\theoremstyle{plain}
\newtheorem{theorem}{Theorem}[section]
\newtheorem{lemma}[theorem]{Lemma}
\newtheorem{proposition}[theorem]{Proposition}
\newtheorem{corollary}[theorem]{Corollary}
\theoremstyle{definition}
\newtheorem{definition}[theorem]{Definition}
\newtheorem{example}[theorem]{Example}
\newtheorem{examples}[theorem]{Examples}
\newtheorem{exercise}[theorem]{Exercise}
\newtheorem{remark}[theorem]{Remark}
\newtheorem{remarks}[theorem]{Remarks}
\newtheorem{rev}[theorem]{Review}
\numberwithin{equation}{section}
\newcommand\sm{\smallskip}
\newcommand\ms{\medskip}
\newcommand\ot{\otimes}
\newcommand\tplus{\textstyle \bigoplus}
\newcommand\pa{\partial}
\newcommand\inpr{(\cdot|\cdot)} 
\newcommand\an{^{\rm an}}
\newcommand\Iff{\Leftrightarrow}
\newcommand\lan{\langle} \newcommand\ran{\rangle}
\newcommand\bull{\bullet}
\newcommand\tsum{\textstyle \sum}
\newcommand\ti{^\times}
\newcommand\ch{\sp{\scriptscriptstyle\vee}}
\newcommand\re{^{\rm re}}
\newcommand\ind{_{\rm ind}}
\renewcommand\div{_{\rm div}}
\newcommand\sh{_{\rm sh}}
\renewcommand\lg{_{\rm lg}}
\newcommand\ts{\textstyle}
\newcommand\gr{{\rm gr}}
\DeclareMathOperator{\ad}{ad} 
\DeclareMathOperator{\Cent}{Cent} 
\DeclareMathOperator{\cent}{Cent} 
\DeclareMathOperator{\grCent}{grCent} 
\DeclareMathOperator{\CDer}{CDer} 
\DeclareMathOperator{\Der}{Der} 
\DeclareMathOperator{\grDer}{grDer} 
 \DeclareMathOperator{\diff}{diff}
\DeclareMathOperator{\rmD}{D} 
\DeclareMathOperator{\rmE}{E} \DeclareMathOperator{\End}{End}
\DeclareMathOperator{\ev}{ev} \DeclareMathOperator{\grEnd}{grEnd}
\DeclareMathOperator{\Hom}{Hom} \DeclareMathOperator{\grHom}{grHom}
\DeclareMathOperator{\Id}{Id} \DeclareMathOperator{\inc}{inc}
\DeclareMathOperator{\IDer}{IDer} 
\DeclareMathOperator{\IF}{IF} 
\DeclareMathOperator{\Ker}{Ker} \DeclareMathOperator{\pr}{pr}
\DeclareMathOperator{\rank}{rank} 
\DeclareMathOperator{\Rea}{Re} \DeclareMathOperator{\Span}{span}
\DeclareMathOperator{\SDer}{SDer}
\DeclareMathOperator{\grSDer}{grSDer}
\DeclareMathOperator{\supp}{supp}
\DeclareMathOperator{\SCDer}{SCDer}
\DeclareMathOperator{\tr}{tr} 
 \DeclareMathOperator{\rma}{A}
 \DeclareMathOperator{\rmb}{B}
 \DeclareMathOperator{\rmbc}{BC}
 \DeclareMathOperator{\rmc}{C}
 \DeclareMathOperator{\rmd}{D}
 \DeclareMathOperator{\rme}{E}
 \DeclareMathOperator{\rmf}{F}
 \DeclareMathOperator{\rmg}{G}
\newcommand\bq{{\mathbf q}}
\newcommand\CC{\mathbb{C}}
\newcommand\FF{\mathbb{F}}
\newcommand\NN{\mathbb{N}}
\newcommand\QQ{\mathbb{Q}}
\newcommand\RR{\mathbb{R}}
\newcommand\ZZ{\mathbb{Z}}
\newcommand\scA{\mathcal{A}}
\newcommand\scL{\mathcal{L}}
\newcommand\scQ{\EuScript{Q}} 
\newcommand\scV{\mathcal{V}}
\newcommand{\euC}{\EuScript{C}}
\newcommand{\euD}{\EuScript{D}}
\newcommand\frL{\mathfrak{L}}
\newcommand\fru{\mathfrak{u}}
\newcommand\uce{\mathfrak{uce}}
\newcommand\frg{\ensuremath{\mathfrak{g}}} \newcommand\g{\frg}
\newcommand\frh{\ensuremath{\mathfrak{h}}}
\newcommand\lsl{\ensuremath{\mathfrak{sl}}}
\newcommand\gl{\ensuremath{\mathfrak{gl}}}
\newcommand\al{\alpha}
\newcommand\be{\beta}
\newcommand\ga{\gamma} \newcommand\Ga{\Gamma}
\newcommand\de{\delta} \newcommand\De{\Delta}
\newcommand\eps{\epsilon} \newcommand\veps{\varepsilon}
\newcommand\ka{\kappa}
\newcommand\la{\lambda} \newcommand\La{\Lambda}
 \newcommand\vphi{\varphi}
\newcommand\rh{\rho}
\newcommand\si{\sigma} \newcommand\boldsi{{\boldsymbol{\sigma}}}
\newcommand\ta{\tau}
\newcommand\thet{\theta} 
\newcommand\ze{\zeta}
 \newcommand\Om{\Omega}
\begin{document}

\title{Lectures on extended affine Lie algebras}
\author{Erhard Neher}
\address{Department of Mathematics and Statistics \\
University of Ottawa\\ Ottawa, Ontario, K1N 6N5, Canada}
\email{neher@uottawa.ca\\}
\thanks{This work is partially supported by the Natural
Sciences and Engineering Research Council (NSERC) of Canada through
the author's Discovery Grant.}
\date{\today}

\subjclass{Primary 17B}

\begin{abstract} We give an introduction to the structure theory of extended
affine Lie algebras, which provide a common framework for
finite-dimensional semisimple, affine and toroidal Lie algebras. The
notes are based on a lecture series given during the Fields
Institute summer school at the University of Ottawa in June 2009.
\end{abstract}

\maketitle

\tableofcontents

\section*{Introduction}

Extended affine Lie algebras form a category of Lie algebras
containing finite-dimensional semisimple, affine, toroidal and some
other interesting classes of Lie algebras. \sm

Like finite-dimensional simple Lie algebras, extended affine Lie
algebras are defined by a set of axioms prescribing their internal
structure, rather than a potentially elusive presentation. The
structure of extended affine Lie algebras is now well understood,
and is quite similar to the construction of affine Lie algebras:
They are obtained from a generalized loop algebra, a so-called
invariant Lie torus, by taking a central extension and adding some
derivations:
\[ \xymatrix@C=80pt{
   { \begin{matrix}\hbox{central extension of $L$}\\
              \hbox{(another Lie torus)} \end{matrix} }
    \ar@{~>}[r]^{\txt{add \\ derivations}}
        & \hbox{extended affine Lie algebra}  \\
     \hbox{invariant Lie torus $L$} \ar@{~>}[u]
}\] Invariant Lie tori have been classified. Although there are some
rather sophisticated examples, many of them have a concrete matrix
realization or can be described in terms of familiar objects like
finite-dimensional simple Lie algebras and Laurent polynomial rings.
This makes extended affine Lie algebras easily accessible. Since
they are an emerging new area, there are many open questions,
opportunities for research and applications, for example in physics.
A short history of extended affine Lie algebras is given in section
\ref{n:sec:eala-def}, in particular it describes the role physicists
have played. \sm

The goal of these notes is to provide a survey of the structure
theory of extended affine Lie algebras, accessible to graduate
students. The emphasis is on examples, and not on an exposition
containing all proofs. Such an exposition will appear elsewhere.
Thus, while we have endeavored to present a complete picture of the
theory by giving precise definitions and theorems, most of the
proofs have been left out. But references to proofs are provided, as
far as possible. \sm

\emph{Outline.} Section \ref{n:ch:affgen} reviews the construction
of affine Kac-Moody algebras and discusses some natural
generalizations, like toroidal algebras. It also contains an
exposition of central extensions of Lie algebras, which are crucial
for the theory. The following section \ref{n:ch:eala-def-exam}
starts with the definition of an extended affine Lie algebra and
then presents some easily proven properties. We also give examples
of extended affine Lie algebras: finite-dimensional split simple,
affine Kac-Moody and untwisted multi-loop algebras. Part of the
axioms for an extended affine Lie algebra is the existence of a root
space decomposition. Section \ref{n:sec:roots} describes the
structure of the roots occurring in an extended affine Lie algebra,
naturally called extended affine root systems. They turn out to be
special types of so-called affine reflection systems. In section
\ref{n:sec:core} we reverse the picture above: We start with an
extended affine Lie algebra and, using the structure of affine
reflection systems, we associate to it a graded ideal, the so-called
core, and its central quotient, the centreless core. Both are Lie
tori. This section also presents properties of Lie tori and
examples. Finally, in section \ref{n:sec:const} we survey the
general construction of extended affine Lie algebras, as summarized
in the picture above. \sm

\emph{Prerequisites.} We assume that the reader is familiar with the
basic structure theory of complex finite-dimensional semisimple Lie
algebras, as for example developed in \cite{hum}. Some familiarity
with affine Kac-Moody algebras, {e.g.} chapters 7 and 8 of
\cite{kac}, is helpful but not essential, since section
\ref{n:sec:aff} will give a short review of the necessary
background. Similarly, knowing split simple Lie algebras will
facilitate reading the notes, but is not required. A short summary
of the facts used here is presented in section \ref{n:sec:eala0}.
\sm

\emph{Notation and setting.} With some rare exceptions (in
\ref{n:sec:aff}, \ref{n:sec:degree} and \ref{n:sec:centr}), all
vector spaces and algebras are defined over a field $F$ of
characteristic $0$. \textit{We will not assume that $F$ is
algebraically closed\/}, since this is not needed and would not do
proper justice to the theory to be explained here. Thus, $F$ could
be, but need not be the field $\CC$ of complex numbers or the field
$\RR$ of real numbers or the field of rational numbers $\QQ$ or ...
Unless specified otherwise, linear maps will always be $F$-linear.
All unadorned tensor products will be over $F$.

The symbol $\g$ will always denote a split simple finite-dimensional
Lie algebra. We let $Z(L) = \{ z\in L : [z,L]=0\}$ denote the centre
of a Lie algebra $L$. We will say that $L$ is \textit{centreless} if
$Z(L)=0$. If $K$ is a subspace of a Lie algebra $E$, the
\emph{centralizer\/} of $K$ in $E$ is $C_E(K)= \{ c\in E :
[c,K]=0\}$.

With the exception of some remarks, all algebras will be associative
or Lie algebras. For an $F$-algebra $A$ we denote by $\Der_F(L)$ the
Lie algebra of all derivations of $L$ (recall that an $F$-linear map
$d: L \to L$ is a \emph{derivation\/} if $d([l_1,l_2])= [d(l_1),l_2]
+ [l_1, d(l_2)]$ holds for all $l_1, l_2 \in L$).

The algebras considered here will often be graded by some abelian
group, usually denoted $\La$ and always written additively. A
\textit{$\La$-grading} of a vector space $V$ by the abelian group
$\La$ is a decomposition $V= \bigoplus_{\la \in \La} V^\la$ into
subspaces $V^\la$. Suppose $V$ is such a $\La$-graded vector space.
Then the \emph{$\La$-support\/} of $V$ is defined as $\supp_\La V =
\{ \la \in \La : V^\la \ne 0\}$. A \textit{graded subspace of\/
$V$\/} is a subspace $U$ of $V$ satisfying $U= \bigoplus_\la (U \cap
V^\la)$. We will say that $V$ has \textit{finite bounded dimension}
if there exists a constant $M$ such that for all $\la \in \La$ we
have $\dim V^\la \le M$. Note that this is a stronger condition than
requiring that $V$ has \textit{finite homogeneous dimension}, which
by definition just means that every $V^\la$, $\la\in \La$, is
finite-dimensional.

Given two $\La$-graded vector spaces $V=\bigoplus_\la V^\la $ and
$W= \bigoplus_\la W^\la$, we say an $F$-linear map $f: V \to W$ has
\emph{degree $\la$\/} if $f(V^\mu) \subset W^{\la + \mu}$ holds for
all $\mu \in \La$. We denote by $\Hom_F(V,W)^\la$ the linear maps of
degree $\la$ and put \[  \grHom_F(V,W) = \ts \bigoplus_{\la \in \La}
\Hom_F(V,W)^\la \quad \hbox{and} \quad \grEnd_F(V) =
\grHom_F(V,V).\] We note that $\grEnd_F(V)$ is a $\La$-graded
associative algebra with respect to composition of maps. We give $F$
the trivial grading $F=F^0$ and define the \textit{graded dual space
of $V$\/} as \[ V^{{\rm gr}*} = \grHom_F(V,F) = \ts \bigoplus_{\la
\in \La}(V^{{\rm gr}*})^\la.
\]
Observe that $(V^{\gr *})^\la$ consists of those linear forms $\vphi
: V \to F$ which satisfy $\vphi(V^\mu) = 0$ whenever $\la + \mu \ne
0$ and can therefore be identified with the usual dual space
$(V^{-\la})^*$.

Given a symmetric bilinear form on a vector space $V$, an
endomorphism $d$ of $V$ is called \emph{skew-symmetric} if $(d(v)
\mid v)=0$ for all $v\in V$. Since we assume that our base field has
characteristic $0$, this is equivalent to the condition $(d(v_1)
\mid v_2)+ (v_1 \mid d(v_2))=0$ for all $v_1, v_2 \in V$. A bilinear
form is \emph{nondegenerate\/} if $(v\mid u)=0$ for all $u\in V$
implies $v=0$.

If $A$ is an algebra, a \textit{$\La$-grading of the algebra $A$\/}
is a $\La$-grading of the underlying vector space $A$, say $A =
\bigoplus_{\la \in \La} A^\la$, for which in addition $A^\la A^\mu
\subset A^{\la + \mu}$ holds for all $\la,\mu \in \La$. Since we
will often deal with algebras with two gradings, it is convenient to
use superscripts and subscripts to distinguish them. \ms

These notes grew out of my notes for a lecture series during the
Fields Institute summer school on Geometric Representation Theory
and Extended Affine Lie Algebras, held at the University of Ottawa
in June 2009. I would like to thank all the participants of the
summer school for their interest and questions. I also thank Bruce
Allison and Juana S\'anchez Ortega for their careful reading of an
earlier version of these notes.


%
%
\section{Affine Lie algebras and some generalizations}
\label{n:ch:affgen}

We will always assume that $F$ is a field of characteristic $0$.
Occasionally we will need some roots of unity in $F$, so certainly
an algebraically closed field like $\CC$ will do.

We denote by $\g$ a split simple finite-dimensional Lie algebra over
$F$. For example, if $F$ is algebraically closed then this just
means that $\g$ is a simple and finite-dimensional. Their structure
theory is explained in most standard textbooks, for example in
\cite{hum}. For more general fields, an example of a split simple
$\g$ is the Lie algebra $\lsl_n(F)$ of $n\times n$-matrices over $F$
which have trace $0$. These types of Lie algebra are investigated in
\cite[Ch.~VII]{bou:lie78}, \cite[Ch.~1]{Dix} or \cite[Ch.~IV]{jake}.

\subsection{Realization (construction of affine Kac-Moody Lie algebras)}
\label{n:sec:aff}

Let $\ze \in F$ be a primitive $m$th root of $1$. In other words,
the multiplicative subgroup of $F$ generated by $\ze$ is isomorphic
to $\ZZ/ m \ZZ$. For example, in $F=\CC$ we can take $\ze =
\exp(2\pi i/m)$.

Let $\si$ be an automorphism of $\g$ of finite order $m\in \NN$.
Thus, the subgroup $\langle \si\rangle$ of the automorphism group of
$\g$ is isomorphic to $\ZZ/m\ZZ$. For example, if $\g=\lsl_n(F)$ an
example of such an automorphism is $\si(x) = a x a^{-1}$, where $a$
is an $n\times n$-matrix of order $m$, and an example of such a
matrix is $a=\ze E_n$ where $E_n$ is the $n\times n$ identity
matrix.

Observe that $\si$ is diagonalizable. Indeed, its minimal polynomial
divides the polynomial $t^m=1$ and therefore has no multiple roots
in $F$. For a general field $F$ this would of course only say that
$\si$ is a semisimple endomorphism. But since as we assumed that $F$
contains all roots of unity which we need, $\si$ is diagonalizable
over $F$. To describe its eigenspaces we need some notation. In
anticipation of the later developments we put
$$
   \La = \ZZ \quad \hbox{and} \quad \bar \La = \ZZ / m\ZZ,
$$
and denote the canonical map $\La \to \bar \La$ by $\la \mapsto \bar
\la$. That $\si$ is diagonalizable, means \begin{equation}
\label{n:eq:aff1}
  \g = \textstyle \bigoplus_{\bar \la \in \bar \La} \g_{\bar \la}
 \quad\hbox{for } \g_{\bar \la} = \{ x\in \g : \si(x) = \ze^\la x\}
\end{equation} Of course, some of the $\g_{\bar \la}$ could be zero. The
eigenspaces of $\si$ are precisely the non-zero among the subspaces
$\g_{\bar \la}$. It is also appropriate to note that $\g_{\bar \la}$
is well-defined: if $\bar \la = \bar \mu$ then $\ze^\la = \ze^\mu$.
Finally we point out that the decomposition (\ref{n:eq:aff1}) is a
$\bar \La$-grading, which means that it satisfies
\begin{equation} \label{n:eq:aff1.5}
[\g_{\bar \la}, \g_{\bar \mu}] \subset \g_{\bar \la + \bar \mu}
\quad \hbox{for all $\bar \la, \bar \mu \in \bar \La$.}
\end{equation}

Let $F[t^{\pm 1}]$ be the ring of Laurent polynomials. This is a
unital associative commutative $F$-algebra with $F$-basis $\{t^\la :
\la \in \ZZ\}$ and multiplication rule $t^\la t^\mu = t^{\la +\mu}$.

The \textit{loop algebra} associated to the data $(\g, \si)$ is the
Lie algebra \begin{equation} \label{n:oneloop} \scL= L(\g, \si) =
\tplus_{\la \in \La} \g_{\bar \la} \ot F t^\la \end{equation} with
product
\begin{equation} \label{n:eq:aff2}
   [u_{\bar \la} \ot t^\la, \, v_{\bar \mu} \ot t^\mu] =
   [u_{\bar \la}, v_{\bar \mu}] \ot t^{\la + \mu}.
\end{equation}
We will sometimes use more precise terminology: If $\si = \Id$,
i.e., $m=1$, we will call $L(\g, \Id) = \g \ot F[t^{\pm 1}]$ the
\textit{untwisted loop algebra}, and we will call $L(\g,\si)$ a
\textit{twisted loop algebra} if it is clear that $\si\ne \Id$ and
we want to emphasize this.

We point out that we consider $L(\g,\si)$ as a Lie algebra over $F$.
It is therefore infinite-dimensional. It is also important to note
that $\scL$ is a $\La$-graded algebra, whose homogenous spaces are
$\scL^\la = \g_{\bar \la} \ot Ft^\la$ for $\la \in \La$. For the
reader with some background in algebraic geometry, a more geometric
definition of $L(\g, \si)$ is the following: It is (isomorphic to)
the Lie algebra of equivariant maps $F^\times \to \g$, where $\si$
acts on $F^\times$ by $\si(x) = \ze x$.) \sm

Let $\ka$ be the Killing form of $\g$, i.e., $\ka(u,v) = \tr(\ad u
\circ \ad v)$, and define \begin{equation}\label{n:eq:aff2.5} \psi :
\scL \times \scL \to F, \quad
     \psi (u\ot t^\la, v \ot t^\mu) = \la \,\de_{\la, -\mu} \, \ka(u,v)
\end{equation} where $\de_{\la, -\mu}$ is the Kronecker delta: It has the
value $1$ if $\la = -\mu$ and is zero otherwise.

\begin{exercise}\label{n:ex:aff1}
Check that the map $\psi$ of (\ref{n:eq:aff2.5}) is a {\it
$2$-cocycle\/} of $\scL$, i.e.~an $F$-bilinear map satisfying
\begin{equation} \label{n:eq:aff3}
 \psi(l,l) = 0 = \psi([l_1,l_2], l_3) + \psi([l_2, l_3], l_1) +
\psi([l_3,l_1], l_2)
\end{equation}
for $l, l_i \in \scL$.
\end{exercise}
A consequence of Exercise~\ref{n:ex:aff1} is that we can enlarge our
Lie algebra $\scL$ by adjoining a $1$-dimensional space, denoted
$Fc$ here: \begin{equation} \label{n:aff3.2}
  \tilde \scL = \tilde \scL(\g, \si) = L(\g, \si) \oplus Fc
\end{equation} is a Lie algebra over $F$ with respect to the product
$$
   [l_1 \oplus s_1 c, \, l_2 \oplus s_2 c]_{\tilde \scL} = [l_1, l_2]_L
         \oplus \psi(l_1, l_2) c
$$
for $l_i \in \scL$ and $s_i \in F$. We have added subscripts on the
products to emphasize where the product is calculated, in $\tilde
\scL$ or in $\scL$. It is obvious from the product formula, that it
is important to know in which Lie algebra the product is being
calculated. But in the future we will leave out the subscripts, if
it is clear in which algebra the product is calculated.

The equations (\ref{n:eq:aff3}) are exactly what is needed to make
$\tilde \scL$ a Lie algebra. The map $$  \fru : \tilde \scL \to
\scL, \quad \fru(l \oplus sc) = l
$$
is a surjective Lie algebra homomorphism with kernel $\Ker (\fru) =
Fc = Z(\tilde \scL)$, the centre of $\tilde \scL$. In other words,
$\fru$ is a {\it central extension\/} (see \ref{n:appcen} for a
short review of central extensions). In fact, $\fru $ is the
``biggest'' central extension, the so-called {\it universal central
extension\/}, see \cite{gar} and \cite{wilson} for a proof.\sm

The Lie algebra $\tilde \scL$ has a canonical derivation $d$, the
so-called \textit{degree derivation} \begin{equation}
\label{n:eq:aff3.5}
 d\big( (u \ot t^\la) \oplus sc\big) = \la u \ot t^\la, \quad (\la \in \ZZ,
            u \in \g_{\bar \la}, s \in F).
\end{equation}  Hence we can form the semidirect product $
 \hat \scL = L( \g, \si)\hat{\;} = \tilde \scL \rtimes F d
$ with product
$$
 [\tilde l_1 \oplus s_1 d, \, \tilde l_2 \oplus s_2 d]_{\hat \scL}
    =  [\tilde l_1, \tilde l_2]_{\tilde \scL} + s_1 d(\tilde l_2)
       - s_2 d(\tilde l_1)
$$
for $\tilde l_i \in \tilde \scL$ and $s_i \in F$. In untangled form,
\begin{equation} \label{n:eq:aff8}
 \hat \scL = \big( \tplus_{\la \in \ZZ} (\g_{\bar \la} \ot F t^\la)
\big) \, \oplus \, F c \, \oplus \, Fd \end{equation} is the Lie
algebra with product \begin{multline} \label{n:eq:aff9}
   [u_{\bar \la} \ot t^\la \oplus s_1 c \oplus s'_1 d , \,
        v_{\bar \mu} \ot t^\mu \oplus s_2 c \oplus s'_2 d]
  \\ = \big( [u_{\bar \la}, v_{\bar \mu}] \ot t^{\la + \mu} +
      \mu s_1' v_{\bar \mu} \ot t^\mu
          - \la s_2' u_{\bar \la} \ot t^\la \big)
  \oplus \la \, \de_{\la, - \mu} \,\ka(u_{\bar \la}, v_{\bar \mu}) \,c.
\end{multline}

\begin{exercise} \label{n:ueb1} Show $[\hat \scL, \hat \scL] = \tilde \scL$ and
$Z(\tilde\scL) = Fc= Z(\hat \scL)$.
\end{exercise}

The importance of the Lie algebras $L(\g,\si)\hat{\;\;}$ stems from
the following. \ms

\begin{theorem} \label{n:th-kac}
{\rm (\textbf{Realization Theorem} \cite[Th.~7.4, Th.~8.3,
Th.~8.5]{kac})} Suppose $F$ is algebraically closed. \sm

{\rm (a)} The Lie algebra $L(\g, \si)\hat{\;\;}$ is an affine
Kac-Moody Lie algebra, and every affine Kac-Moody Lie algebra is
isomorphic (as $F$-algebra) to some $L (\g, \si)\hat{\;\,}$. \sm

{\rm (b)} $L (\g, \si)\hat{\;\;} \cong L(\g, \si')\hat{\;\,}$ where
$\si'$ is a diagram automorphism with respect to some Cartan
subalgebra of $\g$. \end{theorem}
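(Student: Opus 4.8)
The plan is to reduce both parts of the theorem to the classification of finite-order automorphisms of $\g$ by their conjugacy classes and to Kac's description of affine Kac-Moody algebras in terms of generalized Cartan matrices of affine type. For part (a), the key observation is that $L(\g,\si)\hat{\;\;}$ is built entirely from the $\bar\La$-grading \eqref{n:eq:aff1} of $\g$, so it depends on $\si$ only up to conjugacy in $\Aut(\g)$: if $\si' = \tau\si\tau^{-1}$ for $\tau\in\Aut(\g)$, then $\tau$ induces a graded isomorphism $\g_{\bar\la}\to\g'_{\bar\la}$, hence a Lie algebra isomorphism $L(\g,\si)\to L(\g,\si')$ respecting the $\La$-grading, the Killing form $\ka$ (which is $\Aut(\g)$-invariant), the cocycle $\psi$, and the degree derivation $d$; this extends to $L(\g,\si)\hat{\;\;}\cong L(\g,\si')\hat{\;\;}$. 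So it suffices to produce, for one representative $\si$ in each conjugacy class of finite-order automorphisms, an explicit affine generalized Cartan matrix whose associated Kac-Moody algebra is $L(\g,\si)\hat{\;\;}$, and conversely to see that every affine GCM arises this way.

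First I would treat the untwisted case $\si=\Id$: here $L(\g,\Id)\hat{\;\;} = (\g\ot F[t^{\pm1}])\oplus Fc\oplus Fd$, and one checks directly that choosing a Chevalley basis of $\g$ together with the affine simple root data $\al_0 = \de - \theta$, $\al_1,\dots,\al_\ell$ (where $\theta$ is the highest root and $\de$ the null root dual to $d$) produces Chevalley generators $e_i,f_i,h_i$ satisfying the Serre relations for the untwisted affine GCM $X_\ell^{(1)}$; the map from the Kac-Moody algebra is surjective because the $e_i,f_i$ generate, and injective by a standard argument using the grading by the root lattice together with the fact that $\g\ot t^k$ for $k\neq 0$ lies in the subalgebra generated by $\g\ot t^{\pm1}$ (Gabber–Kac, or the simplicity-type arguments in \cite[Ch.~7--8]{kac}). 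For the twisted case I would use that every finite-order automorphism is conjugate to one of the standard form $\si = \epsilon\circ(\text{diagram automorphism})$ described by Kac's theory — this is exactly what part (b) asserts — and that such $\si$ of order $m = r\cdot(\text{something})$, $r\in\{1,2,3\}$, yields the twisted affine GCM $X_\ell^{(r)}$; the verification is again the construction of Chevalley generators adapted to the $\bar\La$-grading, following \cite[Th.~8.3, Th.~8.5]{kac}. The converse statement — every affine Kac-Moody algebra is some $L(\g,\si)\hat{\;\;}$ — then follows because the affine GCMs are exhausted by the list $X_\ell^{(1)}, X_\ell^{(2)}, X_\ell^{(3)}$, each of which we have realized.

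For part (b), the content is the classification of finite-order automorphisms of a complex simple Lie algebra up to conjugacy: every such $\si$ is conjugate to an automorphism of the form $\Ad(\exp(2\pi i\, h/m))\circ\omega$ where $\omega$ is a diagram automorphism (of order $r\in\{1,2,3\}$) with respect to a fixed Cartan subalgebra $\frh$ and $h\in\frh$ lies in the appropriate alcove — this is Kac's theorem on finite-order automorphisms (see \cite[Ch.~8]{kac}). In particular one may conjugate $\si$ so that it preserves a Cartan subalgebra and acts on it, and further so that its ``diagram part'' is a genuine diagram automorphism; combined with the conjugation-invariance of $L(\g,\si)\hat{\;\;}$ established above, this gives the claimed isomorphism. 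The main obstacle is really this classification input: establishing from scratch that every finite-order automorphism is conjugate to one in Kac normal form is a nontrivial piece of structure theory (using that inner automorphisms of finite order are of the stated exponential form, plus the description of $\Aut(\g)/\Aut(\g)^\circ$ as the diagram automorphism group, plus a careful analysis of the alcove). Everything else — the transport of structure under conjugation, and the matching of a normal-form $\si$ with its affine GCM — is bookkeeping with Chevalley bases and Serre relations, exactly as carried out in \cite[Ch.~7--8]{kac}, to which I would refer for the detailed computations.
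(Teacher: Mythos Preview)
The paper does not prove this theorem; it is stated as a citation to \cite[Th.~7.4, Th.~8.3, Th.~8.5]{kac} with no argument given. Your proposal is therefore already more than the paper offers, and your outline for part (a) --- conjugation-invariance of the loop construction, then matching Chevalley generators to the affine Serre presentation case by case --- is exactly the strategy Kac follows.

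There is, however, a genuine gap in your argument for part (b). Conjugation-invariance together with Kac's classification of finite-order automorphisms gives you
\[
L(\g,\si)\hat{\;\;} \;\cong\; L\bigl(\g,\; \Ad(\exp(2\pi i\, h/m))\circ\omega\bigr)\hat{\;\;},
\]
where $\omega$ is a diagram automorphism of order $r\in\{1,2,3\}$ and $h$ lies in the Cartan subalgebra. But the theorem asserts $L(\g,\si)\hat{\;\;}\cong L(\g,\si')\hat{\;\;}$ with $\si'$ a \emph{pure} diagram automorphism, and conjugation alone cannot remove the inner factor $\Ad(\exp(2\pi i\, h/m))$ --- it is generally not conjugate to the identity. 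What is needed is a separate ``untwisting'' isomorphism on the loop algebra itself, of the shape $x\otimes t^n \mapsto \exp(2\pi i\, n h/m)\cdot x \otimes t^n$ (with appropriate adjustments to $c$ and $d$), which absorbs the inner part into a reparametrization of the grading. This is precisely the content of \cite[Th.~8.5]{kac}; it also handles the passage from the order-$m$ grading to the order-$r$ grading attached to $\omega$. Your sketch treats this step as a consequence of conjugation-invariance, which it is not.
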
 \sm

We note that diagram automorphisms have order $1, 2$ or $3$, with
the latter case only occurring for $\g$ of type $\rmd_4$.

\subsection{Multiloop and toroidal Lie algebras}\label{n:sec:toroidal}

We will discuss some (straightforward) generalizations of
$\scL=L(\g,\si)$, the central extension $\tilde \scL$ and the big
Lie algebra $\hat \scL$. \ms

The first idea is to replace the Laurent polynomial ring $F[t^{\pm
1}]$ by a ring with similar properties. Instead of one variable we
will use the Laurent polynomial ring $F[t_1^{\pm 1}, \ldots,
t_n^{\pm 1}]$ in $n$ variables. This ring has indeed very similar
properties to the ring $F[t^{\pm 1}]$. We put $\La = \ZZ^n$ and
define
$$
   t^\la = t_1^{\la_1} \cdots t_n^{\la_n} \quad \hbox{for }
   \la=(\la_1, \ldots, \la_n) \in \La
$$
Then $\{ t^\la : \la \in \La\}$ is an $F$-basis of $F[t_1^{\pm 1},
\ldots, t_n^{\pm 1}]$ and the multiplication rule in $F[t_1^{\pm 1},
\ldots, t_n^{\pm 1}]$ is $t^\la t^\mu = t^{\la + \mu}$, which is the
``same'' as in the $1$-variable case. Also, $F[t_1^{\pm 1}, \ldots,
t_n^{\pm 1}]$ is still a unital commutative associative $F$-algebra.
We can therefore define the \textit{untwisted multiloop algebra},
the ``several variable'' generalization of the untwisted loop
algebra of \ref{n:sec:aff} as \begin{equation}  \label{n:untloo}
L(\g) =  \g \ot F[t_1^{\pm 1}, \ldots, t_n^{\pm 1}], \end{equation}
which becomes a Lie algebra with respect to the product $$[u\ot
t^\la, v \ot t^\mu] = [u,v] \ot t^{\la + \mu}$$ for $u,v\in \g$ and
$\la, \mu \in \ZZ^n$. We will meet this Lie algebra again in
Example~\ref{n:lieuntwist}.

To continue the analogy we let $\boldsi=(\si_1, \ldots, \si_n)$ be a
family of $n$ commuting finite order automorphisms of $\g$, say
$\si_i$ has order $m_i \in \NN_+$. Let $\ze_i \in F$ be a primitive
$m_i$-th root of $1$ (recall that we assumed that $F$ has an ample
supply of them). We put
$$
 \bar \La = (\ZZ/ m_1 \ZZ) \oplus \cdots \oplus (\ZZ/ m_n \ZZ)$$ and
let $\la \mapsto \bar \la$ be the obvious map. The automorphisms
$\si_i$ are simultaneously diagonalizable:
\begin{equation}\label{n:eq:aff4}
  \g = \tplus_{\bar \la \in \bar \La} \g_{\bar \la}, \quad
    \g_{\bar \la} = \{ u \in \g: \si_i(u) = \ze_i^{\la_i} u,
    1 \le i \le n\}.
\end{equation}
As in the one-variable case, the decomposition (\ref{n:eq:aff4}) is
a $\bar \La$-grading: $[\g_{\bar \la}, \g_{\bar \mu}] \subset
\g_{\bar \la + \bar \mu}$ for $\bar \la, \bar \mu \in \bar \La$. It
follows from this that \begin{equation} \label{n:multdeff}
  L(\g, \boldsi ) = \tplus_{\la \in \La} \, \g_{\bar \la} \ot Ft^\la
\end{equation} is a subalgebra of $\g \ot F[t_1^{\pm 1}, \ldots, t_n^{\pm
1}]$, called the \textit{multiloop algebra associated to $\g$ and
$\boldsi$}. If all $\si_i= \Id_\g$ we will (of course) call it an
\textit{untwisted multiloop algebra}. Multiloop algebras are
investigated in the papers \cite{abfp}, \cite{abfp2}, \cite{abp1},
\cite{abp2} and \cite{abp2.5}. \sm

Following our procedure in section \ref{n:sec:aff} we should now
make a central extension to get a bigger Lie algebra $\tilde \scL$
and then add some derivations:
\begin{equation} \label{n:eq:aff4.5}
\xymatrix@C=80pt@R=40pt{ \tilde \scL= \scL \oplus C \;
 \ar@{^{(}->}[r]^{\txt{add \\ derivations}}
         \ar[d]_{\txt{central \\ extension}}
 &  \hat \scL = \tilde \scL \rtimes D \\
 \scL =L(\g,\boldsi) \ar@{~>}[ur]}
\end{equation} To define the Lie algebra product on $\tilde \scL $ we would
use a $2$-cocycle $\psi : \scL \times \scL \to C$ where $C$ is some
vector space and then put
\begin{equation} \label{n:eq:aff5}
    [l_1 \ot c_1, l_1\ot c_2]_{\tilde \scL}
         = [l_1, l_2]_\scL \oplus \psi(l_1, l_2)
\end{equation} for $l_i \in \scL$ and $c_i \in C$. The Lie algebra $\hat
\scL$ should be a semidirect product with $D$ acting on $\tilde
\scL$ by derivations. \sm

But here is where the problems start, or things become interesting
depending on one's taste. In the one-variable case the $2$-cocycle
$\psi$ of (\ref{n:eq:aff2.5}) was the only possible choice up to
scalars, i.e., the universal central extension $\tilde \scL$ of
$\scL$ had a $1$-dimensional centre $C=Fc$. This is no longer true
in the case of several variables. It is not so surprising that there
exists a $2$-cocycle with values in $F^n$: We can simply use the
same formula as in (\ref{n:eq:aff2.5}).

\begin{exercise} \label{n:ueb3} Let $\scL= L(\g, \boldsi)$ be a
multiloop algebra and embed $\La \subset F^n$ canonically. Then
$\psi: \scL \times \scL \to F^n$, given by \begin{equation}
\label{n:eq:aff6} \psi(u \ot t^\la, v\ot t^\mu) =  \de_{\la +\mu, 0}
\, \ka(u,v) \,\la \,, \end{equation} is a $2$-cocycle of $\scL$.
\end{exercise}

However, this is still not the ``biggest'' possible. Rather, the
centre of the universal central extension is infinite-dimensional
and the so-called \textit{universal $2$-cocycle}, i.e., the
$2$-cocycle used in (\ref{n:eq:aff5}) to describe the universal
central extension $\hat \scL$ of $\scL$, is described in the
following result. \ms

\begin{theorem}[{\cite{n:uce}}] \label{n:ucemm}  Let $\scL=L(\g,\boldsi)$
be a multiloop algebra. We embed $\La\subset F^n$ canonically, put
$\Ga = m_1 \ZZ \oplus \cdots \oplus m_n \ZZ$ and let
$C=\bigoplus_{\ga\in \Ga} C_\ga$ where $C_\ga = F^n / F\ga$. Then
the universal $2$-cocycle is $\psi_\fru : \scL \times \scL \to C$
for which the $\ga$-component of $\psi_\fru$ is \begin{equation}
\label{n:eq:aff7}   \psi_\fru( u \ot t^\la, y \ot t^\mu)_\ga =
\ka(u,v) \de_{\la + \mu, -\ga} \, \bar \la \in C_\ga.\end{equation}
\end{theorem}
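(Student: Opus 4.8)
The plan is to verify directly that $\psi_\fru$ is a $2$-cocycle and then establish its universality by producing a natural bijection between $2$-cocycles on $\scL$ with values in an arbitrary vector space $W$ and linear maps $C \to W$ (equivalently, by identifying $\scL \oplus C$ with the universal central extension $\uce(\scL)$). The cocycle check is the routine part: skew-symmetry is clear since $\bar\la = -\bar\mu$ in $C_\ga$ forces the $\ga$-component to be skew in $(u,v)$ via $\ka$, and the Jacobi identity for $\psi_\fru$ reduces, component by component in $\Ga$, to the identity $[\la,\mu,\nu]$-type relation $\bar\la + \bar\mu + \bar\nu = 0$ in $C_\ga$ whenever $\la+\mu+\nu = -\ga$, combined with the invariance and symmetry of the Killing form $\ka([u,v],w) = \ka(u,[v,w])$.

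The substantive step is universality. First I would recall Kassel's description of the universal central extension of a Lie algebra of the form $\frg \ot R$ for $\frg$ finite-dimensional simple and $R$ commutative unital: the centre of $\uce(\frg \ot R)$ is $\Om_R/dR$, the Kähler differentials modulo exact forms, with cocycle $\psi(u \ot r, v \ot s) = \ka(u,v)\, \ol{r\,ds}$. For a multiloop algebra $\scL = L(\g,\boldsi)$ this does not apply verbatim because $\scL$ is a subalgebra of $\g \ot R$ with $R = F[t_1^{\pm1},\dots,t_n^{\pm1}]$, not all of it; but $\scL$ is graded by $\La = \ZZ^n$ with $\scL^\la = \g_{\bar\la} \ot F t^\la$, and $\uce$ of a perfect graded Lie algebra is again graded. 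So the approach is: compute $\uce(\scL)$ degree-by-degree, using that in each degree the bracket $[\scL^\la, \scL^\mu] \to \scL^{\la+\mu}$ is governed by $[\g_{\bar\la}, \g_{\bar\mu}] \to \g_{\bar\la+\bar\mu}$, and that the relevant piece of the kernel lives in degrees $-\ga$ with $\ga \in \Ga = m_1\ZZ \oplus \cdots \oplus m_n\ZZ$ — precisely the degrees $\la$ for which $\g_{\bar\la} = \g_{\bar 0} = \g^{\boldsi}$ contains the image of the bracket pairing $\g_{\bar\mu} \times \g_{-\bar\mu}$ nondegenerately against $\ka$. Then one identifies the degree-$(-\ga)$ part of the centre of $\uce(\scL)$ with $F^n/F\ga = C_\ga$: the $F^n$ comes from $\Om_R$ in degree $-\ga$ (spanned by $t^{-\ga-\mu} t^\mu\, d(t^\mu)/\!\!\sim$, i.e.\ by $\la \mapsto t^{-\ga}\,\tfrac{dt^\mu}{t^\mu}$ type classes indexed by $\la \in F^n$), and the one-dimensional quotient by $F\ga$ arises because $d(t^{-\ga}) = 0$-type relations — more precisely $\sum_i \ga_i\, t^{-\ga}\,dt_i/t_i = t^{-\ga}\,d\log(t^\ga)$ and $t^\ga$ is not a unit obstruction but $t^{-\ga}\cdot t^\ga = 1$ gives $d(1) = 0$, collapsing the $\ga$-direction. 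The map $\psi_\fru$ in \eqref{n:eq:aff7} is then exactly the composite of Kassel's universal cocycle for $\g \ot R$ restricted to $\scL$ with the projection onto this graded centre, and one checks it is surjective onto each $C_\ga$ (nontriviality of $\ka$ on $\g_{\bar\mu}$ for suitable $\mu$) and that $\scL$ is perfect, so that restriction/corestriction indeed yields the universal object.

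Concretely, the key steps in order: (1) verify $\psi_\fru$ is a well-defined skew-symmetric bilinear map $\scL \times \scL \to C$, checking that $\bar\la \in C_\ga = F^n/F\ga$ is independent of the representative $\la$ modulo the constraint $\la + \mu = -\ga$ — here one must be careful that $\bar\la$ denotes the image of $\la \in \La \subset F^n$ in $F^n/F\ga$, and that the reduction $\La \to \bar\La$ used elsewhere is compatible; (2) verify the cocycle identity \eqref{n:eq:aff3} for $\psi_\fru$ componentwise, reducing to invariance of $\ka$; (3) show $\scL$ is perfect, so the universal central extension exists and is graded; (4) invoke Kassel's theorem for $\g \ot R$ and the grading to pin down the centre of $\uce(\scL)$ as $\bigoplus_\ga C_\ga$; (5) verify that under this identification the defining cocycle of $\uce(\scL)$ corresponds to $\psi_\fru$, using the explicit form of the Kähler-differential cocycle and the graded decomposition; (6) conclude universality via the standard criterion (a central extension by $C$ with surjective-onto-$H_2$ cocycle is universal iff the total space is perfect and the cocycle is ``universal'' in the $H^2$ sense).

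The main obstacle I expect is step (4)–(5): carefully matching the abstract Kähler-differential centre $\Om_R/dR$ of $\uce(\g \ot R)$, which has an infinite-dimensional homogeneous-but-not-bounded structure, with the explicit $\bigoplus_{\ga \in \Ga} F^n/F\ga$ — in particular showing that only the degrees $-\ga$, $\ga \in \Ga$, survive when one passes from $\g \ot R$ down to the subalgebra $\scL$ (the twist kills all other homogeneous pieces of the differentials because $\g_{\bar\la}$ need not pair nondegenerately with $\g_{\bar\mu}$ under $\ka$ unless $\bar\la + \bar\mu = \bar 0$, i.e.\ $\la + \mu \in \Ga$), and correctly identifying the one relation per degree that produces the quotient $F^n/F\ga$ rather than $F^n$. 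This is exactly the content of \cite{n:uce}, so in the survey style of these notes I would present steps (1)–(3) with brief verification and cite \cite{n:uce} for the identification in steps (4)–(6), noting that the case $n = 1$, $\boldsi = \Id$ recovers $C = C_0 = F$ and the classical cocycle \eqref{n:eq:aff2.5}.
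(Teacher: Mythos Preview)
The paper does not actually prove this theorem in the text; it cites \cite{n:uce} and, crucially, remarks later that ``Th.~\ref{n:ucemm} is an application of Th.~\ref{n:thgencen}''. So the intended proof is the derivation-based one: verify that $\scL$ is perfect, finitely generated, of finite homogeneous dimension, and carries an invariant nondegenerate graded form; compute a graded complement $D$ of $\IDer(\scL)$ inside $\SDer_F(\scL)$; then Th.~\ref{n:thgencen} gives $\uce(\scL) \cong \rmE(\scL, D^{\gr *}, \psi_D)$, and one identifies $D^{\gr *} \cong \bigoplus_{\ga \in \Ga} F^n/F\ga$ and $\psi_D$ with $\psi_\fru$. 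Exercise~\ref{n:uebcent3} is precisely the request to carry out that last identification.

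Your route via Kassel's $\Om_R/dR$ is a genuinely different argument. The paper explicitly acknowledges that this works in the untwisted case (all $\si_i = \Id$, $\Ga = \La$), where $\scL = \g \ot R$ and Kassel applies directly. For the twisted case, though, your step~(4)--(5) is where the real work hides: $\uce$ does not in general commute with passage to subalgebras, so ``restrict Kassel's cocycle to $\scL$ and observe that only $\Ga$-degrees survive'' needs an actual mechanism --- typically an averaging or descent argument using the action of the finite group $\La/\Ga \cong \prod \ZZ/m_i\ZZ$ on $\g \ot R$ and on $\Om_R/dR$, showing that the fixed points give exactly $\bigoplus_{\ga \in \Ga} F^n/F\ga$. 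That can be done, but it is not the paper's approach. The $\SDer$ route of Th.~\ref{n:thgencen} has the advantage of being intrinsic to $\scL$ (no embedding into an untwisted algebra needed) and of applying uniformly to the invariant Lie tori that appear later in the notes, which is why the paper builds the machinery that way.
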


Observe that (\ref{n:eq:aff6}) is just the $0$-component of
(\ref{n:eq:aff7}). The theorem is well-known in the untwisted case
(all $\si_i = \Id_\g$, so $\Ga=\La$), in which it can be deduced
from the description of the universal central extension of the Lie
algebra $\g \ot A$ where $A$ is any unital commutative associative
$F$-algebra, see \cite{Kas} and \cite{Moo-Rao-Yok}. (In these
references the centre $C$ of the universal central extension is
described as $\Om_A /dA$ where $\Om_A$ is the module of K\"ahler
differentials, which is also the same as the first cyclic homology
group ${\rm HC}_1(A)$.)

In the untwisted case, the universal central extension $\hat \scL$
was termed the \textit{$n$-toroidal Lie algebra} based on $\g$. The
reader should however be warned that this terminology is not
standard. It is sometimes used for the Lie algebra $\tilde \scL$
with the $2$-cocycle of exercise~\ref{n:ueb3}, and sometimes also
for the Lie algebras of the form $\hat \scL = \scL \oplus C \oplus
D$ for an appropriate subalgebra $D$ of derivations, e.g.~in
\cite{DiFuPe}.

Thus, there are many possibilities for $C$ in the diagram
(\ref{n:eq:aff4.5}), and it is not clear which one is the best
possible choice. (In fact, we will later allow any central
extension). \sm

Assuming that we have settled for some $C$, which $D$ should we
take? For simplicity we will discuss this only in the untwisted
case. If $n=1$ we added the degree derivation $d$ described in
(\ref{n:eq:aff3.5}). This is far from being an arbitrary derivation.
The full derivation algebra of the Lie algebra $\g \ot A$ for $A$ is
described in \cite[Th.~1]{bemo}: \begin{align}
 \Der_F(\g\ot A) &=
    \big( \Der_F(\g) \ot A \big) \oplus \big( F\Id \ot \Der_F(A)\big)
  \nonumber \\
  &= \quad \IDer(\g \ot A) \oplus F\Id \ot \Der_F(A). \label{n:eq:aff11}
 \end{align}
where $\Der_F(\g) \ot A$ and $F \ot \Der_F(A)=F\Id \ot \Der_F(A)$
act on $\g \ot A$ in the obvious way.

Since $\g \ot A$ is perfect, up to a canonical isomorphism, this is
then also the derivation algebra of the universal central extension
of $\g \ot A$ (see for example \cite[Th.~2.2]{bemo}). From
$$\Der F[t^{\pm 1}] = F[t^{\pm 1}]d$$ we see that we added a rather
special derivation, one which can be used to define the
$\La$-grading of $\scL$ (see also Ex.~\ref{n:ex:derloop}).

We can do something similar in multi-variable case. Define the
$i$-th \textit{degree derivation} $\pa_i$ of $L(\g)\oplus C$ by
\begin{equation} \label{n:eq:aff10}
\pa_i ( u \ot t^\la \oplus c) = \la_i \, u \ot t^\la \quad \hbox{for
} \la= (\la_1, \ldots, \la_n) \in \La=\ZZ^n \end{equation} and put
$$
 \euD = \Span_F \{ \pa_i : 1 \le i \le n\},$$
the space of \textit{degree derivations}. Possible (interesting)
choices for $D$ are: \begin{enumerate}

 \item $D= \euD $, \sm

 \item $F[t_1^{\pm 1}, \ldots, t_n^{\pm 1}] \euD$ (in physics parlance:
``all vector fields''), and \sm

 \item $\bigoplus_{\la \in \La} Ft^\la \{ \sum_{i=1}^n s_i \pa_i :
    \sum_i s_i = 0 \}$ (the ``divergence $0$ vector fields'').
\end{enumerate}

It will turn out that for the Lie algebras which we are going to
study in the next chapters, the choices (1) and (3) are the correct
ones. In addition, there will be a surprise: semidirect products in
(\ref{n:eq:aff4.5}) will not be enough!

\begin{exercise} \label{n:ex:invfo}
 Recall that a bilinear form $\inpr$ on a Lie algebra $L$ is
called \emph{invariant} if $([l_1, l_2] \mid l_3) = (l_1 \mid [l_2,
l_3])$ holds for all $l_i \in L$. Show:


(a) The set $\IF(L)$ of invariant bilinear forms on $L$ is a vector
space with respect to the obvious scalar multiplication and addition
defined by $(\be_1 + \be_2)(l_1,l_2) = \be_1(l_1, l_2) + \be_2(l_1,
l_2)$ for $\be_i \in \IF(L)$. \sm


(b) If $L$ is perfect, any invariant bilinear form is symmetric. \sm

(c) Let $S$ be a unital associative $F$-algebra. A bilinear form $b$
on $S$ is called \emph{invariant} if $b(s_1 s_2 , s_3) = b(s_1,
s_2s_3) = b(s_2, s_3s_1)$ for $s_i \in S$. \begin{itemize}

\item[(i)] The set $\IF(S)$ of invariant bilinear forms on $S$ is a
vector space with respect to the obvious operations.

\item[(ii)] Any linear form $\la \in S^*$ with $\la([S, S])=0$ gives rise to an
invariant bilinear form $b_\la$ on $S$, defined by $b_\la(s_1, s_2)
= \la(s_1s_2)$.

\item[(iii)]  The map $(S/[S,S])^* \to \IF (S)$, given by $\la \mapsto b_\la$,
is a vector space isomorphism. \end{itemize}

(d) Let $L$ be a perfect Lie algebra with a $1$-dimensional space
$\IF(L)$, say $\IF(L) = F \ka$. Also, let $S$ be a unital
associative commutative $F$-algebra. We consider $L\ot S$ as Lie
algebra with respect to the product $[l_1 \ot s_1, l_2 \ot s_2] =
[l_1, l_2] \ot s_2s_2$, cf. (\ref{n:eq:aff2.5}). For $\la \in
\IF(S)$ define a  bilinear form $\ka \ot \la$ on $L \ot S$ by
\[(\ka\ot \la)\,(l_1 \ot s_2, \, l_2 \ot s_2)= \ka(l_1, l_2) \,
\la(s_1, s_2).\] Then $\ka \ot \la \in \IF(L\ot S)$ and the map
$\IF(S) \to \IF(L \ot S)$, given by $\la \mapsto \ka \ot \la$, is an
isomorphism of vector spaces.
 \end{exercise}

\begin{exercise} \label{n:ex:derloop} Define the i$^{\rm th}$ degree
derivation $\pa_i$ of the Laurent polynomial ring $S=F[t_1 ^{\pm 1},
\ldots, t_n^{\pm 1}]$ by $\pa_i(t^\la) = \la_i t^\la$, so that the
$\pa_i$ of (\ref{n:eq:aff10}) becomes $\pa_i(u \ot t^\la) = u \ot
\pa_i(t^\la) = (\Id \ot \pa_i)(u\ot t^\la)$ (this double meaning of
$\pa_i$ should not create any confusion). Show:

(a) The derivation algebra $\Der_F(S)$ of $S$ is given by \[
\Der_F(S) = S \euD =
   \ts \bigoplus_{\la \in \ZZ^n} Ft^\la \euD
 \]
where, as above, $\euD = \Span_F\{ \pa_i : 1\le i \le n\}$. The
derivation algebra is a $\ZZ^n$-graded Lie algebra with Lie algebra
product determined by
\[[t^\la \pa_i, t^\mu \pa_j] = t^{\la + \mu} (\mu_i \pa_j - \la_j
\pa_i).\] Thus,  for $n=1$ we obtain the usual \emph{Witt algebra},
see for example \cite[1.4]{mp}.

 (b) $(t^\la \mid t^\mu)= \de_{\la + \mu, 0}$ defines a
nondegenerate symmetric bilinear form $\inpr$ on $S$ which is
\emph{invariant} in the sense that $(ab | c) = (a \mid bc)$ for all
$a,b,c\in S$.

(c) Let $\SDer_F(S)$ be the subalgebra of derivations of $S$, which
are skew-sym\-metric with respect to the form $\inpr$ of (b). Then
\[\SDer_F(S) = \ts \bigoplus_{\la \in \ZZ^n} Ft^\la \big\{ \tsum_{i=1}^n s_i
\pa_i : \sum_i s_i \la_i = 0 \big\}.\] In particular, for $n=1$ we
get $\SDer_F(F[t^{\pm 1}]) = Fd$ for $d=\pa_1$.
\end{exercise}

\subsection{Appendix on central extensions of Lie algebras}\label{n:appcen}

Central extensions will turn out to be an important tool in the
construction of extended affine Lie algebras. Although this provides
one with a bigger and hence potentially more complicated Lie
algebra, central extensions turn up naturally in the general theory
and the biggest of them (the universal central extension) is in fact
quite ``nice". For example, universal central extensions often have
a simpler presentation and a much richer representation theory than
the original Lie algebra. In this appendix we review the necessary
background. \ms

\begin{definition}[Extensions] \label{n:defexten}
An \textit{extension of a Lie algebra $L$\/} is a surjective
homomorphism $f: K \to L$ of Lie algebras. A \textit{homomorphism}
from an extension $f: K \to L$ to another extension $f': K' \to L$
is a Lie algebra homomorphism $g: K \to K'$ satisfying $f = f' \circ
g$. In other words, the diagram below is commutative.
\begin{equation} \label{n:exthomdi} \xymatrix{ K \ar[rr]^g \ar[dr]_f
&& K' \ar[dl]^{f'}\cr
            & L }
\end{equation} We will use \emph{abelian extensions},
i.e., extensions $f : K \to L$ with $\Ker f$ an abelian ideal in the
construction of an extended affine Lie algebra in section
\ref{n:sec:genconstr}.
\end{definition}

\begin{definition}[central extensions] \label{n:ucedef} A \textit{central extension\/} of $L$ is an extension $f: K \to
L$ whose kernel $\Ker f$ is contained in the centre $Z(K)$ of $K$. A
central extension $f: K \to L$ is called a \textit{covering} if $K$
is perfect, {i.e.,} $K=[K,K]$. It is traditional (but not always
advisable) to not specify the morphism $f$ and simply say that
\textit{$K$ is a central extension of $L$} or a \textit{covering}.

A central extension $\fru : \frL \to L$ is called a
\textit{universal central extension\/} if there exists a unique
homomorphism from $\fru:  \frL \to L$ to any other central extension
$f:  K \to L$ of $L$. It is obvious from the universal property that
two universal central extensions of $L$ are isomorphic as central
extensions and hence in particular their underlying Lie algebras are
isomorphic. We denote the universal central extension of $L$ by
$\fru : \uce(L)\to L$ or simply $\uce(L)$. \end{definition}

\begin{theorem}[{\cite[Prop.~1.3]{vdK}, \cite[\S1]{gar}}] \label{n:sub:ucethm}
A Lie algebra $L$ has a universal central extension if and only if
$L$ is perfect. In this case, the universal central extension $\fru
: \uce(L)\to L$ is perfect too, i.e., $\fru$ is a covering.
\end{theorem}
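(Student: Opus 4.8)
The plan is to prove both directions by an explicit construction. For the easy direction, suppose $L$ admits a universal central extension $\fru\colon\uce(L)\to L$; I want to show $L$ is perfect. If $L$ were not perfect, then $[L,L]$ is a proper ideal and $L/[L,L]$ is a nonzero abelian Lie algebra. I would exploit this to build two distinct morphisms from $\fru$ to some central extension of $L$, contradicting uniqueness. Concretely, take the split central extension $L\oplus V\to L$ (projection onto the first factor) where $V$ is any nonzero vector space viewed as an abelian ideal with trivial bracket; one lifts the identity on $L$ via $l\mapsto(l,0)$, but also via $l\mapsto(l,\phi(\bar l))$ for any linear map $\phi\colon L/[L,L]\to V$, and these are Lie algebra homomorphisms precisely because $V$ is central and $\phi$ kills brackets. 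Choosing $\phi\neq0$ (possible since $L/[L,L]\neq0$) gives two homomorphisms over $L$, so there cannot even be a homomorphism from $\fru$ that is unique — contradiction. Hence $L$ is perfect.

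For the hard direction, assume $L$ is perfect and construct $\uce(L)$ explicitly. I would take the free Lie algebra, or more efficiently work with $\Lambda^2 L$: let $\widehat{L}$ be the quotient of $\Lambda^2 L$ by the subspace spanned by the "Jacobi" elements
\[
(x\wedge[y,z]) + (y\wedge[z,x]) + (z\wedge[x,y]),
\]
with bracket induced by $[x\wedge y,\,x'\wedge y'] := [x,y]\wedge[x',y']$. One checks this bracket is well defined and satisfies the Jacobi identity, and that the map $\fru\colon\widehat L\to L$ sending the class of $x\wedge y$ to $[x,y]$ is a surjective Lie algebra homomorphism (surjectivity uses $L=[L,L]$) with central kernel. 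Then, given any central extension $f\colon K\to L$, I would define $g\colon\widehat L\to K$ by choosing set-theoretic lifts $\tilde x\in K$ of each $x\in L$ and setting $g(x\wedge y)=[\tilde x,\tilde y]_K$; because $\Ker f\subset Z(K)$, this is independent of the choice of lifts and respects the Jacobi relations, hence descends to a Lie algebra homomorphism with $f\circ g=\fru$. Uniqueness of $g$ follows because $\widehat L$ is perfect and any two lifts of $\fru$ differ by a map into the central $\Ker f$ that vanishes on brackets, hence on all of $\widehat L=[\widehat L,\widehat L]$.

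It remains to see that $\uce(L)$ is itself perfect, i.e. that $\fru$ is a covering. This is where the perfectness of $L$ is used a second time: since $L=[L,L]$, every generator $x\wedge y$ of $\widehat L$ with $x=[a,b]$ can be rewritten using the defining Jacobi relations as a sum of brackets in $\widehat L$, so $\widehat L=[\widehat L,\widehat L]$. I expect the main obstacle to be the bookkeeping in verifying that the induced bracket on $\widehat L$ is well defined and Jacobi — the relations one quotients by are exactly engineered to make this work, but checking it cleanly requires care. (Alternatively, one can cite \cite{vdK} or \cite{gar} for the entire construction, as the theorem statement already does, and merely indicate the shape of the argument.)
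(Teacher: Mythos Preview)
The paper does not give its own proof of this theorem; it simply cites \cite{vdK} and \cite{gar}. Your proposal is exactly the standard construction found in those references (and in, e.g., \cite[\S7.9]{wei}): realize $\uce(L)$ as $\Lambda^2 L$ modulo the image of the Chevalley--Eilenberg boundary $d_3$, with bracket $[\,\overline{x\wedge y},\,\overline{x'\wedge y'}\,]=\overline{[x,y]_L\wedge [x',y']_L}$ and projection $\overline{x\wedge y}\mapsto [x,y]_L$. So in substance you are reproducing the cited proof, and correctly.

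Two small points of exposition. First, in the ``only if'' direction you build two Lie-algebra sections $s_0,s_\phi\colon L\to L\oplus V$ and then assert this contradicts uniqueness of a morphism from $\fru$. Strictly speaking, the universal property concerns morphisms out of $\uce(L)$, not sections out of $L$; you should say explicitly that $s_0\circ\fru$ and $s_\phi\circ\fru$ are two distinct morphisms of central extensions $\uce(L)\to L\oplus V$ (they are distinct because $\fru$ is surjective and $s_0\neq s_\phi$). Second, your argument for perfectness of $\widehat L$ is cleaner if you expand \emph{both} entries: writing $x=\sum_i[a_i,b_i]_L$ and $y=\sum_j[c_j,d_j]_L$ gives immediately
\[
\overline{x\wedge y}=\sum_{i,j}\overline{[a_i,b_i]_L\wedge[c_j,d_j]_L}
  =\sum_{i,j}\bigl[\,\overline{a_i\wedge b_i},\,\overline{c_j\wedge d_j}\,\bigr]\in[\widehat L,\widehat L],
\]
which is more direct than invoking the Jacobi relations. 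Neither point is a real gap; the argument is sound.
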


The process of taking universal central extensions stops at
$\uce(L)$, due to the following equivalent conditions for a Lie
algebra $L$: \begin{itemize}

\item[(i)] $\Id : L \to L$ is a universal central extension, i.e.,
$\uce(L) = L$,

\item[(ii)] every central extension $f: K \to L$ is direct product
$K=\tilde L \times \Ker f$ such that $f|_{\tilde L}$ is an
isomorphism between $\tilde L$ and $L$. \end{itemize} If (i) and
(ii) hold, one calls $L$ \textit{centrally closed}.

\begin{examples} (a) It is an immediate corollary of the Levi-Malcev
Theorem that every finite-dimensional semisimple Lie algebra is
centrally closed (\cite[VII, \S6.8, Cor.~3]{bou:lie78} or
\cite[Cor.~7.9.5]{wei}).

(b) An example of a universal central extension is the Virasoro
algebra: It is the universal central extension of the Witt algebra
$\Der_F(F[t^{\pm 1}])$, see for example \cite[I.9, Prop.~4]{mp}.
Hence the Virasoro algebra is centrally closed, while
$\Der_F(F[t^{\pm 1}])$ is not. On the other hand, the higher rank
Witt algebra $\Der_F(F[t^{\pm 1}_1, \ldots, t_n^{\pm 1}])$, $n>1$,
is centrally closed (\cite[V, Th.~5.1]{RSS}). \end{examples}

\begin{definition}[Central extensions via $2$-cocycles.]
\label{n:sub:2coc} We have already seen in \S\ref{n:sec:aff} that
one can construct central extensions of a Lie algebra $L$ by using
\textit{$2$-cocycles}, which, we recall, are bilinear maps $\psi : L
\times L \to C$ into a vector space $C$ satisfying for all $l, l_1,
l_2, l_3 \in L$
\begin{equation} \label{n:sub:2cocy1}
 \psi(l,l) = 0 \quad\hbox{and} \quad
  \psi([l_1, l_2], l_3) + \psi([l_2,l_3], l_1) + \psi([l_3, l_1], l_2)
=0.\end{equation}  The first equation is of course equivalent to
$\psi(l_1, l_2) = - \psi(l_2, l_1)$. Given a $2$-cocycle $\psi: L
\times L \to C$, the algebra
\begin{equation}\label{n:eq:2coc2} K= L \oplus C\quad \hbox{by}
\quad
    [l_1 \oplus c_1, \, l_2 \oplus c_2]_K = [l_1, l_2]_L \oplus
\psi(l_1, l_2)
\end{equation}($l_i \in L$, $c_i \in C$) is a Lie algebra and
$\pr_L : K \to L$, $\pr_L(l\oplus c) = l$, is a central extension of
$L$, which we will denote by $\rmE(L,C, \psi)$ or $\rmE(L,\psi)$ for
short.

Conversely, given a central extension $f: K \to L$, let $s : L \to
K$ be a \textit{section of $f$\/} in the category of vector spaces,
i.e.~a linear map $s : L \to K$ such that $f\circ s = \Id_L$. Such a
section always exists: We can choose a subspace $L'$ of $K$, which
is complementary to $C = \Ker f$, and take $s= (f|_{L'})^{-1}$ which
makes sense since $(f|L') : L' \to L$ is an invertible linear map
(but in general not a Lie algebra homomorphism since $L'$ need not
be a subalgebra). Given a section $s$, the map
\begin{equation} \label{n:appcen1}
    \psi_s : L \times L \to C, \quad
         \psi_s(l_1, l_2) =  [s(l_1), s(l_2)]_K - s([l_1, l_2]_L)
\end{equation} turns out to be a $2$-cocycle. Moreover, the map
$$
  K \to L\oplus C, \quad x \mapsto
        f(x) \oplus \big( x- (s \circ f)(x) \big)= f(x) \oplus x_C,
$$
where $x_C$ is the $C$-component of $x\in K$, is an isomorphism from
the central extension $f: K \to L$ to the central extension
$\rmE(L,\Ker f, \psi_s)$. To summarize, modulo some verifications
left as an exercise, we have proven the following well-known result.
\end{definition}

\begin{proposition} \label{n:appcenprop} For any $2$-cocycle $\psi$ the
construction {\rm (\ref{n:eq:2coc2})} is a central extension
$\rmE(L,\psi)$ of $L$ and, conversely, every central extension $L$
is isomorphic as central extension to some $\rme(L,\psi)$.
\end{proposition}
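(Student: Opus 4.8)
The plan is to verify the two directions of Proposition~\ref{n:appcenprop} by checking the claims left as exercises in Definition~\ref{n:sub:2coc}.

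For the first direction, given a $2$-cocycle $\psi : L \times L \to C$, I would show that the bracket in \eqref{n:eq:2coc2} makes $K = L \oplus C$ a Lie algebra. Anticommutativity of $[\cdot,\cdot]_K$ is immediate from $\psi(l,l)=0$ (equivalently $\psi(l_1,l_2)=-\psi(l_2,l_1)$) together with anticommutativity of $[\cdot,\cdot]_L$. For the Jacobi identity, I would expand $[[l_1\oplus c_1, l_2\oplus c_2]_K, l_3\oplus c_3]_K$ and its cyclic permutations: the $L$-components sum to zero by the Jacobi identity in $L$, and the $C$-components sum to $\psi([l_1,l_2],l_3)+\psi([l_2,l_3],l_1)+\psi([l_3,l_1],l_2)$, which vanishes by the cocycle condition \eqref{n:sub:2cocy1}. (Note that the $c_i$ never enter the bracket of a bracket, since $[L\oplus C, C]_K = 0$.) Then $C \subseteq Z(K)$ by construction, and $\pr_L : K \to L$ is clearly a surjective homomorphism with kernel $C$, hence a central extension.

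For the converse, let $f : K \to L$ be a central extension with kernel $C = \Ker f$, and choose a vector-space section $s : L \to K$ with $f \circ s = \Id_L$ as described (pick a complement $L'$ to $C$ in $K$ and set $s = (f|_{L'})^{-1}$). I would first check that $\psi_s$ of \eqref{n:appcen1} actually takes values in $C$: applying $f$ gives $f([s(l_1),s(l_2)]_K) - f(s([l_1,l_2]_L)) = [l_1,l_2]_L - [l_1,l_2]_L = 0$. Next, $\psi_s$ is a $2$-cocycle: anticommutativity is clear, and the cocycle identity follows from the Jacobi identity in $K$ after using that $C$ is central (so that e.g. $[s(l_1),[s(l_2),s(l_3)]_K]_K = [s(l_1), s([l_2,l_3]_L) + \psi_s(l_2,l_3)]_K = [s(l_1), s([l_2,l_3]_L)]_K$). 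Finally I would verify that the map $K \to L \oplus C$, $x \mapsto f(x) \oplus x_C$ where $x_C = x - (s\circ f)(x)$, is a Lie algebra isomorphism onto $\rmE(L,C,\psi_s)$ intertwining the projections: it is linear with inverse $l \oplus c \mapsto s(l) + c$, and one computes that the bracket of $f(x)\oplus x_C$ and $f(y)\oplus y_C$ in $\rmE(L,C,\psi_s)$ has $L$-component $[f(x),f(y)]_L = f([x,y]_K)$ and $C$-component matching $[x,y]_K - s(f([x,y]_K))$, again using centrality of $C$ to discard the cross terms involving $x_C$ and $y_C$.

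The main obstacle is essentially bookkeeping rather than any genuine difficulty: one must be careful in the converse direction to track where centrality of $C$ is used, both in proving the cocycle identity for $\psi_s$ and in checking that the candidate isomorphism respects brackets. Since $\psi_s$ depends on the choice of section $s$, I should also remark (though it is not needed for the statement as phrased, which only asserts isomorphism to \emph{some} $\rmE(L,\psi)$) that a different section changes $\psi_s$ by a coboundary, so the central extension class is independent of the choice. Everything else is routine verification, which is precisely why the statement was phrased as "modulo some verifications left as an exercise."
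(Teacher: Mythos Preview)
Your proposal is correct and follows exactly the route the paper sketches: the paper explicitly sets up the cocycle $\psi_s$ via a section $s$ and the candidate isomorphism $x \mapsto f(x) \oplus x_C$, then states that ``modulo some verifications left as an exercise'' the proposition holds. You have supplied precisely those verifications, including the checks that $\psi_s$ lands in $C$, that centrality is what makes the cocycle identity and the bracket-preservation go through, and your observation about coboundaries under change of section is a helpful bonus.
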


\begin{exercise} \label{n:uebcover} Let $\psi : L \times L \to C$ be a $2$-cocycle and let
$C'$ be a subspace of $C$ satisfying $\psi(L,L) := \Span_F
\{\psi(l_1, l_2) : l_i \in L\}\subset C'$. Then $\rmE(L,C', \psi)$
is also a central extension, and if $\rmE(L,C,\psi)$ is a covering
then $C=\psi(L,L)$. \end{exercise}

\begin{examples} \label{n:sub:examcoc}
(a) Any Lie algebra $L$ has many uninteresting central extensions.
One can simply take the direct product of $L$ with an abelian Lie
algebra, i.e., $L\times C$ with product $[(l_1, c_1), \, (l_2,
c_2)]= ([l_1, l_2], 0)$ for $l_i \in L$, $c_i \in C$, and consider
the canonical projection $\pr_L : L \times L \to L$, which is a
central extension (but not a covering, unless $L$ is perfect and
$C=\{0\}$). Observe that the canonical inclusion $\inc : L \to
L\times C$ is a section of $\pr_L$, not only in the category of
vector spaces, but even in the category of Lie algebras. Its
associated $2$-cocycle $\psi_{\inc} = 0$. \sm

(b)  Let $h: L \to C$ be a linear map into some vector space $C$.
Then $\be_h : L \times L \to C$, $\be_h(l_1, l_2) = h([l_1, l_2])$
is a $2$-cocycle, a so-called \textit{$2$-coboundary}. \sm

The two examples are related in the following exercise.
\end{examples}

\begin{exercise}\label{n:uebcen} For a central extension $f: K \to L$ of
$L$ with $C=\Ker f$ the following are equivalent: \begin{itemize}
 \item[(i)] The extension $f: K \to L$ is split in the category of Lie algebras,
i.e, there exists a section $L \to K$ of $f$, which is a Lie algebra
homomorphism.

 \item[(ii)] For any section $s$ of $f$ the associated $2$-cocycle
$\psi_s$ is a $2$-coboundary.

\item[(iii)] There exists a section $s$ of $f$, for which the associated
$2$-cocycle $\psi_s$ is a $2$-coboundary.

\item[(iv)] As central extension, $f$ is isomorphic to the central
extension $\pr_L : L \oplus C \to L$.
\end{itemize}
If these conditions are fulfilled, one calls $f$ a \textit{split
extension}.
\end{exercise}

\begin{exercise} \label{n:uebcent1} Let $\psi : L \times L \to C$ be
a $2$-cocycle and let $\pi : C \to C'$ be a linear map. Show:

(a) $\psi'=\pi \circ \psi$ is a $2$-cocycle of $L$ and the map
\[\rmE(\pi) : \rmE(L,C,\psi) \to \rmE(L,C', \psi'),
   \quad l\oplus c \mapsto l \oplus \pi(c)\] is a homomorphism of central extensions of
$L$:
\[ \xymatrix{ \rmE(L,C,\psi) \ar[rr]^{\rmE(\pi)} \ar[dr]_{\pr_L}
 && \rmE(L,C',\psi') \ar[dl]^{\pr_L} \\ &L }\]

(b) If $\pi$ is surjective, the map $\rmE(\pi)$ is a central
extension of $L'=\rmE(L,C', \pi \circ \psi)$, which as central
extension of $L'$ has the form $\rmE(L',C'', \psi'')$ for
\[\psi''(l_1\oplus c_1', l_2 \oplus c_2') = \big((\Id - \ga \circ \pi)
\circ \psi \big)(l_1, l_2),  \] where $\ga : C' \to C$ is a section
of $\pi$ with $\ga(C')=C''$. \sm

(c) Conversely, suppose $f' : L'\to L$ is a central extension and
$f: \rmE(L,C,\psi) \twoheadrightarrow L'$ is a surjective
homomorphism of central extensions. Then $\pi = f|C$ maps $C$ onto
$C'=\Ker f'$ and there exists a unique isomorphism of extensions
$\Phi : L' \to \rmE(L,C', \psi')$, $\psi'=\pi \circ \psi$ such that
all triangles in the diagram below commute:
\[ \xymatrix{
    \rmE(L,C,\psi) \ar[rr]^f \ar[dr]^{\rmE(\pi)} \ar[ddr]_{\pr_L}
          && L' \ar[dl]_\Phi \ar[ddl]^{f'} \\
          & \rmE(L,C',\psi') \ar[d]_{\pr_L} \\ &L
}\]
\end{exercise}

\begin{exercise} \label{n:uebcent2}
Let $C= C_1 \oplus C_2$ be a vector space direct sum and denote by
$\pi_i : C \to C_i$ the canonical projections. Let $\psi : L \times
L \to C$ be a $2$-cocycle with the property that $\pi_2 \circ \psi$
is a $2$-coboundary. Then for $\psi_1 = \pi_1 \circ \psi$,
\[\rmE(L,C,\psi) \cong \rmE(L,C_1, \psi_1) \times C_2\] as
central extensions of $L$ (even as central extensions of the Lie
algebra $\rmE(L, C_1, \psi_1)$).
\end{exercise}

\begin{example} \label{n:generic}
Let $\inpr : L \times L \to F$ be a symmetric bilinear form, which
is invariant, see Ex.~\ref{n:ex:invfo}. We denote by $\SDer_F(L)$
the subalgebra of $\Der_F (L)$ which consists of all skew-symmetric
derivations, where a derivation $d\in \Der_F(L)$ is called
\emph{skew-symmetric\/} if $(d(l_1) \mid l_2) + (l_1 \mid d(l_2)) =
0$ for all $l_1,l_2\in L$. Observe that
\[ \IDer(L) = \{ \ad l : l\in L\} \triangleleft \SDer_F(L).\]

Let $D$ be a subspace of $\SDer_F(L)$ and let $D^*$ be its dual
space. Then the map $\psi_D : L\times L \to D^*$, given by
\begin{equation} \label{n:examcoc1}
   \psi_D(l_1, l_2) (d) = \big( d(l_1) \mid l_2)
\end{equation}
for $l_i \in L$ and $d\in D$, is a $2$-cocycle of $L$.
\end{example}

\begin{exercise} \label{n:uebgeneric}
Show: (a) (\ref{n:examcoc1}) defines indeed a $2$-cocycle.

(b) $\psi_D$ for $D \subset \IDer(L)$ is a $2$-coboundary.

(c) If $\tilde D$ is a subspace of $D$, then the central extension
$\rmE(L,D^*, \psi_D)$ of $L$ factors through the central extension
$\rmE(L,\tilde D^*, \psi_{\tilde D})$ of $L$,
\[  \rmE(L,D^* , \psi_D) \twoheadrightarrow
       \rmE(L, \tilde D^*, \psi_{\tilde D}) \twoheadrightarrow L.\]
\end{exercise}

\begin{definition}[Graded central extensions] Let $\La$ be an abelian
group, and let $L= \bigoplus_{\la \in \La} L^\la$ be a $\La$-graded
Lie algebra. We say that $f : K \to L$ is a \emph{$\La$-graded
central extension of $L$\/} if $K$ is a $\La$-graded Lie algebra and
$f$ is a central extension which is at the same time a homomorphism
of $\La$-graded algebras: $f(K^\la) \subset L^\la$ for all
$\la\in\La$.  A $\La$-graded central extension $f : K \to L$ is
called a {\it $\La$-covering}, if $f$ is a covering, i.e., $K$ is
perfect. We note that an arbitrary central extension of a graded Lie
algebra need not be a graded central extension.

A {\it homomorphism\/} of a $\La$-graded central extension $f : K
\to L$ to another $\La$-graded central extension $f' : K' \to L$ is
a homomorphism $g : K \to K'$ of $\La$-graded Lie algebras
satisfying $f= f' \circ g$, cf.~\ref{n:exthomdi}.

To define graded central extensions of a $\La$-graded Lie algebra
$L$ via a $2$-cocycle, we need (obviously) a \emph{$\La$-graded
$2$-cocycle}, i.e., a $2$-cocycle $\psi: L \times L \to C$ into a
$\La$-graded vector space $C=\bigoplus_{\la \in \La} C^\la$ which is
graded of degree $0$, \[ \psi(L^\la, L^\mu) \subset C^{\la + \mu}
\quad \hbox{for all $\la, \mu$.}
\] For a graded $2$-cocycle $\psi$ the Lie algebra $K=L\oplus C$ of
(\ref{n:eq:2coc2}) is naturally $\La$-graded by
\[
    K^\la = L^\la \oplus C^\la
\]
and the central extension $\pr_L : K \to L$ is a $\La$-graded
central extension. Conversely, if $f : K \to L$ is a $\La$-graded
central extension, we can choose a section $s: L \to K$ of the
underlying vector spaces of degree $0$, meaning $s(L^\la) \subset
K^\la$. The $2$-cocycle associated to $s$ in (\ref{n:appcen1}) is
then a graded $2$-cocycle. Thus, Prop.~\ref{n:appcenprop} holds in
an analogous way for graded central extensions.

The following proposition also shows that one does not have to
introduce a new object of a ``graded universal central extension''.
\end{definition}

\begin{proposition}[{\cite[1.16]{n:superuce}}] \label{n:appcenth} Let
$L=\bigoplus_{\la\in \La} L^\la $ be a $\La$-graded perfect Lie
algebra. Then its universal central extension $\fru : \uce(L) \to L$
is $\La$-graded, hence a $\La$-covering. Moreover, $\Ker \fru $ is a
graded subspace of $\uce(L)$.
\end{proposition}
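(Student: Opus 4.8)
The plan is to build $\uce(L)$ explicitly as a quotient of $\Lambda^2 L$ (the standard model of the universal central extension) and to transport the $\Lambda$-grading through that construction. Recall the usual description: $\uce(L) = (L \wedge L)/J$, where $J$ is the span of the Jacobi-type elements $[l_1,l_2]\wedge l_3 + [l_2,l_3]\wedge l_1 + [l_3,l_1]\wedge l_2$, with bracket $[l_1\wedge l_2,\, l_3\wedge l_4] = [l_1,l_2]\wedge[l_3,l_4]$ (well-defined modulo $J$), and the covering map $\fru$ induced by $l_1\wedge l_2 \mapsto [l_1,l_2]_L$. First I would put a $\Lambda$-grading on $L\wedge L$ by declaring $L^\lambda \wedge L^\mu$ to sit in degree $\lambda+\mu$; since $L=\bigoplus_\lambda L^\lambda$, the wedge square decomposes as the direct sum of these pieces, so this is a genuine $\Lambda$-grading of the vector space $L\wedge L$. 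Because the bracket of $L$ is graded, the formula for the bracket on $L\wedge L$ is homogeneous of degree $0$, and the map $l_1\wedge l_2\mapsto [l_1,l_2]_L$ is homogeneous of degree $0$.

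Next I would check that the ideal $J$ is a graded subspace of $L\wedge L$. This is where one uses that $L$ is graded: the Jacobi generator attached to homogeneous elements $l_1\in L^{\lambda_1}$, $l_2\in L^{\lambda_2}$, $l_3\in L^{\lambda_3}$ is itself homogeneous of degree $\lambda_1+\lambda_2+\lambda_3$, because each of its three summands lies in that degree. Since $J$ is spanned by such homogeneous elements (an arbitrary generator is a sum of these, one for each multidegree), $J=\bigoplus_\lambda (J\cap (L\wedge L)^\lambda)$. Consequently $\uce(L)=(L\wedge L)/J$ inherits a $\Lambda$-grading, $\fru$ becomes a homomorphism of $\Lambda$-graded Lie algebras, and $\Ker\fru$, being spanned by homogeneous elements, is a graded subspace. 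By Theorem~\ref{n:sub:ucethm} the map $\fru$ is a covering, so it is in particular a $\Lambda$-covering in the sense of the preceding definition.

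Finally I would note that this $\Lambda$-graded central extension is genuinely the universal one — i.e.\ there is no separate notion to worry about. Given any $\Lambda$-graded central extension $f: K\to L$, the universal property in the ungraded category supplies a unique Lie algebra homomorphism $g:\uce(L)\to K$ with $f\circ g=\fru$; one then checks $g$ is homogeneous of degree $0$ by evaluating on the homogeneous generators $l_1\wedge l_2$ with $l_i$ homogeneous, using that $g$ is forced on $[\uce(L),\uce(L)]=\uce(L)$ by the requirement $f\circ g = \fru$ together with a choice of degree-$0$ section of $f$. Since $\uce(L)$ is perfect, $g$ is determined by its values on brackets, which land in the correct degrees, so $g$ respects the grading.

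The main obstacle is the verification that $J$ is graded: one must be slightly careful that the defining relations of $\uce(L)$, when written for arbitrary (not necessarily homogeneous) arguments, still generate a homogeneous submodule. This is handled by the standard device of expanding each argument in its homogeneous components and observing that the relation is multilinear, so it decomposes into its homogeneous constituents, each of which is again a defining relation. Everything else — homogeneity of the bracket, of $\fru$, and the uniqueness/homogeneity of the comparison map $g$ — is routine once the grading on $L\wedge L$ and the gradedness of $J$ are in place.
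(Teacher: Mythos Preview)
Your proof is correct and follows the standard construction of $\uce(L)$ as $(L\wedge L)/J$; the key points --- that $L\wedge L$ inherits a $\La$-grading, that $J$ is graded because its generators for homogeneous arguments are homogeneous, and that the bracket and $\fru$ are degree-$0$ maps --- are all handled properly. Note, however, that the paper does not actually supply its own proof of this proposition: it is stated with a citation to \cite[1.16]{n:superuce} and no argument is given in the text. Your approach is precisely the one used in that reference (and is the canonical one), so there is nothing to compare beyond observing that you have reproduced the intended argument. The final paragraph on the graded universal property is a nice addition and matches the remark preceding the proposition that ``one does not have to introduce a new object of a `graded universal central extension'\,'', though strictly speaking it goes beyond what the proposition itself asserts.
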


\begin{example} \label{n:grcocy} We also have the graded versions of the
Example~\ref{n:sub:examcoc} (details left to the reader) and the
Example~\ref{n:generic}, whose details follow.

Let $L = \bigoplus_{\la \in \La} L^\la$ be a $\La$-graded Lie
algebra and let $\inpr$ be an invariant bilinear form on $L$, which
is \textit{$\La$-graded\/} in the following sense:
\[
   (L^\la \mid L^\mu) = 0 \quad \hbox{ if } \la + \mu \ne 0.
\]
We define the $\La$-graded subalgebra of $\grEnd_F(L)$
\begin{equation}
 \grSDer_F(L) = \grEnd_F(L) \cap \SDer_F(L) = \ts
   \bigoplus_{\la \in \La} \big(\SDer_F(L)\big)^\la
\end{equation}
where $(\SDer_F(L))^\la$ consists of all skew-symmetric derivations
of degree $\la$. If $D\subset \grSDer_F(L)$ is a graded subspace of
$\grSDer_F(L)$, the $2$-cocycle $\psi_D$ of (\ref{n:examcoc1}) is
$\La$-graded and maps $L \times L$ into $D^{\gr *}$, thus giving
rise to a graded central extension $\rmE(L, D^{\gr *}, \psi_D)$ of
$L$.
\end{example}

\begin{exercise} \label{n:uebcent3} Show that the $2$-cocycles $\psi$ of (\ref{n:eq:aff2.5}),
(\ref{n:eq:aff6}) and (\ref{n:eq:aff7}) can be obtained in the form
(\ref{n:examcoc1}), i.e., find an invariant bilinear form on
$\scL=L(\g,\si)$ resp. $\scL=L(\g,\boldsi)$ and a subspace $D
\subset \SDer_F(\scL)$ such that $\psi$ and $\psi_D$ yield
isomorphic central extensions of $\scL$.
\end{exercise}

It is not so surprising that the $2$-cocycles we used in sections
\ref{n:sec:aff} and \ref{n:sec:toroidal} can all be obtained in the
form $\psi_D$ for $D\subset \grSDer_F(L)$. This is a special case of
the following general result.

\begin{theorem}[{\cite{n:uce}}] \label{n:thgencen}
Let $L=\bigoplus_{\la \in \La} L^\la$ be a $\La$-graded Lie algebra,
which \begin{itemize}

\item[\rm (i)] is perfect and finitely generated as Lie algebra,

\item[\rm (ii)] has finite homogeneous dimension: $\dim L^\la <
\infty$ for all $\la \in \La$, and

\item[\rm (iii)] has an invariant nondegenerate $\La$-graded
symmetric bilinear form. \end{itemize}

{\rm (a)} Then $\Der_F(L)= \grDer_F(L)$ is $\La$-graded and has
finite homogeneous dimension, whence the same is true for
$\SDer_F(L)$.

{\rm (b)} The universal central extension $\uce(L)$ has finite
homogenous dimension with respect to the $\La$-grading of {\rm
\ref{n:appcenth}}. Moreover, \[ \uce(L) \cong \rmE(L,D^{\gr
*},\psi_D) \] as central extensions of $L$, where $D$ is any graded
subspace of\/ $\SDer_F(L)$ which complements $\IDer(L)$ in
$\SDer_F(L)$, and $\psi_D$ is the $2$-cocycle of {\rm
(\ref{n:examcoc1})}.\end{theorem}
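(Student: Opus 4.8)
\emph{Part (a).} The plan is to exploit finite generation. Fix finitely many homogeneous generators $x_1,\dots,x_r$ of $L$, say $x_i\in L^{\mu_i}$. A derivation $d$ is determined by the values $d(x_i)$; writing $d(x_i)=\sum_{\nu}d(x_i)^\nu$ with finite support and putting $S=\bigcup_i(\supp_\La d(x_i)-\mu_i)$, an induction on the length of iterated brackets — using the derivation identity together with $L^\ga=\sum_{\al+\be=\ga}[L^\al,L^\be]$ (valid since $L$ is perfect and graded) — shows $d(L^\la)\subseteq\bigoplus_{\si\in S}L^{\la+\si}$ for every $\la$. Projecting $d$ onto the $L^{\la+\si}$-component defines $d_\si\in\Der_F(L)^\si$ with $d=\sum_{\si\in S}d_\si$, so $\Der_F(L)=\grDer_F(L)$. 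Moreover $d\mapsto(d(x_i))_i$ embeds $\Der_F(L)^\la$ into the finite-dimensional space $\bigoplus_i L^{\mu_i+\la}$, so $\Der_F(L)$ has finite homogeneous dimension, and hence so does $\SDer_F(L)\subseteq\Der_F(L)$. Finally, if $d$ is skew-symmetric then so is each $d_\si$ — this is the one point where $\La$-gradedness of the form enters — giving $\SDer_F(L)=\grSDer_F(L)$.

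\emph{Part (b): set-up.} First, $L$ is centreless: if $z\in Z(L)$ then $(z\mid[l_1,l_2])=([z,l_1]\mid l_2)=0$ by invariance, so $(z\mid L)=(z\mid[L,L])=0$ and $z=0$ by nondegeneracy. Hence $\IDer(L)\cong L$ is graded of finite homogeneous dimension, and by part (a) we may choose a graded complement $D$ of $\IDer(L)$ in $\SDer_F(L)=\grSDer_F(L)$, again of finite homogeneous dimension. By Example \ref{n:grcocy}, $\psi_D$ is a graded $2$-cocycle into $D^{\gr*}$, so $K:=\rmE(L,D^{\gr*},\psi_D)$ is a graded central extension of $L$ of finite homogeneous dimension; the whole point is to prove $K\cong\uce(L)$ as central extensions of $L$, which yields both assertions of (b). By Theorem \ref{n:sub:ucethm} and Proposition \ref{n:appcenth}, $\uce(L)$ exists, is $\La$-graded with $\Ker\fru$ a graded subspace, and $\psi_\fru$ may be taken graded of degree $0$; since $\uce(L)$ is a covering, $\psi_\fru(L,L)=\Ker\fru$.

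\emph{Part (b): the pairing.} The heart of the matter is a perfect pairing, in each graded degree, between $\Ker\fru$ and $D$. For a homogeneous linear form $\xi$ on $\Ker\fru$ vanishing off $(\Ker\fru)^\la$, the scalar $2$-cocycle $\xi\circ\psi_\fru$ is supported on $L^\al\times L^\be$ with $\al+\be=\la$, so by nondegeneracy of the form and finite homogeneous dimension of $L$ it equals $(d_\xi(\cdot)\mid\cdot)$ for a unique $d_\xi\in(\SDer_F(L))^{-\la}$; the map $\xi\mapsto d_\xi$ is linear, and injective because $\psi_\fru(L,L)=\Ker\fru$. In particular $((\Ker\fru)^\la)^*$ embeds in $(\SDer_F(L))^{-\la}$, so $\uce(L)$ has finite homogeneous dimension. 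The key lemma is that $\xi\circ\psi_\fru$ is a coboundary only for $\xi=0$: otherwise the push-forward $\rmE(\xi)\colon\uce(L)\twoheadrightarrow\rmE(L,F,\xi\circ\psi_\fru)$ of Exercise \ref{n:uebcent1} would make $\rmE(L,F,\xi\circ\psi_\fru)$ a homomorphic image of the perfect algebra $\uce(L)$, hence perfect, while by Exercise \ref{n:uebcen} it is split, hence $\cong L\oplus F$ and not perfect. Using this, one checks that the image of $((\Ker\fru)^\la)^*$ in $(\SDer_F(L))^{-\la}$ is exactly a complement of $(\IDer(L))^{-\la}$: it meets $\IDer(L)$ trivially (an inner $d_\xi=\ad z$ makes $\xi\circ\psi_\fru=(z\mid[\cdot,\cdot])$ a coboundary), and it spans together with $(\IDer(L))^{-\la}$, because for $d\in(\SDer_F(L))^{-\la}$ the homogeneous cocycle $(d(\cdot)\mid\cdot)$ equals $\chi\circ\psi_\fru+\be_k$ by universality (cf.\ Proposition \ref{n:appcenprop}, Exercise \ref{n:uebcent1}), and comparing homogeneous parts — using the lemma to annihilate the off-diagonal pieces — one may take $\chi$ homogeneous of degree $\la$, whence $d-d_\chi$ is inner. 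Therefore $\dim(\Ker\fru)^\la=\dim(\SDer_F(L))^{-\la}-\dim(\IDer(L))^{-\la}=\dim D^{-\la}=\dim(D^{\gr*})^\la$.

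\emph{Part (b): conclusion, and the obstacle.} The unique morphism $p\colon\uce(L)\to K$ over $L$ given by universality is graded (taking degree-$0$ parts of the data defining any such morphism produces a graded morphism, which must equal $p$) and surjective (as $K$ is perfect and $p(\uce(L))+Z(K)=K$), so it restricts to a graded surjection $\Ker\fru\to D^{\gr*}=Z(K)$, bijective in each degree by the dimension count above; hence $p$ is injective on each graded piece, so $\Ker p=0$ and $p$ is an isomorphism. Thus $\uce(L)\cong\rmE(L,D^{\gr*},\psi_D)$ as central extensions of $L$, proving (b). The routine parts are all of (a), the verification that $\psi_D$ is a $2$-cocycle, and the facts that $L$ is centreless and $K$ is perfect; the genuine obstacle is the dimension count in the pairing paragraph — showing that the homogeneous forms on $\Ker\fru$ account for \emph{all} of $(\SDer_F(L))^{-\la}$ modulo $(\IDer(L))^{-\la}$ — which is precisely where perfectness, the covering identity $\psi_\fru(L,L)=\Ker\fru$, nondegeneracy of the form, and the finite homogeneous dimension from (a) must be combined.
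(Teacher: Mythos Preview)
The paper does not actually prove this theorem: it is stated with a citation to \cite{n:uce} and followed only by remarks and an exercise. So there is no proof in the paper against which to compare your approach.

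That said, your argument is sound and follows the natural line one would expect for this result. A few comments on the points that deserve care. In part~(a) your induction on bracket length is fine; note that the perfectness hypothesis is not needed there, since finite generation by homogeneous elements already gives that every element of $L^\la$ is a linear combination of iterated brackets of the $x_i$ that happen to land in degree~$\la$. In part~(b), the step ``one may take $\chi$ homogeneous of degree~$\la$'' is the crux and works exactly as you say: for $\nu\ne\la$ the degree-$\nu$ piece of $\psi_d=\chi\circ\psi_\fru+\be_k$ vanishes, so $\chi^\nu\circ\psi_\fru=-\be_{k^\nu}$ is a coboundary and your lemma kills $\chi^\nu$; then $\be_{k^\nu}=0$ forces $k^\nu=0$ as well since $L$ is perfect, so $k=k^\la$ is represented by some $z\in L^{-\la}$ via the nondegenerate form. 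For the claim that the unique morphism $p\colon\uce(L)\to K$ is graded, the cleanest justification is to write $p(l\oplus c)=l\oplus(\pi(c)+h(l))$ with $\psi_D=\pi\circ\psi_\fru+\be_h$, take degree-$0$ parts $\tilde\pi,\tilde h$ of $\pi,h$ using that both $\psi_D$ and $\psi_\fru$ are graded, observe that $\psi_D=\tilde\pi\circ\psi_\fru+\be_{\tilde h}$ still holds, and invoke uniqueness of $p$ to get $p=\tilde p$. Finally, the perfectness of $K$ (which you list as routine) does require the finite homogeneous dimension of $D$: one checks that $\psi_D(L,L)^\la$ separates points of $D^{-\la}$ and then uses $\dim D^{-\la}<\infty$ to conclude $\psi_D(L,L)^\la=(D^{\gr*})^\la$.
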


\begin{remarks} (a) Th.~\ref{n:ucemm} is an application of
Th.~\ref{n:thgencen}, as is Th.~\ref{n:torfg}(c).

(b) The Exercise~\ref{n:uebcent2} gives some indication why it is
sufficient to take a subspace of $\SDer_F(L)$ complementing
$\IDer(L)$ and not an arbitrary subspace of $\SDer_F(L)$.
\end{remarks}

\begin{exercise} \label{n:uebcent4}
In the setting of Th.~\ref{n:thgencen}, every $\La$-graded central
covering of $L$ is isomorphic as central extension to a central
extension $\rmE(L,B^{\gr *}, \psi_B)$ for some graded subspace $B$
of $D$.
\end{exercise}
%

%
\section{Extended affine Lie algebras: Definition and first examples
  }
\label{n:ch:eala-def-exam}

Rather than constructing Lie algebras in a concrete way as we have
done in Lecture~\ref{n:ch:affgen}, in this chapter we will define
extended affine Lie algebras by a set of axioms and give examples.
We will see that these examples encompass all the examples of
Lecture~\ref{n:ch:affgen} (with the exception of the choice 2. for
$D$ in \ref{n:sec:toroidal}). \sm

As before we will consider Lie algebras over an arbitrary  field $F$
of characteristic $0$, but we will no longer assume that $F$ has
enough roots of unity (multiloop algebras will not be play a role
here), except in \S\ref{n:sec:ealaone} where $F=\CC$).

\subsection{Definition of an extended affine Lie algebra} \label{n:sec:eala-def}

An \textit{extended affine Lie algebra\/}, or EALA for short, is a
pair $(E,H)$ consisting of a Lie algebra $E$ over $F$ and subalgebra
$H$ satisfying the following axioms (EA1) -- (EA6). \sm

\begin{description}
\item[(EA1)] {\it $E$ has an invariant nondegenerate symmetric bilinear
form $\inpr$.} \sm

\item[(EA2)] \textit{$H$ is nontrivial finite-dimensional toral and
self-centralizing subalgebra of $E$.}
\end{description}

Before we can state the other four axioms, we need to draw some
consequences of the axioms (EA1) and (EA2). But first we give
explanations of some of the notions used. The term
\textit{invariant} (= \textit{associative}) means that $\inpr$
satisfies $([e_1,e_2]\mid e_3) = (e_1\mid [e_2,e_3])$ for all $e_i
\in E$, and $\inpr$ is \textit{nondegenerate} if $(e\mid E) = 0
\implies e=0$. In the context of above, a \textit{toral
subalgebra\/}, sometimes also called an \textit{$\ad$-diagonalizable
subalgebra\/} is a subalgebra $H$ which induces a decomposition of
$E$ via the adjoint representation of $H$:
\begin{equation}\label{n:eq:eala-def0} \begin{split}
E  &= \tplus_{\al \in H^*} E_\al,  \\
 E_\al &= \{ e\in E: [h,e] = \al(h)e \hbox{ for all } h\in H\}.
\end{split}\end{equation} Such a subalgebra is necessarily abelian, whence
$H\subset E_0=\{ e\in E : [h,e]=0 \hbox{ for all } h\in H\}$. That
$H$ is also required to be \textit{self-centralizing\/} means
$$ 
H=E_0.$$
 Now to the consequences of (EA1) and (EA2). Because of invariance of
the bilinear form $\inpr$, we have \begin{equation}
\label{n:eala-def5}
    (E_\al \mid E_\be) = 0 \quad \hbox{if $\al + \be \ne 0$},
\end{equation} in particular the restriction of the bilinear form to $E_0=H$
is nondegenerate. Because of this and finite-dimensionality of $H$,
every linear form $\al \in H^*$ is represented by a unique $t_\al
\in H$, defined by the condition that $(t_\al\mid h) = \al (h)$
holds for all $h\in H$. This allows us to transport the restricted
form $\inpr \mid H \times H $ to a symmetric bilinear form on $H^*$,
also denoted $\inpr$ and defined by
\begin{equation}\label{n:ealadef2}
 (\al \mid \be) = (t_\al \mid
t_\be), \quad \al,\be\in H^*. \end{equation} This transport of
bilinear forms is a standard procedure in the theory of semisimple
Lie algebras, see for example \cite[\S8]{hum}.
We can now define \begin{equation}\begin{split} \label{n:eq:eala-def4}
   R &= \{ \al \in H^* : E_\al \ne 0 \} \quad
          (\hbox{\it set of roots of $(E,H)$}), \\
  R^0 &= \{ \al \in R : (\al \mid \al) = 0 \}
      \quad(\hbox{\it null roots}), \\
 R\an &= \{ \al \in R : (\al \mid \al)\ne 0 \}
       \quad(\hbox{\it anisotropic roots}).
\end{split}\end{equation}
We prefer to call $R$ the set of roots of $(E,H)$ and not the ``root
system'' since we want to restrict the latter term for root systems
in the usual sense, see \ref{n:arsexfin}. We point out that by
definition $0$ is a root,
$$ 0 \in R^0 \subset R.
$$
This is the customary convention for EALAs and has some notational
advantages.

We define the \textit{core of $(E,H)$\/} as the subalgebra $E_c$ of
$E$ generated by all anisotropic root spaces:
$$
  E_c = \lan \, \textstyle \bigcup_{\al \in R\an} E_\al \, \ran_{\rm
subalg}
$$
 We can now state the remaining four axioms. \sm

\begin{description}
\item[(EA3)] \textit{For every $\al \in R\an$ and $x_\al \in E_\al$, the
operator $\ad x_\al$ is locally nilpotent on $E$}. \sm

\item[(EA4)] \textit{$R\an$ is connected\/} in the sense that for any
decomposition $R\an = R_1 \cup R_2$ with $(R_1 \mid R_2)= 0$ we have
$R_1 = \emptyset$ or $R_2 = \emptyset$. \sm

\item[(EA5)] \textit{The centralizer of the core $E_c$ of $E$ is contained
in $E_c$}: $\{e \in E : [e, E_c] =0 \} \subset E_c$.

\sm

\item[(EA6)] \textit{The subgroup $\La = \Span_\ZZ(R^0) \subset H^*$ generated
by $R^0$ in $(H^*,+)$ is a free abelian group of finite rank.} In
other words, $\La \cong \ZZ^n$ for some $n\in \NN$ (including
$n=0$!).
\end{description} \sm

The term \textit{locally nilpotent} means that for every $e\in E$
there exists an $n\in \NN$, possibly depending on $e$, such that
$(\ad x_\al)^n (e) = 0$. The property (EA5) is called
\textit{tameness}. The condition $[e,E_c]=0$ is of course equivalent
to $[e,E_\al]=0$ for all $\al \in R\an$. The rationale for this
axiom is the following. The subalgebra $E_c$ is in fact an ideal of
$E$ (Th.~\ref{n:ealcor}). Hence we have a representation $\rho$ of
$E$ on $E_c$, given by $\rho(e) (x_c) = [e,x_c]$ for $e\in E$ and
$x_c\in E_c$. The kernel of the representation $\rho$ is the
centralizer of $E_c$ in $E$. Hence tameness means that $\Ker \rho
\subset E_c$. The idea here is that the core $E_c$ should control
$E$. We will make this more precise in section
\ref{n:sec:genconstr}. The rank of the free abelian group $\La$ in
axiom (EA6) is called the \textit{nullity} of $(E,H)$. It is
invariant under isomorphisms. We will describe EALAs of nullity $0$
and $1$ below.\sm

Although the structure of an EALA requires the existence of an
invariant nondegenerate symmetric bilinear form $\inpr$ in the axiom
(EA1), which is then used to define the anisotropic roots, it turns
out that this bilinear form is really not so important. Because of
this, we have defined an EALA as a pair $(E,H)$ and not as a triple
$(E,H,\inpr)$ as it is for example done in \cite{AF:isotopy}.
Consequently, an \textit{isomorphism\/} from an EALA $(E,H)$ to
another EALA $(E',H')$ is a Lie algebra isomorphism $f : E \to E'$
such that $f(H) = H'$. It is immediate that any isomorphism induces
a bijection $f'$ between the set of roots $R$ and $R'$ of $(E,H)$
and $(E',H')$ respectively. It then follows that $f'$ maps $R\an$
onto ${R'}\an$, whence also $R^0$ onto ${R'}^0$. One can then show
that $f'$ preserves the forms on $X=\Span_F(R)$ and $X'=\Span_F(R')$
up to scalars. \sm

For $F=\CC$ one can define a special class of EALAs. We call a pair
$(E,H)$ a \textit{discrete EALA} if it satisfies the axioms (EA1) --
(EA5) and in addition

\begin{description} \item[(DE)] $R$ is a discrete subset of $H^*$ with respect to the
natural topology of the finite-dimensional complex vector space
$H^*$. \end{description}

\noindent It is justified to call a discrete EALA an EALA, since one
can show that a discrete EALA also satisfies (EA6). Indeed, this
follows from Prop.~\ref{n:earsstrut} and Th.~\ref{n:ears&eala}.
However, not every EALA over $\CC$ is a discrete EALA (see
\cite[6.17]{n:persp}). \sm

\textbf{Some historical comments.} Although there were some
precursors (papers by Saito and Slodowy for nullity $2$), it was in
the paper \cite{HKT} by the physicists H{\o}egh-Krohn and
Torr{\'e}sani that the class of discrete extended affine Lie
algebras was introduced, however not under this name. Rather, they
were called ``irreducible quasi-simple Lie algebras" and later
(\cite{bgk,bgkn}) ``elliptic quasi-simple Lie algebras". The stated
goal of the paper \cite{HKT} was applications in quantum gauge
theory. The theory developed there did however not stand up to the
scrutiny of mathematicians. The errors of \cite{HKT} were corrected
in the AMS memoir \cite{aabgp} by Allison, Azam, Berman, Gao and
Pianzola. There also the name ``extended affine Lie algebras''
appears for the first time. But not in the sense as defined above.
Rather, the authors develop the basic theory of what here are called
discrete EALAs. Nevertheless,  \cite{aabgp} has become the standard
reference even for the more general extended affine Lie algebras,
since many of the results presented there for discrete extended
affine Lie algebras easily extend to the more general setting. The
definition of an extended affine Lie algebra given above is due to
the author (\cite{n:eala}) and was motivated by the fact that all
the examples presented in \cite{aabgp} did make sense over an
arbitrary base field $F$ and not just over $\CC$ only. Before
\cite{n:eala} the tameness axiom (EA5) was not part of the
definition of an EALA. However, as examples show (\cite[\S3]{bgk} or
\cite[6.10]{n:persp}), it seems impossible to classify EALAs without
(EA5). After \cite{n:eala}, several generalizations of EALAs have
been proposed. They are surveyed in \cite{n:persp}. \sm

\subsection{Some elementary properties of extended affine Lie algebras}
 \label{n:sec:ealaelem}

The following chapters will (hopefully) show that extended affine
Lie algebras share many properties with familiar Lie algebras, like
finite-dimensional split simple Lie algebras or affine Kac-Moody Lie
algebras. Some of these properties are immediate consequences of the
axioms. The following (strongly recommended!) exercise gives an
incomplete list of such properties.

\begin{exercise} \label{n:ex:eala-def1} Let $(E,H)$ be an EALA. We
use the notation of  above. Show: \begin{enumerate}

\item[(a)] For $\al,\be \in R$ we have \begin{equation} \label{n:ealagrad}
[E_\al, E_\be] \subset E_{\al + \be}.  \end{equation} Thus the root
space decomposition (\ref{n:eq:eala-def0}) is a grading by the
abelian group $\Span_\ZZ(R)$.

\item[(b)] $H$ is a \textit{Cartan subalgebra}, defined as a nilpotent
subalgebra which is self-normalizing: $H= \{ e\in E : [e,H] \subset
H \}$.

\item[(c)] For $\al,\be \in R$ we have $(E_\al \mid E_\be) = 0$
unless $\al + \be = 0$. The restriction of the bilinear form $\inpr$
to $E_\al \times E_{-\al}$ is nondegenerate, i.e., if $x_\al \in
E_\al$ satisfies $(x_\al \mid E_{-\al}) = 0$  then $x_\al = 0$. In
particular, $R=-R$.

\item[(d)] For $\al \in R$ and $x_\al \in E_\al$ and $y_{-\al} \in E_{-\al}$,
\begin{equation} \label{n:eala-def2}
 [x_\al, \, y_{-\al}] = (x_\al \mid y_{-\al}) \, t_\al.
\end{equation}
In particular, $[E_\al, E_{-\al}] = Ft_\al$, and if $\al \in R\an$
then
\begin{equation} \label{n:eala-def4}
  [[E_\al , E_{-\al}] , \, E_\al] = E_\al .
\end{equation}

\item[(e)] The core $E_c$ satisfies
\begin{equation}
 E_c = \textstyle \Big( \oplus_{\al \in R\an} E_\al \Big) \oplus
    \Big( \bigoplus_{\al \in R^0} (E_c \cap E_\al) \Big).
\end{equation}
\end{enumerate} \end{exercise}

We will now show that EALAs are built out of ``little'' $\lsl_2$'s
and Heisenberg's (albeit in a complicated way).

\begin{proposition} \label{n:ealafact} Let $(E,H)$ be an extended affine Lie algebra, with
anisotro\-pic root $R\an$ and null roots $R^0$. \sm

{\rm (a)} Let $\al \in R\an$. Then $\dim E_\al = 1$, and for any
$e_\al \in E_\al$ there exists $f_\al \in E_{-\al}$ such that
$(e_\al, h_\al = [e_\al, f_\al], f_\al) \in E_\al \times H \times
E_{-\al}$ is an $\lsl_2$-triple:
$$
   E_\al \oplus [E_\al, E_{-\al}] \oplus E_{-\al}
     = F e_\al \oplus Fh_\al \oplus Ff_\al \cong \lsl_2(F).
$$

{\rm (b)} Let $\al\in R^0$. Then for any $0\ne x_\al \in E_\al$
there exists $y_\al \in E_{-\al}$ such that $[x_\al, y_\al] = t_\al$
and
$$
    Fx_\al \oplus Ft_\al \oplus Fy_\al \cong \frh_3,
$$
the $3$-dimensional Heisenberg algebra.
\end{proposition}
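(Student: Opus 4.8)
The plan is to dispatch part (b) quickly from the elementary identities collected in Exercise~\ref{n:ex:eala-def1}, and to obtain part (a) by first constructing the $\lsl_2$-triple in the same elementary way and then invoking $\lsl_2(F)$-representation theory — made available precisely by axiom (EA3) — to force $\dim E_\al=1$.

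For (b), I first note that the relevant $\al\in R^0$ is nonzero (for $\al=0$ one has $t_0=0$ and the assertion is degenerate), and that then $t_\al\neq 0$, since $t_\al=0$ would give $\al=(t_\al\mid\cdot)=0$. Given $0\neq x_\al\in E_\al$, Exercise~\ref{n:ex:eala-def1}(c) supplies $z\in E_{-\al}$ with $(x_\al\mid z)\neq 0$; I put $y_\al=(x_\al\mid z)^{-1}z$, so that $[x_\al,y_\al]=(x_\al\mid y_\al)\,t_\al=t_\al$ by (\ref{n:eala-def2}). Since $\al(t_\al)=(\al\mid\al)=0$, the element $t_\al$ commutes with both $x_\al$ and $y_\al$, hence is central in $U=Fx_\al+Ft_\al+Fy_\al$; and $x_\al,t_\al,y_\al$ are nonzero elements of the three distinct root spaces $E_\al,E_0,E_{-\al}$ (distinct because $\al\neq 0$ and $\mathrm{char}\,F=0$ forces $\al\neq-\al$), so $U$ is $3$-dimensional with exactly the bracket relations of $\frh_3$.

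For (a), let $\al\in R\an$, so $t_\al\neq 0$ and $\al(t_\al)=(\al\mid\al)\neq 0$. Given $0\neq e_\al\in E_\al$, I again pick $z\in E_{-\al}$ with $(e_\al\mid z)\neq 0$ and rescale, setting $f_\al=\frac{2}{(\al\mid\al)(e_\al\mid z)}\,z$; then $h_\al:=[e_\al,f_\al]=(e_\al\mid f_\al)\,t_\al=\frac{2}{(\al\mid\al)}\,t_\al$ by (\ref{n:eala-def2}), and a one-line check using $[h,e_\al]=\al(h)e_\al$ gives $[h_\al,e_\al]=2e_\al$, $[h_\al,f_\al]=-2f_\al$, $[e_\al,f_\al]=h_\al$. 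Thus $\frs:=Fe_\al+Fh_\al+Ff_\al$ is $3$-dimensional (it meets $E_\al$, $E_0$, $E_{-\al}$) and isomorphic to $\lsl_2(F)$, and $[E_\al,E_{-\al}]=Ft_\al=Fh_\al$ by Exercise~\ref{n:ex:eala-def1}(d). To finish I must show $\dim E_\al=1$; applying the same argument to $-\al\in R\an$ then gives $\dim E_{-\al}=1$, hence $E_\al=Fe_\al$ and $E_{-\al}=Ff_\al$, which is the displayed identity. For this I would form $M:=Fh_\al\oplus\bigoplus_{0\neq k\in\ZZ}E_{k\al}$, check via (\ref{n:ealagrad}) and $[E_\al,E_{-\al}]=Fh_\al$ that $M$ is an $\frs$-submodule of $E$ under the adjoint action, and note that by (EA3) the operators $\ad e_\al$ and $\ad f_\al$ are locally nilpotent on $M$. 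By the standard fact that an $\lsl_2(F)$-module on which $e$ and $f$ act locally nilpotently is a direct sum of finite-dimensional irreducibles (see, e.g., \cite{kac}), $M$ is such a direct sum; hence $\ad h_\al$ is diagonalizable on $M$ and its weight spaces satisfy $\dim M_m\ge\dim M_{m+2}$ for all $m\ge 0$. Since $\al(h_\al)=2$, we have $M_0=Fh_\al$ and $M_2=E_\al$, so $\dim E_\al\le\dim M_0=1$; and $E_\al\neq 0$ because $\al\in R$.

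The only substantial ingredient — and the step I expect to be the crux — is the passage from the formal manipulations to $\dim E_\al=1$: a priori $E_\al$ could be infinite-dimensional, and the role of axiom (EA3) is exactly to turn the $\al$-string $\bigoplus_k E_{k\al}$ into a locally finite, hence completely reducible, $\lsl_2(F)$-module, after which one-dimensionality of the zero weight space finishes the job. Everything else reduces to bookkeeping with (\ref{n:eala-def2}), (\ref{n:ealagrad}) and the nondegeneracy statement in Exercise~\ref{n:ex:eala-def1}(c).
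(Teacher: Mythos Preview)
The paper does not actually supply a proof of this proposition; it is stated and then immediately followed by remarks and exercises, in keeping with the survey style announced in the introduction. So there is no ``paper's proof'' to compare against, and your proposal has to be judged on its own merits.

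Your argument is correct and is in fact the standard one. Part~(b) is exactly the intended use of (\ref{n:eala-def2}) together with the nondegeneracy in Exercise~\ref{n:ex:eala-def1}(c); your remark that the case $\al=0$ is degenerate is appropriate, since the displayed isomorphism with $\frh_3$ tacitly requires $t_\al\ne 0$. For part~(a), the construction of the $\lsl_2$-triple is again just (\ref{n:eala-def2}) plus a rescaling, and the real content --- that $\dim E_\al=1$ --- you extract correctly from (EA3): the $\ad$-nilpotency of $e_\al$ and $f_\al$ makes the $\al$-string $M=Fh_\al\oplus\bigoplus_{k\ne 0}E_{k\al}$ an integrable $\lsl_2(F)$-module, hence a direct sum of finite-dimensional simples (this holds over any field of characteristic $0$; see e.g.\ \cite[\S3.6--3.8]{kac} or \cite[Ch.~6]{mp}), and then the weight inequality $\dim M_2\le\dim M_0=1$ finishes it. Your verification that $M$ is closed under $\ad f_\al$ and $\ad e_\al$ uses $[E_\al,E_{-\al}]=Ft_\al=Fh_\al$ from Exercise~\ref{n:ex:eala-def1}(d), which is exactly what is needed at the boundary $k=\pm 1$. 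This is precisely the argument one finds in \cite[Ch.~I]{aabgp} for the discrete case, and it goes through unchanged here.
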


It is not true that $\dim E_\al = 1$ if $\al \in R^0$ (this is
already not true in the examples of sections \ref{n:sec:eala0} and
\ref{n:sec:ealaone}). But we will show in Th.~\ref{n:bdd} that all
root spaces $E_\al$ are finite-dimensional in a rather strong way.

The following exercise shows that one can ``extend'' the
$3$-dimensional Heisenberg subalgebras in (b) above.

\begin{exercise} In the setting and notation of Prop.~\ref{n:ealafact}(b) show
that there exists $d_\al \in H$ such that
$$
   [d_\al, x_\al]= x_\al \quad \hbox{and} \quad [d_\al, y_\al] = -
y_\al. $$ Hence
$$
    Fx_\al \oplus Ft_\al \oplus Fd_\al \oplus Fy_\al
$$
is a $4$-dimensional subalgebra. It is $2$-step solvable, not
nilpotent and isomorphic to the subalgebra
$$
  \Big \{ \left( \begin{smallmatrix} 0 & a & b \\ 0 & c & d \\ 0 & 0 & 0
           \end{smallmatrix} \right):   a,b,c,d\in F \Big\}
$$
of $\gl_3(F)$.
\end{exercise}

And now an exercise which implies that in an EALA one can produce
many so-called elementary automorphisms.

\begin{exercise} \label{n:ex:eala-def2} Let $M$ be an $F$-vector
space. Once calls an endomorphism $f\in \End_F(M)$ \textit{locally
nilpotent} if for every $m\in M$ there exists $n\in \NN$, possibly
depending on $m$,  such that $f^n(m)=0$. \sm

(a) Show that the following conditions are equivalent for $f\in
\End_F(M)$:
\begin{enumerate}
 \item[]
\begin{enumerate}
\item[(i)] $f$ is locally nilpotent,

\item[(ii)] for every finitely spanned subspace $N$ of $M$ there exists
a finite-dimensional subspace $P$ of $M$ such that $N\subset P$ and
$f(P) \subset P$,

\item[(iii)] $f$ is nilpotent on every finite-dimensional and
$f$-invariant subspace of $M$.
\end{enumerate} \end{enumerate} \sm

(b) Let $f\in \End_F(M)$ be locally nilpotent and define the
\textit{exponential $\exp f$ of $f$} by
$$
    (\exp f)(m) = \ts \sum_{n\in \NN} \, \frac{1}{n!} f^n(m),
$$
for $m\in M$ (note that the sum on the right is always finite). Show
that $\exp f$ is an invertible endomorphism of $M$ with inverse
given by $(\exp f)^{-1} = \exp (-f)$. \sm

(c) Let $L$ be a Lie algebra and let $d$ be a locally nilpotent
derivation of $L$. Show that then $\exp d$ is an automorphism of
$L$.
\end{exercise}

We will next present some examples of EALAs.

\subsection{Extended affine Lie algebras of nullity $0$}
\label{n:sec:eala0}

Let $\g$ be a finite-dimensional split simple Lie algebra with
splitting Cartan subalgebra $\frh$, for example $\lsl_l(F)$ or a
finite-dimensional simple Lie algebra over an algebraically closed
field. We will show that then $(\g,\frh)$ is an EALA of nullity $0$.
The facts needed to prove this can be found in \cite[VIII,
\S2]{bou:lie78} or in \cite[\S8]{hum} for $F$ algebraically closed.
\sm

(EA1) Up to a scalar, there exists only one invariant nondegenerate
symmetric bilinear form on $\g$, the Killing form $\ka$. Hence we
can (and will) take $\inpr = \ka$. \sm

(EA2) By definition of a splitting Cartan subalgebra, the Lie
algebra $\g$ has a root space decomposition $$\g = \textstyle \g_0
\oplus \big( \bigoplus_{\al \in \Phi} \, \g_\al\big), \quad \g_0 =
\frh,
$$ where $\Phi$ is the root system of $(\g,\frh)$ (which is a
reduced root system in the usual sense, see \ref{n:arsexfin}) and
where the root spaces $\g_\al$ are defined as in
\ref{n:eq:eala-def0}. Hence the set of roots $R$ of $(\g,\frh)$ is
\begin{equation} \label{n:sec:eala01}   R = \{0\} \cup \Phi.
 \end{equation}
It is a basic fact that $\ka(t_\al, t_\al) \ne 0$ for $t_\al \in
\frh$ representing $\al \in \Phi$ via $\ka(t_\al, h) = \al(h)$ for
all $h\in \frh$. Hence, the anisotropic and null roots are
$$R\an = \Phi \quad \hbox{and} \quad R^0 = \{0\}. $$

(EA3) is now obvious: From $[\g_\al, \g_\be] \subset \g_{\al + \be}$
for $\al\in \Phi$ and $\be \in R$ and finite-dimensionality of $\g$,
it is clear that $\ad x_\al$ for $x_\al \in \g_\al$ is not only
locally nilpotent but even (globally) nilpotent.

(EA4) is another way of saying that $\Phi$ is an irreducible root
system. This is indeed the case and follows from simplicity of $\g$.

(EA5) We first need to determine the core $\g_c$ of $\g$. By
definition, $\g_c$ is the subalgebra of $\g$ generated by
$\bigoplus_{\al \in \Phi} \g_\al$. Since $\frh = \sum_{\al \in \Phi}
[g_\al, \g_{-\al}]$ we have
$$ \g_c = \g.$$
It is now a tautology that (EA5) holds, i.e., that the centralizer
of the core $\g_c$ is contained in $\g_c=\g$. Of course, we know
even more: The centralizer of the core equals the centre of $\g$,
and is therefore $\{0\}$.

(EA6) We have $\La= \lan R^0\ran = \lan \{0\}\ran = \{0\}$. \sm

\noindent We have now shown:
\begin{equation} \label{n:eala-def:prop}
\hbox{\it A finite-dimensional split simple Lie algebra is an EALA
of nullity $0$.}
\end{equation}
We will see in Prop.~\ref{n:fdeala} that the converse of
(\ref{n:eala-def:prop}) is true too. We thus know all the nullity
$0$ examples of EALAs, and can therefore focus on the higher nullity
examples. We will answer the case of nullity $1$ in the next section. %

\subsection{Affine Kac-Moody Lie algebras again} \label{n:sec:ealaone}

To justify the name extended \textit{affine Lie algebra\/}, we will
now show that any affine Kac-Moody Lie algebra is an extended affine
Lie algebra. To do so, we will need some basic facts about affine
Kac-Moody Lie algebras. All of them can be found in Kac's book
\cite{kac}. Since this reference uses $\CC$ as base field, we will
do the same in this section. But everything we say here holds true
for arbitrary algebraically closed fields of characteristic $0$.
Thus we let \begin{align*}
   \scL &=  \ts \bigoplus_{n\in \ZZ} \g_{\bar n}\ot \CC t^n \\
   \hat \scL  &= \hat \scL(\g, \si) = \scL \oplus \CC c \oplus \CC d
 \end{align*} be the complex Lie algebra described in (\ref{n:eq:aff8}) and
(\ref{n:eq:aff9}). Recall that $\g$ is a finite-dimensional simple
Lie algebra over $\CC$ and $\si$ is a diagram automorphism of $\g$.
We let $m\in \{1,2,3\}$ be the order of $\si$, and denote the
canonical map $\ZZ \to \ZZ/m\ZZ$ by $n\mapsto \bar n$. Recall from
(\ref{n:eq:aff1}) and (\ref{n:eq:aff1.5}) that $\si$ induces a
$\ZZ/m \ZZ$-grading of $\g$, namely
$$
     \g= \g_{\bar 0} \oplus \cdots \oplus \g_{\overline{m-1}}
$$
 where $\g_{\bar n} = \{x\in \g: \si(x) = \ze^n x\}$ for a primitive
$m$th root of unity $\ze$. For example, for $m=2$ we get a
$\ZZ/2\ZZ$-grading $\g=\g_{\bar 0} \oplus \g_{\bar 1}$ with
$\g_{\bar 0} = \{ x\in \g : \si(x) = x\}$ and $\g_{\bar 1} = \{ x\in
\g: \si(x) = - x\}$. We  identify $\g_{\bar 0} \equiv \g_{\bar 0}
\ot \CC t^0$. \sm

We now verify the axioms (EA1) -- (EA5) and (DE) which, we recall,
implies (EA6). \sm

(EA1) We let $\ka$ be the Killing form of $\g$ and define a bilinear
form $\inpr$ on $\hat \scL$, using the notation of
(\ref{n:eq:aff9}),
\begin{equation}\label{n:eq:ealaone1}
   \begin{split}
  &\big( u_{\bar \la} \ot t^\la \oplus s_1 c \oplus s'_1 d  \mid
        v_{\bar \mu} \ot t^\mu \oplus s_2 c \oplus s'_2 d \big)
 \\ &\qquad = \ka(u_{\bar \la}, v_{\bar \mu})\de_{\la, - \mu} \, + \, s_1 s_2' + s_2 s'_1.
\end{split} \end{equation} The form is visibly symmetric. The reader is
invited in Exercise~\ref{n:ex:ealaone1} to show that it is in fact
an invariant nondegenerate symmetric bilinear form on $\hat \scL$,
as required in (EA1). In anticipation of the later developments, we
point out that $\inpr$ has the following features:
\begin{itemize}

\item  $\hat \scL$ is an orthogonal sum of $\scL$ and $\CC c
\oplus \CC d$: $ \hat \scL = \scL \perp (\CC c \oplus \CC d)$,

\item  $\CC c \oplus \CC d$ is a hyperbolic plane, i.e.,
$(c\mid c) = 0 = (d \mid d)$ while $(c \mid d) = 1$.

\item The Laurent polynomial ring $\CC[t^{\pm 1}]$ has a
nondegenerate symmetric bilinear form $\eps$ given by $\eps(t^\la,
t^\mu) = \de_{\la, -\mu}$. It is \textit{invariant} in the sense
that $\eps(pq,r) = \eps(p,qr)$ for $p,q,r\in \CC[t^{\pm 1}]$, and is
\textit{graded} in the sense that $\eps(t^\la, t^\mu) = 0$ unless
$\la +\mu = 0$. For $\si = \Id_\g$, the bilinear form on the loop
algebra $\g \ot \CC[t^{\pm 1}]$ is simply the tensor product form
$\ka \ot \eps$, and for a general $\si$ the form is obtained by
restriction. \end{itemize} \sm

(EA2) To construct a subalgebra $H$ as required in axiom (EA2) we
start with a Cartan subalgebra $\frh$ of $\g$. Since $\si$ is a
diagram automorphism, it leaves $\frh$ invariant. We let
$$   \frh_{\bar 0} = \frh \cap \g_{\bar 0}
         =\{ h \in \frh : \si(h) = h\}
$$
and put
$$
     H = \frh_{\bar 0} \oplus \CC c \oplus \CC d.
$$
One knows that $\g_{\bar 0}$ is a simple Lie algebra with Cartan
subalgebra $\frh_{\bar 0}$ (\cite[Prop.~7.9]{kac}). The grading
property implies that $[\g_{\bar 0}, \g_{\bar n}] \subset \g_{\bar
n}$ for $n\in \ZZ$. Hence $\g_{\bar 0}$ acts on $\g_{\bar n}$ by the
adjoint action. Let $\De_{\bar n}$ be the set of weights of the
$\g_{\bar 0}$-module $\g_{\bar n}$ with respect to $\frh_{\bar 0}$:
 \begin{align*}
    \g_{\bar n} &= \ts \bigoplus_{\ga \in \De_{\bar n}}
           \g_{\bar n,  \ga} \\
   \g_{\bar n, \ga} &= \{ x\in \g_{\bar n} : [h_{\bar 0}, x ]
     = \ga(h_{\bar 0}) x \hbox{ for all } h_{\bar 0} \in
              \frh_{\bar 0} \}.
\end{align*}
In particular, $\De_{\bar 0} \setminus \{0\}$ is the root system of
$\g_{\bar 0}$ with respect to $\frh_{\bar 0}$ and $\frh_{\bar 0} =
\g_{\bar 0, 0}$.

We extend $\De_{\bar n} \subset \frh_{\bar 0}^*$ to a linear form on
$H$ by zero, i.e., for $\ga \in \De_{\bar n} $ we put
$$
     \ga(h_{\bar 0} \oplus sc \oplus s'd) = \ga(h_{\bar 0})
$$
and define a linear form $\de$ on $H$ by
$$
    \de(h_{\bar 0} \oplus sc \oplus s'd) = s'.
$$
Then for $\ga \in \De_{\bar n}$, $n \in \ZZ$, we have
\begin{equation} \label{n:eq:ealaone2} \begin{split}
   \hat \scL_{\ga \oplus n\de} &= \{ u \in \hat \scL :
         [h ,u ] = (\ga\oplus n \de)(h) u \hbox{ for all } h\in H\}
    \\ &= \begin{cases} \g_{\bar n, \ga} \ot t^n,
                                 & \ga\oplus n\de \ne 0, \\
                       H,   & \ga \oplus n \de = 0, \end{cases}
\end{split} \end{equation}
whence $\hat \scL = \bigoplus_{\al \in R} \hat \scL_\al$ has a root
space decomposition with respect to $H$ with set of roots
 \begin{equation} \label{n:eq:ealaone3}
   R = \{ \ga \oplus n \de : \ga \in \De_{\bar s},\, \bar n = \bar
s, 0 \le s < m\}. \end{equation} This establishes (EA2).

To check the other axioms we first need to determine which of the
roots in $R$ are the null respectively anisotropic roots. Following
the procedure in \S\ref{n:sec:eala-def}, we consider the restriction
of the bilinear form $\inpr$ to $H$. With obvious notation this is
$$
   \big(h_{\bar 0} \oplus s_1c \oplus s_1' d \mid
         h'_{\bar 0} \oplus s_2 c \oplus s_2' d) =
     \ka(h_{\bar 0}, h'_{\bar 0}) + s_1 s_2' + s_2 s'_1.
$$
Since $\ka |_{\frh_{\bar 0} \times \frh_{\bar 0}}$ is nondegenerate,
this is indeed a nondegenerate symmetric bilinear form on $H$, as it
should be. Let $t_\ga \in \frh_{\bar 0}$ be the element representing
$\ga \in \frh^*_{\bar 0}$: $\ka(t_\ga, h_{\bar 0}) = \ga(h_{\bar
0})$ for all $h_{\bar 0} \in \frh_{\bar 0}$. For the canonical
extension of $\ga$ to a linear form of $H$, also denoted by $\ga$,
we then get $(t_\ga \mid h) = \ga(h)$ for all $h\in H$. Moreover $(c
\mid h_{\bar 0} \oplus sc \oplus s'd) = s' = \de(h_{\bar 0} \oplus
sc \oplus s'd)$ shows that $\de$ is represented by $t_\de=c\in H$.
Therefore $\al = \ga \oplus n \de \in R$ is represented by
$$
   t_{\ga \oplus n \de} = t_\ga \oplus nc.
$$
Now observe $(t_{\ga \oplus n\de} \mid t_{\ga \oplus n\de}) =
  (t_\ga \oplus nc \mid t_\ga \oplus nc) = \ka(t_\ga, t_\ga)$.
It is of course well-known that $\ka(t_\ga, t_\ga)\ne 0$ for $0\ne
\ga\in \De_{\bar 0}$. But one can (easily) show that this also holds
for any $0 \ne \ga\in \De_{\bar n}$. We therefore get
\begin{equation} \label{n:eq:ealaone4}
   R\an = \{ \ga \oplus n \de \in R: \ga \ne 0\}\quad\hbox{and}
\quad
    R^0 = \ZZ \de,
\end{equation} which in the theory of affine Kac-Moody algebras are usually
called \textit{real} and \textit{imaginary roots}. We are now set
for the verification of the  remaining axioms. \sm

(EA3) holds in the stronger form: $\ad \hat \scL_\la$, $\al \in
R\an$, is nilpotent. (We have already seen the same phenomenon in
the Example~\ref{n:sec:eala0} of a finite-dimensional split simple
Lie algebra. Perhaps the reader wonders if this is true in general.
The answer is yes.)\sm

(EA4) The verification of (EA4) is left to the reader. \sm

(EA5) The core of $\hat \scL$ is $\hat \scL_c = \big(
\bigoplus_{n\in \ZZ} \g_{\bar n} \ot \CC t^n\big) \oplus \CC c$, and
therefore equals the derived algebra $[\hat \scL, \hat \scL]$ of
$\hat \scL$. The centralizer of $\hat \scL_c$ in $\hat \scL$, in
fact the centre of $\hat \scL$ is $\CC c \subset \hat \scL_c$, see
Exercise~\ref{n:ueb1}. \sm

(DE) In this example the subgroup $\La= \lan R^0\ran$ equals $R^0 =
\ZZ d$ and is a discrete subset of $H^*$. \sm

We have now shown one implication of the following result.

\begin{theorem}[\cite{abgp}] \label{n:ealaone:th} A complex Lie algebra $E$ is a discrete EALA of
nullity $1$ if and only if $E$ is an affine Kac-Moody Lie algebra.
 \end{theorem}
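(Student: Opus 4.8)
The forward implication has essentially been established in the discussion preceding the theorem: for a finite-dimensional simple $\g$ over $\CC$ and a diagram automorphism $\si$, the axioms (EA1)--(EA5) and (DE) were verified for $\hat\scL(\g,\si)$, and by the Realization Theorem~\ref{n:th-kac} every affine Kac-Moody Lie algebra is isomorphic to some $\hat\scL(\g,\si)$. So the plan is to prove the converse: if $(E,H)$ is a discrete EALA of nullity $1$, then $E\cong\hat\scL(\g,\si)$ for suitable $\g$ and $\si$, whence $E$ is affine Kac-Moody by Theorem~\ref{n:th-kac} once more.

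First I would analyse the roots and pass to the core. By the structure theory of extended affine root systems (\S\ref{n:sec:roots}), the set of roots $R$ of a discrete EALA of nullity $1$ is, modulo a single copy of $\ZZ\de$ in its radical, an affine root system attached to an irreducible finite root system $\bar R$; in particular $R^0 = \ZZ\de$. Next, $E_c$ is a graded ideal of $E$ (Th.~\ref{n:ealcor}), it is perfect, and by the material of \S\ref{n:sec:core} its centreless quotient $E_{cc} = E_c/Z(E_c)$ is a centreless Lie torus whose external grading group is $\La\cong\ZZ$. Here I invoke the classification of centreless Lie tori of nullity $1$: such a torus is a loop algebra $L(\g,\si) = \bigoplus_{n\in\ZZ}\g_{\bar n}\ot\CC t^n$, with $\si$ a diagram automorphism, hence of order $1$, $2$ or $3$. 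Thus $E_{cc}\cong L(\g,\si)$.

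It then remains to reconstruct $E$ from $E_{cc}$. Since $E_c$ is a perfect central extension of $E_{cc}\cong L(\g,\si)$, Theorem~\ref{n:sub:ucethm} shows it is a covering, hence a quotient of $\uce(L(\g,\si)) = \tilde\scL(\g,\si) = L(\g,\si)\oplus\CC c$, whose centre is one-dimensional (the one-variable case recalled in \S\ref{n:sec:aff}, cf.~Theorem~\ref{n:ucemm}). Using nondegeneracy of $\inpr$ together with tameness (EA5), which forces the element $t_\de$ to lie in $Z(E_c)$, one shows $\dim Z(E_c) = 1$, so $E_c = \tilde\scL(\g,\si)$. Finally one checks, using tameness and the structure theory of \S\ref{n:sec:const}, that $E_\al\subseteq E_c$ for every nonzero root $\al$ (for $\al\in R\an$ by the definition of the core, for $0\neq\al\in\ZZ\de$ because $E/E_c$ is concentrated in degree $0$), that $E_0 = H$, and that $H = (H\cap E_c)\oplus\CC d$ for a single $d\in H$ acting on $E_c$ as the degree derivation attached to $\de$. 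Comparing products with the formula~(\ref{n:eq:aff9}) then yields $E\cong\tilde\scL(\g,\si)\rtimes\CC d = \hat\scL(\g,\si)$.

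I expect the main obstacle to be the classification of centreless Lie tori of nullity $1$ as (twisted and untwisted) loop algebras, together with the ensuing rigidity statements $\dim Z(E_c) = 1$ and $H = (H\cap E_c)\oplus\CC d$. This is precisely where the three hypotheses enter: discreteness (DE) prevents $R$ from being an exotic infinite root system, so that $\bar R$ is a genuine finite root system; nullity $1$ keeps both the central extension $C$ and the space $D$ of adjoined derivations one-dimensional; and tameness (EA5) locates the central element inside the core and the degree derivation outside it. Everything else is routine bookkeeping with central extensions (\S\ref{n:appcen}) and with the explicit product formula~(\ref{n:eq:aff9}).
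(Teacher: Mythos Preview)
Your identification of the forward direction is correct and matches the paper: the entire section preceding the theorem verifies (EA1)--(EA5) and (DE) for $\hat\scL(\g,\si)$, and the paper explicitly says ``We have now shown one implication of the following result.'' For the converse, however, the paper does \emph{not} give a proof at all --- it simply cites \cite{abgp}. So your sketch of the reverse implication is not something to be compared against a proof in the paper; there is none.

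That said, your proposed route for the converse is essentially the specialization to nullity~$1$ of the general construction theorem in \S\ref{n:sec:const} (Theorem~6 of \cite{n:eala}): pass to the centreless core, recognize it as a Lie torus with $\La\cong\ZZ$, classify such Lie tori, and then rebuild $E$ as $L\oplus D^{\gr*}\oplus D$ with $D$ one-dimensional. This is a legitimate and conceptually clean approach, but it is historically and logically different from what \cite{abgp} does. The paper \cite{abgp} predates the Lie torus framework entirely; its proof works directly with the root-space decomposition and the axioms, showing by hand that $E$ carries the structure of a Kac--Moody algebra with an affine generalized Cartan matrix. Your approach instead leans on the full machinery of \S\S\ref{n:sec:roots}--\ref{n:sec:const}, which was developed later and in part \emph{motivated} by the nullity-$1$ result.

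The genuine gap in your sketch is the step ``centreless Lie tori of nullity~$1$ are loop algebras $L(\g,\si)$.'' You flag this yourself as the main obstacle, and rightly so: it is not proved anywhere in these notes, and in the general classification of Lie tori it requires the type-by-type coordinatization results (Theorems~\ref{n:typeAcl}, \ref{n:typede}, and their analogues for the remaining types) together with the observation that the relevant coordinate tori of type $\ZZ$ are all (twisted) Laurent polynomial rings in one variable. Alternatively one can invoke \cite{abfp2} or \cite{naoi}, but then one is already outside the scope of the present paper. The remaining steps --- $\dim Z(E_c)=1$, $E/E_c$ concentrated in degree~$0$, $H=(H\cap E_c)\oplus\CC d$ --- are indeed consequences of the structure theorem in \S\ref{n:sec:genconstr}, but note that appealing to that theorem makes the argument somewhat anachronistic relative to the placement of Theorem~\ref{n:ealaone:th} in the exposition.
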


\begin{exercise} \label{n:ex:ealaone1} Check the following details of the
construction above.

(a) (\ref{n:eq:ealaone1}) defines an invariant symmetric bilinear
form on $\hat \scL$.

(b) $\hat \scL$ has a root space decomposition whose root spaces are
given by (\ref{n:eq:ealaone2}) and whose set of roots is
(\ref{n:eq:ealaone3}).

(c) $\ka(t_\ga, t_\ga) \ne 0$ for any $0\ne \ga\in \De_{\bar n}$.

(d) (EA4) holds for $(\hat \scL, H)$.
\end{exercise}

\subsection{Higher nullity examples} \label{n:sec:ealan}

We have seen all examples of EALAs of nullity $0$ and $1$. In this
section we will construct examples of higher nullity. To simplify
things we consider untwisted algebras (no non-trivial finite order
automorphism are involved). We can therefore go back to our standard
setting: $\g$ is a split simple Lie algebra over a field $F$ of
characteristic $0$. \sm

As in \S\ref{n:sec:toroidal} let $F[t_1^{\pm 1}, \ldots, t_n^{\pm
n}]$ be the Laurent polynomial ring in $n$ variables and let
$$
  L =   L(\g) = \g \ot F[t_1^{\pm 1}, \ldots, t_n^{\pm n}]
$$
be the associated untwisted multiloop algebra. We have seen in
Exercise~\ref{n:ueb3} that $L$ has a $2$-cocycle $\psi : \scL \times
\scL \to F^n = : \euC$, given by (\ref{n:eq:aff6}): $\psi(u \ot
t^\la, v\ot t^\mu) =  \de_{\la +\mu, 0} \, \ka(u,v) \,\la$. We can
therefore define the central extension
$$
   K = L \oplus \euC
$$
with product (\ref{n:eq:2coc2}). In (\ref{n:eq:aff10}) we have
defined degree derivations $\pa_i$, $i=1,\ldots, n$, of $K$. Let
\begin{equation} \label{n:ealandeg}
   \euD= \Span_F \{ \pa_1, \ldots, \pa_n\}\end{equation} and define the Lie
algebra $E$ as the semidirect product,
$$
   E = \big( L(\g) \oplus \euC \big)\rtimes \euD.
$$
Let $\frh$ be a Cartan subalgebra of $\g$ and put
$$   H = \frh \oplus \euC \oplus \euD.   $$
We claim  that \textit{$(E,H)$ is an EALA of nullity $n$.} \sm

(EA1) We will mimic the construction of an invariant nondegenerate
symmetric bilinear form in \S\ref{n:sec:ealaone} and require

\begin{itemize}

\item $( L(\g) \mid \euC \oplus \euD)=0$.

\item $\euC \oplus \euD$ is a hyperbolic space with $(\euC
\mid \euC) = 0 = (\euD \mid \euD)$ and $$ (\tsum_i  s_i c_i \mid
\sum_i s'_i \pa_i) = \sum_i s_i s'_i, $$ where $c_1, \ldots, c_n$ is
the canonical basis of $F^n$. Thus $\euC \oplus \euD$ is the
orthogonal sum of the $n$ hyperbolic planes $Fc_i \oplus F\pa_i$.

\item On $L(\g)$ the form is the tensor product form of the
Killing form $\ka$ of $\g$ and the natural invariant bilinear form
on $F[t_1^{\pm 1}, \ldots, t_n^{\pm 1}]$. \end{itemize}

\noindent Putting all these requirements together, we arrive at the
global formula which is completely analogous to
(\ref{n:eq:ealaone1}):
\begin{align} \label{n:eq:ealan0} \begin{split}
  &\textstyle\big( u \ot t^\la \,\oplus \, \sum_i s_i  c_i \oplus \sum_j s'_j \pa_j
  \, \mid \,
        v \ot t^\mu \,\oplus \,\sum_i t_i c_i \oplus \sum_j t'_j \pa_j \big)
 \\ & \qquad \qquad \textstyle = \ka(u, v)\de_{\la, - \mu} \, + \, \sum_i ( s_i t_i' + t_i s'_i).
\end{split} \end{align}

(EA2) Let $\frh$ be a splitting Cartan subalgebra and let $\Phi$ be
the usual root system of $(\g,\frh)$, thus $0\not\in \Phi$. We put
$\De = \{0\} \cup \Phi$ and then have the root space decomposition
$\g = \bigoplus_{\ga \in \De} \g_\ga$ with $\g_0 = \frh$. We embed
$\De \hookrightarrow H^*$ by requiring $\ga \mid  \euC \oplus \euD =
0$ for $\ga \in \De$. Also we embed $\La = \ZZ^n \hookrightarrow
H^*$ by $\la(\frh \oplus \euC) = 0$ and $\la (\pa_i) = \la_i$ for
$\la = (\la_1, \ldots, \la_n)\in \La$. Then $E$ has the root space
decomposition $E=\bigoplus_{\al \in R} E_\al$ with root spaces
\begin{align}\label{n:eq:ealan1}
 E_{\ga \oplus \la} &= \g_\ga \ot t^\la \quad (\ga \oplus
\la \ne 0),  & E_0 = H, \\ \label{n:eq:ealan2}
   R\an &= \Phi \times \La, & R^0 = \La.
\end{align} \sm

\noindent It is now not difficult to verify (EA3) -- (EA5) and (DE).
Thus:

\begin{lemma} The pair $(E,H)$ constructed above is a discrete EALA of nullity $n$. \end{lemma}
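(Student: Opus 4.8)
Most of the work is already done: (EA1) and (EA2) were verified above, so the plan is to check (EA3), (EA4), (EA5) and (DE) (the last of which forces (EA6)), reducing everything to the explicit root space description in (\ref{n:eq:ealan1}), the identification $R\an=\Phi\times\La$, $R^0=\La$ of (\ref{n:eq:ealan2}), and the finiteness and irreducibility of the root system $\Phi$ of $(\g,\frh)$. A preliminary observation I would record first: writing $t_\ga\in\frh$ for the element representing $\ga\in\Phi$ via $\ka$, the element $t_{\ga\oplus\la}=t_\ga\oplus\sum_i\la_i c_i$ represents $\ga\oplus\la\in H^*$ (by the explicit form (\ref{n:eq:ealan0})), so that on $R\an$ one has $(\ga\oplus\la\mid\ga'\oplus\la')=\ka(t_\ga,t_{\ga'})=(\ga\mid\ga')_\Phi$, because $\euC$ is isotropic and orthogonal to $\frh$.

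For (EA3) I would prove the stronger fact that $\ad x_\al$ is globally nilpotent for $x_\al\in E_\al=\g_\ga\ot t^\la$, $\ga\in\Phi$. On $L(\g)$ one has $(\ad x_\al)^k(\g_\mu\ot t^\nu)\subset\g_{k\ga+\mu}\ot t^{k\la+\nu}$ modulo a term in the central space $\euC$; since $\Phi\cup\{0\}$ is finite, $k\ga+\mu\notin\Phi\cup\{0\}$ for all $\mu$ once $k$ is large, and the central contribution is killed by one further application of $\ad x_\al$. On $H$ one has $\ad x_\al(H)\subset E_\al$ and $(\ad x_\al)^2(H)\subset[E_\al,E_\al]=0$ since $2\ga\notin\Phi\cup\{0\}$. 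Hence a fixed power of $\ad x_\al$ annihilates all of $E=L(\g)\oplus\euC\oplus\euD$. For (EA4), using the preliminary observation, a decomposition $R\an=R_1\cup R_2$ with $(R_1\mid R_2)=0$ projects onto a decomposition $\Phi=S_1\cup S_2$ with $(S_1\mid S_2)=0$, where $S_k=\{\ga\in\Phi:\ga\oplus\la\in R_k\text{ for some }\la\}$ (surjectivity onto $\Phi$ uses $\ga\oplus 0\in R\an$); moreover $S_1\cap S_2=\emptyset$ because a common $\ga$ would be isotropic, impossible for $\ga\in\Phi$. Irreducibility of $\Phi$ (from simplicity of $\g$) forces $S_1=\emptyset$ or $S_2=\emptyset$, hence $R_1=\emptyset$ or $R_2=\emptyset$.

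For (EA5) the only step requiring real care is identifying the core: I claim $E_c=L(\g)\oplus\euC$. Indeed $\frh=\sum_{\ga\in\Phi}[\g_\ga,\g_{-\ga}]$ shows that the anisotropic root spaces $\g_\ga\ot t^\la$ generate $\g\ot t^\la$ for every $\la\in\La$, while the central components of the brackets $[\g_\ga\ot t^\la,\g_{-\ga}\ot t^{-\la}]$ run through all of $\euC=F^n$ as $\la$ and $u,v$ vary. Now suppose $e=\ell\oplus z\oplus\sum_j s_j\pa_j$ with $\ell=\sum_\mu\ell_\mu\ot t^\mu\in L(\g)$ and $z\in\euC$ centralizes $E_c$. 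Bracketing with $u\ot t^0$ ($u\in\g$) gives $\sum_\mu[\ell_\mu,u]\ot t^\mu=0$, so $\ell_\mu\in Z(\g)=0$ and $\ell=0$; bracketing then with $u\ot t^\la$ gives $(\sum_j s_j\la_j)\,u\ot t^\la=0$ for all $\la\in\ZZ^n$, so every $s_j=0$. Hence $e=z\in\euC\subset E_c$; in fact $C_E(E_c)=\euC$, which gives (EA5).

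Finally, $R^0=\La\cong\ZZ^n$ is free abelian of rank $n$, so $(E,H)$ has nullity $n$ and (EA6) holds; and over $F=\CC$ the set $R=(\Phi\times\La)\cup\La$ is a finite union of cosets of the lattice $\La$ inside a real form of $H^*$, hence discrete in $H^*$, so (DE) holds as well. Assembling the verifications proves the lemma. I expect the core computation in (EA5) to be the one genuinely non-formal point; (EA3) and (EA4) are routine bookkeeping with the finite root system $\Phi$, and the form on $\euC\oplus\euD$ is hyperbolic by construction.
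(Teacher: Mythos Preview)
Your proof is correct. The paper does not actually give a proof of this lemma: after verifying (EA1) and (EA2) explicitly, it simply states ``It is now not difficult to verify (EA3)--(EA5) and (DE)'' and relegates the details to Exercise~\ref{n:ueb4}. Your argument supplies precisely those details, along the only natural route: reducing (EA3) to the finiteness and reducedness of $\Phi$, (EA4) to the irreducibility of $\Phi$ via the observation that the form on $R\an$ factors through the projection to $\Phi$, and (EA5) to the explicit computation $E_c=L(\g)\oplus\euC$ together with the fact that nothing in $\euD$ centralizes $L(\g)$. Your identification of the core computation as the one substantive step is accurate; the rest is indeed bookkeeping, which is why the paper felt justified in omitting it.
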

\sm

\noindent There is however no analogue of
Prop.~\ref{n:eala-def:prop} and Th.~\ref{n:ealaone:th}: There are
many more EALAs of nullity $n\ge 2$. We have just seen the ``tip of
the iceberg''! Other examples can be found in Ch.III of
\cite{aabgp}, some of them involving heavy-duty nonassociative
algebras, like octonion algebras and Jordan algebras over Laurent
polynomial rings!
 \sm

\begin{exercise}\label{n:ueb4} Supply the missing details of the proof that
$(E,H)$ above is a discrete EALA of nullity $n$. In particular,
prove:

(a) (\ref{n:eq:ealan0}) defines an invariant nondegenerate symmetric
bilinear form on $E$. \sm

(b) The root spaces of $(E,H)$ and the anisotropic and null roots
are as stated in (\ref{n:eq:ealan1}) and (\ref{n:eq:ealan2}).
\end{exercise}

%
%
\section{The structure of the roots of an EALA}\label{n:sec:roots}

In this chapter we will describe the structure of the set of roots
$R$ of an EALA $(E,H)$, defined in (\ref{n:eq:eala-def4}). We have
already seen some examples: $R$ can be a finite irreducible reduced
root system, (\ref{n:sec:eala01}), $R$ can be an affine root system
(\ref{n:eq:ealaone3}), i.e., the set of roots of an affine Kac-Moody
Lie algebra, or $R$ can be of the form $R=S \times \ZZ^n$ where
$S\setminus \{0\}$ is a finite irreducible reduced root system
(\ref{n:eq:ealan2}). Thus any description of the general case has to
encompass all these different examples. \sm

It turns out that the roots of an EALA form an extended affine root
system and that the latter is naturally described as a special case
of affine reflection systems. We therefore first introduce the
latter, describe their structure and then specialize later to
extended affine root systems. Affine reflection systems are
themselves special cases of reflection systems, whose theory is
developed in \cite{prs}.

\subsection{Affine reflection systems: Definition} \label{n:sec:ars}

Throughout this section we work with a triple $(R,X,\inpr)$ where
\begin{itemize}

\item  $X$ is a finite-dimensional vector space
over a field $F$ of characteristic $0$,

\item $\inpr$ is a symmetric bilinear form on $X$ and

\item $ R \subset X$.
\end{itemize}
\noindent For any such triple $(R,X,\inpr)$ we define
\begin{align}
 X^0 &= \{ x \in X : (x\mid X)= 0\}, \hbox{ the radical of } \inpr,
   \nonumber \\
 R^0 &= \{\al\in R : (\al | \al) =0\},  \quad (\hbox{\it null roots})\nonumber \\
R\an & = \{\al\in R : (\al | \al) \ne 0\}, \quad(\hbox{\it anisotropic roots})\nonumber  \\
\lan x, \al\ch\ran &= 2 \frac{(x|\al)} {(\al | \al)}, \quad
    (\hbox{$x\in X$ and $\al \in R\an$})  \nonumber \\
s_\al (x) &= x -\lan x,\al\ch\ran \al. \label{n:sec:ars1}
\end{align}

\noindent By definition we therefore have $R=R^0 \cup R\an$. The map
$s_\al :X \to X$ is a \textit{reflection in $\al$}, i.e.,
$s_\al^2=\Id_X$ and $\{x\in X : s_\al(x) = -x\} = F \al$. It is also
orthogonal with respect to $\inpr$: $(s_\al(x) \mid s_\al(y)) =
(x\mid y)$ for all $x,y\in X$. \sm

We call $(R,X,\inpr)$, or just $R$ for short, an \textit{affine
reflection system} if \begin{description} {\it

\item[\bf (AR1)] $0\in R$ and $R$ spans $X$,

\item[\bf (AR2)] $s_\al(R)=R$ for all $\al \in R\an$,

\item[(AR3)] for every $\al \in R\an$ the set $\lan R, \al\ch\ran$
is finite and contained in $\ZZ$, and

\item[(AR4)] $R^0 = R \cap X^0$.}
\end{description}
An affine reflection system is said to be \begin{itemize}

 \item{} \textit{reduced} if for every $\al \in R\an$ and $c\in
F$:  $ c\al \in R\an \iff c=\pm 1 $,

\item{} \textit{connected} if for any decomposition $R\an = R_1
\cup R_2$ with $(R_1 \mid R_2) = 0$ we have $R_1 = \emptyset$ or
$R_2 = \emptyset$.
\end{itemize}
The \textit{nullity\/} of $(R,X)$ is the rank of the torsion-free
abelian group $\ZZ[R^0]=\Span_\ZZ(R^0)$ generated by $R^0$ in
$(X,+)$. Thus, by definition, $$
 \text{nullity of  } (R,X) = \dim_\QQ( \ZZ[R^0]\ot_\ZZ \QQ)
    = \dim_F ( \ZZ[R^0] \ot_\ZZ F).$$
Since the vector space $\ZZ[R^0]\ot_\ZZ F$ maps onto $\Span_F(R^0)$,
the nullity of $(R,Z)$ is bounded below by $\dim_F \Span_F(R^0)$. It
is in general not equal to it. But this is of course so for nullity
$0$: $(R,X)$ has nullity $0$ if and only if $R^0=\{0\}\iff \dim_F
\Span_F(R^0) = 0$.\sm

\begin{remarks} - For a large part of the theory it is not necessary that $X$ be
finite-dimensional, see \cite{prs}. But assuming this right from the
start, simplifies the presentation.

- We need the bilinear form $\inpr$ to define $R^0$ and the
reflections. But although we will sometimes write $(R,X,\inpr)$, we
will not consider $\inpr$ as part of the structure of an affine
reflection system. For example, in the definition of an isomorphism
below we will not require that the bilinear forms are preserved. See
\cite{prs}, where this point of view is emphasized.

- The requirement $0\in R$ is in line with the previous chapter, in
which $0$ was considered a root of an EALA. This conflicts with the
traditional approach to root systems in which $0$ is not a root, see
for example \cite{brac}, \cite{hum} or \cite{kac}. The question
whether $0$ is a root or is not a root, has lead to heated debates.
In the author's opinion, there are some advantages of considering
$0$ as a root, which however can only be fully seen when one
develops the  theory for affine reflection systems. But perhaps the
reader can be convinced by the natural) example $(R,X)=(\{0\},
\{0\})$ of an affine reflection system.

- The condition $\lan \be, \al\ch\ran \in \ZZ$ in axiom (AR3) makes
sense since every field of characteristic $0$ contains (an
isomorphic copy of) the field of rational numbers, which allows us
to identify $\ZZ \equiv \ZZ 1_F$.

- By definition $\lan X^0, \al\ch\ran=0$ for all $\al\in R\an$.
Hence $s_\al(x^0) = x^0$ for $x^0 \in X^0$. Also, the inclusion $R
\cap X^0 \subset R^0$ in (AR4) is always true. Therefore the axioms
(AR2)--(AR4) can be replaced by the following conditions \sm

\begin{tabular}{cl}
   (AR2)$'$  & $s_\al(R\an) = R\an$ for all $\al \in R\an,$\\
    (AR3)$'$ & for every $\al \in R\an$ the set $\lan R\an,\al\ch\ran\subset \ZZ$ is finite, \\
   (AR4)$'$ & $R^0 \subset X^0.$
\end{tabular}
\sm

\noindent This new set of axioms makes it (even more) clear that the
conditions on $R^0$ are rather weak: We (may) need $R^0$ to span $X$
from (AR1), we need $R^0 \subset X^0$ for (AR4)$'$ and we need $0\in
R$, which is no condition since one can always add $0$ to $R^0$. We
will see this phenomena re-appearing in the examples, e.g., in
Example~\ref{n:arsexreal}, and in the definition of an extension
datum in \ref{n:arsed}.

- The definition of a connected affine reflection system is the same
as the axiom (EA4) in the definition of an EALA.

- The definition of an affine reflection system given in \cite{prs}
is not the same as the one given here. The equivalence of two
definitions follows from \cite[Prop.~5.4]{prs}. \end{remarks} \sm

An \textit{isomorphism\/} from an affine reflection system
$(R,X,\inpr)$ to another affine reflection system $(R',X'\inpr')$ is
a vector space isomorphism $f: X \to X'$ satisfying
$$
     f(R\an)=R'{}\an  \quad\hbox{and} \quad  f(R^0) = R'{}^0
$$
If such a map exists, $(R,X,\inpr)$ and $(R',X,\inpr')$ are called
\textit{isomorphic\/}. One can show, as a corollary of the Structure
Theorem~\ref{n:arsstructh}, that an isomorphism $f$ also satisfies $
     f \circ s_\al = s_{f(\al)} \circ f \quad \hbox{for all
          $\al \in R\an$,}
$ equivalently, \begin{equation*} 
\lan x,\al\ch\ran = \lan f(x), f(\al)\ch\ran \end{equation*} for all
$x\in X$ and $\al \in R\an$. This is always fulfilled if $f$ is an
isometry for $\inpr$ and $\inpr'$ respectively. But in general an
isomorphism is not necessarily an isometry. For example, one can
always multiply the bilinear form $\inpr$ by a non-zero scalar
without changing $\lan x, \al\ch\ran$.

Since a reflection $s_\al$ is an isometry, it follows from (AR2) and
(AR4) that $s_\al$ leaves $R\an$ and $R^0$ invariant and is thus an
automorphism of $(R,X)$. The subgroup $W(R)$ of the automorphism
group of $(R,X)$ generated by all reflections $s_\al$, $\al \in
R\an$, is (obviously) called the \textit{Weyl group\/} of $(R,X)$.
(It will not play a big role in this chapter.)

\subsection{Examples of affine reflection systems} \label{n:sec:arsex}

We will now give some immediate examples of affine reflection
systems.

\begin{example}[\textit{The real part of an affine reflections system}] \label{n:arsexreal}
Let $(R,X,\inpr)$ be an affine reflection system. Then
\begin{align*}
 \Rea(R) &= \{0\} \cup R\an, \quad \Rea(X) = \Span_F(R\an), \\
 \inpr_{\Rea} &= \inpr_{\Rea(X) \times \Rea(X)}
\end{align*}
defines an affine reflection system, called the \textit{real part of
$(R,X)$}, with
$$
    \Rea(R)\an = R\an, \quad \Rea(R)^0 = \{0\},
$$
in particular $\Rea(R)$ has nullity $0$.

Observe that $\inpr_{\Rea}$ need not be nondegenerate, see
Example~\ref{n:arsexn} for an example.

The fact that one can ``throw away'' the non-zero null roots and
still have an affine reflection system indicates that one has little
control over the null roots in a general affine reflection system.
This will be made even more evident in the concept of an extension
datum \ref{n:arsed}, used in the general Structure Theorem
\ref{n:arsstructh} for affine reflection systems. It is therefore
natural to define subclasses of affine reflection system by imposing
conditions on the null roots. For example, we will do so when we
define extended affine root systems in \ref{n:sec:ears}.

In \cite[3.6]{n:persp} the author claimed that an affine reflection
system of nullity $0$ is a finite root system. The example above
show that this is far from being true. But what remains true is the
converse, also claimed in \cite[3.6]{n:persp}: A finite root system
is an affine reflection system of nullity $0$, as we will show now.
\end{example}

\begin{example}[\textit{Finite root systems}] \label{n:arsexfin} Let $\Phi$ be a (finite)
root system \`a la Bourbaki \cite[VI, \S1.1]{brac}. Recall that this
means that $\Phi$ is a subset of an $F$-vector space $Y$ satisfying
the axioms (RS1)--(RS3) below.
\begin{description}

\item[(RS1)] $\Phi$ is finite, $0\not\in \Phi$ and $\Phi$ spans $Y$.

\item[(RS2)] For every $\al \in \Phi$ there exists a linear form
$\al\ch \in Y^*$ such that $\al\ch(\al) = 2$ and $s_\al(\Phi) =
\Phi$, where $s_\al $ is the reflection of $Y$ defined by $s_\al (y)
= y - \al\ch(y)\al$,

\item[(RS3)] for every $\al \in \Phi$ the set $\al\ch(\Phi)$
is contained in $\ZZ$.
\end{description}
Observe that the reflection $s_\al$ defined in (RS2) satisfies
$s_\al(\al) = - \al$ and $s_\al(y)= y$ for $\al\ch(y)=0$. It
therefore seems to depend on $\al$ and the linear form $\al\ch$.
However, since $\Phi$ is finite, there exists at most one reflection
$s$ with $s(\Phi)=\Phi$ and $s(\al) = - \al$ (\cite[VI, \S1.1,
Lemme~1]{brac}). It is therefore not necessary to indicate $\al\ch$
in the notation of $s_\al$. \sm

Note that we do not assume that $\Phi$ is reduced. This more general
concept of a root system is necessary for the Structure Theorem of
affine root systems (\ref{n:arsstructh}). The reader who is only
familiar with the theory of reduced finite root systems, as for
example developed in \cite[Ch.~III]{hum}, can perhaps be comforted
by the fact that the difference is not very big. Indeed, every
finite root system is a direct sum of connected (= irreducible) root
systems and there is only one irreducible non-reduced root system of
rank $l$, namely
$$\rmbc_l = \rmb_l \cup \rmc_l = \{ \pm \veps_i : 1 \le i \le l\} \cup
\{ \pm \veps_i \pm \veps_j : 1\le i,j\le l\}$$ where here and in the
following $\veps_1, \ldots, \veps_l$ is the standard basis of $F^l$.
(Note $0\in \rmbc_l$ in anticipation of the convention introduced
below.)\sm

In the context of finite-dimensional Lie algebras, non-reduced root
systems arise naturally as the roots of a finite-dimensional
semisimple Lie algebra $L$ with respect to a maximal
$\ad$-diagonalizable subalgebra $H\subset L$ which is not
self-centralizing, hence not a Cartan algebra. In particular,
non-reduced root systems do not occur over an algebraically closed
field. However, they do occur in the context of infinite-dimensional
Lie algebras, even over algebraically closed fields, see
Ex.~\ref{n:arsexone}. \sm

Given a finite root system $(\Phi,Y)$, define
\begin{equation}\label{n:arsexfin1}
 S=\{0\} \cup \Phi \quad\hbox{and} \quad
(x\mid y) = \tsum_{\al\in \Phi}\, \al\ch(x) \, \al\ch(y)
\end{equation} for $x,y\in Y$. Then $\inpr$ is a nondegenerate
symmetric bilinear form on $Y$ with respect to which all reflections
$s_\al$ are isometric (\cite[VI, \S1.1, Prop.~3]{brac}). Moreover,
$(\al \mid \al)$ is a positive integer for every $\al \in \Phi$
(viewing $\QQ \subset F$ canonically) and
$$
   \lan y,\al\ch \ran = \al\ch(y) = 2 \, \textstyle \frac{
(y\mid \al)}{(\al\mid \al)}$$ for all $y\in Y$. Hence $s_\al$ as
defined in (RS2) is also given by the formula (\ref{n:sec:ars1}). We
have $S^0=\{0\} = X^0 = X^0 \cap S$. Since $\lan \Phi, \al\ch\ran
\subset \ZZ$ we have shown that
$$
 \textit{$(S,Y,\inpr)$ as defined in {\rm (\ref{n:arsexfin1})}
is a finite affine reflection system of nullity $0$\/}.$$ We will
characterize finite root systems within the category of affine
reflection systems in Cor.~\ref{n:arsclass0}. \sm

In the following we will always assume that a finite root system
contains $0$. We will usually use the symbol $S$ for a finite root
system, and put
$$   S^\times = S\setminus \{0\} = \Phi.
$$
We will also need the following subsets of roots of a finite root
system $S$:
\begin{itemize}

\item[$S\div$] is the set of divisible roots, where $\al \in S$ is
called \textit{divisible\/} if $\al/2\in S$. In particular $0\in
S\div$. We put $S\div^\times = S\div \cap S^\times = S\div \setminus
\{0\}$.

\item[$S\ind$] $= S\setminus S\div^\times$, the subsystem of
\textit{indivisible roots}.
\end{itemize}
We also need the fact that there exists a unique symmetric bilinear
form $\inpr_u$ on $Y$ which is invariant under the Weyl group $W(S)$
and which satisfies $2\in \{ (\al | \al)_u : 0 \ne \al \in C\}
\subset \{2, 4, 6, 8\}$ for every connected component $C$ of $S$.
This follows easily from \cite[Prop.~7]{brac}. Observe that
$$S\div^\times = \{ \al \in S : (\al |\al)_u=8\}. $$ We use $\inpr_u$
to define short and long roots:
\begin{itemize}
\item[$S\sh$] $= \{ \al \in S : (\al | \al)_u = 2\}$ is the set of
\textit{short roots\/}.

\item[$S\lg$] $= \{ \al \in S : (\al | \al)_u \in \{4, 6\}\}$ is the set of
\textit{long roots} in $S$.
\end{itemize}
Thus $S\lg = S \setminus (S\sh \cup S\div)$. For example, for
$S=\rmbc_l$ we have
\begin{align*}
\rmbc_{l, {\rm sh}} &= \{ \pm \veps_i : 1 \le i \le l\}, \\
\rmbc_{l, {\rm div}}^\times &= \{ \pm 2\veps_i : 1\le i \le l\}, \\
\rmbc_{l, {\rm lg}} &= \{ \pm \veps_i \pm \veps_j : 1 \le i \ne j
\le l\}, \end{align*} in particular $\rmbc_{1, {\rm lg}} =
\emptyset$, and if $S$ is \textit{simply laced}, i.e., $S^\times =
S\sh$, then $S\div = \{0\}$ and $S\lg = \emptyset$.
\end{example}

\begin{example}[\textit{Untwisted affine reflection systems}]
\label{n:arsexn} Let $(S, Y, \inpr_Y)$ be a finite root system.
Hence $0\in S$ and $\Phi=S\setminus \{0\}$, as stipulated in
Example~\ref{n:arsexfin}. Also, let $Z$ be an $n$-dimensional
$F$-vector space, say with a basis $\veps_1, \ldots, \veps_n$. We
define
\begin{align*}
    X &= Y \oplus Z, \\
    \La &= \ZZ \veps_1 \oplus \cdots \oplus \ZZ \veps_n \; \subset Z, \\
    R &= \textstyle \bigcup_{\xi \in S} \{ \xi \oplus \la : \la \in \La \}
           \; \subset Y \oplus Z,\\
   (x_1 \mid x_2)_X &= (y_1 \mid y_2)_Y \quad
 \hbox{for $x_i= y_i \oplus z_i$ with $y_i \in Y$ and $z_i \in Z$.}
\end{align*}
By construction we then have
$$
   X^0 = Z, \quad R^0 = \La, \quad R\an = \textstyle
         \bigcup_{\xi\in \Phi} \, \xi \oplus \La
$$
where of course $\xi \oplus \La = \{ \xi \oplus \la : \la \in
\La\}$. For $\al= \xi \oplus \la \in R\an$ with $\xi\in S$ and $\la
\in \La$ the reflection $s_\al$ satisfies
\begin{equation} \label{n:eq:arsexn1}
  s_\al(y \oplus z) = s_\xi(y) \oplus (z - \lan y, \xi\ch\ran \la).
\end{equation}
We will leave it to the reader to verify that
\begin{equation} \label{n:arsexn2}
\hbox{\it $(R,X)$ is an affine reflection system of nullity $n$.}
\end{equation}
Observe that $(R,X)$ is the set of roots of the EALA constructed in
\ref{n:sec:ealan}, see in particular (\ref{n:eq:ealan2}). \sm

Observe that $\Span_F(R\an) = X= \Rea(X)$ in case $S\ne \{0\}$. This
shows that the form $\inpr_{\Rea}$ of the real part $\Re(R)$ of $R$
need not be nondegenerate.\end{example}

\begin{exercise} Show the claim in (\ref{n:arsexn2}), and also
that $(R,X)$ is reduced resp. connected if and only if $(S,Y)$ is
so. \end{exercise}

\begin{example}[\textit{Affine root systems}] \label{n:arsexone}
By definition, an \textit{affine root system} is the set of roots of
an affine Kac-Moody Lie algebra, which we studied in
\S\ref{n:sec:aff} and then again in \S\ref{n:sec:ealaone}, where we
showed that an affine Kac-Moody algebra is an EALA of nullity $1$.
Our goal here is not surprising. We want to show that
\begin{equation} \label{n:arsexone1} \hbox{\it an affine root system
is an affine reflection system of nullity $1$.}
\end{equation}
Let us first collect the data necessary to prove this. We use the
notation established in \ref{n:sec:ealaone}. Thus, $\hat \scL = \hat
\scL(\g, \si)$ is an affine Kac-Moody Lie algebra over $\CC$, $\si$
is a diagram automorphism of the simple finite-dimensional Lie
algebra $\g$ of order $m\in \{1,2,3\}$, and $\De_{\bar s}$ denotes
the set of weights of the $(\g_{\bar 0}, \frh_{\bar 0})$-module
$\g_{\bar s} \subset \g$, $s=0, \ldots , m-1$. One knows that
$\De_{\bar 0}$ is a reduced irreducible root system in $\frh_{\bar
0}^* =:Y$. The roots of $\hat \scL$ with respect to $H=\frh_{\bar 0}
\oplus \CC c \oplus \CC d$ are
$$
  R=\{ \ga \oplus n\de : \ga \in \De_{\bar s}, \, \bar n = \bar s,
            \,   0 \le s < m\},
$$
see (\ref{n:eq:ealaone3}), hence
$$  X=\Span_\CC (R) = Y \oplus \CC \de.$$
The bilinear form $\inpr_X$ used to determine the (an)isotropic
roots in $R$ has the form
$$ (x_1 \mid x_2)_X = (y_1 \mid y_2)_Y$$
where $x_i = y_i \oplus a_i \de$ with $y_i \in Y$ and $a_i \in \CC$,
and where $\inpr_Y$ is the nondegenerate symmetric bilinear form on
$Y$, obtained by transporting the Killing form $\ka\mid_{\frh_{\bar
0} \times \frh_{\bar 0}}$ from $\frh_{\bar 0}$ to $Y$. It follows
that
$$  X^0 = \CC \de \quad \hbox{and} \quad R\an=
         \{ \ga \oplus n \de \in R: \ga\ne 0\}.$$
We can now verify the axioms (AR1)--(AR4). \sm

(AR1) holds by definition. (AR2) is a consequence of
\cite[Prop.~3.7(b)]{kac}. Concerning (AR3), it follows from the
structure of $\inpr_X$ that \begin{equation} \label{n:arsexone3}
 \lan x, \al\ch \ran = \lan y, \ga\ch\ran \quad \hbox{for
     $x=y\oplus a \de\in X$ and $\al = \ga \oplus n \de \in R\an$.}
\end{equation} This implies that $\lan R, \al\ch\ran $ is a finite set since
$$
   S = \De_{\bar 0} \cup \cdots \cup \De_{\overline{m-1}}
$$
is a finite set ($S$ is actually a finite root system; for $m>1$ see
the table below). Moreover $\lan R, \al\ch\ran \subset \ZZ$ because
$\hat \scL$ is an integrable $\hat \scL$-module
(\cite[Lemma~3.5]{kac}). Thus (AR3) holds, and (AR4) follows from
(\ref{n:arsexone3}) and $(\ga \mid \ga) = 0 \Leftrightarrow \ga=0$
for $\ga \in S$. This proves (\ref{n:arsexone1}). \sm

To motivate the definition of extension data in Def.~\ref{n:arsed}
and the Structure Theorem \ref{n:arsstructh} for affine reflection
systems, we will now look at $R$ and $S$ more closely. In the
untwisted case, i.e., $m=1$, we have of course
$$  \De_{\bar 0} = S , \quad R= S \times \ZZ \de \quad (m=1).
$$
Thus $R$ is an untwisted affine root system of nullity $1$, a
special case of the Example~\ref{n:arsexn}. For $m=2,3$ the
structure of $\De_{\bar s}$ and $S$ is summarized in the table
below. Proofs can be extracted from \cite[7.9, 7.8, 8.3]{kac}.
\begin{equation} \label{n:table1} {\renewcommand{\arraystretch}{1.5}
\begin{array}{|c|| c| c| c |} \hline
 (\g, m) &  \De_{\bar 0} & \De_{\bar 1} &  S \\ \hline \hline
  (\rma_{2l}, 2), l\ge 1  &  \rma_1 \text{ or } \rmb_l (l \ge 2)
    &  \De_{\bar 0} \cup \{\pm 2\veps_i : 1 \le i \le l \} & \rmbc_l \\ \hline
 (A_{2l-1}, 2), l\ge 2   & \rmc_l & \{0\} \cup \rmc_{l, {\rm sh}} & \rmc_l \\ \hline
(\rmd_{l+1}, 2), l\ge 3 & \rmb_l& \{0\} \cup \rmb_{l, {\rm sh}} & \rmb_l \\
\hline
 (\rme_6,2) & \rmf_4 & \{0\} \cup \rmf_{4, {\rm sh}}  & \rmf_4 \\ \hline
 (\rmd_4, 3) &  \rmg_2 & \{0\} \cup \rmg_{2, {\rm sh}}  & \rmg_2 \\ \hline
\end{array}}
\end{equation} We can now rewrite $R$. For subsets $T \subset S$ and $\Xi
\subset \ZZ \de$ we put $$T \oplus \Xi = \{ \ta \oplus n \de : \ta
\in T, \, n\de \in \ZZ \de\}$$ and abbreviate $\rmb_1 = \rma_1 =
\{0,\pm \al\}$. For $m=2$ we get
\begin{align*}
  R &= (\De_{\bar 0} \oplus 2 \ZZ \de) \cup
       \big(\De_{\bar 1} \oplus (1 + 2\ZZ) \de\big) \\
 &= \begin{cases}
    (\{0\} \oplus \ZZ \de) \cup \big( (\rmb_l \setminus \{0\} \oplus \ZZ \de\big)
            \cup
          \big(\rmbc^\times \div \oplus (1 + 2 \ZZ) \de\big),
 & \g = \rma_{2l} \\
   (\{0\} \oplus \ZZ \de) \cup \quad (S\sh \oplus \ZZ \de) \quad \cup
      \quad   (S\lg \oplus 2 \ZZ \de ),  &\g \not = \rma_{2l}.
 \end{cases}
 \end{align*}
For $(\g, m)=(\rmd_4, 3)$ one knows $\De_{\bar 1} = \De_{\bar
2}=\{0\} \cup \rmg_{2, {\rm sh}}$, whence \begin{align*}
  R&=(\De_{\bar 0} \oplus 3 \ZZ \de) \cup
       \big(\De_{\bar 1} \oplus (1 + 3\ZZ) \de\big)  \cup \big(
         \De_{\bar 2}\oplus (2 + 3\ZZ)\de\big) \\
  &= (\{0\} \oplus \ZZ \de) \cup (S\sh \oplus \ZZ \de) \cup
         (S\lg \oplus 3\ZZ\de).
\end{align*}
In all three cases $R$ has a simultaneous description in terms of
the root system $S$ and subsets $\La\sh, \La\lg, \La\div \subset \ZZ
\de$ as
\begin{equation} \label{n:arsexone4}
 R = R^0 \cup (S\sh \oplus  \La\sh) \cup (S\lg \oplus \La\lg) \cup
        (S\div^\times \oplus \La\div)
\end{equation}
where 
\begin{equation} \label{n:arsexone5}
 \La\sh = \ZZ \de = R^0, \quad \La\div  = (1 +2 \ZZ) \de, \quad
   \La\lg =  \begin{cases} \ZZ\de, & \g=\rma_{2l}, \, m=2, \\
                            2\ZZ \de, & \g \ne \rma_{2l}, \, m=2, \\
                            3\ZZ \de, & m=3. \end{cases}
\end{equation}
If we define $\La_\xi$ for $\xi \in S$ by $\La_\xi \in \{ \La_0=
R^0, \La\sh, \La\lg, \La\div\}$ according to $\xi$ belong to the
corresponding subset of $S$, then (\ref{n:arsexone4}) becomes
\begin{equation}\label{n:arsexone7}
  R = \textstyle\bigcup_{\xi \in S} \xi \oplus \La_\xi.
\end{equation}
Note that we also recover \cite[Th.~5.6(b)]{kac}:
$$   R \cap X^0 = R \cap \ZZ \de.
$$

\end{example}

\begin{example}[\textit{Type $\rma_1$ generalized}] \label{n:arsexaone}
We consider a final example of an affine reflection system to
motivate the definition of an extension datum in \ref{n:arsed}
below. \sm

Let $Z$ be a finite-dimensional $F$-vector space and define the
vector space $X$ and a symmetric bilinear form on $X$ by
$$ X= F \al \oplus Z, \quad (a_1 \al \oplus z_1 \mid a_2 \al \oplus
z_2) = a_1 a_2,
$$
where $0\ne \al$ and $a_i \in F$. We define $R\subset X$ in terms of
three non-empty subsets $\La_0, \La_\al, \La_{-\al} \subset Z$ as
follows: \begin{equation} \label{n:arsexaone0}     R = \La_0 \cup
(\al \oplus \La_\al) \cup (-\al \oplus \La_{-\al}). \end{equation}
It is then immediate that
$$
   X^0 = Z, \quad R^0 = \La_0, \quad
  R\an = (\al  \oplus \La_\al) \cup ( -\al  \oplus \La_{-\al}).
$$
We will now discuss under which conditions $(R,X,\inpr)$ is an
affine reflection system. Let us start with (AR2). For $s_i \in
\{\pm 1\}$, $\mu \in \La_{s_1 \al}$ and $\la \in \La_{s_2 \al}$ we
have
\begin{align*}
\lan s_1 \al \oplus \mu, (s_2 \al \oplus \la)\ch\ran &= 2 \, s_1s_2,
\\
 s_{s_2 \al \oplus \la} (s_1 \al \oplus \mu) &= -s_1 \al \oplus
   (\mu - 2 s_1 s_2 \la)
\end{align*}

Hence, all reflections $s_{s_2\al \oplus \la}$ leave $R$ invariant
if and only if $\mu - 2 s_1 s_2 \la \in \La_{-s_1 \al}$ for
$\mu,\la$ as above, i.e., in obvious short form
$\La_{s_1\al} - 2 s_1 s_2 \La_{s_2 \al} \subset \La_{-s_1 \al}$.
In particular,
$$ \begin{tabular}{lcc}
    $\La_\al - 2\La_\al \subset \La_{-\al},$
       & $\La_{-\al} - 2 \La_{-\al} \subset \La_\al$ & for $s_1=s_2$, \\
 $\La_{-\al} + 2 \La_\al \subset \La_\al$ &  for $s_1 = -1 = - s_ 2.$
\end{tabular}
$$
For $\la \in \La_\al$ we therefore get $\la - 2 \la = - \la \in
\La_{-\al}$, whence $-\La_\al \subset \La_{-\al}$ and, analogously,
$\La_{-\al} \subset -\La_\al$. We therefore obtain
\begin{equation} \label{n:arsexaone2}
\La_{-\al} = - \La_\al,  \end{equation} or with the notation of
above $\La_{s_1 \al} = s_1 \La_\al$. It is now easy to see that
(AR2) is equivalent to the two conditions (\ref{n:arsexaone2}) and
\begin{equation} \label{n:arsexaone4}
2\La_\al - \La_\al  \subset \La_\al. \end{equation} It then follows
that $R$ is an affine reflection system if and only if
\begin{enumerate}
 \item[(i)] (\ref{n:arsexaone2}) and (\ref{n:arsexaone4}) hold,
 \item[(ii)] $0\in \La_0$, and
 \item[(iii)] $Z= \Span_F ( \La_0 \cup \La_\al \cup \La_{-\al})$.
\end{enumerate}
Observe the similarity with the previous examples: $R$ has the form
$$R= \textstyle \bigcup_{\xi \in S} \xi \oplus \La_\xi$$ where $S=\{0,
\pm \al\}$ is a finite root system and $(\La_\xi : \xi \in S)$ is a
family of subsets in $X^0$. However, in the previous examples the
$\La_\xi$ were subgroups of $(Z,+)$ while here we only have the
condition (\ref{n:arsexaone4}). Does this imply that $\La_\al$ is a
subgroup? The answer is no! For example, in $Z=F$ the subset
$\La_\al = 1 + 2 \ZZ \subset F$ satisfies (\ref{n:arsexaone4}). \sm

A subset $A$ of an abelian group $(Z,+)$ is called a
\textit{reflection subspace} if $2a_1-a_2 \in A$ for all $a_i \in A$
(see \cite{l:sp} or \cite[3.3]{n:persp} for a justification for this
terminology). Hence, (\ref{n:arsexaone4}) just says that $\La_\al$
is a reflection subspace. The structure of two special types of
reflection subspaces is described in Exercise~\ref{n:arsexlem}
below. \sm

While in general $\La_\al$ is far from being a subgroup, one can
always ``re-coordina\-tize'' $R$ to at least get $0\in \La_\al$.
Namely, for a fixed $\la \in \La_\al$ we have $\al + \La_\al = (\al
+ \la) + (\La_\al - \la)$. Hence, with $\tilde \al = \al + \la$ and
$\La_{\tilde \al} = \La_\al - \la$, we obtain \begin{equation}
\label{n:arsexaone5}
    R=  \La_0 \cup (\tilde \al + \La_{\tilde \al}) \cup
        \big( - (\tilde \al + \La_{\tilde \al}) \big),
\end{equation} where now $\La_{\tilde \al}$ not only satisfies
(\ref{n:arsexaone4}) but also $0\in \La_{\tilde \al}$. In other
words, $\La_{\tilde \al}$ is a pointed reflection subspace as
defined in Lemma~\ref{n:arsexlem} and therefore also satisfies
$\La_{\tilde \al} = - \La_{\tilde \al}$. \sm

The process of re-coordinatization works well in this example. The
reason is that the finite root system $S$ in (\ref{n:arsexaone5}) is
reduced. Re-coordinatization will not work if $S$ is not reduced, as
for example in the case $(\g,m)=(\rma_{2l},2)$ of
Example~\ref{n:arsexone}. This ``explains'' why in the property
(ED2) of an extension datum in \ref{n:arsed} we require $0\in
\La_\xi$ only for an indivisible root $\xi \in S$.
\end{example}

\begin{exercise} \label{n:arsexlem}
Let $A$ be a subset of an abelian group $(Z,+)$. As above we put
$2A-A = \{2a_1-a_2 : a_i \in A\}$. We denote by $\La=\Span_\ZZ (A)$
the $\ZZ$-span of $A$ in $Z$. A subset $A\subset Z$ is called
\emph{symmetric\/} if $A=-A$.
\begin{itemize}

\item[(a)] The following equivalent conditions characterize
symmetric reflection subspaces $A \subset Z$:
\begin{itemize}

\item[(i)]  $2A - A \subset A$ and $A=-A$,

\item[(ii)]  $2\la + a \in A$ for every $\la\in \La$ and $a\in A$,

\item[(iii)] $A$ is a union of cosets modulo $2\La$,

\item[(iv)] $a_1 - 2 a_2 \in A$ for all $a_i \in A$.
\end{itemize}
\sm

\item[(b)] The following are equivalent for $A \subset Z$: \begin{enumerate}

\item[(i)] $0\in A$ and $A - 2A \subset A$,

\item[(ii)] $0\in A$ and $2A - A \subset A$,

\item[(iii)] $2\ZZ[A]\subset A$ and $2\ZZ[A] - A \subset A$,

\item[(iv)] $A$ is a union of cosets modulo $2\ZZ[A]$, including the
trivial coset $2\ZZ[A]$.
\end{enumerate}
 In this case $A$ is called a \textit{pointed} reflection subspace.
\sm

\item[(c)] Every pointed reflection subspace is symmetric. \sm

\item[(d)] If $A$ is a symmetric reflection subspace then $A+A$ is
a  pointed reflection subspace.
\end{itemize} \end{exercise}

\subsection{The Structure Theorem of affine reflection systems} \label{n:sec:arstrut}

After the many examples in \ref{n:sec:arsex}, the following
definition should not be too surprising.

\begin{definition} \label{n:arsed} Let $S$ be a finite root system as defined in
\ref{n:arsexfin}. Recall $S^\times = S\setminus \{0\}$ and $S\ind =
\{0\} \cup \{ \al \in S : \al/2 \not\in S\} = S\setminus
S\div^\times$. Also, let $Z$ be a finite-dimensional $F$-vector
space. An \textit{extension datum of type $(S,Z)$\/}, sometimes
simply called an \textit{extension datum\/}, is a family $(\La_\xi :
\xi \in S)$ of subset $\La_\xi \subset Z$ satisfying the axioms
(ED1)--(ED3) below.

\begin{description}

\item[(ED1)] For $\eta, \xi \in S\ti$, $\mu \in \La_\eta$ and $\la
\in \La_\xi$ we have $\mu - \lan \eta, \xi\ch\ran \xi \in
\La_{s_\xi(\eta)}$, in obvious short form
$$
    \La_\eta - \lan \eta, \xi\ch\ran \La_\xi \subset
\La_{s_\xi(\eta)}.
$$

\item[(ED2)] $0\in \La_\xi$ for $\xi \in S\ind$, and $\La_\xi \ne
\emptyset$ for $\xi \in S\setminus S\ind = S\div^\times$.

\item[(ED3)] $Z = \Span_F \big( \bigcup_{\xi \in S} \La_\xi\big) $.
\end{description}
\end{definition}
The axiom (ED1) is trivially true for $\eta=0$ since $\lan \eta,
\xi\ch\ran = 0$ and $s_\xi(0)= 0$. Also, if $S\div^\times =
\emptyset$, then there is no $\La_\xi$ for $\xi\in S\div^\times$ and
so the second condition in (ED2) is trivially fulfilled. (ED3)
simply serves to determine $Z$. If it does not hold, one can simply
replace $Z$ by $\Span_\ZZ(\bigcup_{\xi \in S} \La_\xi)$. \sm

The definition of an extension datum above is a special case of the
notion of an extension datum for a pre-reflection system, introduced
in \cite[4.2]{prs}. (The reader will note that the axiom (ED1) in
\cite{prs} simplifies since in our setting the subset $S\re$ of
\cite{prs} is $S\re= S\an = S\setminus \{0\}$.) The Structure
Theorem~\ref{n:arsstructh} below is proven in \cite[Th.~4.6]{prs}
for extensions of pre-reflection systems. Affine reflection systems
are special types of such extensions, namely finite-dimensional
extensions of finite root systems. \sm

The rationale for the concept of an extension datum is the following
Structure Theorem for affine reflection systems.

\begin{theorem}[\textbf{Structure Theorem for affine reflection systems}]
\label{n:arsstructh} \ \newline \noindent {\rm (a)} Let $(S,Y,
\inpr_Y)$ be a finite root system and let $\frL = (\La_\xi : \xi \in
S)$ be an extension datum of type $(S,Z)$. Define $(R,X,\inpr_X)$ by
\begin{align*}
     X & = Y \oplus Z \\
     R &= \textstyle \bigcup_{\xi \in S} \,  \xi \oplus \La_\xi
     \; \subset \; Y \oplus Z = X, \\
   (y_1 \oplus z_1 \mid y_2 \oplus z_2)_X &= (y_1 \mid y_2)_Y
\end{align*}
for $y_i \in Y$ and $z_i \in Z$. Then $(R,X,\inpr_X)$ is an affine
reflection system, denoted $\scA(S,\frL)$,  with
$$
  R^0 = \La_0, \quad X^0=Z \quad\hbox{and} \quad R\an = \textstyle \bigcup_{0 \ne \xi \in S} \,
     \xi \oplus \La_\xi.
$$
For $\al = \xi \oplus \la \in R\an$ and $x=y \oplus z \in X$ the
reflection $s_\al$ is given by
$$
    s_\al (x) = s_\xi(y) \oplus (z - \lan y, \xi\ch\ran \la)
$$

\noindent {\rm (b)} Conversely, let $(R,X,\inpr_X)$ be an affine
reflection system. \begin{itemize}

\item[\rm (i)] Let $f : X \to X/X^0=:Y$ be the canonical map, put
$S=f(R)$ and let $\inpr_Y$ be the induced bilinear form on $Y$, that
is $(f (x_1) \mid f(x_2))_Y = (x_1 \mid x_2)_X$. Then $(S,Y,
\inpr_Y)$ is a finite root system, the so-called {\rm quotient root
system} of $(R,X)$.

\item[\rm (ii)] There exists a linear map $g : Y \to X$ satisfying
$f\circ g = \Id_Y$ and $g(S\ind) \subset R$.

\item[\rm (iii)] For $g$ as in {\rm (ii)} and $\xi \in S$ define
$\La_\xi \subset  \Ker(f) =: Z$ by \begin{equation}
\label{n:arsstructh1}
   R\cap f^{-1}(\xi) =  g(\xi) \oplus \La_\xi.  \end{equation}
Then $\frL = (\La_\xi : \xi \in S)$ is an extension datum of type
$(S,Z)$.

\item[\rm (iv)] $(R,X)$ is isomorphic to the affine reflection
system $\scA(S,\frL)$ constructed in {\rm (a)}.
\end{itemize}
 \end{theorem}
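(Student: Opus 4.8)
The plan is to treat part (a) as a routine verification and to concentrate the real work on the converse in part (b). For (a), writing a typical element of $R$ as $\al=\xi\oplus\la$ and using $(y_1\oplus z_1\mid y_2\oplus z_2)_X=(y_1\mid y_2)_Y$ together with nondegeneracy of $\inpr_Y$, one reads off $X^0=Z$; from $(\al\mid\al)_X=(\xi\mid\xi)_Y$, which vanishes exactly when $\xi=0$, one gets $R^0=\La_0$ and $R\an=\bigcup_{0\ne\xi}\xi\oplus\La_\xi$; and the computation $\lan y\oplus z,(\xi\oplus\la)\ch\ran=\lan y,\xi\ch\ran$ gives the reflection formula. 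Then I would check (AR1)--(AR4): $0\in\La_0$ by (ED2) so $0\in R$, the elements $\xi\oplus 0$ with $\xi\in S\ind$ span $Y$, and subtracting $\xi\oplus 0$ (resp.\ $2(\zeta\oplus 0)$ when $\xi=2\zeta\in S\div^\times$) from $\xi\oplus\la\in R$ shows each $\La_\xi$ lies in $\Span R$, so (ED3) gives $\Span R=X$, proving (AR1); axiom (AR2) is precisely (ED1) fed into the reflection formula (using $s_\xi(S)=S$); (AR3) holds because $\lan R,\al\ch\ran=\lan S,\xi\ch\ran$ is finite and integral; and (AR4) holds since $R^0$ and $R\cap X^0$ both equal $\La_0$.

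For part (b), I would set $f\colon X\to Y:=X/X^0$, $S:=f(R)$, and let $\inpr_Y$ be the form induced by $\inpr_X$, which is well defined and nondegenerate because $X^0$ is the radical of $\inpr_X$. Since $R^0\subset X^0$ by (AR4) while anisotropic roots avoid $X^0$, one gets $f(R^0)=\{0\}$ and $S^\times:=S\setminus\{0\}=f(R\an)$, which spans $Y$. The point here that needs a genuine argument is that \emph{$S$ is finite}: fixing $\al_1,\dots,\al_d\in R\an$ whose images form a basis of $Y$, nondegeneracy of $\inpr_Y$ makes $\bar\beta\mapsto(\lan\beta,\al_i\ch\ran)_{1\le i\le d}$ an injection of $S$ into $\ZZ^d$, while (AR3) forces each coordinate into the fixed finite set $\lan R,\al_i\ch\ran$. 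With finiteness in hand, the induced coreflections $\bar\al\ch(f(x)):=\lan x,\al\ch\ran$ ($\al\in R\an$) and the reflections $s_{f(\al)}$ verify the axioms (RS1)--(RS3), so $(S,Y,\inpr_Y)$ is a finite root system with $S^0=\{0\}$; this is (b)(i).

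Next, for (b)(ii), I would choose a base $\Pi=\{\bar\al_1,\dots,\bar\al_d\}$ of the reduced root system $S\ind$, pick for each simple root a lift $g(\bar\al_i)\in R\cap f^{-1}(\bar\al_i)$ (nonempty since $\bar\al_i\in S=f(R)$), and extend $g$ linearly to a map $Y\to X$, so that $f\circ g=\Id_Y$. The decisive point is that $g$ intertwines reflections: for every $y\in Y$,
\[
 g\big(s_{\bar\al_j}(y)\big)=g(y)-\lan y,\bar\al_j\ch\ran g(\bar\al_j)=s_{g(\bar\al_j)}\big(g(y)\big),
\]
using linearity of $g$, that $g(\bar\al_j)$ is anisotropic, and that $\lan g(y),g(\bar\al_j)\ch\ran=\lan y,\bar\al_j\ch\ran$ because $\inpr_X$ descends to $\inpr_Y$ along $f$. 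Composing, $g\circ w=\tilde w\circ g$ for every $w$ in the Weyl group of $S\ind$, where $\tilde w\in W(R)$ is the corresponding product of the reflections $s_{g(\bar\al_j)}$. Since every indivisible root lies in the Weyl-group orbit of a simple root and $R$ is $W(R)$-stable, this yields $g(S\ind)\subset R$.

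Finally, using $X=g(Y)\oplus X^0$, I would put $Z:=X^0$ and $\La_\xi:=\{z\in Z:g(\xi)+z\in R\}$, so that $R\cap f^{-1}(\xi)=g(\xi)\oplus\La_\xi$. Then (ED2) holds since $g(\xi)\in R$ for $\xi\in S\ind$ (by (b)(ii), with $g(0)=0\in R$) and $\La_\xi\ne\emptyset$ for all $\xi\in S$; (ED3) follows exactly as in part (a); and (ED1) is obtained by applying the reflection $s_{g(\xi)+\la}$, which stabilizes $R$ by (AR2), to an element $g(\eta)+\mu\in R$ and recognizing the result, by linearity of $g$, as $g(s_\xi(\eta))+(\mu-\lan\eta,\xi\ch\ran\la)$; this proves (b)(iii). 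For (b)(iv), the linear isomorphism $\phi\colon Y\oplus Z\to X$, $\phi(y\oplus z)=g(y)+z$, is an isometry taking $\bigcup_\xi\xi\oplus\La_\xi$ onto $R$, $\La_0$ onto $R^0$, and $\bigcup_{0\ne\xi}\xi\oplus\La_\xi$ onto $R\an$, hence an isomorphism $\scA(S,\frL)\cong(R,X)$. The main obstacle is exactly (b)(ii): producing a \emph{single} linear section that lifts \emph{all} indivisible roots into $R$, which hinges on the intertwining identity above and the standard transitivity of the Weyl group on indivisible roots; the finiteness of $S$ in (b)(i) is the other step needing a real, if short, argument.
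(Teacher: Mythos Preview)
Your proof is correct and complete. The paper itself does not prove this theorem but refers to \cite[Th.~4.6]{prs}; your direct verification of (a) and your arguments for (b) --- especially the finiteness of $S$ via the injection $\bar\beta\mapsto(\lan\beta,\al_i\ch\ran)_i$ into a finite box in $\ZZ^d$, and the construction of the section $g$ by lifting a base of $S\ind$ and propagating via the intertwining identity $g\circ s_{\bar\al_j}=s_{g(\bar\al_j)}\circ g$ --- constitute exactly the natural approach and are what one expects the cited proof to contain in this finite-dimensional setting.
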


Let us note that it is not reasonable to expect $g(S) \subset R$ in
(b.ii) above, since $R$ may be reduced while $S$ is not, see for
example the case $(\g,\rma_{2l})$ in \ref{n:arsexone}. The quotient
root system $S$ is uniquely determined, but not so the extension
datum, see \cite[Th.~4.6(c)]{prs}.\sm

\begin{corollary} \label{n:arsclass0} An affine reflection system $(R,X,\inpr)$ is
nondegenerate in the sense that $\inpr$ is nondegenerate if and only
if $R$ is a finite root system.\end{corollary}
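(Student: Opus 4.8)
The plan is to extract both implications from the Structure Theorem~\ref{n:arsstructh}. Using part~(b), I identify $(R,X,\inpr)$ with an affine reflection system $\scA(S,\frL)$: here $f : X \to X/X^0 =: Y$ is the canonical projection, $S = f(R)$ is the quotient finite root system, $Z := \Ker f = X^0$, and $\frL = (\La_\xi : \xi \in S)$ is an extension datum of type $(S,Z)$ with $R = \bigcup_{\xi \in S} \xi \oplus \La_\xi$. Throughout I use that $\inpr$ is nondegenerate exactly when $X^0 = 0$.

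First the easy direction. If $\inpr$ is nondegenerate then $X^0 = 0$, so in Theorem~\ref{n:arsstructh}(b)(i) the map $f$ is the identity; hence $Y = X$, $S = R$, and the induced form $\inpr_Y$ equals $\inpr$. That part of the theorem then states precisely that $(R,X,\inpr)$ is a finite root system.

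The substance is the converse. Here the point to appreciate is that $R^0 = \{0\}$ alone does \emph{not} give $X^0 = 0$ --- by Example~\ref{n:arsexreal} an affine reflection system may have nullity $0$ and still carry a degenerate form --- so I must show the stronger statement that every layer $\La_\xi$ of the extension datum collapses to $\{0\}$. Since $R$ is a finite root system it is finite, so $S = f(R)$ is finite, and by the description $R \cap f^{-1}(\xi) = g(\xi) \oplus \La_\xi$ of (\ref{n:arsstructh1}) each $\La_\xi$ is a finite subset of $Z$. Take first an indivisible root $\xi \in S\ind$. Applying (ED1) with $\eta = \xi$ and reflecting root $\xi$, and using $\lan \xi,\xi\ch\ran = 2$ and $s_\xi(\xi) = -\xi$, I get $\La_\xi - 2\La_\xi \subset \La_{-\xi}$; putting $\la = 0 \in \La_\xi$ (available from (ED2), as is $0 \in \La_{-\xi}$ because $-\xi$ is also indivisible) gives $\La_\xi \subset \La_{-\xi}$, and by the symmetric argument $\La_\xi = \La_{-\xi}$. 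Thus $\La_\xi - 2\La_\xi \subset \La_\xi$ with $0 \in \La_\xi$, i.e.\ $\La_\xi$ is a pointed reflection subspace of $Z$; by Exercise~\ref{n:arsexlem}(b) it contains the subgroup $2\ZZ[\La_\xi]$, and since $\ZZ[\La_\xi]$ is finitely generated and torsion-free, finiteness of $\La_\xi$ forces $\ZZ[\La_\xi] = 0$, that is $\La_\xi = \{0\}$. Now let $\xi$ be a divisible root and write $\xi = 2\be$ with $\be := \xi/2 \in S$; in a finite root system $\be$ is necessarily indivisible, so $\La_\be = \{0\} = \La_{-\be}$ by the previous step. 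Applying (ED1) with $\eta = \be$ and reflecting root $2\be$, where $\lan \be,(2\be)\ch\ran = 1$ and $s_{2\be}(\be) = -\be$, gives $\La_\be - \La_{2\be} \subset \La_{-\be} = \{0\}$, hence $\La_{2\be} \subset \{0\}$; as $\La_{2\be} \neq \emptyset$ by (ED2), we conclude $\La_\xi = \{0\}$. So all $\La_\xi$ vanish, and by (ED3) $X^0 = Z = \Span_F\big(\bigcup_{\xi \in S} \La_\xi\big) = 0$; hence $\inpr$ is nondegenerate.

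The main obstacle is precisely this converse: the nontrivial insight is that finiteness of $R$ is strictly stronger than vanishing of $R^0$ and is exactly what kills each $\La_\xi$. The only technical nuisances are (a) that (ED2) supplies $0 \in \La_\xi$ only for indivisible $\xi$, forcing one to reach divisible roots through (ED1), and (b) the elementary observation that a finite pointed reflection subspace is trivial.
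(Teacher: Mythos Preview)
Your forward direction agrees with the paper's: nondegeneracy forces $X^0=0$, so $f$ is the identity and $R=S$ is a finite root system by the Structure Theorem.

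For the converse the paper does almost nothing: it simply points to Example~\ref{n:arsexfin}, where a finite root system was equipped with the nondegenerate form~(\ref{n:arsexfin1}) and shown to be an affine reflection system. In other words, the paper reads ``$R$ is a finite root system'' as ``$(R,X,\inpr)$ is one of the affine reflection systems of Example~\ref{n:arsexfin}'', and then the converse is immediate.

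You take a much harder route, attempting to deduce nondegeneracy of the \emph{given} $\inpr$ from the assumption that $R$ is a finite root system. Your argument that finiteness of each $\La_\xi$ forces $\La_\xi=\{0\}$ for $\xi\in S^\times$ is correct and a nice use of Exercise~\ref{n:arsexlem}. But there is a real gap: you never treat $\La_0$. The axiom (ED1) is stated only for $\eta,\xi\in S^\times$, so your reflection-subspace machinery says nothing about $\La_0$, and the line ``so all $\La_\xi$ vanish'' is unjustified. Without $\La_0=\{0\}$, (ED3) does not give $Z=0$.

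This gap is not cosmetic. Under the reading you actually use (only that $R$ is finite), the converse is false: take $X=F\al$, $\inpr=0$, $R=\{0,\pm\al\}$. All of (AR1)--(AR4) hold (those involving $R\an$ vacuously, since $R\an=\emptyset$), so this is an affine reflection system; $R\setminus\{0\}$ is a root system of type $\rma_1$; yet $\inpr$ is degenerate. Here $S=\{0\}$ and $\La_0=R$. What rescues the statement is the implicit identification of the root-system reflections with the reflections $s_\al$ of the affine reflection system: this forces every $\al\in R\setminus\{0\}$ to lie in $R\an$, hence $R^0=\{0\}$ and $\La_0=\{0\}$. With that one sentence added, your argument is complete --- and in fact proves more than the paper's one-line reference makes explicit.
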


\begin{proof} If $(R,X,\inpr)$ is an affine reflection system with
a nondegenerate form $\inpr$, then $\{0\}= X^0 = \Ker f$, so $f$ is
the identity. We have seen the other direction in
Example~\ref{n:arsexfin}.\end{proof}

\begin{corollary}[{\cite[Cor.~5.5]{prs}}] \label{n:arsrealcl}
Let $(R,X,\inpr)$  be an affine reflection system over $F=\RR$. Then
there exists a positive semidefinite symmetric bilinear form
$\inpr_{\ge}$ on $X$ such that $(R,X,\inpr_\ge)$ is an affine
reflection system with the same (an)isotropic roots and reflections.
\end{corollary}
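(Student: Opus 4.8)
The plan is to discard the old form entirely and instead pull back, along the canonical projection $f\colon X \to Y:=X/X^0$, the canonical positive definite form of the quotient root system. First I would record the easy structural facts. Since $X^0$ is by definition the radical of $\inpr$, the form descends to a \emph{nondegenerate} symmetric bilinear form $\inpr_Y$ on $Y$ with $(x_1\mid x_2) = (f(x_1)\mid f(x_2))_Y$, and by the Structure Theorem~\ref{n:arsstructh}(b) the pair $(S,Y)$, $S:=f(R)$, is a finite root system whose associated bilinear form is $\inpr_Y$. Because $X^0=\Ker f$ is the radical and $R=R^0\cup R\an$ with $R^0 = R\cap X^0$ by (AR4), this immediately gives $R\an = \{\al\in R : f(\al)\ne 0\}$ and $R^0 = \{\al\in R : f(\al)=0\}$. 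I would also invoke the standard fact that the coroot $\xi\ch\in Y^*$ of $\xi\in S^\times:=S\setminus\{0\}$ is intrinsic to $S$ and that for \emph{any} nondegenerate $W(S)$-invariant symmetric bilinear form $b$ on $Y$ one has $\xi\ch(y) = 2\,b(y,\xi)/b(\xi,\xi)$ for all $y\in Y$. Applied to $b=\inpr_Y$, this shows that for $\al\in R\an$, writing $\bar\al:=f(\al)\in S^\times$, the reflection of $(R,X,\inpr)$ is
\[
   s_\al(x) = x - \lan x,\al\ch\ran\,\al = x - \bar\al\ch\big(f(x)\big)\,\al ,
\]
so the reflections of $(R,X,\inpr)$ are controlled purely by $f$ and the intrinsic coroots of $S^\times$.

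Next I would let $b_0$ be the canonical $W(S)$-invariant form on $Y$ attached to $S^\times$ as in Example~\ref{n:arsexfin}, namely $b_0(y_1,y_2)=\sum_{\xi\in S^\times}\xi\ch(y_1)\,\xi\ch(y_2)$; it is nondegenerate and, over $F=\RR$, manifestly positive semidefinite, hence positive definite. I then define $\inpr_\ge$ on $X$ by $(x_1\mid x_2)_\ge := b_0\big(f(x_1),f(x_2)\big)$. This is symmetric and positive semidefinite, and its radical is exactly $\Ker f = X^0$.

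It remains to check that $(R,X,\inpr_\ge)$ is an affine reflection system with the same (an)isotropic roots and reflections. For $\al\in R$ we have $(\al\mid\al)_\ge = b_0(\bar\al,\bar\al)$, which is nonzero iff $\bar\al\ne 0$ since $b_0$ is definite; hence the anisotropic roots of $\inpr_\ge$ are $\{\al : f(\al)\ne 0\}=R\an$ and the null roots are $R^0$, exactly as before. For $\al\in R\an$, applying the intrinsic-coroot fact to $b=b_0$ gives $\lan x,\al\ch\ran_\ge = 2\,b_0(f(x),\bar\al)/b_0(\bar\al,\bar\al)=\bar\al\ch(f(x))=\lan x,\al\ch\ran$, so $s_\al$ is unchanged. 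Finally the axioms hold: (AR1) does not involve the form; (AR2) and (AR3) hold because the reflections $s_\al$ and the sets $\lan R,\al\ch\ran$ for $\al\in R\an$ are literally those of $(R,X,\inpr)$, which satisfies these axioms; and (AR4) holds since the radical of $\inpr_\ge$ is $X^0$ and $R^0_{\inpr_\ge}=\{\al : f(\al)=0\}=R\cap X^0$.

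The step I expect to require the most care is the intrinsic-coroot fact, i.e.\ that $\lan x,\al\ch\ran$ comes out the same whether computed from $\inpr$ or from $\inpr_\ge$; this is precisely where one uses that the space of invariant symmetric bilinear forms on an irreducible finite root system is one-dimensional (so the normalizing scalar cancels in the ratio). Everything else — positive semidefiniteness of $\inpr_\ge$, the description of $R\an$ and $R^0$ through $f$, and the verification of (AR1)--(AR4) — is formal once the Structure Theorem is in hand. As an alternative I could skip $b_0$ and simply rescale $\inpr_Y$ by $\pm 1$ on each irreducible component of $S$ to make it positive definite (with unchanged coroot pairings) and pull that back instead; the argument is the same.
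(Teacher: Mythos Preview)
Your proposal is correct and is exactly the argument the paper's placement suggests: the corollary is stated immediately after the Structure Theorem~\ref{n:arsstructh} (with only a citation to \cite[Cor.~5.5]{prs} and no proof given in the paper), and your approach---pull back a positive definite $W(S)$-invariant form from the quotient root system $(S,Y)$ along $f$---is the natural way to derive it as a corollary. One minor remark: for the ``intrinsic coroot'' step you do not actually need the one-dimensionality of invariant forms on each irreducible component; it suffices that the reflection $s_\xi$ with $s_\xi(S)=S$ and $s_\xi(\xi)=-\xi$ is unique (as the paper notes after (RS3)), since any $W(S)$-invariant nondegenerate form $b$ realises that same reflection as $y\mapsto y - 2\,b(y,\xi)/b(\xi,\xi)\,\xi$, forcing $2\,b(y,\xi)/b(\xi,\xi)=\xi\ch(y)$.
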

\sm

\noindent The morale of the Structure Theorem is

\begin{center} \begin{tabular}{|lcr|} \hline
 affine reflection system &$\quad = \quad $&
  finite root system $+$ extension datum \\ \hline\end{tabular}
\end{center}
Thus properties of an affine reflection system can be described in
terms of properties of its quotient root system and the associated
extension datum. Some examples of this philosophy are given in the
Proposition~\ref{n:ex:arsstrut3} and the
Exercise~\ref{n:ex:arsstrut1} below.

\begin{proposition}[{\cite[Cor.~5.2]{prs}}] \label{n:ex:arsstrut3} Let $R$ be an affine reflection
system, let $S$ be its quotient root system and let $(\La_\xi: \xi
\in S)$ be the associated extension datum. We define $$\textstyle
\La_{\diff} = \bigcup_{0\ne \xi \in S} \, \La_\xi - \La_\xi.$$ Then
$\ZZ\La_{\diff} = \La_{\diff}$. Moreover:

{\rm (a)} $R$ is {\rm tame} in the sense that $R^0 \subset R\an -
R\an$ if and only if $R^0 \subset \La_{\diff}$.

{\rm (b)} All {\rm root strings} $$\mathbb{S}(\be,\al)=
   R \cap  (\be + \ZZ \al), \quad (\be \in R, \al\in R\an)
$$ are {\rm unbroken}, i.e., $\ZZ(\be,\al)= \{ n\in \ZZ : \be +n\al \in
R\}$ is either a finite interval in $\ZZ$ or equals $\ZZ$, if and
only if $\La_{\diff} \subset R^0$.

{\rm (c)} A tame affine reflection system with unbroken root strings
is symmetric.
\end{proposition}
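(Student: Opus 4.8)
The plan is to use the Structure Theorem~\ref{n:arsstructh}(b) to identify $R$ with $\scA(S,\frL)$, so that $X=Y\oplus Z$, $X^0=Z$, $R^0=\La_0$ and $R\an=\bigcup_{0\ne\xi\in S}\xi\oplus\La_\xi$, and to first record a few consequences of (ED1) for $0\ne\xi\in S$. Applying (ED1) with first component $\xi$ and reflecting root $\pm\xi$ gives $\La_\xi\pm2\La_\xi\subseteq\La_{-\xi}$; feeding in $0\in\La_{\xi'}$ for an indivisible $\xi'$ (take $\xi'=\xi$ if $\xi\in S\ind$, and $\xi=2\xi'$ otherwise) yields $\La_{-\xi}=\La_\xi$, hence $\La_\xi\pm2\La_\xi\subseteq\La_\xi$; and for $\xi\in S\ind$ we then have $0\in\La_\xi$ and $\La_\xi-2\La_\xi\subseteq\La_\xi$, so by Exercise~\ref{n:arsexlem}(b)--(c) each such $\La_\xi$ is a pointed reflection subspace, in particular symmetric, $\La_\xi=-\La_\xi$.

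For the preliminary identity, from $\La_\xi+2\La_\xi\subseteq\La_\xi$ and $\La_\xi-2\La_\xi\subseteq\La_\xi$ one gets $\La_\xi+2(\La_\xi-\La_\xi)\subseteq\La_\xi$, so induction on $|k|$ yields $\mu+2k(\mu-\la)\in\La_\xi$ for all $\mu,\la\in\La_\xi$ and all $k\in\ZZ$; writing $n(\mu-\la)=(\mu+2k(\mu-\la))-\mu$ when $n=2k$ and $n(\mu-\la)=(\mu+2k(\mu-\la))-\la$ when $n=2k+1$ exhibits $n(\mu-\la)\in\La_\xi-\La_\xi\subseteq\La_{\diff}$, so $\ZZ\La_{\diff}=\La_{\diff}$. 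Part (a) is then formal: since $S\subset Y$ and $Y\cap Z=0$ in $X=Y\oplus Z$, a difference $(\xi\oplus\mu)-(\eta\oplus\la)$ of two anisotropic roots lies in $X^0=Z$ exactly when $\xi=\eta$, so $(R\an-R\an)\cap X^0=\bigcup_{0\ne\xi}(\La_\xi-\La_\xi)=\La_{\diff}$; since $R^0=\La_0\subseteq X^0$, tameness ($R^0\subseteq R\an-R\an$) is equivalent to $R^0\subseteq\La_{\diff}$. Part (c) follows from (a) and (b): together they force $R^0=\La_{\diff}$, hence $-R^0=R^0$ by the preliminary identity, while $-R\an=R\an$ in every affine reflection system (as $-\al=s_\al(\al)\in R$ for $\al\in R\an$), so $R=R^0\cup R\an$ is symmetric.

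The substance is part (b). For "only if" I would argue contrapositively: if $\mu-\la\notin R^0$ for some $\mu,\la\in\La_\eta$ with $0\ne\eta\in S$, put $\be=\eta\oplus\mu\in R$ and $\al=\eta\oplus\la\in R\an$; then $0\in\ZZ(\be,\al)$, and $-2\in\ZZ(\be,\al)$ because $\mu-2\la\in\La_\eta-2\La_\eta\subseteq\La_\eta=\La_{-\eta}$, whereas $-1\notin\ZZ(\be,\al)$ since $\be-\al=0\oplus(\mu-\la)$ with $\mu-\la\notin\La_0$; thus the $\al$-string through $\be$ is broken. For "if", assume $\La_{\diff}\subseteq R^0$. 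Writing $\be=\eta\oplus\mu$ and $\al=\xi\oplus\la$, the string $\ZZ(\be,\al)$ is contained in $I=\{n\in\ZZ:\eta+n\xi\in S\}$, which is a finite interval by the classical unbrokenness of root strings in the finite root system $S$ (adjoining $0$ does not break it); so it suffices to establish the filling property
\[
   \be\in R,\quad \be+m\al\in R,\quad m\ge2\ \Longrightarrow\ \be+\al\in R,
\]
because then, for $a<b$ in $\ZZ(\be,\al)$, this applied with base point $\be+a\al$ and $m=b-a$ gives $a+1\in\ZZ(\be,\al)$, and induction on $b-a$ fills the whole interval $[a,b]$. Given $m\ge2$, one has $\eta+\xi\in S$ from the $S$-string, so the filling property reduces to $\mu+\la\in\La_{\eta+\xi}$; I would prove this by a case analysis on $\lan\eta,\xi\ch\ran$ (which is forced into $\{0,-1,-2,-3,-4\}$ once $\eta+\xi$ and $\eta+m\xi$ lie in $S$ with $m\ge2$), each case unwinding the (ED1)-relations along the $\xi$-string together with the given data point $\mu+m\la\in\La_{\eta+m\xi}$ and the symmetry $\La_\zeta=-\La_\zeta$ for indivisible $\zeta$; the standing hypothesis $\La_{\diff}\subseteq R^0$ enters exactly when $\eta=-\xi$ (then $\eta+\xi=0$ and $\mu+\la\in\La_\xi-\La_\xi\subseteq\La_{\diff}\subseteq R^0$, possibly after a preliminary (ED1)-step reducing from $\La_{2\xi}$ to $\La_\xi$).

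I expect the last case analysis to be the main obstacle: because $S$ is allowed to be non-reduced, several configurations must be handled separately (a long root plus a short root hitting an intermediate short root; strings of length up to five through $0$ when a divisible root occurs; the far endpoint falling on $0$), and one must track which (ED1)-relations are available, since the half-step root $\eta+\xi$ is in general \emph{not} of the form $s_\xi(\eta)$. For the complete bookkeeping of this combinatorial core I would refer to the proof of \cite[Th.~4.6]{prs} and its corollary \cite[Cor.~5.2]{prs}, where it is carried out for extensions of pre-reflection systems, the present statement being the finite-dimensional case over a finite root system.
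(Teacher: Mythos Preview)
The paper does not give a proof of this proposition; it simply states it with the citation \cite[Cor.~5.2]{prs}. Your proposal therefore goes well beyond the paper's treatment: your arguments for the identity $\ZZ\La_{\diff}=\La_{\diff}$, for (a), for (c), and for the ``only if'' direction of (b) are correct and self-contained, and your reduction of the ``if'' direction of (b) to the filling property $\mu+\la\in\La_{\eta+\xi}$ is the right framework. Deferring the residual case analysis to \cite{prs} is exactly what the paper itself does (for the entire proposition), so there is no gap relative to the paper.

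One small caveat on your sketch of the ``if'' direction: your claim that the hypothesis $\La_{\diff}\subseteq R^0$ enters \emph{only} when $\eta=-\xi$ is slightly too optimistic. When $\eta=0$ (so $\be\in R^0$) the filling step requires $\mu+\la\in\La_\xi$ with $\mu\in\La_0$ arbitrary, and this is not obtained from (ED1) alone; one needs the additional data $\mu+m\la\in\La_{m\xi}$ together with the relations among $\La_\xi$, $\La_{2\xi}$ and $\La_0$, and the hypothesis can enter here as well. This does not affect the correctness of your outline, since you already flag the case analysis as the main obstacle and refer to \cite{prs} for it.
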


\begin{exercise} \label{n:ex:arsstrut1} Let $R$ be an affine reflection
system, let $S$ be its quotient root system and let $(\La_\xi: \xi
\in S)$ be the associated extension datum. We use the notation of
Prop.~\ref{n:ex:arsstrut3}. Prove: \sm

\begin{itemize}
\item[(a)] $R$ is reduced if and only if for all $0\ne \xi \in S$ with
$2\xi \in S$ we have $$\La_{2\xi} \cap 2 \La_\xi = \emptyset.$$ In
particular, $S$ need not be reduced for $R$ to be reduced!

\item[(b)] $R$ is connected iff $S$ is connected (= irreducible).

\item[(c)] $R$ is symmetric, i.e., $R=-R$, iff $\La_0$ is symmetric.

\item[(d)] For all $\al\in R\an$ and $\be\in R$ the $\al$-string through
$\be$, i.e., $\mathbb{S}(\be,\al)$ has  length
$|\mathbb{S}(\be,\al)|\le 5$.

\item[(e)] Let $(\al,\be) \in R\an \times R$ and define $d,u\in \NN$ by
put $-d=\min \ZZ(\be,\al)$ and $u = \max \ZZ(\be,\al)$. Then
$u-d=\lan \be,\al\ch\ran$.

\end{itemize}  \end{exercise}
 \sm

We will now describe how the examples of affine reflection systems
of section \ref{n:sec:arsex} fit into the general scheme of the
Structure Theorem above.

\begin{examples} \label{n:arsedexam} (a) Let $\frL=(\La_\xi : \xi \in S)$ be an extension
datum of type $(S,Z)$. Observe that the only conditions on $\La_0$
are $0\in \La_0$ from (ED2) and that $\La_0$ together with the other
$\La_\xi$'s spans $Z$ from (ED3). This is in line with our earlier
observation that one has little control over the null roots $R^0$ of
an affine reflection system. Following the Example~\ref{n:arsexreal}
we define a new extension datum $\Rea(\frL) = (\Rea(\La_\xi) : \xi
\in S)$ of type $(S,\Rea(Z))$ by
$$
   \Rea(Z) = \textstyle \Span_F \big( \bigcup_{0\ne \xi \in S} \La_\xi\big), \quad
  \Rea(\La_\xi) = \begin{cases} \{0\} & \hbox{for $\xi=0$,} \\
                             \La_\xi &  \hbox{for $\xi \ne 0$.}
               \end{cases}
$$
If $\frL$ is the extension datum associated to the affine reflection
system $(R,X)$, then $\Rea(\frL)$ is the extension datum associated
to the affine reflection system $\Rea(R)$. \sm

(b) All $\La_\xi = \{0\}$, whence $Z=\{0\}$, defines a
\emph{trivial} extension datum for any root system $S$. It is
``used'' when we view $S$ as an affine reflection system, as done in
Example~\ref{n:arsexfin}. \sm

(c) Let $\La$ be a subgroup of a finite-dimensional vector space $Z$
such that $\Span_F(\La)\allowbreak =Z$. Then for any finite root
system $S$ the family $(\La_\xi \equiv \La: \xi \in S)$ is an
extension datum of type $(S,Z)$. It is used to construct the
untwisted affine reflection systems of Example~\ref{n:arsexn}. \sm

(d) Let $R$ be an affine root system.  We have seen that $R$ is an
affine reflection system. Its quotient root system $S$ and
associated extension datum $(\La_\xi: \xi \in S)$ are described in
Example~\ref{n:arsexone} using the very same symbols, see the
formulas (\ref{n:arsexone7}) and (\ref{n:arsstructh1}). \sm

(e) The family $\tilde \frL= (\La_0, \La_{\tilde \al}, \La_{-\tilde
\al})$ in Example~\ref{n:arsexaone} is an extension datum, but not
necessarily $\frL=(\La_0, \La_\al, \La_{-\al})$ since $0$ need not
lie in $\La_{\pm \al}$. In fact, replacing $\frL$ by $\tilde \frL$
was the rationale for the re-coordinatization in \ref{n:arsexaone}.
\end{examples}

To describe the classification of affine reflection systems we need
some more properties of the subsets $\La_\xi$ of an extension datum.
They are given in the following exercise (just do it!). Recall from
Exercise~\ref{n:arsexlem} that a reflection subspace $A$ is called
symmetric if $A=-A$ and is called pointed if $0\in A$.

\begin{exercise} \label{n:arsedlem}
Let $(\La_\eta: \eta \in S)$ be an extension datum of type $(S,Z)$.
Show:

\begin{itemize}

\item[(a)] Every $\La_\xi$ for $0\ne \xi \in S$ is a
symmetric reflection subspace and is even a pointed reflection
subspace if $\xi \in S\ind^\times$.

\item[(b)] For $w\in W(S)$, the Weyl group of $S$, we have
\begin{equation} \label{n:arsedlem1} \La_\xi = \La_{w(\xi)}.
\end{equation} In particular, $\La_\xi =
\La_{-\xi} = - \La_\xi$.

\item[(c)]  Whenever $0\ne \xi \in S$ and $2\xi \in S$, then
$$
  \La_{2\xi} \subset \La_\xi .
$$

\item[(d)] $\ZZ\La_\xi \subset \La_\xi$ for $\xi \in S\ind^\times$.
\end{itemize}
\end{exercise}

\sm

Let $(\La_\xi : \xi \in S)$ be an extension datum where $S$ is
irreducible. Then $W(S)$ acts transitively on the roots of the same
length (\cite[VI, \S1.3, Prop.~11]{brac}), i.e., on $\{0\}$, $S\sh$,
$S\lg$ and $S\div$ (some of these sets might be empty). Because of
(\ref{n:arsedlem1}), there are therefore at most four different
subsets $\La_0$, $\La\sh$, $\La\lg$ and $\La\div$ among the
$\La_\xi$, defined by \begin{equation} \label{n:arsed1}
 \La_\xi = \begin{cases}
       \La_0,  & \xi = 0; \\
       \La\sh, & \xi \in S\sh; \\
        \La\lg, & \xi \in S\lg; \\
       \La\div, & \xi \in S\div^\times. \end{cases}
\end{equation}
Of course, $\La\lg$ or $\La\div$ only exists if the corresponding
subset of roots exits. The assertions below referring to $\La\lg$ or
$\La\div$ should be interpreted correspondingly. \sm

We have seen in Exercise~\ref{n:arsedlem} (did you do it?), that the
subsets $\La\sh$ and $\La\lg$ are pointed reflection subspaces and
that $\La\div$ is a symmetric reflection subspace. Assuming only
these properties, does however not give an extension datum, since
only parts of the axiom (ED1) are fulfilled, namely those with $\eta
= \pm \xi$. We also need to evaluate what happens for $\eta \ne \pm
\xi$ with $\lan \eta, \xi\ch\ran \ne 0$. We will do this in the
following examples. \sm

\begin{examples} Let $S$ be an irreducible root system. We suppose that
we are given a pointed reflection subspace $\La\sh$ of a
finite-dimensional vector space, and if $S\lg \ne \emptyset$ or
$S\div^\times \ne \emptyset$ then also a pointed reflection subspace
$\La\lg$ and a symmetric reflection subspace $\La\div$. We define
$\La_\xi$, $\xi \in S$, by (\ref{n:arsed1}) and ask, when is the
family $\frL_{\min}$ defined in this way an extension datum in
$Z=\Span \big(\bigcup_{\xi \in S} \La_\xi \big)$? Note that we only
have to check (ED1). We will consider some examples of $S$. \sm

(a) $S=\rma_1$: In this case there are no further conditions, so
$\frL_{\min}$ describes all possible extension data for $\rma_1$
with $\La_0 = \{0\}$. \sm

(b) $S=\rma_2$: In this case there exists roots $\eta, \xi$ with
$\lan \eta, \xi\ch\ran =1$, namely those for which
$\angle(\eta,\xi)= \frac{\pi}{3}$. Evaluating (ED1) for those gives
$\La\sh - \La\sh \subset \La\sh$, forcing $\La\sh$ to be a subgroup
of $Z$. Thus, $\frL_{\min}$ is an extension datum for $S=\rma_2$ iff
$\La\sh$ is a subgroup.

(c) $S$ simply laced, $\rank S \ge 2$: The argument in (b) works
whenever $S\sh$ contains roots $\eta, \xi$ with $\lan \eta,
\xi\ch\ran =1$. Since this is the case here, we get that
$\frL_{\min}$ is an extension datum iff $\La\sh$ is a subgroup of
$Z$. \sm

(d) $S=\rmb_2= \{\pm \veps_1, \pm \veps_2, \pm \veps_1 \pm
\veps_2\}$:  Here we have pointed reflection subspaces $\La\sh$ and
$\La\lg$. Since non-zero roots of the same length are either
proportional or orthogonal, (ED1) is fulfilled for them. Because
(ED1) is invariant under sign changes, we are left to evaluate the
case of two roots $\eta, \xi$ of different lengths forming an obtuse
angle of $\frac{3\pi}{4}$, for example $\eta = \veps_1$, $\xi =
\veps_2 - \eps_1$.

If $\lan \eta, \xi\ch\ran =-2$, then $\eta$ is long, $\xi$ is short
and so (ED1) becomes $\La\lg + 2 \La\sh \subset \La\lg$. If $\eta$
is short, $\xi$ is long, we have $\lan \eta, \xi\ch\ran = -1$ and
thus get the condition $\La\sh + \La\lg \subset \La\sh$. To
summarize: \emph{$\frL_{\min}$ is an extension datum for $S=\rmb_2$
iff $\La\sh$ and $\La\lg$ are pointed reflection subspaces
satisfying
\begin{equation} \label{n:arsed2}
  \La\lg + 2 \La\sh \subset \La\lg \quad \hbox{and} \quad
    \La\sh + \La\lg \subset \La\sh.
\end{equation}}
Note that (\ref{n:arsed2}) implies $2 \La\sh \subset \La\lg
 \subset \La\sh$.

(e) $S=\rmb_l$, $l\ge 3$. Recall $S = \{ \pm \eps_i : 1\le i \le l\}
\cup \{ \pm \veps_i \pm \veps_j: 1 \le i \ne j \le l\}$. Since the
short roots in $S$ are either proportional or orthogonal, (ED1) is
fulfilled for all short roots $\eta, \xi$. But there exist long
roots $\eta, \xi \in S$ with $\angle(\eta,\xi) = \frac{\pi}{3}$,
whence $\lan \eta, \xi\ch\ran= 1$ and so (ED1) reads $\La\lg -
\La\lg \subset \La\lg$. This forces $\La\lg $ to be a subgroup. As
for $S=\rmb_2$, (ED1) for roots of different lengths leads to the
condition (\ref{n:arsed2}). It is then easy to check that
\emph{$\frL_{\min}$ is an extension datum for $S=\rmb_l$, $l \ge 3$,
iff $\La\sh$ is a pointed reflection subspace, $\La\lg$ is a
subgroup and {\rm ({\ref{n:arsed2}})} holds.}
\end{examples}

Continuing in this way, one arrives at the following.

\begin{theorem}[{\bf Structure of extension data}]
Let $S$ be an irreducible finite root system and define a family
$\frL_{\min}$ as in {\rm (\ref{n:arsed1})} with $\La_0=\{0\}$. Then
$\frL_{\min}$ is an extension datum if and only if $\La\sh$ and
$\La\lg$ are pointed reflection subspaces, $\La\div$ is a symmetric
reflection subspace and the following conditions, depending on $S$,
hold.
\begin{itemize}

\item[\rm (i)] $S$ is simply laced, $\rank S \ge 1:$ No further
condition for $S=\rma_1$, but $\La\sh$ is a subgroup if $\rank S \ge
2$.

\item[\rm (ii)] $S=\rmb_l (l\ge 2)$, $\rmc_l (l\ge 3)$, $\rmf_4:$
$\La\sh$ and $\La\lg$ satisfy $$\La\lg + 2 \La \sh \subset \La\lg
\quad \hbox{and}\quad \La\sh + \La \lg \subset \La\sh.$$ Moreover,

    \begin{itemize}
        \item[$\bullet$] $\La\lg $ is a subgroup if $S=\rmb_l$, $l\ge 3$ or
           $S=\rmf_4$, and

        \item[$\bullet$] $\La\sh$ is a subgroup if $S=\rmc_l$ or $S=F_4$.
    \end{itemize}

\item[\rm(iii)] $S=\rmg_2 :$ $\La\sh$ and $\La\lg$ are subgroups
satisfying
$$ \La\lg + 3 \La \sh \subset \La\lg
\quad \hbox{and}\quad \La\sh + \La \lg \subset \La\sh.$$

\item[\rm (iv)] $S=\rmbc_1:$ $\La\sh$ and $\La \div$ satisfy
$$ \La\div + 4 \La \sh \subset \La\div
\quad \hbox{and}\quad \La\sh + \La \div \subset \La\sh.$$

\item[\rm (v)] $S=\rmbc_l (l\ge 2):$ $\La\sh$, $\La\lg$ and $\La\div
$ satisfy
\begin{center} \begin{tabular}{ccc}
 $\La\lg + 2 \La \sh \subset \La\lg$, &\quad &
  $\La\sh + \La \lg \subset \La\sh,$ \\
 $\La \div + 2 \La\lg \subset \La\div$, &\quad &
     $ \La \lg +\La \div \subset \La\lg$, \\
  $\La\div + 4 \La \sh \subset \La\div$, &\quad &
 $\La\sh + \La \div \subset \La\sh.$
\end{tabular} \end{center}
In addition, if $l\ge 3$ then $\La\lg$ is a subgroup.
\end{itemize}
\end{theorem}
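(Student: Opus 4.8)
The plan is to notice that among the three axioms of an extension datum only (ED1) carries content here: (ED3) holds by the very definition $Z=\Span_F\big(\bigcup_{\xi\in S}\La_\xi\big)$, and (ED2) is immediate, since a pointed reflection subspace contains $0$ (settling $0\in\La_\xi$ for $\xi\in S\ind$) and $\La\div$ is a nonempty symmetric reflection subspace (settling $\La_\xi\neq\emptyset$ for $\xi\in S\div^\times$). So everything reduces to verifying, for all $\eta,\xi\in S\ti$, the inclusion $\La_\eta-\lan\eta,\xi\ch\ran\La_\xi\subset\La_{s_\xi(\eta)}$. First I would record two reductions. By Exercise~\ref{n:arsedlem}(b) each $\La_\xi$ depends only on the $W(S)$-orbit of $\xi$, which—$S$ being irreducible, so $W(S)$ transitive on roots of a fixed length—is just the length class of $\xi$; also $\La_{-\xi}=\La_\xi$ and every $\La_\xi$ is symmetric (Exercise~\ref{n:arsexlem}(c), together with $\La_0=\{0\}$). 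Hence (ED1) for $(\eta,\xi)$ is equivalent to (ED1) for $(w\eta,w\xi)$, $w\in W(S)$, and for $(\pm\eta,\pm\xi)$, and since $s_\xi(\eta)$ has the same length as $\eta$ its right-hand side is $\La_{(\text{length of }\eta)}$. It therefore suffices to check (ED1) for one representative pair per choice of a pair of length classes and per value of $|\lan\eta,\xi\ch\ran|$ occurring in $S$; as any two roots span a subsystem of rank $\le 2$, this is a finite check carried out inside the rank-$\le 2$ types $\rma_1$, $\rma_1\times\rma_1$, $\rma_2$, $\rmb_2$, $\rmg_2$, $\rmbc_1$, $\rmbc_2$, and in each case (ED1) becomes a containment among $\La_0=\{0\}$, $\La\sh$, $\La\lg$, $\La\div$.

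Next I would dispose of pairs of equal length: two such roots are orthogonal (giving the trivial $\La_\ast\subset\La_\ast$), proportional (giving $2\La_\ast-\La_\ast\subset\La_\ast$, which is the reflection-subspace hypothesis already imposed), or span an $\rma_2$, in which case $\lan\eta,\xi\ch\ran=\pm1$ and (ED1) reads $\La_\ast-\La_\ast\subset\La_\ast$, equivalent for a pointed $\La_\ast$ to $\La_\ast$ being a subgroup. The $\rma_2$-case occurs within a length class exactly when the roots of that length contain two roots at angle $\pi/3$: this happens for the short roots of a simply laced $S$ of rank $\ge2$; for the long roots of $\rmb_l\ (l\ge3)$, $\rmf_4$, $\rmbc_l\ (l\ge3)$; for the short roots of $\rmc_l\ (l\ge3)$, $\rmf_4$; and for both root lengths of $\rmg_2$. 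It does not occur when the roots of the given length are mutually orthogonal or proportional, as for the short roots of all $\rmb,\rmc,\rmbc$ types, the long roots of $\rmb_2=\rmc_2$ and of $\rmbc_2$, and the short and divisible roots of $\rmbc_1$. This accounts precisely for the ``subgroup'' clauses in (i)--(v).

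Then I would treat pairs of different length. For $\rmb_l,\rmc_l,\rmf_4$ the mixed non-orthogonal pairs have $|\lan\eta,\xi\ch\ran|\in\{1,2\}$: taking $\eta$ the longer and $\xi$ the shorter, $\lan\eta,\xi\ch\ran=\pm2$ yields, using $\La\sh=-\La\sh$, the inclusion $\La\lg+2\La\sh\subset\La\lg$, while $\eta$ shorter, $\xi$ longer yields $\La\sh+\La\lg\subset\La\sh$; for $\rmg_2$ the values are $\{1,3\}$ and one gets $\La\lg+3\La\sh\subset\La\lg$ and $\La\sh+\La\lg\subset\La\sh$. For $\rmbc_l$ there is in addition the short/divisible pair $\xi=\veps_i$, $\eta=2\veps_i$, with $\lan 2\veps_i,\veps_i\ch\ran=4$ and $\lan\veps_i,(2\veps_i)\ch\ran=1$, giving $\La\div+4\La\sh\subset\La\div$ and $\La\sh+\La\div\subset\La\sh$; and, when $l\ge2$, the long/divisible pair $\xi=\veps_i-\veps_j$, $\eta=2\veps_i$, with $\lan 2\veps_i,(\veps_i-\veps_j)\ch\ran=2$ and $\lan\veps_i-\veps_j,(2\veps_i)\ch\ran=1$, giving $\La\div+2\La\lg\subset\La\div$ and $\La\lg+\La\div\subset\La\lg$. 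Reading these off against the representative list from the first paragraph proves both directions at once: each displayed inclusion is a special instance of (ED1) (necessity), and conversely the displayed inclusions together with the reflection-subspace and subgroup properties cover (ED1) on a complete set of representatives, hence on all of $S\ti\times S\ti$ (sufficiency).

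The hard part will be purely the bookkeeping. For each irreducible type I must fix the normalization of the Weyl-invariant form $\inpr_u$ so that the Cartan integers between distinct length classes come out right—the $\sqrt2$-ratio in $\rmb,\rmc,\rmf_4$, the $\sqrt3$-ratio in $\rmg_2$, and especially the squared-length ratios $2:4:8$ among short, long and divisible roots in $\rmbc_l$, which is what produces the coefficient $4$—and I must check carefully that inside each rank-$\le2$ type every pair of roots is $W$-equivalent, up to independent sign changes, to one of the representatives actually verified, so that the finite case analysis is genuinely exhaustive. The $\rmbc_l$ case carries the longest such list and is where essentially all the care is needed; the other types, once $\inpr_u$ is pinned down, are short.
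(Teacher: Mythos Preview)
Your approach is correct and matches the paper's: the paper does not give a full proof but illustrates exactly this method in the examples immediately preceding the theorem (the cases $\rma_1$, $\rma_2$, simply laced of rank $\ge 2$, $\rmb_2$, $\rmb_l$ for $l\ge 3$), reducing everything to (ED1) and checking representative pairs by length class and Cartan integer, precisely as you do, and then refers to \cite{aabgp} and \cite{y:ext} for the remaining bookkeeping. One slip to correct: in your ``does not occur'' list you write ``the short roots of all $\rmb,\rmc,\rmbc$ types'', which contradicts your earlier (correct) claim that the $\rma_2$-case does occur among the short roots of $\rmc_l\ (l\ge 3)$---indeed the short roots of $\rmc_l$ are $\pm\veps_i\pm\veps_j$, which contain an $\rma_2$ once $l\ge 3$; you presumably meant the \emph{long} roots of $\rmc_l$ (namely $\pm 2\veps_i$), which are pairwise orthogonal or proportional.
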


The inclusions $\La\div + 4 \La \sh \subset \La\div$ and $\La\sh +
\La \div \subset \La\sh$ in case (v) above are consequences of the
other inclusions. Since $0$ lies in $\La\sh$ and also in $\La\lg$ if
it exists, the displayed inclusions in the Structure Theorem above
imply
\begin{equation} \label{n:arsed3}
  \La\div \subset \La\lg \subset \La\sh.
\end{equation}

The details of this theorem are given in \cite[II, \S2]{aabgp} for
the special case of extended affine root systems and then in
\cite{y:ext} in general (it follows from the Structure
Theorem~\ref{n:arsstructh} that an affine reflection system is the
same as a ``root system extended by a torsion-free abelian group of
finite rank'' in the sense of \cite{y:ext}). The reference
\cite{aabgp} also contains a classification of discrete extension
data for extended affine root systems of low nullity.

\begin{exercise} Without looking at \cite{aabgp} or \cite{y:ext}, work out some of
the cases above.
\end{exercise}

\subsection{Extended affine root systems} \label{n:sec:ears}

Let us come back to the beginning of this chapter. Our goal was to
describe the structure of the set of roots occurring in an extended
affine Lie algebra. After all the preparations in
\ref{n:sec:ars}--\ref{n:sec:arstrut}, this is now easy.

We start with the same setting as in \ref{n:sec:ars}, i.e., $X$ is a
finite-dimensional vector space over a field $F$ of characteristic
$0$, $R$ is a subset of $X$ and $\inpr$ is a symmetric bilinear form
on $X$. As in \ref{n:sec:ars} we define $R^0 = \{ \al \in R:
(\al|\al)=0\}$, $R\an = \{ \al \in R : (\al |\al) \ne 0\}$ and
$$
\lan x, \al\ch\ran = 2 \frac{(x|\al)} {(\al | \al)}, \quad
    (\hbox{$x\in X$ and $\al \in R\an$}).
$$

\begin{definition} \label{n:earsdef} A triple $(R,X,\inpr)$ as above is
called an  \textit{extended affine root system\/} or EARS for short,
if the following seven axioms (EARS1)--(EARS7) are fulfilled.
\begin{description}

\item[(EARS1)] $0\in R$ and $R$ spans $X$,

\item[(EARS2)] $R$ has \textit{unbroken finite root strings\/}, i.e.,
for every $\al \in R\an$ and $\beta \in R$ there exist $d,u\in
\NN=\{0,1,2, \ldots\} $ such that $$
 \{\be + n \al : n\in \ZZ \} \cap R =
     \{ \be - d \al, \ldots, \be + u \al\} \quad\hbox{and} \quad
d-u = \lan \be,\al\ch\ran.
$$
($d$ stands for ``down'' and $u$ for ``up''.)

\item[(EARS3)] $R^0 = R \cap X^0$.

\item[(EARS4)] $R$ is reduced as defined in \ref{n:sec:ars}:
for every $\al \in R\an$ we have $F\al \cap R\an = \{\pm \al\}$.

\item[(EARS5)]  $R$ is connected in the sense of
\ref{n:sec:ars}: whenever $R\an = R_1 \cup R_2$ with $(R_1\mid R_2)
= 0$, then $R_1 = \emptyset$ or $R_2= \emptyset$.

\item[(EARS6)] $R$ is tame, i.e., $R^0 \subset R\an +
R\an$.

\item[(EARS7)] The abelian group $\Span_\ZZ(R^0)$ is free of finite rank.
\end{description}

In analogy with the concept of discrete EALAs we call $(R,X, \inpr)$
for $F=\CC$ or $F=\RR$ a \textit{discrete} extended affine root
system if (EARS1)--(EARS6) hold and in addition

\begin{description}

\item[(DE)] $R$ is a discrete subset of $X$, equipped
with the natural topology.

\end{description}
\end{definition}
\noindent As for EALAs, a discrete extended affine root system
necessarily satisfies (EARS7), see Proposition~\ref{n:earsstrut}(c)
below, so that it is justified to call it an EARS.

We will immediately connect EARS to affine reflection systems:

\begin{proposition} \label{n:earsstrut}
{\rm (a)} A pair $(R,X)$ satisfying {\rm (EARS1)--(EARS3)} is an
affine reflection system. In particular:
\begin{itemize}
\item[\rm (i)]
An extended affine root system is an affine reflection system which
is reduced, connected, symmetric, tame and which has unbroken root
strings.

\item[\rm (ii)] If $F=\RR$ we can assume that $\inpr$ is positive
semidefinite.
\end{itemize}

{\rm (b)} Let $(R,X)$ be an affine reflection system with quotient
root system $S$ and extension datum $\scL$. Then $(R,X)$ is an
extended affine root system if and only if
\begin{itemize}

\item[\rm (i)] $S$ is irreducible, hence $\scL=(\La_0, \La\sh,
\La\lg, \La\div)$,

\item[\rm (ii)] $\La_0 = \La\sh + \La\sh$, $\La\div \cap 2 \La\sh
= \emptyset$,  and

\item[\rm (iii)] {\rm (EARS7)} holds. \end{itemize}

{\rm (c)} For an extended affine root system $(R,X)$ over $F=\RR$ or
$F=\CC$ the following are equivalent:
\begin{itemize}

\item[\rm (i)] $R$ is discrete;

\item[\rm (ii)] $R^0$ is a discrete subset of $X$;

\item[\rm (iii)] $\Span_\ZZ(R^0)$ is a discrete subgroup of $X$.
\end{itemize}
In this case, all reflection subspaces $\La_\xi$ of\/ {\rm (b)} are
discrete too, and $\Span_\ZZ(R^0)$ is a free abelian group of finite
rank.
\end{proposition}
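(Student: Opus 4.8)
The plan is to prove the three parts in order, reducing everything via the Structure Theorem~\ref{n:arsstructh} to the quotient root system $S$ and the extension datum, and then reading off each EARS axiom through Proposition~\ref{n:ex:arsstrut3} and Exercise~\ref{n:ex:arsstrut1}. For part (a), I verify the axioms (AR1)--(AR4) of an affine reflection system for a triple $(R,X,\inpr)$ satisfying (EARS1)--(EARS3): (AR1) is (EARS1) and (AR4) is (EARS3). For (AR2) the key computation is that (EARS2) forces $s_\al$ to permute root strings: if $\al\in R\an$, $\be\in R$ and $\{\be+n\al:n\in\ZZ\}\cap R=\{\be-d\al,\dots,\be+u\al\}$ with $d-u=\lan\be,\al\ch\ran$, then $s_\al(\be+n\al)=\be-(d-u+n)\al$, and as $n$ runs over $\{-d,\dots,u\}$ so does $-(d-u+n)$; hence $s_\al$ maps this string onto itself, and since $R$ is the union of the $\al$-strings through its own elements, $s_\al(R)=R$. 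For (AR3), integrality is immediate ($\lan\be,\al\ch\ran=d-u\in\ZZ$ for $\be\in R\an$, and $=0$ for $\be\in R^0\subset X^0$ by (EARS3)), while the finiteness of $\lan R,\al\ch\ran$ --- the only point calling for input beyond inspection --- follows from the general theory of \cite{prs} (this is essentially the equivalence of definitions recorded after the definition of an affine reflection system, via \cite[Prop.~5.4]{prs}). For (a)(i), an EARS is then an affine reflection system, and reduced, connected, tame and unbroken root strings are precisely (EARS4), (EARS5), (EARS6), (EARS2); symmetry is automatic by Proposition~\ref{n:ex:arsstrut3}(c). Statement (a)(ii) is Corollary~\ref{n:arsrealcl}.

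For part (b), by the Structure Theorem~\ref{n:arsstructh} I write $(R,X)\cong\scA(S,\scL)$ with $\scL=(\La_\xi:\xi\in S)$ an extension datum, $R^0=\La_0$, $X^0=Z$ and $R=\bigcup_{\xi\in S}\xi\oplus\La_\xi$. Then (EARS1) and (EARS3) are automatic, and the remaining axioms translate as follows. By Exercise~\ref{n:ex:arsstrut1}(b), (EARS5) holds iff $S$ is irreducible, in which case $W(S)$ is transitive on roots of equal length and $\scL$ reduces to $(\La_0,\La\sh,\La\lg,\La\div)$; this is (b)(i). Given this, Exercise~\ref{n:ex:arsstrut1}(a) says (EARS4) holds iff $\La_{2\xi}\cap 2\La_\xi=\emptyset$ for all $0\ne\xi$ with $2\xi\in S$, which amounts to the single condition $\La\div\cap 2\La\sh=\emptyset$ (vacuous unless $S$ has type $\rmbc_l$). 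Next, (EARS2) is unbroken root strings, equivalently $\La_{\diff}\subset R^0$ by Proposition~\ref{n:ex:arsstrut3}(b), and (EARS6) is tameness, equivalently $R^0\subset\La_{\diff}$ by Proposition~\ref{n:ex:arsstrut3}(a); so (EARS2) together with (EARS6) says $R^0=\La_{\diff}$, and from $\La\div\subset\La\lg\subset\La\sh$ in (\ref{n:arsed3}) one checks $\La_{\diff}=\La\sh+\La\sh$ in the irreducible case, so this becomes $\La_0=\La\sh+\La\sh$. Collecting these equivalences, and retaining (EARS7) as (b)(iii), yields the stated characterization in both directions.

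For part (c), let $(R,X)$ be an EARS over $F=\RR$ or $\CC$; by (b) we have $\scL=(\La_0,\La\sh,\La\lg,\La\div)$ with $\La_0=R^0=\La\sh+\La\sh$, and the inclusions (\ref{n:arsed3}) together with $0\in\La\sh$ give $\La_\xi\subset\La\sh\subset R^0$ for all $\xi\ne0$. Now (i)$\Rightarrow$(ii) is trivial. For (ii)$\Rightarrow$(iii): $\La\sh\subset R^0$ is discrete, and since $\La\sh$ is a pointed reflection subspace (short roots are indivisible), $2\Span_\ZZ(\La\sh)\subset\La\sh$ by Exercise~\ref{n:arsexlem}; hence $2\Span_\ZZ(\La\sh)$ is a discrete subgroup of the finite-dimensional real vector space underlying $X$, so a lattice, free of finite rank and closed, and scaling by $\tfrac12$ shows the same for $\Span_\ZZ(\La\sh)=\Span_\ZZ(\La\sh+\La\sh)=\Span_\ZZ(R^0)$; this gives (iii) and at once the last two assertions (every $\La_\xi$ lies in the discrete set $R^0$, and $\Span_\ZZ(R^0)$ is free of finite rank). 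For (iii)$\Rightarrow$(i): each piece $R\cap f^{-1}(\xi)=g(\xi)+\La_\xi$ from Theorem~\ref{n:arsstructh} is a translate of a subset of the discrete group $\Span_\ZZ(R^0)\subset X^0$, and the finitely many fibres $f^{-1}(\xi)$, $\xi\in S$, are separated in the complement $Y=X/X^0$; fixing a norm on $X$, a short estimate then shows $R$ is uniformly discrete. This closes the cycle.

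I expect the main obstacle to be part (b): correctly simplifying $\La_{\diff}$ to $\La\sh+\La\sh$ and the reducedness condition to $\La\div\cap 2\La\sh=\emptyset$ in the irreducible case requires careful use of the structure of extension data --- the inclusions (\ref{n:arsed3}) and the fact that $S\sh$ consists of indivisible roots. In part (a) the only ingredient not obtained by direct inspection is the finiteness statement in (AR3), which I take from \cite{prs}; in part (c) the one point needing care is the separation estimate for (iii)$\Rightarrow$(i).
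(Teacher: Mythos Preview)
Your proof is correct and follows essentially the same route as the paper's: verify (AR1)--(AR4) from (EARS1)--(EARS3), then translate (EARS4)--(EARS7) via Proposition~\ref{n:ex:arsstrut3} and Exercise~\ref{n:ex:arsstrut1} through the Structure Theorem, and handle discreteness in (c) via $\La_0=\La\sh+\La\sh$ and the pointed reflection subspace property. Two minor differences worth noting: in (a) you are more careful than the paper about the finiteness clause in (AR3), invoking \cite{prs} --- the paper simply asserts ``(AR3) is immediate from (EARS2)'', which literally gives only integrality, so your caution is warranted; in (c), for (iii)$\Rightarrow$(i) the paper argues more briskly via ``$R$ is a finite union of discrete subsets'' (each $g(\xi)+\La_\xi$ lying in a coset of the closed discrete subgroup $\Span_\ZZ(R^0)$), whereas you phrase it as a separation estimate --- both amount to the observation that $R\subset g(S)\oplus\Span_\ZZ(R^0)$ with $g(S)$ finite and $\Span_\ZZ(R^0)$ a lattice in $X^0$.
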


\begin{proof} (a) Obviously (AR1) = (EARS1) and (AR4) = (EARS3). The axiom
(AR2), i.e., $s_\al(R)=R$, follows from (EARS2): $s_\al(\be) = \be +
(u-d)\al$ and $ -d \le u-d \le u$. Also (AR3) is immediate from
(EARS2).

In any affine reflection system the root strings $R \cap (\be + \ZZ
\al)$ for $(\beta, \al) \in R\times R\an$ are finite. This is
Exercise~\ref{n:ex:arsstrut1}(d) and an immediate consequence of the
Structure Theorem~\ref{n:arsstructh}. Also, a tame affine reflection
system with unbroken roots strings is necessarily symmetric by
Prop.~\ref{n:ex:arsstrut3}. The characterization of an EARS in (i)
is now clear. (ii) follows from Cor.~\ref{n:arsrealcl}.

(b) follows from Prop.~\ref{n:ex:arsstrut3} and
Exercise~\ref{n:ex:arsstrut1}, since for a connected $R$ =
irreducible $S$ the formula (\ref{n:arsed3}) implies $\La_{\rm diff}
= \La\sh + \La\sh$.

(c) (i) $\Rightarrow$ (ii) is obvious. Suppose (ii) holds. We know
from (b) that $R^0=\La_0 = \La\sh +\La\sh$. Since $\La\sh$ is a
pointed reflection subspace, so is $\La_0$
(Exercise~\ref{n:arsexlem}). Hence $2\Span_\ZZ(\La_0) \subset \La_0$
is discrete. But then so is $\Span_\ZZ(\La_0)$. Thus (ii)
$\Rightarrow$ (iii).

By (\ref{n:arsed3}) and (b.ii), $\La\div \subset \La\lg\subset
\La\sh \subset \La_0$. Hence, if (iii) holds, then all $\La_\xi$ are
discrete subsets of $X$. But then so is is $R$, as a finite union of
discrete subsets. This shows (iii) $\rightarrow$ (i).

It is well-known fact that every discrete subgroup of a
finite-dimensional real vector space is free of finite rank.
\end{proof} \sm

A quick comparison of \cite[Definition~2.1]{aabgp} and our
Definition~\ref{n:earsdef} together with Prop.~\ref{n:earsstrut}(a)
will convince the reader that an extended affine root system in the
sense of \cite{aabgp} is the same as a discrete extended affine root
system over $\RR$ in our sense. The reason for the generalization
and the change of name is the same as the one justifying our more
general notion of extended affine Lie algebras: We are considering
EALAs over arbitrary fields of characteristic $0$ and the set of
roots of an EALA will not be an extended affine root system in the
sense of \cite{aabgp}. \sm

Finally, here is the result which brings us back to EALAs.

\begin{theorem} \label{n:ears&eala} Let $(E,H)$ be an EALA over $F$
and let $R\subset H^*$ be its set of roots. Put $X=\Span_F(R)$ and
let $\inpr_X$ be the restriction of the bilinear form {\rm
(\ref{n:ealadef2})} to $X$. Then $(R,X,\inpr_X)$ is an extended
affine root system. If $F=\CC$, then $(E,H)$ is a discrete EALA if
and only if $(R,X,\inpr_X)$ is a discrete extended affine root
system. \end{theorem}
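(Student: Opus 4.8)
The plan is to verify the seven axioms (EARS1)--(EARS7) of Definition~\ref{n:earsdef} for $(R,X,\inpr_X)$ directly from the EALA axioms, using the elementary structure theory of \S\ref{n:sec:ealaelem} --- above all Proposition~\ref{n:ealafact} --- and then to settle the statement about discreteness by a short topological observation. First I would note that $(R,X,\inpr_X)$ at least fits the setting of \ref{n:sec:ears}: $X\subseteq H^*$ is finite-dimensional, $\inpr_X$ is a symmetric bilinear form, and $R\subseteq X$. Three axioms are then immediate: (EARS1) holds since $0\in R$ belongs to the EALA conventions and $X=\Span_F(R)$ by definition; (EARS5) is word for word (EA4); and (EARS7) is word for word (EA6). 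One inclusion of (EARS3) is also free, because $\al\in R\cap X^0$ forces $(\al\mid\al)=0$, i.e.\ $\al\in R^0$. So the real work lies in (EARS2), (EARS4), the remaining inclusion of (EARS3), and (EARS6).

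For (EARS2) and (EARS4) I would run the $\lsl_2$-theory attached to an anisotropic root. Fix $\al\in R\an$. By Proposition~\ref{n:ealafact}(a) one has $\dim E_\al=1$ and an $\lsl_2$-triple $(e_\al,h_\al,f_\al)$ with $h_\al=\tfrac{2}{(\al\mid\al)}t_\al\in H$, so $\be(h_\al)=\lan\be,\al\ch\ran$ for every $\be\in R$. By (EA3) the operators $\ad e_\al$ and $\ad f_\al$ are locally nilpotent on $E$, while $\ad h_\al$ is semisimple; hence $E$ is a weight module for $\frs_\al:=Fe_\al\oplus Fh_\al\oplus Ff_\al\cong\lsl_2$ on which $e_\al,f_\al$ act locally nilpotently, and a standard argument then shows $E$ is a direct sum of finite-dimensional irreducible $\frs_\al$-modules. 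For (EARS2), given $\be\in R$ I would apply this to the $\frs_\al$-submodule $M=\bigoplus_{n\in\ZZ}E_{\be+n\al}$, on which $h_\al$ acts by $\lan\be,\al\ch\ran+2n$ on $E_{\be+n\al}$: the set of $h_\al$-weights occurring in $M$ is an interval symmetric about $0$ (a union of the symmetric weight-intervals of the irreducible summands), and since $\be\in R$ this interval contains $\lan\be,\al\ch\ran$, whence $\{n:E_{\be+n\al}\ne0\}$ is an interval containing $0$ and, by the symmetry, $d-u=\lan\be,\al\ch\ran$. Finiteness is the only further point: $n\mapsto(\be+n\al\mid\be+n\al)$ is a quadratic polynomial with nonzero leading coefficient $(\al\mid\al)$, so all but at most two members of the string are anisotropic; picking such a $\ga$ with $\dim E_\ga=1$ forces the weight $\lan\ga,\al\ch\ran$ to have multiplicity one in $M$, hence $M$ has a unique nontrivial $\frs_\al$-summand and therefore bounded weights (strings of length $\le2$ being finite for trivial reasons). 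For (EARS4), suppose $c\al\in R\an$ with $c\notin\{\pm1\}$; since $R=-R$ I may assume $c>1$ (passing from $(\al,c)$ to $(c\al,c^{-1})$ when $0<c<1$). The coroot rescales as $h_{c\al}=c^{-1}h_\al$, so $E_\al$ occurs in $h_{c\al}$-weight $2/c$, which must be a positive integer, and for $c>1$ this forces $c=2$. But if $2\al\in R$ then $3\al\notin R$ (otherwise $E_\al$ would carry the non-integral $h_{3\al}$-weight $2/3$), so $\ad e_\al$ annihilates $E_{2\al}$; a nonzero vector there is then an $\frs_\al$-highest weight vector of weight $4$ generating the five-dimensional irreducible $\lsl_2$-module, which requires $(\ad f_\al)^4$ to act nontrivially on it, whereas $(\ad f_\al)^3$ already lands in the line $E_{-\al}=Ff_\al$, so $(\ad f_\al)^4$ vanishes on it --- a contradiction. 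Hence $F\al\cap R\an=\{\pm\al\}$.

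Both remaining axioms rest on the tameness axiom (EA5), and the surviving inclusion of (EARS3) --- that every null root lies in $X^0$ --- is the step I expect to be the main obstacle. I would first reduce it to the assertion $(\delta\mid\delta')=0$ for all $\delta,\delta'\in R^0$: granting that, no anisotropic root lies in $\Span_F(R^0)$ (its norm would be $0$), so for $\delta\in R^0$ and $\al\in R\an$ one cannot have $\lan\delta,\al\ch\ran\ne0$, as that would force $\lan\delta,\al\ch\ran\al=\delta-s_\al(\delta)\in\Span_\ZZ(R^0)$ (using (EARS2) to see that $s_\al$ preserves $R^0$); thus $R^0\perp R\an$, and since also $R^0\perp R^0$ and $X=\Span_F(R\an)+\Span_F(R^0)$, every $\delta\in R^0$ lies in $X^0$. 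The remaining point --- $(\delta\mid\delta')=0$ for $\delta,\delta'\in R^0$ --- is where the Lie-algebra structure genuinely enters, and I would prove it from the $\frs_\al$-module decompositions above together with (EA5); this is the crux. Once (EARS3) is in hand, (EARS6) is quick. If $0\ne\delta\in R^0$ did not lie in $R\an+R\an$, then for $\al\in R\an$ a root $\delta+\al$ would satisfy $(\delta+\al\mid\delta+\al)=(\al\mid\al)\ne0$ by (EARS3), hence be anisotropic, giving $\delta=(\delta+\al)+(-\al)\in R\an+R\an$; so $\delta+\al\notin R$, meaning $E_\delta$ commutes with every anisotropic root space and hence with the entire core $E_c$ they generate. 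The same applies to $-\delta$, so by (EA5) both $E_\delta$ and $E_{-\delta}$ lie inside $E_c$, hence inside the abelian subalgebra $Z(E_c)=C_E(E_c)\cap E_c$; but Proposition~\ref{n:ealafact}(b) produces $0\ne x_\delta\in E_\delta$ and $y\in E_{-\delta}$ with $[x_\delta,y]=t_\delta\ne0$, contradicting $x_\delta,y\in Z(E_c)$. With all of (EARS1)--(EARS7) checked, $(R,X,\inpr_X)$ is an extended affine root system.

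For the discrete statement take $F=\CC$. Since $X=\Span_\CC(R)$ is a finite-dimensional subspace of $H^*$, its unique vector-space topology is the subspace topology, so a point of $R$ is isolated in $X$ exactly when it is isolated in $H^*$; hence $R$ is discrete in $X$ if and only if it is discrete in $H^*$. Therefore $(E,H)$ satisfies (DE) if and only if $(R,X,\inpr_X)$ does, and as $(E,H)$ is already an EALA and $(R,X,\inpr_X)$ is already an extended affine root system, this is precisely the stated equivalence. (Proposition~\ref{n:earsstrut}(c) then reconfirms that a discrete extended affine root system automatically satisfies (EARS7), in agreement with what was shown above.)
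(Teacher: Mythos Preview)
The paper does not actually prove this theorem: the remarks following it refer the reader to \cite[I, Th.~2.16]{aabgp} for the discrete case and to \cite[Prop.~3]{n:eala} for the general case, and explicitly note that the former ``uses discreteness''. So there is no in-paper proof to compare against, and your outline is in fact the standard shape of the argument. Most of it is fine: (EARS1), (EARS5), (EARS7) are indeed immediate; your $\lsl_2$-string argument for (EARS2) is correct in substance (one remark below); your (EARS4) argument is a clean and correct reduction to the case $c=2$ followed by a valid $V(4)$-contradiction; your reduction of (EARS3) to the single statement $(\de\mid\de')=0$ for $\de,\de'\in R^0$ is correct; your (EARS6) argument is correct once (EARS3) is in hand; and the discrete equivalence is handled properly.

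The genuine gap is exactly where you flag it. You write that $(\de\mid\de')=0$ for $\de,\de'\in R^0$ ``is the crux'' and that you ``would prove it from the $\frs_\al$-module decompositions above together with (EA5)'', but you do not carry this out. This is not a routine step: it is precisely the place where the discrete and non-discrete proofs diverge, and the paper's own remark singles it out (``using discreteness''). The $\lsl_2$-theory alone only yields integrality and the bound $\lan\de,\al\ch\ran\in\{0,\pm1,\pm2\}$; turning this into $(R^0\mid R^0)=0$ without (DE) requires an additional idea --- in \cite{aabgp} it is discreteness, while in \cite{n:eala} and \cite[Th.~6.8]{n:persp} it is an argument through the core that genuinely uses tameness. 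Your (EARS6) argument shows you know how to exploit (EA5), but that argument \emph{presupposes} (EARS3), so it cannot be recycled as is. Until you supply an actual mechanism here, the proposal is incomplete at its acknowledged hardest point.

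One minor correction in your (EARS2) argument: a multiplicity-one weight does \emph{not} force $M$ to have a unique nontrivial $\frs_\al$-summand (e.g.\ $V(3)\oplus V(1)$). What it does force is that there is a unique summand whose highest weight is at least $|\lan\ga,\al\ch\ran|$; since all other summands then have weights strictly inside $(-|\lan\ga,\al\ch\ran|,|\lan\ga,\al\ch\ran|)$, the weight set of $M$ is still bounded, and your conclusion survives. Also, over a general field $F$ the phrase ``$c>1$'' is ill-posed, but since $2/c$ and $2c$ are both integers one has $c\in\{\pm\tfrac12,\pm1,\pm2\}$ anyway, so the reduction to $c=2$ goes through regardless.
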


\begin{remarks} (a) For $(E,H)$ a discrete EALA over $F=\CC$, the
theorem is proven in \cite[I, Th.~2.16]{aabgp}, using discreteness.
The generalization to arbitrary EALAs is due to the author, see
\cite[Prop.~3]{n:eala}. It has been further generalized to other
classes of Lie algebras, the so-called invariant affine reflection
algebras, see \cite[Th.~6.6 and Th.~6.8]{n:persp}. Special cases
have also been proven in \cite{az:grla} and \cite{morita-yoshii}.
That $R$ is symmetric, is an easy exercise, namely
Exercise~\ref{n:ex:eala-def1}(c).

(b)  In view of the theorem above, one can ask if every extended
affine root system is the set of roots of some extended affine Lie
algebra. This is however not the case, see \cite[Th.~6.2]{AG2} for a
detailed discussion of this question.
\end{remarks}

As a first application of this theorem, we can now completely
characterize EALAs of nullity $0$.

\begin{proposition} \label{n:fdeala}
The following are equivalent: \begin{itemize}
 \item[\rm (i)] $(E,H)$ is an EALA of nullity $0$,

\item[\rm (ii)] $(E,H)$ is an EALA with a finite-dimensional $E$,

\item[\rm (iii)] $E$ is a finite-dimensional split simple Lie algebra with
splitting Cartan subalgebra $H$.
\end{itemize}
In this case, $E$ equals its core and the set of roots $R$ coincides
with the quotient root system $S$ of $R$ and is an irreducible
reduced finite root system.
\end{proposition}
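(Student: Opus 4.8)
The plan is to prove the cycle of implications (iii)$\,\Rightarrow\,$(i)$\,\Rightarrow\,$(ii)$\,\Rightarrow\,$(iii) and to read off the supplementary assertions from the proof of the last step. The implication (iii)$\,\Rightarrow\,$(i) is already established in (\ref{n:eala-def:prop}), so nothing remains there. Everything else rests on three tools: Theorem~\ref{n:ears&eala} (the roots $(R,X,\inpr_X)$ of an EALA form an extended affine root system, so that Corollary~\ref{n:arsclass0} and Proposition~\ref{n:earsstrut} apply), Proposition~\ref{n:ealafact} (every anisotropic root $\al$ carries an $\lsl_2$-triple, with $E_\al\oplus[E_\al,E_{-\al}]\oplus E_{-\al}\cong\lsl_2(F)$), and finally the tameness axiom (EA5).

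First I would dispose of (i)$\,\Leftrightarrow\,$(ii) by showing both conditions force $R^0=\{0\}$. Nullity $0$ says exactly $R^0=\Span_\ZZ(R^0)=\{0\}$. If instead $E$ is finite-dimensional, then $R$ is finite (each $E_\al$, $\al\in R$, is nonzero); writing $R$ through its quotient root system and extension datum, Proposition~\ref{n:earsstrut}(b) gives $\La_0=R^0=\La\sh+\La\sh$ with $\La\sh$ a pointed reflection subspace, so $2\Span_\ZZ(\La\sh)\subset\La\sh$ is a finite torsion-free group, hence $\La\sh=\{0\}$ and $R^0=\{0\}$. Either way $\La_0=\{0\}$, which forces $\La\sh=\La\lg=\La\div=\{0\}$ (using $0\in\La\sh$ and (\ref{n:arsed3})), hence $X^0=\{0\}$; thus $\inpr_X$ is nondegenerate and, by Corollary~\ref{n:arsclass0}, $R=:\{0\}\cup\Phi$ is a finite root system coinciding with its quotient system, reduced by (EARS4) and irreducible by (EA4). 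In particular $R$ is finite, and since $E_0=H$ is finite-dimensional by (EA2) while $\dim E_\al=1$ for $\al\in R\an=\Phi$ by Proposition~\ref{n:ealafact}(a), $E=\bigoplus_{\al\in R}E_\al$ is finite-dimensional. This also records the last-sentence claims in these cases: $R=\{0\}\cup\Phi$ is an irreducible reduced finite root system equal to its quotient system.

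For (ii)$\,\Rightarrow\,$(iii) I keep this notation, so $\Phi=R\an$ spans $X$ and $\inpr_X$ is nondegenerate. Set $H_0=\sum_{\al\in\Phi}[E_\al,E_{-\al}]=\Span_F\{t_\al:\al\in\Phi\}$ via (\ref{n:eala-def2}); a direct computation using $\dim E_\al=1$ and $[E_\al,E_\be]\subset E_{\al+\be}$ identifies the core as $E_c=H_0\oplus\bigoplus_{\al\in\Phi}E_\al$ (this is Exercise~\ref{n:ex:eala-def1}(e) in the present situation). Since $\al\mapsto t_\al$ transports $\inpr_X$ to $\inpr|_{H_0}$ and $E_\al\times E_{-\al}\to F$ is a nondegenerate pairing (Exercise~\ref{n:ex:eala-def1}(c)), $\inpr$ restricts nondegenerately to $H_0$, hence to $E_c$. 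Next, $\rad(E_c)$ is an $\ad(H_0)$-stable, hence graded, ideal of $E_c$; were it to meet some $E_\al$ ($\al\in\Phi$), Proposition~\ref{n:ealafact}(a) applied to a spanning vector of $\rad(E_c)\cap E_\al$ would drag the whole copy of $\lsl_2(F)$ into $\rad(E_c)$, which is impossible, so $\rad(E_c)\subset H_0$; then $[\rad(E_c),E_\al]\subset E_\al\cap H_0=\{0\}$ gives $\al(\rad(E_c))=0$ for all $\al\in\Phi$, whence $\rad(E_c)\subset H_0^\perp\cap H_0=\{0\}$. Thus $E_c$ is semisimple, and $H_0$ is a toral, self-centralizing (because $\al|_{H_0}\neq0$ for $\al\in\Phi$), hence maximal toral subalgebra, i.e.\ a splitting Cartan subalgebra with irreducible root system $\Phi$; by the structure theory of split semisimple Lie algebras, e.g.\ \cite[VIII]{bou:lie78}, $E_c$ is split simple, so in particular $Z(E_c)=\{0\}$.

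It then remains to promote this to $E=E_c$ and $H=H_0$, and this is the delicate step. Tameness (EA5) gives $C_E(E_c)\subset E_c$, whence $C_E(E_c)=C_E(E_c)\cap E_c=Z(E_c)=\{0\}$. As $\inpr$ is nondegenerate on both $E$ and $E_c$ we have $E=E_c\oplus E_c^\perp$, and invariance of $\inpr$ together with $[E_c,E_c]\subset E_c$ makes $E_c^\perp$ stable under $\ad(E_c)$, hence under $\ad(H_0)$. Since the only root of $R$ vanishing on $H_0$ is $0$, every nonzero $H_0$-weight space of $E$ is a sum of root spaces $E_\al$ with $\al\in\Phi$, all lying in $E_c$, so $E_c^\perp$ (meeting each such $E_\al$ trivially) is contained in the zero weight space $E_0=H$; and since $t\perp E_\al$ automatically for $t\in H$, $\al\neq0$, one gets $E_c^\perp=H\cap H_0^\perp$. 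But for $t$ there, $\al(t)=(t\mid t_\al)=0$ for all $\al\in\Phi$, so $[t,E_c]=0$ and $t\in C_E(E_c)=\{0\}$; hence $E_c^\perp=\{0\}$, $E=E_c$ is split simple, and $H=E_0=(E_c)_0=H_0$ is a splitting Cartan subalgebra, proving (iii). The remaining assertions ($E$ equals its core; $R$ is the quotient root system, irreducible, reduced and finite) have been obtained along the way. The main obstacle is exactly this last reduction from ``$E_c$ split simple'' to ``$E=E_c$'': it genuinely requires (EA5), the self-centralizing property of $H$, and nondegeneracy of $\inpr$, and it fails without tameness, as the trivial central extension $(\g\oplus Fz,\ \frh\oplus Fz)$ shows, where (EA1)--(EA4) hold but $E\neq E_c=\g$.
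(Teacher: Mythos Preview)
The paper does not actually supply a proof of this proposition: it is stated immediately after Theorem~\ref{n:ears&eala} as ``a first application of this theorem'' and left without further argument. Your proof correctly fills in the details using precisely the tools the paper has developed for this purpose: the EARS structure of $R$ (Theorem~\ref{n:ears&eala}, Proposition~\ref{n:earsstrut}, Corollary~\ref{n:arsclass0}) to force $R^0=\{0\}$ and identify $R$ with a finite irreducible reduced root system, Proposition~\ref{n:ealafact}(a) for the one-dimensionality of anisotropic root spaces and the $\lsl_2$-triples, and finally (EA5) together with nondegeneracy of $\inpr$ on $E_c$ to collapse $E$ onto its core. The argument is sound throughout; in particular your handling of the step $E_c^\perp=\{0\}$ via $E_c^\perp\subset H\cap H_0^\perp\subset C_E(E_c)=\{0\}$ is clean, and the closing counterexample $(\g\oplus Fz,\,\frh\oplus Fz)$ correctly isolates tameness as the indispensable axiom for the last implication.
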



\section{The core and centreless core of an EALA}\label{n:sec:core}

In the previous chapter we have studied affine reflection systems
per se. The rationale for doing so became clear only in the end,
when we saw in Th.~\ref{n:ears&eala} that the set of roots $R$ of an
EALA $(E,H)$ is an extended affine root system, a special type of an
affine reflection system.

In this chapter we start by drawing consequences of the Structure
Theorem~\ref{n:arsstructh} of affine reflection systems and the
description of extended affine root systems in
Prop.~\ref{n:earsstrut}. The examples in \S\ref{n:sec:ealaone} and
\S\ref{n:sec:ealan} indicate that the core $E_c$ and centreless core
$E_{cc}= E_c / Z(E_c)$ of an extended affine Lie algebra $(E,H)$
really are the ``core'' of the matter. We will show in
Th.~\ref{n:ealcor} and in Cor.~\ref{n:ccore} that both are so-called
Lie tori, a new class of Lie algebras which we will introduce in
\ref{n:sec:lietordef}. We will present some basic properties of Lie
tori in \ref{n:lietorprop} and describe some examples in
\ref{n:sec:lietypeA} and \ref{n:sec:lietorex}.

With some justification, this chapter could therefore also be
entitled ``On Lie tori''. But the reader can be re-assured that we
are not getting side-tracked too much: In the next chapter we will
see that Lie tori are precisely what is needed to construct EALAs.

\subsection{Lie tori: Definition} \label{n:sec:lietordef}
\sm

Lie tori are special objects in the following category of graded Lie
algebras.

\begin{definition}\label{n:sladef} 
Let $(S,Y)$ be a finite irreducible, but not necessarily reduced
root system, as defined in Example~\ref{n:arsexfin}. We denote by
$\scQ(S)=\Span_\ZZ(S)\subset Y$ the root lattice of $S$. To avoid
some degeneracies we will always assume that $S\ne \{0\}$. Let $\La$
be an abelian group. \sm

A \textit{$(\scQ(S),\La)$-graded\/} Lie algebra is a Lie algebra $L$
with compatible $\scQ(S)$- and $\La$-gradings. It is convenient (and
helpful) to use subscripts for the $\scQ(S)$-grading and
superscripts for the $\La$-grading. Thus,
$$
   L = \ts \bigoplus_{q\in \scQ(S)} L_q  = \bigoplus_{\la \in \La} L^\la
$$
are $\scQ(S)$- and $\La$-gradings of $L$, and compatibility means
$$
 \ts L = \bigoplus_{q\in \scQ(S), \, \la \in \La} L_q^\la \quad
 \hbox{for } L_q^\la = L_q \cap L^\la.
   $$
Hence for $\la,\mu \in \La$ and $p,q\in \scQ(S)$
$$  L^\la = \ts \bigoplus_{q\in \scQ(S)} L_q^\la, \quad
     L_q = \bigoplus_{\la \in \La} L^\la_q \quad \hbox{and}\quad
   [L_q^\la, L_p^\mu] \subset L_{q+p}^{\la + \mu}.
$$
Thus, $L$ has three gradings, by $\scQ(S)$, $\La$ and $\scQ(S)
\oplus \La$ whose interplay will be crucial in the following.
Corresponding to these three different gradings are three
\textit{support sets} :
  $\supp_{\scQ(S)} L = \{ q\in \scQ(S) : L_q \ne 0\}$,
  $\supp_\La L  = \{ \la \in L : L^\la \ne 0\}$, and
  $\supp_{\scQ(S)\oplus\La} L = \{ (q,\la) \in (\scQ(S), \La): L_q^\la
   \ne 0 \}$.
 \end{definition}

\begin{definition} \label{n:lietorax} We keep the notation of the Def.~\ref{n:sladef}.
A \textit{Lie torus of type $(S,\La)$} is a $(\scQ(S), \La)$-graded
Lie algebra $L$ over $F$, a field of characteristic $0$, satisfying
the axioms (LT1)--(LT3) below. \begin{description}
\item[(LT1)]  $\supp_{\scQ(S)} L \subset S$, hence
 $L=\bigoplus_{\xi \in S} L_\xi$.

\item[(LT2)]  If $L_\xi^\la \ne 0$ and $\xi \ne 0$, then there
exist $e_\xi^\la \in L_\xi^\la$ and $f_\xi^\la \in L_{-\xi}^{-\la}$
such that \begin{equation} \label{n:lietordef1}
 L_\xi^\la = F e_\xi^\la, \quad L_{-\xi}^{-\la} = F f_\xi^\la,
\end{equation}
and for $x_\ta \in L_\ta $ we have
\begin{equation} \label{n:lietordef2}
 [[e_\xi^\la, f_\xi^\la],\, x_\ta] = \lan \ta, \xi\ch\ran x_\ta.
\end{equation}

\item[(LT3)] (a) $L_\xi^0 \ne 0$ if $\xi \in S\ind^\times$, i.e., $0\ne \xi
  \in S$ and $\xi/2 \not\in S$.
 \begin{itemize}
  \item[(b)] As a Lie algebra, $L$ is generated by $\bigcup_{0\ne
  \xi \in S} L_\xi$.

  \item[(c)] $\La = \Span_\ZZ( \supp_\La L)$.
  \end{itemize}
\end{description}
We will say that $L$ is a Lie torus if $L$ is a Lie torus for some
pair $(S,\La)$.

 A Lie torus is called \textit{invariant}, if $L$ has
an invariant nondegenerate symmetric bilinear form $\inpr$ which is
\textit{graded} in the sense that
\begin{equation} \label{n:lietordef9}
  (L_\xi^\la \mid L_\ta^\mu) = 0 \quad \hbox{if $\la + \mu \ne 0$
              or $\xi + \ta \ne 0$.}
\end{equation}

Two Lie tori $L$ and $\tilde L$, both of type $(S,\La)$, are called
\textit{graded-isomorphic} if there exists a Lie algebra isomorphism
$f: L \to \tilde L$ such that $f(L_\xi^\la) = \tilde L^\la_\xi$ for
all $(\xi,\la) \in S\times \La$. Thus, a graded-isomorphism is an
isomorphism in the category of graded Lie algebras. But we will use
the term ``graded-isomorphism'' to emphasize that Lie tori are
graded algebras.
\end{definition}

\begin{remarks} (a) Let $L$ be a Lie torus. Hence, by (LT1),
$L= \bigoplus_{\xi \in S} L_\xi =  \bigoplus_{\xi \in S, \, \la \in
\La} L_\xi^\la$. We will determine $\supp_{\scQ(S)} L$ in
Cor.~\ref{n:lietorsupp} below.  The axiom (LT2) implies that
\begin{equation} \label{n:lietordef3}   \dim L_\xi^\la = 1 \quad
\hbox{if $0\ne \xi$ and $L_\xi^\la \ne 0$}
\end{equation}    and  that
\begin{equation} \label{n:lietordef5}
(e_\xi^\la, h_\xi^\la, f_\xi^\la) \quad  \hbox{with} \quad
 h_\xi^\la  = [e_\xi^\la, f_\xi^\la] \in L_0^0\end{equation} is an $\lsl_2$-triple. The
condition (LT3.a) together with (LT2) ensures that a Lie torus has
enough $\lsl_2$-triples.

The other two conditions in (LT3) are not really serious; they just
serve to normalize things: If (LT3.c) does not hold, one can simply
replace $\La$ by $\Span_\ZZ(\supp_\La L)$.  Also,
\begin{equation} \label{n:lietordef4}
\hbox{(LT3.b)} \quad \iff \quad
 L_0^\la= \tsum_{0\ne \xi \in S} \tsum_{\mu \in \La} \,
       [L_\xi^\mu, \, L_{-\xi}^{\la - \mu}]
\end{equation}
for all $\la\in \La$. If one has a Lie algebra, for which all axioms
except (\ref{n:lietordef4}) hold, one can replace the subspaces
$L_0^\la$ by the right hand side of (\ref{n:lietordef4}) and then
gets a Lie torus. Observe that (\ref{n:lietordef4}) for $\la=0$
together with (LT2) yields
\begin{equation} \label{n:lietordef6}
 L_0^0 = \tsum \, F h_\xi^\la
\end{equation}
where the sum in (\ref{n:lietordef6}) is taken over all pairs
$(\xi,\la)$ for which $h_\xi^\la$ exists, i.e., those with
$L_\xi^\la \ne 0$ and $\xi \ne 0$.
 \sm

(b) A Lie torus is a special type of a so-called
division-$(S,\La)$-graded Lie algebras, or more generally of a
root-graded Lie algebra. This and also the different approaches to
root-graded Lie algebras are discussed in \cite[\S5]{n:persp}. Lie
tori were first defined by Yoshii in \cite{y:ext,y:lie}, using the
notion of a root-graded Lie algebra. The definition above is due to
the author \cite{n:tori}.

Viewing a Lie torus as a special type of a root-graded Lie algebra
is the approach used in the classification of Lie tori. \sm

(c) Why was a Lie torus christened a ``Lie torus''? The historically
correct answer is: Because of pure analogy with already existing
names like a quantum torus, defined in \ref{n:quantordef}, or an
alternative or Jordan torus. All of these are graded algebras, in
which every non-zero homogeneous element is invertible. If one
interprets the elements $e$ and $f$ of the $\lsl_2$-triple
(\ref{n:lietordef5}) as invertible elements of $L$, then a Lie torus
is a graded Lie algebra in which most of the non-zero homogenous
elements are invertible. It is certainly unusual to speak of
``invertible elements'' in a Lie algebra. But the examples below
will provide some justification to that: We will see that the
invertible elements of $L$ are given by invertible elements of its
coordinate algebra.

Besides the analogy with the already existing concepts of ``tori''
in categories of (non)associative algebras, the fact that a
\textit{toroidal} Lie algebra is a Lie \textit{torus}, see
\ref{n:lieuntwist}, reinforces the choice of the name Lie torus.
\end{remarks}

\subsection{Some basic properties of Lie tori} \label{n:lietorprop}

Throughout this section $L$ is a Lie torus of type $(S,\La)$. We use
the notation of Def.~\ref{n:lietorax}. We describe some basic
properties of Lie tori and prove some of them, in particular those
for which there does not yet exist  a published proof.

We first show that the homogeneous subspaces of the
$\scQ(S)$-grading of $L$ are weight spaces for the
$\ad$-diagonalizable subalgebra
$$\frh=\Span_F\{h_\xi^0 : \xi \in S\ind^\times\}.$$

\begin{lemma}\label{n:weisp} The subspaces $L_\ta$, $\ta \in S$, are
given by
\begin{equation} \label{n:weisp1}
L_\ta = \{ l\in L : [h_\xi^0, l] = \lan \ta, \xi\ch\ran l \hbox{ for
all $\xi \in S\ind^\times$} \}.
\end{equation}
\end{lemma}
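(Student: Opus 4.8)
The plan is to prove the two inclusions separately; the only substantive point is that the functionals $\ta \mapsto \lan\ta,\xi\ch\ran$, $\xi \in S\ind^\times$, separate the points of $\scQ(S)$, everything else being a direct unwinding of the axioms (LT1)--(LT3) together with (\ref{n:lietordef5}).

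First I would record the inclusion ``$\subseteq$''. Fix $\xi \in S\ind^\times$. By (LT3.a) we have $L_\xi^0 \ne 0$, so (LT2) (taken with $\la = 0$) provides elements $e_\xi^0, f_\xi^0$ with $h_\xi^0 = [e_\xi^0,f_\xi^0] \in L_0^0$ by (\ref{n:lietordef5}), and with $[h_\xi^0, x_\ta] = \lan\ta,\xi\ch\ran x_\ta$ for every $x_\ta \in L_\ta$ and every $\ta \in S$. In particular each $l \in L_\ta$ satisfies $[h_\xi^0, l] = \lan\ta,\xi\ch\ran l$ for all $\xi \in S\ind^\times$, which is precisely the asserted inclusion. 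Note also that, being homogeneous of degree $0$ for the $\scQ(S)$-grading, each $\ad h_\xi^0$ maps $L_{\ta'}$ into itself.

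Next I would prove the separation statement: if $\ta,\ta' \in \scQ(S)$ satisfy $\lan\ta,\xi\ch\ran = \lan\ta',\xi\ch\ran$ for all $\xi \in S\ind^\times$, then $\ta = \ta'$. Since $\lan x,\xi\ch\ran = 2(x\mid\xi)/(\xi\mid\xi)$, this says $(\ta - \ta' \mid \xi) = 0$ for all $\xi \in S\ind^\times$. Now $S\ind^\times$ spans $Y$ over $F$: the set $S = \{0\}\cup S^\times$ spans $Y$, and every divisible root $\al \in S\div^\times$ equals $2\be$ with $\be = \al/2 \in S\ind^\times$ --- here $\be$ is indeed indivisible, since otherwise $\al/4 \in S$ would give proportional roots $\al/4, \al$ with ratio $4$, which is impossible. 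Hence $\Span_F(S\ind^\times) = \Span_F(S^\times) = Y$, and as the bilinear form $\inpr$ attached to the finite root system $(S,Y)$ is nondegenerate (Example~\ref{n:arsexfin}), $(\ta-\ta'\mid Y)=0$ forces $\ta = \ta'$.

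Finally, for the inclusion ``$\supseteq$'', let $l \in L$ satisfy $[h_\xi^0, l] = \lan\ta,\xi\ch\ran l$ for all $\xi \in S\ind^\times$. By (LT1) write $l = \sum_{\ta'\in S} l_{\ta'}$ with $l_{\ta'} \in L_{\ta'}$. Applying the first step componentwise, $[h_\xi^0, l] = \sum_{\ta'} \lan\ta',\xi\ch\ran l_{\ta'}$, and comparing with $\sum_{\ta'} \lan\ta,\xi\ch\ran l_{\ta'}$ using the directness of $L = \bigoplus_{\ta'\in S} L_{\ta'}$ yields $\bigl(\lan\ta',\xi\ch\ran - \lan\ta,\xi\ch\ran\bigr)\, l_{\ta'} = 0$ for every $\ta' \in S$ and every $\xi \in S\ind^\times$. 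By the separation statement this gives $l_{\ta'} = 0$ whenever $\ta' \ne \ta$, so $l = l_\ta \in L_\ta$. There is no real obstacle here; the only place asking for a moment's care is checking that $S\ind^\times$ still spans $Y$ (handled above) and that the chosen $h_\xi^0$ genuinely lie in $L_0^0$ so that they respect the $\scQ(S)$-grading.
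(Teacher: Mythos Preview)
Your proof is correct and follows essentially the same route as the paper's: establish the easy inclusion via (\ref{n:lietordef2}), then for the reverse inclusion decompose $l$ along the $\scQ(S)$-grading and use that $S\ind^\times$ spans $Y$ together with nondegeneracy of the form to separate the roots. Your write-up is somewhat more detailed (you isolate the separation statement and justify $\Span_F(S\ind^\times)=Y$ explicitly), but the argument is the same.
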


\begin{proof} The inclusion from left to right holds by (\ref{n:lietordef2}).
For the proof of the other inclusion we write $l\in L$ as
$l=\sum_\al l_\al$ with $l_\al \in L_\al$. Then $l$ satisfies
$[h_\xi^0, l] = \lan \ta, \xi\ch\ran l \hbox{ for all } \xi \in
S\ind^\times$ if and only if for every $\al \in S$ we have $\lan \al
- \ta, \xi\ch\ran l_\al = 0$ for all $\xi \in S\ind^\times$. Since
$\Span_F(S\ind)=Y$ and the bilinear form on $Y$ associated with the
root system  $S$ is nondegenerate, for every pair $(\al, \ta) \in
S^2$ with $\al \ne \ta$ there exists $\xi \in S\ind^\times$ with
$\lan \al - \ta, \xi\ch\ran \ne 0$. Hence, any $l$ belonging to the
set on the right hand side of (\ref{n:weisp1}) has  $l_\al = 0$ for
$\al \ne \ta$, proving $l\in L_\ta$.
\end{proof}

\begin{proposition}\label{n:weylref}  For every $(\xi, \la) \in \supp_{\scQ(S) \oplus
\La} L$ with $\xi \ne 0$ the map
$$
  \vphi_\xi^\la  = \exp\big( \ad(e_\xi^\la)\big) \,
   \exp\big( \ad( - f_\xi^\la)\big) \, \exp\big( \ad(e_\xi^\la)\big)
$$
is a well-defined automorphism of the Lie algebra $L$ with the
property \begin{equation} \label{n:weylref1} \vphi_\xi^\la (
L_\ta^\mu)  = L_{s_\xi(\ta)}^{\mu - \lan \mu, \xi\ch\ran \la}.
\end{equation}
Moreover, for every $w\in W(S)$, the Weyl group of $S$, there exists
an automorphism $\vphi_w$ of the Lie algebra $L$ such that
$$\vphi_w(L_\ta^\mu) = L_{w(\ta)}^\mu$$ for all $\ta \in S$ and $\mu \in
L$.
\end{proposition}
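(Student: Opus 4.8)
\emph{Well-definedness.} First I would check that $\ad(e_\xi^\la)$ and $\ad(f_\xi^\la)$ are locally nilpotent on $L$. Since $e_\xi^\la\in L_\xi^\la$ is homogeneous of $\scQ(S)\oplus\La$-degree $(\xi,\la)$, the element $(\ad e_\xi^\la)^k(x)$ with $x\in L_\ta^\mu$ lies in $L_{\ta+k\xi}^{\mu+k\la}$, which vanishes by (LT1) unless $\ta+k\xi\in S$; as $S$ is finite and $\xi\ne 0$ (and $F$ has characteristic $0$), only finitely many $k$ can occur, so $(\ad e_\xi^\la)^k$ kills every homogeneous element for $k$ large, hence $\ad(e_\xi^\la)$ is locally nilpotent, and likewise $\ad(f_\xi^\la)$. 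By Exercise~\ref{n:ex:eala-def2}(b),(c) the three exponentials are automorphisms of $L$, so $\vphi_\xi^\la$ is a well-defined automorphism of $L$.

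\emph{Behaviour on the graded pieces.} The heart of the matter is to determine, for $x\in L_\ta^\mu$, which bigraded component of $\vphi_\xi^\la(x)$ can survive. On the one hand, since $e_\xi^\la$ and $f_\xi^\la$ are homogeneous of degrees $(\xi,\la)$ and $(-\xi,-\la)$, every monomial in $\ad(e_\xi^\la)$ and $\ad(f_\xi^\la)$ shifts the $\scQ(S)\oplus\La$-degree by an integer multiple of $(\xi,\la)$; hence $\vphi_\xi^\la(L_\ta^\mu)\subseteq\bigoplus_{j\in\ZZ}L_{\ta+j\xi}^{\mu+j\la}$. On the other hand, $(e_\xi^\la,h_\xi^\la,f_\xi^\la)$ spans a subalgebra $\frs\cong\lsl_2(F)$ (by (\ref{n:lietordef5})) that is visibly stable under each of the three exponentials, and on $\frs$ the map $\vphi_\xi^\la$ is conjugation by the element $\exp(e_\xi^\la)\exp(-f_\xi^\la)\exp(e_\xi^\la)$ of $\mathrm{SL}_2(F)$; a one-line matrix computation gives $\vphi_\xi^\la(h_\xi^\la)=-h_\xi^\la$. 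Conjugating through $\vphi_\xi^\la$ therefore turns $\ad(h_\xi^\la)$ into $-\ad(h_\xi^\la)$, so $\vphi_\xi^\la$ carries the eigenspace of $\ad(h_\xi^\la)$ for an eigenvalue $n$ into the one for $-n$. By (\ref{n:lietordef2}) the subspace $L_\sigma$ lies in the eigenspace for $\lan\sigma,\xi\ch\ran$; comparing with the inclusion above, a summand $L_{\ta+j\xi}^{\mu+j\la}$ of $\vphi_\xi^\la(L_\ta^\mu)$ can be nonzero only if $\lan\ta+j\xi,\xi\ch\ran=-\lan\ta,\xi\ch\ran$, i.e.\ $j=-\lan\ta,\xi\ch\ran$. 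Thus $\vphi_\xi^\la(L_\ta^\mu)\subseteq L_{s_\xi(\ta)}^{\mu-\lan\ta,\xi\ch\ran\la}$. Equality then follows by applying this same inclusion to $(\vphi_\xi^\la)^{-1}$ --- which is the operator of the same form built from the triple $(-e_\xi^\la,h_\xi^\la,-f_\xi^\la)$ --- to get $(\vphi_\xi^\la)^{-1}\big(L_{s_\xi(\ta)}^{\mu-\lan\ta,\xi\ch\ran\la}\big)\subseteq L_\ta^\mu$, and then hitting this with the bijection $\vphi_\xi^\la$.

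\emph{The Weyl-group statement.} For $\xi\in S\ind^\times$ axiom (LT3.a) gives $L_\xi^0\ne 0$, so the construction above applies with $\la=0$ and produces an automorphism $\vphi_\xi:=\vphi_\xi^0$ of $L$ with $\vphi_\xi(L_\ta^\mu)=L_{s_\xi(\ta)}^{\mu}$. The reflections $s_\xi$, $\xi\in S\ind^\times$, generate $W(S)$ (a base of $S$ consists of indivisible roots, and the corresponding reflections already generate $W(S)$); so, writing a given $w\in W(S)$ as $w=s_{\xi_1}\cdots s_{\xi_k}$ with all $\xi_i\in S\ind^\times$ and setting $\vphi_w=\vphi_{\xi_1}\circ\cdots\circ\vphi_{\xi_k}$, one obtains an automorphism of $L$ with $\vphi_w(L_\ta^\mu)=L_{w(\ta)}^{\mu}$ for all $\ta\in S$ and $\mu\in\La$.

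\emph{Main obstacle.} The delicate point is the middle step: upgrading the classical ``$\vphi$ lifts the reflection'' fact on $\lsl_2$-modules (which by itself only yields $\vphi_\xi^\la(h_\xi^\la)=-h_\xi^\la$, hence control of the $\ad(h_\xi^\la)$-eigenspaces) to the sharp statement about the bigraded pieces. What makes it go through is that in a Lie torus the $\scQ(S)$- and $\La$-degree shifts produced by $\ad(e_\xi^\la)$ are rigidly coupled, so once the $\scQ(S)$-part of the shift is forced to be $s_\xi$ the $\La$-part is determined as well; and the finiteness of $S$ --- which already drove the local-nilpotence argument --- keeps every sum in sight finite.
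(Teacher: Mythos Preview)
Your proof is correct and follows the standard route; the paper itself does not give a proof but simply points to \cite[Prop.~1.27]{aabgp}, where the same strategy (local nilpotence from the $\scQ(S)$-grading, the $\lsl_2$-computation $\vphi_\xi^\la(h_\xi^\la)=-h_\xi^\la$, and the eigenspace/degree-shift bookkeeping) is carried out. Note also that your argument yields the $\La$-exponent $\mu-\lan\ta,\xi\ch\ran\la$, which is the correct formula --- the $\lan\mu,\xi\ch\ran$ in the displayed statement is a typo, since $\mu\in\La$ is not paired with coroots.
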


This proposition can be proven in the same way as
\cite[Prop.~1.27]{aabgp}.

\begin{corollary}[{\cite[Lemma~1.10]{abfp2}}] \label{n:lietorsupp}  The $\scQ(S)$-support of $L$ satisfies
\[
  \supp_{\scQ(S)} L =  \begin{cases} S & \hbox{if $S$ is reduced}, \\
              S \hbox{ or } S\ind  &  \hbox{if $S$ is non-reduced.}
   \end{cases}
\]
 \end{corollary}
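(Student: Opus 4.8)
The plan is to extract the two easy inclusions directly from the Lie torus axioms and then appeal to the Weyl group automorphisms of Proposition~\ref{n:weylref} to pin down the behaviour of the divisible roots. First I would record that $\supp_{\scQ(S)} L \subset S$ is precisely axiom (LT1). Next I would check the reverse inclusion on indivisible roots, i.e.\ $S\ind \subset \supp_{\scQ(S)} L$: for $0 \ne \xi \in S\ind^\times$ the space $L_\xi^0$ is nonzero by (LT3.a), and this set is nonempty since $S$ is irreducible and $S \ne \{0\}$; moreover $h_\xi^0 = [e_\xi^0,f_\xi^0] \in L_0^0$ is nonzero because $[h_\xi^0, e_\xi^0] = \lan \xi,\xi\ch\ran e_\xi^0 = 2 e_\xi^0 \ne 0$ by (\ref{n:lietordef2}), so $L_0 \ne 0$ and hence $0 \in \supp_{\scQ(S)} L$ (recall $0 \in S\ind$ by our convention).

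If $S$ is reduced then $S\div^\times = \emptyset$, so $S = S\ind$ and the two facts above already give $\supp_{\scQ(S)} L = S$. So I would then assume $S$ is non-reduced; since $S$ is irreducible this forces $S = \rmbc_l$ for some $l \ge 1$, and $S\setminus S\ind = S\div^\times = \{\pm 2\veps_i : 1\le i\le l\}$ is a single orbit of the Weyl group $W(S)$, by transitivity of $W(S)$ on roots of a fixed length (\cite[VI, \S1.3, Prop.~11]{brac}). Now suppose $\supp_{\scQ(S)} L \ne S\ind$; then by (LT1) there exist $\eta \in S\div^\times$ and $\mu \in \La$ with $L_\eta^\mu \ne 0$. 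For each $w \in W(S)$ the automorphism $\vphi_w$ of Proposition~\ref{n:weylref} satisfies $\vphi_w(L_\eta^\mu) = L_{w(\eta)}^\mu \ne 0$, hence $L_{w(\eta)} \ne 0$; running $w$ over $W(S)$ and using transitivity yields $S\div^\times \subset \supp_{\scQ(S)} L$, and therefore $\supp_{\scQ(S)} L = S\ind \cup S\div^\times = S$.

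Putting the cases together, $\supp_{\scQ(S)} L$ equals $S$ when $S$ is reduced, and equals either $S\ind$ or $S$ when $S$ is non-reduced, which is the assertion. The only step requiring any care is the all-or-nothing behaviour of the divisible roots, and I do not expect a real obstacle there: it is delivered cleanly by Proposition~\ref{n:weylref} together with the transitivity of $W(\rmbc_l)$ on its divisible roots, so the argument is essentially bookkeeping around the axioms.
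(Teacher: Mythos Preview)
Your argument is correct and is precisely the natural way to deduce the corollary from Proposition~\ref{n:weylref}: the axioms (LT1) and (LT3.a) give $S\ind \subset \supp_{\scQ(S)} L \subset S$, and the Weyl-group automorphisms $\vphi_w$ together with transitivity of $W(\rmbc_l)$ on $S\div^\times$ force the divisible roots to lie in the support either all together or not at all. The paper itself does not spell out a proof here but simply cites \cite[Lemma~1.10]{abfp2}; your write-up is exactly the argument one would expect, and the only point worth noting is that the existence of $\vphi_w$ for every $w\in W(S)$ (the second part of Proposition~\ref{n:weylref}) is what makes the last step go through, since $W(S)=W(S\ind)$ and the generators $\vphi_\xi^0$ for $\xi\in S\ind^\times$ are available by (LT3.a).
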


As a consequence of this corollary, $\supp_{\scQ(S)}L$ is always a
finite irreducible root system. It is immediate that $L$ is also a
Lie torus of type $(\supp_{\scQ(S)}, \La)$. Without loss of
generality we can therefore assume that $S=\supp_{\scQ(S)}L$ if this
is convenient.

\begin{proposition} \label{n:divsupp}
For $\xi \in S$ define \[ \La_\xi = \{\la \in \La : L_\xi^\la \ne 0
\},  \] so that $\supp_\La L = \bigcup_{\xi \in S} \La_\xi$. Then
the family $(\La_\xi : \xi \in S)$ satisfies the axioms {\rm (ED1)}
and {\rm (ED2)} of\/ {\rm Def.~\ref{n:arsed}},
\begin{align}
  &\La_\eta - \lan \eta, \xi\ch\ran \La_\xi \subset
  \La_{s_\xi(\eta)}   \tag{ED1} \\
  &\hbox{$0 \in \La_\xi$ for $\xi \in S\ind$ and $\La_\xi \ne
  \emptyset$ for $\xi \in S^\times\div$}
    \tag{ED2}
\end{align}
Hence $\La_\xi$ is a pointed reflection subspace for $\xi \in
S^\times\ind$,  a symmetric reflection subspace for $\xi \in
S^\times\div$ and \begin{equation} \label{n:divsupp1}
   \La_\xi = \La_{w(\xi)} \quad \hbox{for all $w\in W(S)$}.
   \end{equation}
Defining $\La\sh$, $\La\lg$ and $\La\div$ as in {\rm
\ref{n:arsed1}}, we have
\begin{align}
  \La\sh &\supset \La\lg \supset \La\div, \label{n:divsupp2} \\
  \supp_\La L &= \La_0 = \La\sh + \La\sh, \label{n:divsupp3} \\
   \emptyset     &=   2\La\sh \cap \La\div  , \label{n:divsupp4} \\
  \La &= \Span_\ZZ(\La\sh). \label{n:divsupp5}
\end{align}
The support families $(\La_\xi: \xi \in S)$ are the same for $L$ and
$L/Z(L)$.
\end{proposition}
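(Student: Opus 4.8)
\emph{Proof plan.} By Corollary~\ref{n:lietorsupp} I may replace $S$ by $\supp_{\scQ(S)}L$; this is harmless and makes every $\La_\xi$ non-empty, so it settles the clause ``$\La_\xi\ne\emptyset$ for $\xi\in S\div^\times$'' of (ED2). The remaining clause of (ED2) is immediate: (LT3.a) gives $L_\xi^0\ne0$, hence $0\in\La_\xi$, for $\xi\in S\ind^\times$, and then $h_\xi^0=[e_\xi^0,f_\xi^0]\in L_0^0$ is non-zero by \eqref{n:lietordef2} (it acts as $2$ on $L_\xi^0\ne0$), so $0\in\La_0$ as well. Axiom (ED1) falls out of Proposition~\ref{n:weylref}: once $L_\xi^\la\ne0$ the automorphism $\vphi_\xi^\la$ is defined, and \eqref{n:weylref1} turns a non-zero $L_\eta^\mu$ into a non-zero $L_{s_\xi(\eta)}^{\mu-\lan\eta,\xi\ch\ran\la}$; the automorphisms $\vphi_w$ of the same proposition give $\La_\xi=\La_{w(\xi)}$, which is \eqref{n:divsupp1}. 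Now $(\La_\xi:\xi\in S)$ is an extension datum of type $(S,Z)$ with $Z:=\Span_F(\bigcup_\xi\La_\xi)$, and Exercise~\ref{n:arsedlem}(a) supplies the pointed/symmetric reflection-subspace statements.

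Next I would treat the inclusion chain. The containments among $\La\sh$, $\La\lg$, $\La\div$ recorded in the Structure Theorem on extension data are consequences of (ED1) alone (feed into (ED1) two roots of different length forming an obtuse angle, as in the examples preceding that theorem), and together with $0\in\La\sh$ (and $0\in\La\lg$ when long roots occur) they yield \eqref{n:arsed3}, i.e.\ \eqref{n:divsupp2}. For \eqref{n:divsupp3} the inclusion $\La_0\subset\La\sh+\La\sh$ is read off from (LT3.b) in the form \eqref{n:lietordef4}: $L_0^\la\ne0$ forces $[L_\xi^\mu,L_{-\xi}^{\la-\mu}]\ne0$ for some $0\ne\xi$, so $\mu\in\La_\xi$ and $\la-\mu\in\La_{-\xi}=\La_\xi\subset\La\sh$ by \eqref{n:divsupp1} and \eqref{n:divsupp2}. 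For the reverse inclusion $\La\sh+\La\sh\subset\La_0$ I would use $\lsl_2$-theory: for a short root $\xi$ the triple $(e_\xi^\mu,h_\xi^\mu,f_\xi^\mu)$ acts on $L$, and since $\supp_{\scQ(S)}L=S$ is finite the operators $\ad e_\xi^\mu$, $\ad f_\xi^\mu$ are locally nilpotent, so $L$ is a locally finite, hence completely reducible, $\lsl_2$-module with integral weights; a weight vector of weight $-2$ is never killed by $e$, so for $\nu\in\La\sh=\La_\xi=\La_{-\xi}$ we get $0\ne[e_\xi^\mu,e_{-\xi}^\nu]\in L_0^{\mu+\nu}$, whence $\mu+\nu\in\La_0$. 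Combining, $\supp_\La L=\bigcup_\xi\La_\xi=\La_0=\La\sh+\La\sh$, and \eqref{n:divsupp5} then follows from (LT3.c).

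The delicate part, and the step I expect to need the most care, is the reducedness statement \eqref{n:divsupp4}. If $S$ has no divisible roots this is vacuous; otherwise (after the reduction above) $S=\rmbc_l$ and there is a short root $\xi$ with $2\xi\in S$. Suppose $\la=2\mu\in2\La\sh\cap\La\div$ with $\mu\in\La\sh=\La_\xi$. Then $L_\xi^\mu=Fe_\xi^\mu$ and there is $0\ne x\in L_{2\xi}^{2\mu}$; since $3\xi\notin S$ we have $[e_\xi^\mu,x]\in L_{3\xi}^{3\mu}=0$, while $h_\xi^\mu$ acts on $x$ as $\lan2\xi,\xi\ch\ran=4$. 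By local finiteness $x$ generates a copy of the $5$-dimensional irreducible $\lsl_2$-module, inside which $f_\xi^\mu\cdot x$ is a non-zero weight-$2$ vector; but the weight-$2$ subspace lies in $L_\xi^\mu=Fe_\xi^\mu$, so $f_\xi^\mu\cdot x=c\,e_\xi^\mu$ with $c\ne0$, and then $[e_\xi^\mu,f_\xi^\mu\cdot x]=c[e_\xi^\mu,e_\xi^\mu]=0$, contradicting that $e$ maps the weight-$2$ subspace isomorphically onto $Fx\ne0$. Hence $2\La\sh\cap\La\div=\emptyset$. The combination of $\dim L_\xi^\mu=1$ from (LT2) with the local finiteness coming from the $\scQ(S)$-grading (LT1) is exactly what makes this argument---and the $\supset$ part of \eqref{n:divsupp3}---go through, so these are the places I would concentrate.

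Finally, for the assertion about $L/Z(L)$: the centre $Z(L)$ is a graded ideal, and $Z(L)\cap L_\xi^\la=0$ for $\xi\ne0$ because $e_\xi^\la$ is not central ($[e_\xi^\la,f_\xi^\la]=h_\xi^\la$ acts as $2$ on $L_\xi^\la\ne0$); in particular $Z(L)\subset L_0$. Hence $\bar L:=L/Z(L)$, with its quotient grading, has $\bar L_\xi^\la\ne0$ whenever $\xi\ne0$ and $L_\xi^\la\ne0$, so $\La_\xi(\bar L)=\La_\xi(L)$ for all $\xi\ne0$. One checks that $\bar L$ is again a Lie torus of type $(S,\La)$---axioms (LT1), (LT2), (LT3.a), (LT3.b) pass to the graded quotient, and (LT3.c) holds because $\Span_\ZZ(\La\sh(\bar L))=\Span_\ZZ(\La\sh(L))=\La$ by \eqref{n:divsupp5}. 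Applying \eqref{n:divsupp3} to $\bar L$ then gives $\La_0(\bar L)=\La\sh(\bar L)+\La\sh(\bar L)=\La\sh(L)+\La\sh(L)=\La_0(L)$, so the support families $(\La_\xi:\xi\in S)$ for $L$ and for $L/Z(L)$ coincide.
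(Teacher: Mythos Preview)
Your proof is correct. For (ED1), (ED2), the reflection-subspace properties, \eqref{n:divsupp1}, \eqref{n:divsupp2}, and \eqref{n:divsupp5} you follow exactly the paper's route: Proposition~\ref{n:weylref}, axiom (LT3.a), and the extension-datum machinery of \S\ref{n:sec:arstrut}.

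Where you differ is in \eqref{n:divsupp3}, \eqref{n:divsupp4}, and the final claim about $L/Z(L)$: the paper simply cites \cite[Th.~5.1]{y:ext} and \cite[Lemma~1.1.12]{abfp2} for these, whereas you supply direct $\lsl_2$-arguments. Your argument for \eqref{n:divsupp4} is in fact slightly cleaner than you present it---one need not invoke the $5$-dimensional irreducible at all: from $[e_\xi^\mu,x]=0$ and $[h_\xi^\mu,x]=4x$ the Jacobi identity gives $[e_\xi^\mu,[f_\xi^\mu,x]]=4x$, while $[f_\xi^\mu,x]\in L_\xi^\mu=Fe_\xi^\mu$ forces $[e_\xi^\mu,[f_\xi^\mu,x]]=0$, a contradiction regardless of whether $c=0$. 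For the $\supset$ direction of \eqref{n:divsupp3} your local-finiteness argument is sound (the $\scQ(S)$-grading with finite support makes $\ad e_\xi^\mu$ and $\ad f_\xi^\mu$ locally nilpotent, hence $L$ is a completely reducible $\lsl_2$-module). For the last claim you correctly reduce to $\La_0$ via \eqref{n:divsupp3} applied to the quotient Lie torus $L/Z(L)$ (whose Lie-torus status is Exercise~\ref{n:ex:lietor}(d)). Your approach thus buys self-containedness at the cost of a little more work; the paper's approach keeps the exposition short by outsourcing the $\lsl_2$-computations.
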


\begin{proof} (ED1) is a consequence of Prop.~\ref{n:weylref} and (ED2) of
(LT3.a). It then follows as in section~\ref{n:sec:roots} that
$\La_\xi$, $\xi \in S^\times$, are pointed respectively symmetric
reflection subspaces such that (\ref{n:divsupp1}) and
(\ref{n:divsupp2}) hold. (\ref{n:divsupp3}) and (\ref{n:divsupp4})
are proven in \cite[Th.~5.1]{y:ext} and \cite[Lemma~1.1.12]{abfp2}.
(\ref{n:divsupp5}) follows from (\ref{n:divsupp2}) and
(\ref{n:divsupp3}). The last claim is also proven in
\cite[Th.~5.1]{y:ext}.
\end{proof}

\begin{proposition} \label{n:liesplit} Define
\begin{align*}
   \g &= \hbox{subalgebra generated by $\{ L^0_\xi :
            \xi \in  S\ind^\times\}$}, \\
  \frh &= \Span_F \{ h_\xi^0 : \xi \in S\ind^\times\}.
 \end{align*}

{\rm (a)} Then $\frg$ is a finite-dimensional split simple Lie
algebra with splitting Cartan subalgebra $\frh$. \sm

{\rm (b)} The root system $S\ind$ and the root system of $(\g,
\frh)$ are canonically isomorphic. Namely, for every $\xi \in
S^\times \ind$ there exists a unique $\tilde \xi \in \frh^*$,
defined by $\tilde \xi (h_\eta^0) = \lan \xi, \eta\ch\ran$ for $\eta
\in S\ind^\times$, such that the map $\xi \mapsto \tilde \xi$
extends to an isomorphism between the root system $S\ind$ and the
root system of $(\g,\frh)$.
\end{proposition}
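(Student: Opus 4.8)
The plan is to realize $\g$ as the finite-dimensional split simple Lie algebra attached by Serre's theorem to the Cartan matrix of $S\ind$, using only (LT2)--(LT3), Lemma~\ref{n:weisp} and the structure theory of finite-dimensional semisimple Lie algebras. First I would fix the data. Since $S$ is irreducible, so is the reduced finite root system $S\ind$; by Corollary~\ref{n:lietorsupp} and the remark following it we may even assume $S=\supp_{\scQ(S)}L$, and argue with $S\ind$ directly. Fix a base $\Pi=\{\al_1,\dots,\al_n\}$ of $S\ind$ and write $a_{ij}=\lan\al_i,\al_j\ch\ran$ for its Cartan matrix. By (LT3.a) each $L_{\al_i}^0\ne 0$, so (LT2) supplies $e_i:=e_{\al_i}^0\in L_{\al_i}^0$, $f_i:=f_{\al_i}^0\in L_{-\al_i}^0$ and $h_i:=[e_i,f_i]=h_{\al_i}^0\in L_0^0$, with $L_{\pm\al_i}^0$ one-dimensional and $(e_i,h_i,f_i)$ an $\lsl_2$-triple by~(\ref{n:lietordef5}).

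Next I would verify that the $e_i,f_i,h_i$ satisfy the Serre relations for $A=(a_{ij})$. The relations $[h_i,h_j]=0$, $[h_i,e_j]=a_{ji}e_j$ and $[h_i,f_j]=-a_{ji}f_j$ are immediate from (LT2) applied with $x_\ta$ equal to $h_j\in L_0$, $e_j\in L_{\al_j}$ and $f_j\in L_{-\al_j}$ respectively. For $i\ne j$, expanding $\al_i-\al_j$ in the base $\Pi$ shows it has coefficients of both signs, hence lies in neither $\ZZ_{\ge 0}\Pi$ nor $\ZZ_{\le 0}\Pi$; since every root of $S$ lies in one of these cones (the divisible roots of $S$ being twice indivisible, hence $\Pi$-expressible, ones), we get $\pm(\al_i-\al_j)\notin S$, so $L_{\al_i-\al_j}=L_{\al_j-\al_i}=0$ by (LT1) and therefore $[e_i,f_j]=[f_i,e_j]=0$. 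For the remaining two relations, $[f_i,e_j]=0$ makes $e_j$ a lowest weight vector of $h_i$-weight $a_{ji}\le 0$ for the $\lsl_2$-algebra $Fe_i\oplus Fh_i\oplus Ff_i$; since $(\ad e_i)^m(e_j)\in L_{\al_j+m\al_i}^0$ is nonzero only for the finitely many $m$ with $\al_j+m\al_i\in S$, the $\lsl_2$-submodule generated by $e_j$ is finite-dimensional, hence irreducible of highest weight $-a_{ji}$ and annihilated by $(\ad e_i)^{1-a_{ji}}$; the relation for the $f$'s follows symmetrically from $[e_i,f_j]=0$.

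By Serre's theorem (\cite[\S18]{hum}), since $A$ is of finite type the Lie algebra presented by these generators and relations is the finite-dimensional split simple Lie algebra with root system $S\ind$ (simple because $S\ind$ is irreducible); hence the subalgebra $\g'\subseteq L$ generated by $\{e_i,f_i:1\le i\le n\}$, being a nonzero homomorphic image of it, is itself split simple with splitting Cartan subalgebra $\frh':=\Span_F\{h_i\}$ and root system relative to $\frh'$ isomorphic to $S\ind$. It then remains to see $\g'=\g$ and $\frh'=\frh$. I would argue this by noting that $\g'$ is $\scQ(S)$-graded (its generators being $\scQ(S)$-homogeneous), so that its root-space decomposition refines the $\scQ(S)$-grading; tracking the $\scQ(S)$-degrees through the root-system isomorphism, and using Lemma~\ref{n:weisp} together with the one-dimensionality~(\ref{n:lietordef3}) of the nonzero spaces $L_\xi^\la$, one identifies the $\scQ(S)$-homogeneous component of $\g'$ in each degree $\xi\ne 0$ with $L_\xi^0$ and the degree-$0$ component with $\frh'=\g'\cap L_0$. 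In particular $\g'$ contains every $e_\xi^0$ and $f_\xi^0$ for $\xi\in S\ind^\times$, so $\g\subseteq\g'$, and $h_\xi^0=[e_\xi^0,f_\xi^0]\in[\g'_\xi,\g'_{-\xi}]\subseteq\frh'$, so $\frh\subseteq\frh'$; the reverse inclusions are obvious, proving (a). For (b), define $\tilde\xi\in\frh^*$ by $\tilde\xi(h_\eta^0)=\lan\xi,\eta\ch\ran$ for $\eta\in S\ind^\times$ (well-defined, since $\{h_\eta^0\}$ spans $\frh$ and any linear relation among the $h_\eta^0$ is detected by the $\ad$-action on the $e_\zeta^0$); by Lemma~\ref{n:weisp}, $e_\xi^0$ and $f_\xi^0$ are root vectors of $(\g,\frh)$ for $\tilde\xi$ and $-\tilde\xi$, and as $(e_\eta^0,h_\eta^0,f_\eta^0)$ is an $\lsl_2$-triple with $\tilde\eta(h_\eta^0)=2$ the element $h_\eta^0$ is the coroot of $\tilde\eta$, whence $\lan\tilde\xi,\tilde\eta\ch\ran=\lan\xi,\eta\ch\ran$; thus $\xi\mapsto\tilde\xi$ extends to a linear isomorphism $Y=\Span_F(S\ind)\to\frh^*$ carrying $S\ind$ isomorphically onto the root system of $(\g,\frh)$.

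The main obstacle is establishing that $\g$ is finite-dimensional, i.e.\ that the subalgebra generated by all the $e_\xi^0,f_\xi^0$ is no larger than the one generated by the simple-root vectors; this is forced by the finiteness of the root system $S$ together with the one-dimensionality of the nonzero homogeneous pieces in (LT2), which is also precisely what makes the $\lsl_2$-submodules finite-dimensional and Serre's theorem applicable. A secondary subtlety, relevant when $S$ is non-reduced (type $\rmbc_l$), is that one must take the base of $S\ind$ rather than of $S$ and verify that differences of distinct such simple roots avoid all of $S$, not merely $S\ind$.
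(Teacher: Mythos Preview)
Your argument is correct and follows exactly the route the paper itself indicates: the paper's proof is only a pointer to references, stating that the result ``is essentially a corollary to the Chevalley-Serre presentation of finite-dimensional split simple Lie algebras'' and deferring details to \cite[Prop.~1.2.2]{abfp2}; you have supplied precisely those details (Serre generators from (LT2)--(LT3), verification of the Serre relations via (LT1) and $\lsl_2$-theory, and the identification $\g'=\g$, $\frh'=\frh$ via one-dimensionality of the $L_\xi^0$).
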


\begin{proof} This is a special case of a result for arbitrary root-graded
Lie algebras, see \cite[Remark 2 of \S2.1]{n:3g} and
\cite[Prop.~5.9]{n:persp}. It is essentially a corollary to the
Chevalley-Serre presentation of finite-dimensional split simple Lie
algebras. For Lie tori it was announced in \cite[\S3]{n:tori}. The
details of the proof are given in \cite[Prop.~1.2.2]{abfp2}.
\end{proof}

The following Ex.~\ref{n:ex:lietor} lists some more basic properties
of Lie tori. You will need Ex.~\ref{n:ex:centex}(a) in part (d) of
\ref{n:ex:lietor}.

\begin{exercise} \label{n:ex:centex} (a) Let $K$ be a perfect Lie algebra. Then $K/Z(K)$ is a
perfect and centreless Lie algebra.

(b) Let $E$ be a Lie algebra with an invariant nondegenerate
symmetric bilinear form $\inpr$, and let $K$ be an ideal of $E$ with
$K=[E,K]$. Then $\{z \in K : (z\mid K)=0 \} = Z(K)$.\end{exercise}

\begin{exercise} \label{n:ex:lietor} Let $L$ be a Lie torus of type $(S,\La)$. Show:

(a) $L_0^\la$ is given by the formula (\ref{n:lietordef4}).

(b) $L$ is perfect.

(c) The centre satisfies $Z(L) = \textstyle \bigoplus_{\la \in \La}
Z(L)^\la$ for $Z(L)^\la = Z(L) \cap L_0^\la$.

(d) Let $Y= \bigoplus_{\la \in \La} Y_0^\la$, $Y_0^\la = Y \cap
L_0^\la$, be a graded subspace of $Z(L)$. Then $L/Y$ is a Lie torus
with respect to the subspaces $(L/Y)_\xi^\la = L_\xi^\la /
Y_\xi^\la$, where for $\xi \ne 0$ we put $Y_\xi^\la = \{0\}$ and
thus have $(L/Y)_\xi^\la \cong L_\xi^\la$ as vector spaces. In
particular, $L/Z(L)$ is a centreless Lie torus.

(e) For $\la,\mu \in \La_\xi$, $\xi\in S^\times$, we have $h_\xi^\la
\equiv h_\xi^\mu \mod Z(L)$.

(f) $L^0 = \g \oplus Z(L)^0$ and $L_0^0 = \frh \oplus Z(L)^0$.

(g) Let $I$ be a $\La$-graded ideal of $L$, whence $I=\bigoplus_{\la
\in \La} I^\la$ for $I^\la = I \cap L^\la$. Then either $I=L$ or $I
\subset Z(L)$. In particular, a centreless Lie torus is
graded-simple with respect to the $\La$-grading of $L$.
\end{exercise}

\sm

Since a Lie torus is perfect by part (b) of the exercise above, it
has a universal central extension (Th.~\ref{n:sub:ucethm}).

\begin{theorem}[{\cite[\S5]{n:tori}, \cite{n:uce}}]\label{n:ucelie}
Let\/ $\fru : \uce(L) \to L$ be a universal central extension of a
Lie torus $L=\bigoplus_{\xi, \la} L_\xi^\la$ of type $(S,\La)$. Then
$\uce(L)$ is also a Lie torus of type $(S,\La)$, say $\uce(L) =
\bigoplus_{\xi, \la} \uce(L)_\xi^\la$, and $\fru$ maps
$\uce(L)_\xi^\la$ onto $L_\xi^\la$.
\end{theorem}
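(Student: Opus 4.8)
The plan is to transfer the Lie torus structure from $L$ to $\uce(L)$ by pulling back the grading-defining data along the covering map $\fru$, and to invoke the general machinery for graded central extensions established earlier in the paper. First I would observe that $L$ is $\La$-graded (even $(\scQ(S),\La)$-graded), and being perfect (Exercise~\ref{n:ex:lietor}(b)) it has a universal central extension. By Prop.~\ref{n:appcenth}, $\fru : \uce(L) \to L$ is a $\La$-covering: $\uce(L)$ inherits a $\La$-grading making $\fru$ degree-$0$, and $\Ker\fru$ is a graded subspace of $\uce(L)$. The same argument applied to the $\scQ(S)$-grading (or to the combined $\scQ(S)\oplus\La$-grading, using that $L$ is perfect) shows $\uce(L)$ is in fact $(\scQ(S),\La)$-graded with $\fru$ bi-degree $0$. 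Writing $\uce(L) = \bigoplus_{\xi,\la}\uce(L)_\xi^\la$ we then have $\fru(\uce(L)_\xi^\la) \subset L_\xi^\la$, and since $\Ker\fru$ is graded and $\Ker\fru \subset Z(\uce(L)) \subset \uce(L)_0^0$ (the centre of a perfect graded Lie algebra sits in the trivial bi-degree, as in Exercise~\ref{n:ex:lietor}(c)), the map $\fru$ restricts to an \emph{isomorphism} $\uce(L)_\xi^\la \xrightarrow{\sim} L_\xi^\la$ whenever $(\xi,\la)\neq(0,0)$, and is surjective in bi-degree $(0,0)$ with kernel $\Ker\fru$.

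Next I would verify the three Lie torus axioms for $\uce(L)$ of type $(S,\La)$. Axiom (LT1): $\supp_{\scQ(S)}\uce(L) = \supp_{\scQ(S)} L \subset S$, immediate from the bi-degree-$0$ isomorphisms for $\xi\neq 0$ together with $\fru$ surjective (and one checks $\uce(L)_0 \neq 0$ trivially). Axiom (LT2): fix $(\xi,\la)$ with $\xi\neq 0$ and $\uce(L)_\xi^\la\neq 0$; then $L_\xi^\la\neq 0$, and since $\fru$ is a bi-degree-$(\xi,\la)$ (resp.\ $(-\xi,-\la)$) linear isomorphism onto $L_\xi^\la$ (resp.\ $L_{-\xi}^{-\la}$), we can choose $\tilde e_\xi^\la, \tilde f_\xi^\la \in \uce(L)$ mapping to the $\lsl_2$-triple generators $e_\xi^\la, f_\xi^\la$ of $L$; then $\uce(L)_\xi^\la = F\tilde e_\xi^\la$ and $\uce(L)_{-\xi}^{-\la} = F\tilde f_\xi^\la$, giving (\ref{n:lietordef1}). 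For (\ref{n:lietordef2}), set $\tilde h = [\tilde e_\xi^\la, \tilde f_\xi^\la] \in \uce(L)_0^0$. For $x_\ta \in \uce(L)_\ta$ we must show $[\tilde h, x_\ta] = \lan\ta,\xi\ch\ran x_\ta$. Applying $\fru$ gives $\fru([\tilde h, x_\ta]) = [[e_\xi^\la,f_\xi^\la], \fru(x_\ta)] = \lan\ta,\xi\ch\ran\fru(x_\ta) = \fru(\lan\ta,\xi\ch\ran x_\ta)$ by (LT2) in $L$; hence $[\tilde h,x_\ta] - \lan\ta,\xi\ch\ran x_\ta \in \Ker\fru \subset Z(\uce(L))$. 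This is where the only real work lies: I would argue that $[\tilde h, \cdot]$ is a \emph{semisimple} operator whose restriction to the $\La$-homogeneous piece $\uce(L)^{\mu}$ has known eigenvalue $\lan\ta,\xi\ch\ran$ modulo the central ideal, but since $\uce(L)$ is perfect and the error term is central, one gets equality by spanning $\uce(L)$ by products — i.e.\ it suffices to check the identity on $x_\ta$ of the form $x_\ta = [a,b]$ with $a,b$ homogeneous, and there the Jacobi identity plus (LT2) in $L$ (via $\fru$ applied to $a$, $b$, and $\tilde h$ separately) reduces the claim to the case of strictly smaller pieces; alternatively, and more cleanly, one notes that since $\fru$ is a covering and $\Ker\fru$ is central, a derivation of $\uce(L)$ is determined by its image under $\fru$ up to a map into $Z(\uce(L))$, and $\ad\tilde h - \lan\ta,\xi\ch\ran\Id$ is a derivation vanishing on $[\uce(L),\uce(L)] = \uce(L)$.

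Finally, axiom (LT3): (LT3.a) $\uce(L)_\xi^0 \cong L_\xi^0 \neq 0$ for $\xi\in S\ind^\times$ via the bi-degree-$0$ isomorphism; (LT3.b) holds because $\uce(L)$ is perfect, hence generated by its homogeneous pieces, and using the $\lsl_2$-triples in the nonzero $\scQ(S)$-degrees one checks the subalgebra they generate contains all of $\uce(L)_0$ as well — more precisely, since $\fru$ is a covering, $\uce(L)$ is generated by any graded subspace whose image generates $L$, and $\bigcup_{0\neq\xi}L_\xi$ generates $L$ by (LT3.b) for $L$; (LT3.c) $\Span_\ZZ(\supp_\La\uce(L)) = \Span_\ZZ(\supp_\La L) = \La$ since the supports agree (the nonzero-degree pieces are in bijection and $\uce(L)^0 \twoheadrightarrow L^0$). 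This establishes that $\uce(L)$ is a Lie torus of type $(S,\La)$ with $\fru(\uce(L)_\xi^\la) = L_\xi^\la$, which is the assertion. I expect the main obstacle to be the careful bookkeeping in the previous paragraph showing the $\ad$ of the lifted Cartan-type elements acts with the correct eigenvalues on all of $\uce(L)$ and not merely modulo the centre; the cleanest route is to phrase it as: $\ad\tilde h$ and the scalar operator $\lan\cdot,\xi\ch\ran$ induce the same map on $L = \uce(L)/\Ker\fru$, their difference is a derivation of $\uce(L)$ with image in $Z(\uce(L))$, and such a derivation kills $[\uce(L),\uce(L)] = \uce(L)$.
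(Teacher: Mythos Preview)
The paper does not actually prove this theorem; it is stated with external references only. So there is no ``paper's own proof'' to compare against, and your proposal stands or falls on its own merits.

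Your overall strategy is sound, and the derivation argument you sketch at the end (that $\ad\tilde h$ and the $\scQ(S)$-degree derivation differ by a derivation with central image, which therefore vanishes on the perfect algebra $\uce(L)$) is exactly the right idea. But there is a genuine error early on: you claim $\Ker\fru\subset Z(\uce(L))\subset \uce(L)_0^0$, citing Exercise~\ref{n:ex:lietor}(c). First, that exercise says only $Z(L)\subset L_0$ (the $\scQ(S)$-degree-$0$ part), \emph{not} $Z(L)\subset L_0^0$; the centre is $\La$-graded but may have nonzero components in many $\La$-degrees. Indeed Theorem~\ref{n:ucemm} shows that for the untwisted multiloop algebra $\g\otimes F[t_1^{\pm1},\ldots,t_n^{\pm1}]$ with $n\ge 2$, the kernel of $\fru$ is $\bigoplus_{\ga\in\ZZ^n}F^n/F\ga$, which is nonzero in infinitely many $\La$-degrees. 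So $\fru$ is \emph{not} an isomorphism on $\uce(L)_0^\la$ for general $\la\ne 0$; it is an isomorphism only on $\uce(L)_\xi^\la$ with $\xi\ne 0$. Second, even the correct inclusion $\Ker\fru\subset\uce(L)_0$ cannot be justified by Exercise~\ref{n:ex:lietor}(c), since that exercise is for Lie tori and you have not yet shown $\uce(L)$ is one---this is circular.

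The fix is to reorganize: first lift $e_\xi^0,f_\xi^0$ (for $\xi\in S\ind^\times$) to $\uce(L)$---this only needs surjectivity of $\fru$ on each graded piece, which follows from $\Ker\fru$ being graded---and run the derivation argument to establish $[\,\tilde h_\xi^0,x_\ta]=\lan\ta,\xi\ch\ran x_\ta$ for all $x_\ta\in\uce(L)_\ta$. From this, $Z(\uce(L))\subset\uce(L)_0$ follows immediately (a central element in $\uce(L)_\ta$ is killed by all $\ad\tilde h_\xi^0$, forcing $\ta=0$). Only then can you conclude that $\fru$ restricts to an isomorphism on $\uce(L)_\xi^\la$ for $\xi\ne 0$, which gives you (LT1) and the one-dimensionality in (LT2). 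Your arguments for (LT3) are fine once this is in place; in particular the observation that a subalgebra $M\subset\uce(L)$ with $M+Z(\uce(L))=\uce(L)$ must equal $\uce(L)$ by perfection handles (LT3.b) cleanly.
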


\begin{remark} \label{n:ucelierem}
It follows from this theorem and the exercise above that in order to
describe Lie tori up to graded isomorphism, one can proceed in two
steps: \begin{itemize}

 \item[(A)] Classify centreless Lie tori, up to graded isomorphism.
We will discuss some examples in \ref{n:sec:lietypeA} and
\ref{n:sec:lietorex}.

\item[(B)] Describe the universal central
extension of the centreless Lie tori from (A). They are unique up to
isomorphism. We will not say anything about this here. The reader
can find some results in \cite{bgk,bgkn,n:eala,n:uce} for Lie tori
arising from EALAs and in \cite{abg,abg2,BeSm,n:uce} for general
root-graded Lie algebras.
\end{itemize}
Once (A) and (B) completed, an arbitrary Lie torus of type $(S,\La)$
is then obtained as $\uce(L)/C$ where $L$ is taken from the list in
(A) and where $C$ is a graded subspace of the centre of $\uce(L)$.
\end{remark}

The following results only holds for special types of Lie tori.

\begin{theorem}[{\cite[Th.~5]{n:tori}, proven in \cite{n:uce}}]
\label{n:torfg} Let $L= \bigoplus_{\xi \in S, \, \la\in \La}
L_\xi^\la$ be a Lie torus of type $(S,\La)$ where $\La$ is a
finitely generated abelian group. \sm

{\rm (a)} Then $L$ is finitely generated as Lie algebra and has
bounded homogeneous dimension with respect to the $\scQ(S) \oplus
\La$-grading of $L$. \sm

{\rm (b)} Moreover, the Lie algebra $\Der_F(L)= \grDer_F(L)$ where
$\grDer_F(L)$ is naturally $\scQ(S) \oplus \La$-graded and has
bounded homogeneous dimension with respect to this grading. \sm

{\rm (c)} If $L$ is invariant, its universal central extension is
isomorphic to the central extension $\rmE(L,D^{\gr *}, \psi_D)$
where $D$ is any graded complement of $\IDer(L)$ in $\SDer_F(L)$.
\end{theorem}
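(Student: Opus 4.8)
The content of the theorem is concentrated in part (a): once one knows that $L$ is finitely generated as a Lie algebra and has bounded homogeneous dimension, part (b) is a direct grading argument and part (c) is essentially a citation of Theorem~\ref{n:thgencen}(b). So I begin with (a). By (LT2), in the precise form (\ref{n:lietordef3}), one has $\dim L_\xi^\la \le 1$ whenever $\xi \ne 0$, and $S$ is finite; hence ``bounded homogeneous dimension'' amounts to a bound on $\dim L_0^\la$ that is uniform in $\la$. I would get this by passing to the centreless quotient: by Exercise~\ref{n:ex:lietor}(d) the algebra $\bar L = L/Z(L)$ is a centreless Lie torus of type $(S,\La)$, and by Theorem~\ref{n:ucelie} its universal central extension $\uce(\bar L) = \uce(L)$ is again a Lie torus of type $(S,\La)$, of which $L$ is a graded central quotient. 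It therefore suffices to bound the homogeneous dimensions of $\bar L$ and of the central kernel $\Ker(\fru : \uce(\bar L) \to \bar L)$. For $\bar L$ one invokes the coordinate-algebra (centroid) description of a centreless Lie torus: since $\La$ is finitely generated, $\Cent(\bar L)$ is a finitely generated commutative ring, $\bar L$ is a finitely generated module over it, and in particular for fixed $\xi \ne 0$ the line $[\bar L_\xi^\mu, \bar L_{-\xi}^{\la - \mu}]$ does not depend on $\mu$; with (\ref{n:lietordef4}) this gives $\dim \bar L_0^\la \le \dim \frh$. For $\Ker \fru$, which is central in $\uce(\bar L)$, one uses that it is a homomorphic image of a module of K\"ahler differentials of a finitely generated commutative algebra, which has bounded homogeneous dimension (compare the remarks after Theorem~\ref{n:ucemm}).

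Finite generation of $L$ as a Lie algebra is then a routine induction. By Prop.~\ref{n:divsupp}, $\supp_\La L$ generates $\La$, so one can fix a finite $G \subset \supp_\La L$ with $\Span_\ZZ(G) = \La$. The subalgebra generated by the (finitely many, and finite-dimensional by the bound just obtained) spaces $L^\gamma$ with $\gamma \in \{0\} \cup G \cup (-G)$ contains $\g$ and $\frh$ by Prop.~\ref{n:liesplit}, hence contains every $L_\xi^\gamma$ with $\xi \ne 0$ and $\gamma$ in this set; an induction on the length of an expression of $\la$ in the elements of $G$, using (\ref{n:lietordef4}), then shows this subalgebra is all of $L$.

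For part (b): first, $\Der_F(L)$ is $\scQ(S)$-graded, because for $d \in \Der_F(L)$ and $q \in \scQ(S)$ the component $d_q$ (given on $x \in L_\xi$ by the $L_{\xi+q}$-part of $d(x)$) is again a derivation, and $d_q \ne 0$ forces $q \in S - S$, a finite set, so $d = \sum_{q \in S-S} d_q$. Next, $\Der_F(L)$ is $\La$-graded: by (a), $L$ is generated as a Lie algebra by a finite-dimensional graded subspace $V$; since $d(V)$ is finite-dimensional and a derivation vanishing on $V$ is zero, the $\La$-homogeneous components $d^\la$ of $d$ vanish for all but finitely many $\la$. Thus $\Der_F(L) = \grDer_F(L)$ is $\scQ(S)\oplus\La$-graded. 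It has bounded homogeneous dimension because a derivation of degree $(q,\la)$ is determined by its restriction to $V$, which maps each $L^{\gamma_i}$ into $L^{\gamma_i+\la}$, so $\grDer_F(L)^{(q,\la)}$ embeds into $\bigoplus_i \Hom_F(L^{\gamma_i}, L^{\gamma_i+\la})$, whose dimension is bounded independently of $(q,\la)$ by the uniform bound from (a).

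For part (c), assume $L$ invariant with graded invariant nondegenerate symmetric form $\inpr$; by (\ref{n:lietordef9}) it satisfies $(L^\la \mid L^\mu) = 0$ for $\la + \mu \ne 0$, so it is $\La$-graded in the sense of Theorem~\ref{n:thgencen}. By Exercise~\ref{n:ex:lietor}(b), $L$ is perfect, and by (a) it is finitely generated and of finite homogeneous dimension, so the $\La$-graded Lie algebra $L$ satisfies conditions (i)--(iii) of Theorem~\ref{n:thgencen}; part (c) is then precisely conclusion (b) of that theorem. Since the form is $\scQ(S)\oplus\La$-graded, skew-symmetry of a derivation is equivalent to skew-symmetry of each of its homogeneous components, so $\SDer_F(L)$ and $\IDer(L)$ are $\scQ(S)\oplus\La$-graded subspaces of $\grEnd_F(L)$ (using (b)), and one may take the complement $D$ to be $\scQ(S)\oplus\La$-graded, which is the refinement the statement intends. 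The one genuinely non-formal step is the uniform bound on $\dim L_0^\la$ in part (a): this is where $\La$ finitely generated is used in an essential way, and it rests on the coordinate-algebra / centroid structure of centreless Lie tori together with the finiteness of the relevant cyclic homology of finitely generated commutative algebras; everything else is either grading bookkeeping or an appeal to Theorems~\ref{n:ucelie} and~\ref{n:thgencen}.
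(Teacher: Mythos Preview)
Your derivation of (c) from (a), (b) and Theorem~\ref{n:thgencen} is exactly what the paper does; the paper gives no further argument for (a) and (b), simply citing \cite{n:tori} and \cite{n:uce}. Your argument for (b) is a correct standard grading argument.

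The genuine gap is in your bound for $\dim \bar L_0^\la$ in part (a). Two of your assertions fail precisely in the non-commutative type-$\rma$ case. First, ``$\bar L$ is a finitely generated module over its centroid'' is what Theorem~\ref{n:fgcto} asserts \emph{only} for $S$ not of type $\rma$; for $\lsl_N(\FF_\bq)$ with the $q_{ij}$ not all roots of unity one has $[\ZZ^n:\Ga]=\infty$ by Exercise~\ref{n:ex:quntor}(e), and $\bar L$ is not finitely generated over $\Cent_F(\bar L)$. Second, even when it is, the claim that ``the line $[\bar L_\xi^\mu,\bar L_{-\xi}^{\la-\mu}]$ does not depend on $\mu$'' is false for non-commutative coordinates: for $L=\lsl_N(\FF_\bq)$ and $\xi=\veps_i-\veps_j$, (\ref{n:quant2}) gives $[t^\mu E_{ij},\,t^{\la-\mu}E_{ji}] = c(\mu,\la-\mu)\,t^\la E_{ii} - c(\la-\mu,\mu)\,t^\la E_{jj}$, and the ratio of the two cocycle values genuinely varies with $\mu$, so these brackets span a $2$-dimensional space; one can have $\dim L_0^\la = N > N-1 = \dim\frh$. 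Your K\"ahler-differential bound on $\Ker\fru$ likewise only covers commutative coordinates; for $\lsl_N(\FF_\bq)$ the relevant object is cyclic homology. The proof in \cite{n:uce} therefore splits into two regimes: if $\bar L$ is finitely generated over its centroid (automatic for $S\ne\rma_l$ by Theorem~\ref{n:fgcto}) one uses freeness over a Laurent polynomial ring (Prop.~\ref{n:propcenli}(c)); otherwise the classification forces $\bar L\cong\lsl_N(\FF_\bq)$ and one bounds $\dim L_0^\la$ directly via Exercise~\ref{n:ex:slgen}(b).
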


Part (c) of this theorem is an immediate corollary of parts (a) and
(b) and of Th.~\ref{n:thgencen}.

\subsection{The core of an EALA}\label{n:betdes}

We will now connect extended affine Lie algebras and Lie tori, and
first introduce some notation. Let $(E,H)$ be an EALA with set of
roots $R$. We have seen in Th.~\ref{n:ears&eala} that $R$ is an
extended affine root system, hence an affine reflection system. We
can therefore apply the Structure Theorem~\ref{n:arsstructh}. Recall
the following data describing the structure of $R$:
\begin{itemize}
 \item[$\bull$] $X=\Span_F(R) \subset H^*$, $X^0 = \{ x\in X : (x
 \mid X)=0\} = \{x\in X : (x\mid R)=0\}$, $f: X \to X/X^0=Y$ the
 canonical projection,

 \item[$\bull$] $S=f(R)$ the quotient root system, a finite
irreducible but possibly non-reduced root system, and $S\ind = \{
\al  \in S : \al/2\not\in S \} \cup \{0\}$,

 \item[$\bull$] $g: Y \to X$ a linear map satisfying $f\circ g = \Id_Y$ and
 $g(S\ind) \subset R$,

 \item[$\bull$] $(\La_\xi : \xi \in S)$ the associated extension datum, defined
 by $R \cap f^{-1}(\xi)= g(\xi) \oplus \La_\xi$ and $\La_\xi \subset
 X^0$,

 \item $\La= \Span_\ZZ \big( \bigcup_{\xi \in S} \La_\xi \big)$, a free abelian
 group of finite rank (this is axiom (EARS7)).
\end{itemize}
Hence\begin{align*}
  R &= \textstyle \bigcup_{\xi \in S} \, \big(g(\xi) \oplus \La_\xi \big) \subset
            g(Y) \oplus X^0, \\
  R\an &= \textstyle\bigcup_{\xi \in S^\times}  \big(
                g(\xi) \oplus \La_\xi \big), \\
  R^0 &= 0 \oplus \La_0 = R \cap X^0.
\end{align*}

\begin{theorem}[\cite{AG2} for $F=\CC$] \label{n:ealcor} Let
$K=E_c$ be the core of an EALA $(E,H)$. We use the notation of above
and define subspaces
\begin{equation}\label{n:lietordef8}
 K_\xi^\la = K \cap E_{g(\xi) \oplus \la} = \begin{cases}
   E_{g(\xi) \oplus \la} & \xi \ne 0, \\
          K \cap E_{0 \oplus \la}, &\xi=0.
\end{cases}
\end{equation}

{\rm (a)} Then $K= \bigoplus_{\xi, \, \la} K_\xi^\la$ is a Lie torus
of type $(S,\La)$, where $\La$ is free abelian of finite rank. \sm

{\rm (b)} $K$ is a perfect ideal of $E$. \sm

{\rm (c)} Let $\inpr$ be a nondegenerate invariant bilinear form on
$E$, whose existence is guaranteed by the axiom {\rm (EA1)}. Then
the radical of the restricted bilinear form $\inpr|_{K\times K}$
equals the centre $Z(K)$, that is
\begin{equation} \label{n:betdes2}
   \{ z\in K : (z\mid K) = 0 \} = Z(K)
= \textstyle \bigoplus_{\la \in \La} Z(K) \cap K_0^\la.
\end{equation}
\end{theorem}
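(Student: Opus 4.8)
The plan is to prove the three parts essentially in the order stated, with (a) being the heart of the matter and (b), (c) following by soft arguments. For part (a), the strategy is to verify the Lie torus axioms (LT1)--(LT3) for the family $K_\xi^\la$ defined in (\ref{n:lietordef8}), using the structure of the extended affine root system $R = \bigcup_{\xi\in S}(g(\xi)\oplus\La_\xi)$ together with the elementary properties of EALAs from Proposition~\ref{n:ealafact} and Exercise~\ref{n:ex:eala-def1}. First I would check that $K$ is indeed $\scQ(S)\oplus\La$-graded by the $K_\xi^\la$: the $E$-grading by $\Span_\ZZ(R)$ restricts to $K$ (Exercise~\ref{n:ex:eala-def1}(a),(e)), and via the decomposition $\Span_\ZZ(R)\subset g(Y)\oplus X^0$ this refines to the stated bigrading, with $\supp_{\scQ(S)}K\subset S$, giving (LT1). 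For (LT2), fix $(\xi,\la)$ with $\xi\ne 0$ and $K_\xi^\la\ne 0$; then $g(\xi)\oplus\la\in R\an$, so by Proposition~\ref{n:ealafact}(a) we have $\dim E_{g(\xi)\oplus\la}=1$ and there is an $\lsl_2$-triple $(e,h_{g(\xi)\oplus\la},f)$ with $e\in E_{g(\xi)\oplus\la}$, $f\in E_{-(g(\xi)\oplus\la)}=E_{g(-\xi)\oplus(-\la)}=K_{-\xi}^{-\la}$, and $h=t_{g(\xi)\oplus\la}\in H$. Putting $e_\xi^\la=e$, $f_\xi^\la=f$, the identity (\ref{n:lietordef2}) is exactly the eigenvalue computation $[t_{g(\xi)\oplus\la},x_\ta]=(g(\ta)\oplus\mu \mid g(\xi)\oplus\la)\cdot(\text{normalization})\,x_\ta$ for $x_\ta\in K_\ta^\mu$; the normalization works out to $\lan\ta,\xi\ch\ran$ because the form on $X$ descends from the form on $Y=X/X^0$ and $\langle x,\al\ch\rangle$ depends only on the images in $Y$ (cf.~(\ref{n:arsexone3})). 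For (LT3.a), when $\xi\in S\ind^\times$ we need $K_\xi^0\ne 0$: since $\xi\in S\ind$ we have $0\in\La_\xi$ (axiom (ED2) of the extension datum), so $g(\xi)\in R\an$, hence $E_{g(\xi)}\ne 0$ and $K_\xi^0=E_{g(\xi)}\ne 0$. For (LT3.c), $\La=\Span_\ZZ(\supp_\La K)$ holds by the definition $\La=\Span_\ZZ(\bigcup_\xi\La_\xi)$ and the fact that each $\La_\xi\subset\supp_\La K$. The delicate point is (LT3.b), that $K$ is generated as a Lie algebra by $\bigcup_{\xi\ne 0}K_\xi$; but this is essentially the definition of the core: $E_c=\langle\bigcup_{\al\in R\an}E_\al\rangle_{\rm subalg}$, and $\bigcup_{\al\in R\an}E_\al=\bigcup_{\xi\ne 0,\la\in\La_\xi}K_\xi^\la=\bigcup_{\xi\ne 0}K_\xi$, so $K=E_c$ is by construction generated by these spaces. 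That $\La$ is free abelian of finite rank is axiom (EA6)/(EARS7). I expect (LT2), specifically tracking that the coefficient in (\ref{n:lietordef2}) is $\lan\ta,\xi\ch\ran$ and not some rescaling, to be the main place where care is needed; everything else is bookkeeping.

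For part (b), that $K$ is a perfect ideal: perfectness follows since $K$ is generated by the $\lsl_2$-triples of (LT2), and for $\xi\ne 0$ we have $K_\xi^\la=[[K_\xi^\la,K_{-\xi}^{-\la}],K_\xi^\la]$ by (\ref{n:eala-def4}), while the remaining homogeneous spaces $K_0^\la$ are spanned by brackets $[K_\xi^\mu,K_{-\xi}^{\lambda-\mu}]$ via (\ref{n:lietordef4}) (which holds for a Lie torus, Exercise~\ref{n:ex:lietor}(a)); so $K=[K,K]$. To see $K$ is an ideal, it suffices to check $[E_\al,K]\subset K$ for all $\al\in R$. For $\al\in R\an$ this is clear since then $E_\al\subset K$. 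For $\al\in R^0$, in particular for $H=E_0$, one has $[H,K_\xi^\la]\subset K_\xi^\la\subset K$; and for a general null root $\al\in R^0$, $[E_\al,K_\xi^\la]\subset E_{\al+g(\xi)\oplus\la}$, which for $\xi\ne 0$ is an anisotropic root space (since $\al+g(\xi)\oplus\la$ has the same image $\xi$ in $Y$, hence nonzero length) and so lies in $K$, while for $\xi=0$ one uses the generation (\ref{n:lietordef4}) of $K_0$ and the Jacobi identity to reduce to the previous cases. This is the argument referenced as Th.~\ref{n:ealcor} being used to justify the representation $\rho$ of $E$ on $E_c$ in the discussion of axiom (EA5).

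For part (c), the identity $\{z\in K:(z\mid K)=0\}=Z(K)$ is an immediate application of Exercise~\ref{n:ex:centex}(b): we have an invariant nondegenerate symmetric bilinear form $\inpr$ on $E$ by (EA1), and by parts (a),(b) $K$ is an ideal of $E$ with $K=[K,K]\subset[E,K]\subset K$, hence $K=[E,K]$, so the cited exercise gives $\{z\in K:(z\mid K)=0\}=Z(K)$ directly. The further assertion that $Z(K)=\bigoplus_{\la\in\La}Z(K)\cap K_0^\la$ follows because $K$ is a $\La$-graded (indeed $\scQ(S)\oplus\La$-graded) Lie algebra, so its centre is a graded subspace, and since $K_\xi^\la$ for $\xi\ne 0$ is a one-dimensional anisotropic root space it contains no central elements (an element of $K_\xi^\la$ commuting with $K_{-\xi}^{-\la}$ would violate (\ref{n:eala-def4})), forcing $Z(K)\subset K_0=\bigoplus_\la K_0^\la$; this is exactly the content of Exercise~\ref{n:ex:lietor}(c) applied to the Lie torus $K$. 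Thus the only real work is in part (a), and within it in the normalization check for (LT2).
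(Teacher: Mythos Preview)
Your proposal is correct. The paper does not actually supply a proof of this theorem; it attributes the result to \cite{AG2} (for $F=\CC$) and, in line with its stated policy of omitting most proofs, passes directly to Remark~\ref{n:ealcorem} and Corollary~\ref{n:ccore}. So there is no paper argument to compare against, only your own.

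That said, your approach is exactly the natural one and matches the structure of the theory as the paper sets it up: verify (LT1)--(LT3) directly from the EARS decomposition $R=\bigcup_{\xi}(g(\xi)\oplus\La_\xi)$ together with Proposition~\ref{n:ealafact}(a) and Exercise~\ref{n:ex:eala-def1}; derive perfectness and the ideal property from the definition of $E_c$ and the grading; and read off part~(c) from Exercise~\ref{n:ex:centex}(b) and Exercise~\ref{n:ex:lietor}(c). A couple of minor sharpenings: in your check of (LT3.c), the claim ``each $\La_\xi\subset\supp_\La K$'' is immediate only for $\xi\ne 0$, but that already suffices since $\La=\Span_\ZZ(\La\sh)$ by Proposition~\ref{n:earsstrut}(b) and (\ref{n:arsed3}); and in the (LT2) normalization, your parenthetical is exactly right---the key identity is that $h_\xi^\la = \tfrac{2}{(\al\mid\al)}t_\al$ for $\al=g(\xi)\oplus\la$, and then $(g(\ta)\oplus\mu\mid g(\xi)\oplus\la)=(\ta\mid\xi)_Y$ because $X^0$ is the radical of $\inpr_X$, giving $[h_\xi^\la,x_\ta]=\lan\ta,\xi\ch\ran x_\ta$ on the nose.
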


\begin{remark} \label{n:ealcorem} The subspaces $K_\xi^\la$ in
(\ref{n:lietordef8}) and hence the Lie torus structure of $K$ depend
on the section $g$. A different choice of $g$ leads to a so-called
\textit{isotope} of $K$, see \cite{AF:isotopy} and
\cite[Prop.~6.4]{n:persp}.
\end{remark}

\begin{corollary} \label{n:ccore} We use the notation of\/ {\rm Th.~\ref{n:ealcor}}, and put
$E_{cc}= K/Z(K)=L$, the {\rm centreless core of $(E,H)$}. Then $L$
is an invariant centreless Lie torus of type $(S,\La)$ with respect
to the homogeneous subspaces
$$
   L_\xi^\la =  K_\xi^\la \big/ \big(Z(K)\cap K_\xi^\la \big)
$$
and the bilinear form $\inpr_L$ defined by
$$
  (\bar x \mid \bar y)_L = (x \mid y)
$$
where $x,y\in K$, $\bar x$ and $\bar y$ are the canonical images in
$L$ and $\inpr$ is the bilinear form of {\rm \ref{n:ealcor}(c)}.
\end{corollary}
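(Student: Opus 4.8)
The plan is to read off the Lie torus structure of $L$ from Exercise~\ref{n:ex:lietor}, applied to the Lie torus $K=E_c$ supplied by Theorem~\ref{n:ealcor}(a), and then to check the four requirements on $\inpr_L$ (well-defined, symmetric, invariant, nondegenerate, graded) using Theorem~\ref{n:ealcor}(c).

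First I would observe that $Z(K)$ is a graded subspace of $K$: since $K$ is a Lie torus by Theorem~\ref{n:ealcor}(a), Exercise~\ref{n:ex:lietor}(c) gives $Z(K)=\bigoplus_{\la\in\La}\big(Z(K)\cap K_0^\la\big)$, so in particular $Z(K)\subset K_0$ and $Z(K)$ is homogeneous for the full $\scQ(S)\oplus\La$-grading. Applying Exercise~\ref{n:ex:lietor}(d) with $K$ in the role of the Lie torus and $Z(K)$ in the role of the graded central subspace then shows at once that $L=K/Z(K)$ is a Lie torus of type $(S,\La)$ with homogeneous components $L_\xi^\la=K_\xi^\la/(Z(K)\cap K_\xi^\la)$, where $Z(K)\cap K_\xi^\la=0$ for $\xi\ne0$ because $Z(K)\subset K_0$; the same exercise gives that this Lie torus is centreless. (Centrelessness also follows from Exercise~\ref{n:ex:centex}(a) together with perfectness of $K$, Theorem~\ref{n:ealcor}(b).)

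Next I would turn to the form. The delicate point is well-definedness of $(\bar x\mid\bar y)_L:=(x\mid y)$, and this is exactly where Theorem~\ref{n:ealcor}(c) is used: if $z\in Z(K)$ then $(z\mid K)=0$, so replacing $x$ by $x+z$, or $y$ by $y+z$, does not change $(x\mid y)$. Symmetry of $\inpr_L$ is inherited from $\inpr$. Invariance follows from invariance of $\inpr$ on $E$ and the fact that the bracket on $L=K/Z(K)$ is induced from the bracket on $K$, which in turn is the restriction of the bracket on $E$ (Theorem~\ref{n:ealcor}(b)): $([\bar x,\bar y]\mid\bar z)_L=([x,y]\mid z)=(x\mid[y,z])=(\bar x\mid[\bar y,\bar z])_L$. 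For nondegeneracy, if $(\bar x\mid L)_L=0$ then $(x\mid K)=0$, hence $x\in Z(K)$ by Theorem~\ref{n:ealcor}(c), i.e.\ $\bar x=0$. Finally, for the gradedness condition \eqref{n:lietordef9} I would use $(E_\al\mid E_\be)=0$ unless $\al+\be=0$ (Exercise~\ref{n:ex:eala-def1}(c)): since $f\circ g=\Id_Y$ the section $g$ is injective and $g(Y)\cap X^0=0$, so $X=g(Y)\oplus X^0$ and, for $\al=g(\xi)\oplus\la$, $\be=g(\ta)\oplus\mu$ with $\la,\mu\in X^0$, one has $\al+\be=0$ iff $\xi+\ta=0$ and $\la+\mu=0$; thus $(K_\xi^\la\mid K_\ta^\mu)=0$ unless $\xi+\ta=0$ and $\la+\mu=0$, and passing to the quotient gives $(L_\xi^\la\mid L_\ta^\mu)_L=0$ whenever $\xi+\ta\ne0$ or $\la+\mu\ne0$.

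I do not expect a genuine obstacle here: the only step that needs real input rather than formal manipulation is the well-definedness and nondegeneracy of $\inpr_L$, and for both the identification $Z(K)=\{z\in K:(z\mid K)=0\}$ from Theorem~\ref{n:ealcor}(c) does all the work. Everything else is bookkeeping with the $\scQ(S)\oplus\La$-grading and the section $g$; in particular the heavy lifting (that $K$ is a Lie torus, a perfect ideal, and that the radical of $\inpr|_{K\times K}$ is $Z(K)$) has already been done in Theorem~\ref{n:ealcor}.
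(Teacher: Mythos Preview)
Your proposal is correct and is precisely the argument the paper has in mind: the paper gives no explicit proof of this corollary, treating it as an immediate consequence of Theorem~\ref{n:ealcor} together with Exercises~\ref{n:ex:centex} and~\ref{n:ex:lietor}, and you have simply (and accurately) filled in those details. In particular, your identification of Theorem~\ref{n:ealcor}(c) as the key input for both well-definedness and nondegeneracy of $\inpr_L$ is exactly right.
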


\begin{remark} \label{n:yosrem} Yoshii (\cite{y:lie}) has shown that any Lie torus of
type $(S,\La)$ with $\La$ a torsion-free abelian group admits a
non-zero graded invariant symmetric bilinear form. This implies that
the existence of a nondegenerate such form on $E_{cc}$. However,
Yoshii's proof uses the existence of invariant nondegenerate
symmetric bilinear forms on Jordan tori (\cite{NY}) and hence relies
on the classification of Jordan tori.
\end{remark}

We can now show that all root spaces of an EALA are
finite-dimensional in a strong form.

\begin{proposition}[{\cite[Prop.~3]{n:eala}}] \label{n:bdd}
An EALA has finite bounded dimension. The same is true for its core
and centreless core.
\end{proposition}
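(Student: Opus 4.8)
The plan is to reduce the whole statement to Theorem~\ref{n:torfg}, applied to the core $K=E_c$. By Theorem~\ref{n:ealcor} the core is a Lie torus of type $(S,\La)$ with $\La$ free abelian of finite rank, hence finitely generated, so Theorem~\ref{n:torfg}(a) gives a constant $M$ with $\dim K_\xi^\la\le M$ for all $(\xi,\la)\in\scQ(S)\times\La$. Since, by (\ref{n:lietordef8}), $K_\xi^\la=K\cap E_{g(\xi)\oplus\la}$, and every root $\al\in R$ is uniquely of the form $g(\xi)\oplus\la$ (the decomposition $X=g(Y)\oplus X^0$ being direct and $g$ injective), this already says that $K$ has finite bounded dimension. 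For the centreless core $L=K/Z(K)$ I would observe that $L_\xi^\la=K_\xi^\la/\big(Z(K)\cap K_\xi^\la\big)$ is a quotient of $K_\xi^\la$, so $\dim L_\xi^\la\le M$ as well; alternatively one invokes Corollary~\ref{n:ccore} (which makes $L$ a Lie torus of type $(S,\La)$) and Theorem~\ref{n:torfg}(a) again.

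For the full EALA $(E,H)$ the idea is to embed $E$, modulo a bounded piece, into $\Der_F(K)$. By Theorem~\ref{n:ealcor}(b) the core $K$ is an ideal of $E$, so the adjoint action yields a homomorphism $\rho:E\to\Der_F(K)$, $\rho(e)=(\ad e)|_K$, with $\Ker\rho=C_E(K)$. The tameness axiom (EA5) gives $C_E(E_c)\subset E_c=K$, whence $C_E(K)=C_K(K)=Z(K)$, so $\Ker\rho=Z(K)$. Next, using $E=\bigoplus_{\al\in R}E_\al$, $[E_\al,E_\be]\subset E_{\al+\be}$ (Exercise~\ref{n:ex:eala-def1}(a)) and linearity of $g$, one checks that for a nonzero root $\al=g(\ze)\oplus\nu$ and $e\in E_\al$ the derivation $\rho(e)$ maps $K_\xi^\la$ into $K_{\ze+\xi}^{\nu+\la}$, i.e.\ is homogeneous of degree $(\ze,\nu)$ for the $\scQ(S)\oplus\La$-grading of $K$. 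Hence $\rho(E_\al)\subset\Der_F(K)_{(\ze,\nu)}$, and by Theorem~\ref{n:torfg}(b) this component has dimension at most a constant $M_1$, independent of $(\ze,\nu)$.

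It then remains to control $\Ker(\rho|_{E_\al})=E_\al\cap Z(K)$. By Exercise~\ref{n:ex:lietor}(c), $Z(K)=\bigoplus_\mu\big(Z(K)\cap K_0^\mu\big)$, so $Z(K)$ is contained in the sum of the root spaces $E_{0\oplus\mu}$, $\mu\in\La$. Therefore $E_\al\cap Z(K)=0$ whenever $\ze\ne0$, giving $\dim E_\al\le M_1$; and for $\al=0\oplus\nu$ with $\nu\ne0$ we get $E_\al\cap Z(K)\subset K_0^\nu$, hence $\dim E_\al\le M+M_1$. Finally $\dim E_0=\dim H<\infty$ by (EA2). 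Putting $M_2=\max\{\dim H,\,M+M_1\}$ gives $\dim E_\al\le M_2$ for every $\al\in R$, so $E$ has finite bounded dimension.

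I expect the only real obstacle to be the bookkeeping in the second paragraph: one must identify the $H$-root grading of $E$ with a $\scQ(S)\oplus\La$-grading genuinely compatible with the intrinsic $\scQ(S)\oplus\La$-grading of $K$ (so that a fixed root space of $E$ is carried by $\rho$ into a single homogeneous component of $\Der_F(K)$), and then run the case split — anisotropic-direction roots ($\ze\ne0$), where $\rho$ is injective on $E_\al$, versus null-direction roots, where one adds back the dimension of $Z(K)\cap E_\al$, bounded by the core's bounded homogeneous dimension. All the substantive content — that a Lie torus and its derivation algebra have bounded homogeneous dimension — is already packaged in Theorem~\ref{n:torfg}.
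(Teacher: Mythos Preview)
Your proof is correct and follows essentially the same approach as the paper: bound $\rho(E_\al)$ by the homogeneous dimension of $\Der_F(K)$ via Theorem~\ref{n:torfg}(b), bound $\Ker(\rho|_{E_\al})$ using tameness, and add. The paper avoids your case split by observing simply that $\Ker\rho\subset K$ (it even remarks that $\Ker\rho=Z(K)$ ``but we won't need this''), so $\Ker(\rho|_{E_\al})\subset E_\al\cap K=K_\xi^\la$ has dimension at most $M_1$ uniformly, yielding $\dim E_\al\le M_1+M_2$ with no separate treatment of $\ze=0$ versus $\ze\ne0$ or of $\al=0$.
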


\begin{proof} Let $K=E_c$ be the core of the EALA $(E,H)$. By
Th.~\ref{n:ealcor}, $K$ is a Lie torus of type $(S,\La)$, where
$\La$ is a free abelian group of finite rank. Hence, by
Th.~\ref{n:torfg}, one knows that $K$ has finite bounded dimension
with respect to its double grading, say $\dim K_\xi^\la \le M_1$ for
all pairs $(\xi, \la)$. By the same reference, one also knows that
the Lie algebra $\Der_F(K)$ of all $F$-linear derivations of $K$ has
a double grading by $\scQ(S)$ and $\La$,
$$
  \Der_F (K) = \textstyle \bigoplus_{\xi \in S, \, \la \in \La}
    (\Der_F K)^\la_\xi,
$$
where $(\Der_F K)^\la_\xi$ is the subspace of those derivations
mapping $K_\ta^\mu$ to $K_{\xi+ \ta}^{\la + \mu}$, and that
$\Der_F(K)$ has finite bounded dimension with respect to this
grading, say $\dim_F (\Der_F K)^\la_\xi \le M_2$ for all pairs
$(\xi,\la)$.

Since $K$ is an ideal, we have a Lie algebra homomorphism $\rh : E
\to \Der_F(K)$, given by $\rh(e) = \ad e |_K$. It is homogenous of
degree $0$, i.e., $\rh(E_\al) \subset (\Der_F K)^\la_\xi$ for $\al =
g(\xi) \oplus \la$ as in (\ref{n:lietordef8}). Moreover, by the
tameness axiom (EA5) for an EALA we know that $\Ker \rh \subset K$
(whence $\Ker \rh = Z(K)$, but we won't need this). It now follows
that
 $\dim E_\al =    \dim \Ker (\rh|_{E_\al})    +  \dim \rh(E_\al)
    \le  \dim K_\xi^\la    +     \dim (\Der_F K)^\la_\xi \le M_1 + M_2$.
\end{proof}

\subsection{Lie tori of type $\rma_l$, $l \ge 3$} \label{n:sec:lietypeA}

As explained in Rem.~\ref{n:ucelierem}, in classifying Lie tori one
can restrict one's attention to the case of centreless Lie tori, at
least modulo the solution of problem (B) in \ref{n:ucelierem}. In
this section we will describe centreless Lie tori of type $\rma$.

The reader will expect that this will have something to do with
trace-$0$-matrices. This turns out to be correct, but only with the
proper interpretation of ``trace-$0$''. It will not be sufficient to
consider trace-$0$-matrices over $F$. We will see in \ref{n:lietonu}
that they will only lead to nullity $0$-examples. Rather, one must
allow matrices with entries from a possibly non-commutative algebra.
\sm

To avoid some degeneracies, in this section  we let $N$ be a natural
number with $N\ge 3$. We start with an arbitrary associative unital
$F$-algebra $A$. In particular, $A$ need not be commutative, and
hence
 \[ [A,A] = \Span_F \{ a_1 a_2 - a_2 a_1 : a_1, a_2 \in A\}
  \]
is in general non-zero. As usual, $\gl_N(A)$ is the Lie algebra of
all $N \times N$ matrices with entries in $A$ and Lie algebra
product $[x,y] = xy-yx$, the usual commutator of the matrices $x$
and $y$. We define the \textit{special linear Lie algebra\/}
$\lsl_N(A)$ as the derived algebra
\[
 \lsl_N(A)  = [ \gl_N(A), \gl_N(A)] .
     \]
of $\gl_N(A)$. In particular, $\lsl_N(A)$ is an ideal of $\gl_N(A)$.
To analyze the structure of $\lsl_N(A)$ we use the matrix units
$E_{ij}$, i.e., the $N\times N$ matrices with $1$ at the position
$(ij)$ and $0$ at all other positions. They satisfy the basic
multiplication rule \begin{equation} \label{n:quant2}
    [aE_{ij}, \, bE_{mn}] = \de_{jm}\, ab \,E_{in} - \de_{ni}\,ba \,E_{mj}
\end{equation}  where $\de_*$ is the usual Kronecker delta.
We put $E_N= \sum_{i=1}^N E_{ii}$. Some properties of the Lie
algebra $\lsl_N(A)$ are listed in the following (very worthwhile)
exercise.

\begin{exercise} \label{n:ex:slgen} (a) $\lsl_N(A)
= \{ x\in \gl_N(A) : \tr(x) \in [A,A]\}$.

(b) As a vector space, $\lsl_N(A)$ decomposes as \begin{align}
   \lsl_N(A) &= \lsl_N(A)_0 \oplus \ts \big(
          \bigoplus_{i\ne j} A E_{ij} \big), \quad\hbox{where}
      \label{n:quant1}   \\
  \lsl_N(A)_0 &= \lsl_N(A) \cap \big( \ts \bigoplus_{i=1}^N
            AE_{ii}\big) \nonumber \\
 &= \tsum_{i\ne j} [AE_{ij}, AE_{ji}]
  = \tsum_{i\ne j} \Span_F \{abE_{ii} - baE_{jj} :a,b\in A\}
     \nonumber \\
&= \{ cE_N : c\in [A,A] \}
       \oplus
  \big( \ts \bigoplus_{i=1}^{N-1} \{a(E_{ii} - E_{i+1, i+1}) : a\in A\}
      \big) \nonumber
\end{align}

(c) For a commutative $A$:
\begin{equation*}
 \lsl_N(A)  =
 \{ x\in \gl_N(A) : \tr(x) = 0 \} = \lsl_N(F) \ot_F A .
\end{equation*}

(d) The centre of $\lsl_N(A)$ is $
   Z(\lsl_N(A)) = \{ z E_N : z\in Z(A) \cap [A,A]\}
$ where $Z(A) = \{ z\in A: za=az \hbox{ for all } a\in A \}$ is the
centre of $A$.

(e) For $a,b\in A$ and $i\ne j$,
\[
    (aE_{ij}, E_{ii} - E_{jj}, bE_{ji})
\]
is an $\lsl_2$-triple if and only if $a$ is invertible and
$b=a^{-1}$.\end{exercise}

The exercise shows that the structure of a general $\lsl_N(A)$ is
quite similar to that of $\lsl_N(F)$. In particular, the
decomposition (\ref{n:quant1}) is a $\scQ(\rma_l)$-grading where
\[
   \rma_l = \{\veps_i - \veps_j : 1\le i,j\le N \} ,
    \quad l=N-1.
   \]
is the root system of type $\rma_l$ and
\[
   \lsl_N(A)_{\veps_i - \veps_j} = A E_{ij} \quad
            \hbox{for $i\ne j$.}
\]
In fact, $\lsl_N(A)$ is the prototype of an $\rma_l$-graded Lie
algebra (see \cite{bm}). At this level of generality we are far from
the structure of a Lie torus. Most importantly, we are missing a
compatible $\La$-grading of $\lsl_N(A)$. We will use gradings of
$A$, defined as follows.

\begin{definition}\label{n:asstor} Let $A=\bigoplus_{\la \in \La} A^\la$
be a unital associative $\La$-graded $F$-algebra. Then $A$ is called
an \textit{associative torus of type $\La$} if it satisfies
(AT1)--(AT3) below. \begin{description}

\item[(AT1)] if every non-zero $A^\la$ contains an invertible
element,

\item[(AT2)] $\dim A^\la \le 1$ for all $\la \in \La$, and

\item[(AT3)] $\Span_\ZZ (\supp_\La A ) = \La$. \end{description}
One calls $A$ simply an \emph{associative torus\/} if $A$ is an
associative torus of type $\La$ for some abelian group $\La$. See
\ref{n:twigrrev} for a short discussion of associative tori.
\end{definition}

These definitions are justified by the following exercise,
describing when $\lsl_N(A)$ is a Lie torus of type $(\rma_l,\La)$.

\begin{exercise} \label{n:ex:sltor} (a) The Lie algebra $\lsl_N(A)$ has a $\La$-grading
compatible with the $\scQ(\rma_l)$-grading (\ref{n:quant1}) if and
only if $A$ is $\La$-graded. In this case, the compatible
$\La$-grading of $\lsl_N(A)$ is given by $\lsl_N(A)= \bigoplus_{\la
\in \La} \lsl_N(A)^\la$ where $\lsl_N(A)^\la$ consists of matrices
in $\lsl_N(A)$, which have all their entries in $A^\la$.

(b) With respect to the compatible gradings of (a), the Lie algebra
$\lsl_N(A)$ is a Lie torus of type $(\rma_{N-1}, \La)$ if and only
if $A$ is an associative torus of type $\La$.

(c) The Lie torus $\lsl_N(A)$ is invariant with respect to the
bilinear form $\inpr_\lsl$ given by $ (\sum_{i,j} x_{ij} E_{ij} \mid
\sum_{p,q} y_{pq} E_{pq})_\lsl = \sum_{i,j} ( x_{ij} y_{ji})_0$
where $a_0$ for $a\in A$ denotes the $A^0$-component of $a$.
\end{exercise}

But we not only have an example of a Lie torus of type
$(\rma_l,\La)$, we actually have all centreless examples.

\begin{theorem} \label{n:typeAcl} Let $l\ge 3$. A Lie algebra $L$ is a centreless Lie torus of
type $(\rma_l, \La)$ if and only if $L$ is graded-isomorphic to
$\lsl_{l+1}(A)$ for $A$ an associative torus of type $\La$. In this
case, $L$ is an invariant Lie torus.
\end{theorem}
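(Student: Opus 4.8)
The plan is to prove both directions, relying heavily on the exercises that precede the statement. The ``if'' direction is essentially contained in Exercise~\ref{n:ex:sltor}: if $A$ is an associative torus of type $\La$, then parts (b) and (c) of that exercise show that $\lsl_{l+1}(A)$ is an invariant Lie torus of type $(\rma_l,\La)$, and by Exercise~\ref{n:ex:slgen}(d) its centre is $\{zE_N : z\in Z(A)\cap[A,A]\}$; one checks that an associative torus has $Z(A)\cap[A,A]=0$ (a non-zero homogeneous element is invertible, and an invertible element of $[A,A]$ forces, via the trace-type argument, a contradiction with $\dim A^\la\le 1$ unless the element vanishes), so $\lsl_{l+1}(A)$ is centreless. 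Hence any $L$ graded-isomorphic to such an $\lsl_{l+1}(A)$ is an invariant centreless Lie torus of type $(\rma_l,\La)$, and the final sentence of the theorem also follows.

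For the ``only if'' direction, suppose $L$ is a centreless Lie torus of type $(\rma_l,\La)$ with $l\ge 3$. First I would forget the $\La$-grading momentarily and use that $L$ is an $\rma_l$-graded Lie algebra in the sense of Berman--Moody: by Proposition~\ref{n:liesplit}, the subalgebra generated by the degree-$0$ root spaces is a split simple Lie algebra of type $\rma_l$ inside $L$, which pins down the $\scQ(\rma_l)$-grading. The coordinatization theorem for $\rma_l$-graded Lie algebras, $l\ge 3$ (the reference \cite{bm} cited just before Definition~\ref{n:asstor}), then produces a unital associative $F$-algebra $A$ and an isomorphism of $\rma_l$-graded Lie algebras $L\cong \lsl_{l+1}(A)$; here $A$ is recovered from the grading piece as $A\cong L_{\veps_1-\veps_2}$ with multiplication read off from the bracket via the matrix-unit relations \eqref{n:quant2}, and centrelessness of $L$ together with Exercise~\ref{n:ex:slgen}(d) forces $Z(A)\cap[A,A]=0$.

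Next I would transport the $\La$-grading of $L$ through this isomorphism. Because the coordinatization is canonical, the $\La$-grading of $L$ induces a $\La$-grading on $A$ via $A^\la\cong L_{\veps_1-\veps_2}^\la = L_{\veps_1-\veps_2}\cap L^\la$, and compatibility of the two gradings of $L$ (Exercise~\ref{n:ex:sltor}(a)) makes $A=\bigoplus_\la A^\la$ a graded associative algebra with $L\cong\lsl_{l+1}(A)$ as $(\scQ(\rma_l),\La)$-graded Lie algebras. It remains to verify (AT1)--(AT3). Axiom (AT3) is immediate from (LT3.c) for $L$ and \eqref{n:divsupp5}. For (AT2), the axiom (LT2), specifically \eqref{n:lietordef3}, gives $\dim L_{\veps_1-\veps_2}^\la\le 1$, hence $\dim A^\la\le 1$. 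For (AT1), if $A^\la\ne 0$ pick $0\ne a\in A^\la$; then $aE_{12}$ spans $L_{\veps_1-\veps_2}^\la$, so by (LT2) there is $f\in L_{\veps_2-\veps_1}^{-\la}=Fb E_{21}$ with $(aE_{12},h,bE_{21})$ an $\lsl_2$-triple, and Exercise~\ref{n:ex:slgen}(e) says exactly that $a$ is invertible with $b=a^{-1}$. Thus $A$ is an associative torus of type $\La$ and the graded isomorphism $L\cong\lsl_{l+1}(A)$ is the one required; invariance then follows from the ``if'' direction (Exercise~\ref{n:ex:sltor}(c)).

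The main obstacle is the first step of the converse: invoking the coordinatization theorem for $\rma_l$-graded Lie algebras and checking that it is sufficiently canonical to carry the second grading. The hypothesis $l\ge 3$ is essential here precisely because the coordinatizing algebra is associative only in that range (for $l=1,2$ one gets Jordan or alternative coordinates), which is why the theorem is stated for $l\ge 3$; everything else is a matter of unwinding the Lie-torus axioms through the matrix-unit relations \eqref{n:quant2} and \eqref{n:quant1}.
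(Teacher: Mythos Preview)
Your proof is correct and takes the same route as the paper: invoke the Berman--Moody coordinatization theorem for $\rma_l$-graded Lie algebras ($l\ge 3$), transport the $\La$-grading to the coordinate algebra $A$, and verify (AT1)--(AT3) from the Lie-torus axioms exactly as you do via \eqref{n:lietordef3} and Exercise~\ref{n:ex:slgen}(e). One small logical slip to repair in the converse: the Recognition Theorem of \cite{bm} yields $L\cong \lsl_N(A)/Z(\lsl_N(A))$, and since this quotient is automatically centreless, the centrelessness of $L$ is not what forces $Z(A)\cap[A,A]=0$; rather, one first shows $A$ is an associative torus (the quotient does not disturb the root spaces $L_{\veps_i-\veps_j}$ for $i\ne j$, so your verification of (AT1)--(AT3) goes through unchanged) and \emph{then} uses the torus structure of $A$ to conclude $Z(A)\cap[A,A]=0$ --- an argument you already supplied in the ``if'' direction and which the paper handles by citing \cite[(3.3.2)]{NY}.
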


\begin{proof} This is a special case of the Coordinatization Theorem of
$\rma_l$-graded Lie algebras (\cite[Recognition Theorem~0.7]{bm}): A
centreless Lie algebra $L$ is $\rma_l$-graded ($l=N-1$) if and only
if $L$ is $\scQ(\rma_l)$-graded-isomorphic to
$\lsl_N(A)/Z(\lsl_N(A))$ for some associative $F$-algebra $A$. If
$L$ is a Lie torus, it follows as in the Exercise~\ref{n:ex:sltor}
above that $A$ is an associative torus. But $Z(\lsl_N(A))=\{0\}$ for
an associative torus (\cite[(3.3.2)]{NY} and
Exercise~\ref{n:ex:slgen}).
\end{proof}

Besides \cite{bm}, related results are proven in
\cite[Th.~2.65]{bgk} (see Cor.~\ref{n:bgkcor} below), \cite[2.11 and
3.4]{gn2} and \cite[Prop.~2.13]{y2}.

\begin{rev}[Associative tori versus twisted group algebras]
\label{n:twigrrev} In view of Th.~\ref{n:typeAcl} it is of interest
to know more about associative tori. First of all, the identity
$1_A$ of an associative torus $A$ satisfies $1_A \in A^0$. Hence
$a^{-1} \in A^{-\la}$ for every invertible $a\in A^\la$. Moreover,
since the product of two invertible elements in an associative
algebra is again invertible, it follows that $\supp_\La A$ is a
subgroup of $\La$, whence (AT3) is equivalent to
\begin{description}
 \item[(AT3)$'$] $\supp_\La A = \La$.
\end{description}
Next, choose a family $(u_\la : \la \in \La)$ of invertible elements
$u_\la\in A^\la$. This is then in particular an $F$-basis of $A$ so
that the algebra structure of $A$ is completely determined by the
equations
\begin{equation} \label{n:quant3}
 u_\la u_\mu = c(\la,\mu) u_{\la + \mu}
\end{equation}
for $\la,\mu \in \La$ and suitable non-zero scalars $c(\la,\mu) \in
F$. It is not necessary that $c(\la, \mu) = 1$, see for example
Ex.~\ref{n:ex:quntor}. Rather, given an $F$-vector space with basis
$(u_\la: \la \in \La)$ one can define a multiplication on $A$ by
(\ref{n:quant3}), and this multiplication is associative if and only
if
\begin{equation} \label{n:quant4}
   c(\la, \mu) \, c(\la + \mu, \nu) = c(\mu, \nu) \, c(\la, \mu+\nu)
\end{equation}
 In this case, the algebra is an associative
torus of type $\La$. It is clear from the construction that,
conversely, any associative torus is obtained in this way from a
family $(c(\la,\mu))_{\la,\mu}$ of non-zero scalars. The algebras
constructed in this way are called \textit{twisted group algebras}.
The reader with some knowledge in group cohomology will recognize
that the families $(c(\la,\mu))$ satisfying (\ref{n:quant4}) are
precisely the $2$-cocycles of $\La$ with values in $F\setminus
\{0\}$. One can show that two families define graded-isomorphic tori
if and only if their cohomology classes coincide. \end{rev}

\begin{example}[Group algebra] \label{n:groudef} Although, as we have
pointed out, the $c(\la, \nu)$ need not equal $1$ in general, the
family for which all $c(\la,\mu)=1$ satisfies (\ref{n:quant4}) and
so yields an example of a $\La$-torus, called the \textit{group
algebra of $\La$} and denoted $F[\La]$. In particular, this implies
that associative tori exist for all $\La$, and hence Lie tori exist
for all types $(\rma_l, \La)$, $l\ge 2$. \end{example}

The Lie tori that arise as cores of an EALA have type $(S,\La)$
where $\La$ is a free abelian group of finite rank. This condition
on $\La$ follows from the axiom (EA6) or, equivalently from the
axiom (EARS7). We therefore discuss this special case now.

\begin{definition}\label{n:quantordef} Let $\bq =(q_{ij})$ be an
$n\times n$ matrix such that the entries $q_{ij}\in F$ satisfy
$q_{ii} = 1 = q_{ij}q_{ji}$ for all $1\le i,j\le n$. The
\emph{quantum torus} associated to $\bq$ is the associative algebra
$\FF_\bq$ presented by the generators $t_i, t_i^{-1}$, $1\le i \le
n$ subject to the relations \[   t_i t_i^{-1} = t_i^{-1} t_i, \quad
\hbox{and} \quad
    t_i t_j = q_{ij}\, t_j t_i \hbox{ for all } 1\le i,j\le n.
\]
For example, if all $q_{ij} = 1$, then $\FF_\bq = F[t_1^{\pm 1},
\ldots, t_n^{\pm 1}]$ is the Laurent polynomial ring in $n$
variables. Thus, a general $\FF_\bq$ is a non-commutative version of
$F[t_1^{\pm 1}, \ldots, t_n^{\pm 1}]$, the coordinate ring of the
$n$-dimensional algebraic torus $(F\setminus \{0\})^n$, which
explains the name ``quantum torus''.
\end{definition}

\begin{exercise}\label{n:ex:quntor}  Let $\FF_\bq$ be a quantum torus. Show:

(a) $\FF_\bq = \bigoplus_{\la \in \ZZ^n} F t^\la$ for $t^\la =
t_1^{\la_1} \cdots t_n^{\la_n}$.

(b) The $t^\la$ satisfy the multiplication rule $t^\la t^\mu =
c(\la,\mu)t^{\la + \mu} $ with $$c(\la,\mu) = \ts \prod_{1\le j<i\le
n} \, q_{ij}^{\la_i \mu_j}.$$

(c) $\FF_\bq$ is an associative torus of type $\ZZ^n$.

(d) Every associative torus of type $\ZZ^n$ is graded-isomorphic to
some quantum torus $\FF_\bq$.

(e) The centre $Z(\FF_\bq) = \{ z \in \FF_\bq : [z,\FF_\bq]=0\}$ of
the associative algebra $\FF_\bq$ is a graded subspace of $\FF_\bq$,
namely $Z(\FF_\bq)= \bigoplus_{\ga \in \Ga} F t^\ga$, where $\Ga$ is
a subgroup of $\ZZ^n$ given by \begin{align*}
 \Ga &= \{ \ga\in \ZZ^n : c(\ga, \mu) = c(\mu,\ga) \hbox{ for all }
              \mu \in \ZZ^n \}
\\ &= \{ \ga \in \ZZ^n : \ts \prod_{j=1}^n \, q_{ij}^{\ga_j}=1
   \hbox{ for } 1\le i \le n\}. \end{align*}
Moreover, the following are equivalent: \begin{itemize}
 \item[(i)] All $q_{ij}$ are roots of unity.
 \item[(ii)] $[\ZZ^n : \Ga]< \infty$.
 \item[(iii)] $\FF_\bq$ is finitely generated as a module over its
centre $Z(\FF_\bq)$.
\end{itemize}
\end{exercise}
 \sm

Combining this exercise with Th.~\ref{n:typeAcl} we obtain the
classification of the cores of EALAs of type $\rma$:

\begin{corollary}[{\cite[Th.~2.65]{bgk}}] \label{n:bgkcor}  The Lie algebra $\lsl_N(\FF_\bq)$
for a quantum torus $\FF_\bq$ is a centreless Lie torus of type
$(\rma_{N-1}, \ZZ^n)$. Conversely, any centreless Lie torus of type
$(\rma_{N-1}, \ZZ^n)$ with $N\ge 4$ is graded-isomorphic to some
$\lsl_N(\FF_\bq)$. \end{corollary}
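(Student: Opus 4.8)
The plan is to deduce this corollary by combining the general classification of centreless Lie tori of type $\rma_l$ (Theorem~\ref{n:typeAcl}) with the concrete description of associative tori of type $\ZZ^n$ (Exercise~\ref{n:ex:quntor}(c) and (d)). The two halves of the statement are handled separately, and each is essentially a matter of feeding one known result into another. I expect the ``main obstacle'' to be purely expository: making sure the index conventions line up ($l = N-1$, so type $(\rma_{N-1},\ZZ^n)$ corresponds to $N\times N$ matrices), and flagging why the hypothesis $N\ge 4$ is needed rather than $N\ge 3$.

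First I would establish the forward direction. By Exercise~\ref{n:ex:quntor}(c), a quantum torus $\FF_\bq$ is an associative torus of type $\ZZ^n$. Applying Exercise~\ref{n:ex:sltor}(b) with $A = \FF_\bq$ and $\La = \ZZ^n$, the Lie algebra $\lsl_N(\FF_\bq)$ is a Lie torus of type $(\rma_{N-1},\ZZ^n)$. To see it is centreless, invoke Exercise~\ref{n:ex:slgen}(d): $Z(\lsl_N(\FF_\bq)) = \{zE_N : z\in Z(\FF_\bq)\cap[\FF_\bq,\FF_\bq]\}$. One then observes that for a quantum torus this intersection is $0$ --- every homogeneous element $t^\ga$ in the centre $Z(\FF_\bq)$ lies in the degree-$\ga$ span, while $[\FF_\bq,\FF_\bq]$ is spanned by the $t^\la$ with $t^\la$ not central (the commutator $[t^\la,t^\mu] = (c(\la,\mu)-c(\mu,\la))t^{\la+\mu}$ vanishes exactly when $t^{\la+\mu}$ is central), so the two graded subspaces meet only in $0$; this is the computation recorded as \cite[(3.3.2)]{NY} and already cited in the proof of Theorem~\ref{n:typeAcl}. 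Hence $\lsl_N(\FF_\bq)$ is a centreless Lie torus of type $(\rma_{N-1},\ZZ^n)$.

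For the converse, suppose $L$ is a centreless Lie torus of type $(\rma_{N-1},\ZZ^n)$ with $N\ge 4$, so that $l = N-1 \ge 3$ and Theorem~\ref{n:typeAcl} applies: $L$ is graded-isomorphic to $\lsl_N(A)$ for some associative torus $A$ of type $\La = \ZZ^n$. Now apply Exercise~\ref{n:ex:quntor}(d): every associative torus of type $\ZZ^n$ is graded-isomorphic to a quantum torus $\FF_\bq$ for a suitable parameter matrix $\bq$. Transporting the grading through this graded isomorphism of coordinate algebras induces a graded isomorphism $\lsl_N(A) \cong \lsl_N(\FF_\bq)$ (the functoriality here is immediate from the explicit description of $\lsl_N(-)$ in terms of matrix units and the entrywise grading of Exercise~\ref{n:ex:sltor}(a)), and composing with the graded isomorphism $L\cong \lsl_N(A)$ gives $L\cong \lsl_N(\FF_\bq)$ as $\scQ(\rma_{N-1})\oplus\ZZ^n$-graded Lie algebras. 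This completes the proof, the only subtlety worth a remark being that the threshold $N\ge 4$ (equivalently $l\ge 3$) is inherited directly from the hypothesis $l\ge 3$ of Theorem~\ref{n:typeAcl}, which in turn traces back to the Recognition Theorem of \cite{bm}.
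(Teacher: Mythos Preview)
Your proof is correct and follows essentially the same route as the paper, which simply says to combine Exercise~\ref{n:ex:quntor} with Theorem~\ref{n:typeAcl}; you have fleshed out those steps appropriately, including the centreless-ness argument via $Z(\FF_\bq)\cap[\FF_\bq,\FF_\bq]=0$ (which the paper records as \cite[(3.3.2)]{NY}). One small overstatement: in your parenthetical, the commutator $[t^\la,t^\mu]$ vanishes \emph{whenever} $t^{\la+\mu}$ is central (this is the bicharacter computation $\sigma(\la,\ga-\la)=\sigma(\la,\ga)\sigma(\la,\la)^{-1}=1$ for $\ga\in\Ga$), but not \emph{only} then---for instance $[t^\la,t^\mu]=0$ also whenever $\la$ or $\mu$ lies in $\Ga$; since only the ``whenever'' direction is needed to conclude $[\FF_\bq,\FF_\bq]\cap Z(\FF_\bq)=0$, this does not affect the argument.
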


The attentive reader will have noticed that we didn't say anything
about Lie tori of type $(\rma_1, \La)$ and $(\rma_2,\La)$ in
Th.~\ref{n:typeAcl} and Cor.~\ref{n:bgkcor}. Of course, $\lsl_3(A)$
of an associative $\La$-torus will be centreless Lie tori of type
$(\rma_2,\La)$. The limitation $N\ge 4$ in Th.~\ref{n:typeAcl} is
justified, since for $N=3$ there are more examples: One needs
coordinate algebras $A$ which are no longer associative but only
alternative. Moreover, one needs to replace the matrix algebra
$\lsl_3(A)$ by something more general, a Tits-Kantor-Koecher algebra
or an abstractly defined Lie algebra, see \cite{bgkn} for details.

Analogous remarks apply for the $\rma_1$-case, in which the
coordinates come from certain Jordan algebras (called \emph{Jordan
tori}) and in which $\lsl_2$ has to be replaced by a Jordan algebra.
One now has classification theorems for Lie tori of all types. The
references up to 2007 for each type are listed at the beginning of
\S7 in \cite{AF:isotopy}. An additional recent reference is
\cite{NT} for $S=\rmb_2$.

\subsection{Some more easy examples of Lie tori} \label{n:sec:lietorex}

We describe some more easy examples, where easy means that they do
not require some knowledge of non-associative algebras, like Jordan
algebras, alternative or structurable algebras.  \sm

\begin{example}[$\La=\{0\}$]
\label{n:lietonu} Let $\g$ be a finite-dimensional split simple Lie
algebra with splitting Cartan subalgebra $\frh$. Then $\g$ has a
root space decomposition $\g= \bigoplus_{\xi \in S} g_\xi$ where
$\g_0=\frh$ and $S$ is the root system of $(\g, \frh)$, a finite
reduced root system. Since $\g$ is simple, $S$ is also irreducible.
Using standard properties of finite-dimensional split simple Lie
algebras, it is easy to check that \textit{$\g= \bigoplus_{\xi \in
S} \g_\xi$ is a Lie torus of type $(S, \{0\})$.}

Conversely, if $L$ is a Lie torus of type $(S,\{0\})$, then $L$ is a
finite-dimensional split simple Lie algebra. Indeed, $L=\g$ in the
notation of Prop.~\ref{n:liesplit}.

Note that this fits nicely the picture of EALAs of nullity $0$,
which we have characterized in Prop.~\ref{n:fdeala} as
finite-dimensional split simple Lie algebras.
\end{example}

\begin{example} \label{n:lieuntwistgen} As in the previous Example~\ref{n:lietonu} let $\g$ be a
finite-dimensional split simple Lie algebra with splitting Cartan
subalgebra $\frh$ and root system $S$. We would like to consider a
Lie algebra of the form $\g \ot A$ where $A$ is an associative
algebra. For $\g$ of type $\rma$ we could take non-commutative
``coordinates'' $A$ to get a Lie torus, see \S\ref{n:sec:lietypeA}.
However, for $\g$ not of type $\rma$ the algebra $A$ must be
commutative in order to get a Lie algebra.\sm

Therefore, we let $A= \bigoplus_{\la \in \La} A^\la$ be a
\emph{commutative} associative torus of type $\La$ and consider $\g
\ot A $, which becomes a Lie algebra (over $F$) by $[u_1 \ot a_1, \,
u_2 \ot a_2] = [u_1, u_2] \ot a_1 a_2$. It is a \emph{centreless Lie
torus of type $(S,\La)$ with respect to the homogeneous subspaces}
\[ (\g \ot A)_\xi^\la = \g_\xi \ot A^\la.  \]
Note that the support of the $\scQ(S) \oplus \La$-graded Lie algebra
$\g \ot A$ is the set \[
    \supp_{\scQ(S) \oplus \ZZ^n} \g \ot A = S \times \La,\]
and that $\g\ot A$ is an invariant Lie torus with respect to the
bilinear form
\[
     (x \ot a^\la \mid y \ot b^\mu) = \ka(x,y) \, (a^\la b^\mu)_0
\]
where $\ka$ is the Killing form of $\g$ and $c_0$ for $c\in A$ is
the $0$-component of $c$. This example works for any type of $S$.
But it yields all examples only for special types of $S$.
\end{example}

\begin{theorem} \label{n:typede} Any centreless Lie torus of type $(S,\La)$ for $S$ of type
$\rmd_l$, $l\ge 4$ or $\rme_l$, $l=6,7,8$, is graded-isomorphic to
an example as in {\rm \ref{n:lieuntwistgen}} for $\g$ of the
corresponding type and $A$ a commutative associative torus of type
$\La$. \end{theorem}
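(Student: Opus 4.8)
The plan is to invoke the general classification of centreless Lie tori in terms of their coordinate algebras, as for type $\rma$ in Theorem~\ref{n:typeAcl}, and then to identify the coordinate algebra attached to a type $\rmd_l$ or $\rme_l$ Lie torus. The starting point is a Coordinatization (Recognition) Theorem for $(S,\La)$-graded Lie algebras with $S$ of type $\rmd_l$ ($l\ge 4$) or $\rme_l$ ($l=6,7,8$): for these simply laced types with $\rank S\ge 4$ (so that $S$ contains an $\rma_2$-subsystem and hence the rigidity of Example~\ref{n:arsedlem} forces $\La\sh$ to be a \emph{subgroup}), a centreless $S$-graded Lie algebra $L$ is $\scQ(S)$-graded-isomorphic to $\g\ot A$ for a uniquely determined \emph{commutative} associative unital $F$-algebra $A$, with $\g$ split simple of the same type as $S$. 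This is the analogue for these types of \cite[Recognition Theorem~0.7]{bm}; the key structural fact is that, unlike the $\rma_l$ case, the Coordinatization Theorem for $\rmd$ and $\rme$ (and also $\rmb,\rmc,\rmf,\rmg$ away from low rank) produces commutative coordinates — non-commutativity (or non-associativity) can only be absorbed in type $\rma$ (or the small-rank exceptions), precisely because the relevant diagram automorphisms / triple-product identities are absent.

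First I would reduce to $S$ itself being reduced and irreducible: by Corollary~\ref{n:lietorsupp}, $\supp_{\scQ(S)}L$ is a finite irreducible root system, and for $\rmd_l,\rme_l$ it equals $S$ since these are already reduced; replacing $S$ by the support changes nothing. Next I would apply the Coordinatization Theorem just cited to obtain a commutative associative unital $F$-algebra $A$ and a $\scQ(S)$-graded isomorphism $L\cong \g\ot A$, where $\g$ is split simple of the corresponding type and $\g\ot A$ carries its natural $\scQ(S)$-grading $(\g\ot A)_\xi=\g_\xi\ot A$. Then I would transport the $\La$-grading of $L$ across this isomorphism. The point is that the isomorphism is $\scQ(S)$-graded but not a priori $\La$-graded; however, since $\g\ot A$ is $\scQ(S)$-graded with each $\g_\xi$ ($\xi\ne 0$) one-dimensional \emph{over $F$}, a compatible $\La$-grading of $\g\ot A$ is equivalent to a $\La$-grading of the coordinate algebra $A$ — this is exactly the reasoning used in Exercise~\ref{n:ex:sltor}(a), now with $\lsl_N(A)$ replaced by $\g\ot A$. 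Concretely, for a fixed nonzero $\xi$ the space $\g_\xi\ot A$ inherits a $\La$-grading from $L$, and using $\g_\xi\ot A \cong A$ as $F$-vector spaces together with the bracket relations $[\g_\xi\ot A,\g_{-\xi}\ot A]\subset \frh\ot A$ one sees that these $\La$-gradings (for varying $\xi$) all come from a single $\La$-grading $A=\bigoplus_{\la}A^\la$ with $(\g\ot A)_\xi^\la=\g_\xi\ot A^\la$. The axioms (LT1)--(LT3) for $L$ then translate, exactly as in Exercise~\ref{n:ex:sltor}(b), into the axioms (AT1)--(AT3) for $A$: (LT2) gives that every nonzero $A^\la$ contains an invertible element and that $\dim_F A^\la\le 1$, and (LT3.c) gives $\Span_\ZZ(\supp_\La A)=\La$. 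Hence $A$ is a commutative associative torus of type $\La$, and $L\cong \g\ot A$ as $(\scQ(S),\La)$-graded Lie algebras, which is the asserted form of Example~\ref{n:lieuntwistgen}. Finally, invariance is automatic: Example~\ref{n:lieuntwistgen} exhibits the graded invariant nondegenerate symmetric bilinear form $\ka\ot(\cdot\mid\cdot)_0$ on $\g\ot A$, so every such $L$ is in particular an invariant Lie torus.

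The main obstacle is establishing, or correctly citing, the Coordinatization/Recognition Theorem for $\rmd_l$ and $\rme_l$ that yields \emph{commutative} coordinates — i.e.\ controlling why no non-commutative (let alone non-associative) coordinate algebras occur in these types. Everything else is a routine transfer of gradings and a check of axioms parallel to the type $\rma$ case. One clean way to handle the commutativity is to argue internally: in $\g\ot A$ pick a long root $\xi$ together with $\eta\in S$ with $\lan\eta,\xi\ch\ran=1$ (available since $\rank S\ge 4$ and $S$ is simply laced), and use the Lie torus bracket relations among $e_\xi^\la,f_\xi^\la$ and the elements of $\g_\eta\ot A$ to force $A^\la A^\mu=A^\mu A^\la$; this mirrors the rigidity computation behind Example~\ref{n:arsedlem} and the Structure Theorem for extension data, and it is where the hypothesis $l\ge 4$ (resp. $l=6,7,8$) is genuinely used. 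I would then cite \cite{bm} (and, for the graded/torus refinement, \cite{abfp2} or \cite{y:lie}) for the identification of $A$ as a commutative associative algebra, and quote \cite{NY} for $Z(\g\ot A)=\{0\}$ when $A$ is a torus, completing the ``centreless'' bookkeeping.
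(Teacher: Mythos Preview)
Your proposal is essentially the same approach as the paper's: apply the Berman--Moody Coordinatization Theorem \cite{bm} to obtain $L\cong\g\ot A$ with $A$ commutative associative, then check that the Lie torus axioms on $L$ force $A$ to be a commutative associative torus of type $\La$ (exactly as in the type~$\rma$ argument of Theorem~\ref{n:typeAcl}). Your additional internal argument for commutativity and the citation to \cite{NY} for centrelessness are unnecessary, since \cite{bm} already yields commutative coordinates in types $\rmd$ and $\rme$ and $Z(\g\ot A)=0$ is immediate for simple $\g$ and unital commutative $A$; but these do no harm.
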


\begin{proof} The proof is analogous to the proof of
Th.~\ref{n:typeAcl}: One applies the Coordinatization Theorem of
\cite{bm} to get that $L$ has the form $\g \ot A$ for some
commutative associative $F$-algebra. One then has to discuss when
such a Lie algebra is a Lie torus. This is the case exactly when $A$
is a torus. \end{proof}

\begin{example}[Untwisted multiloop algebras] \label{n:lieuntwist}For EALAs it is of interest to describe the centreless Lie tori
of type $(S,\La)$ with $\La$ a free abelian group of finite rank,
say of rank $n$. Hence $\La\cong \ZZ^n$. It is immediate that $\g
\ot A$ is a Lie torus of type $(S,\ZZ^n)$ if and only if $A$ is a
commutative quantum torus, i.e., a Laurent polynomial ring in
several, say $n$ variables. In other words, these are the untwisted
multiloop algebra of (\ref{n:untloo}),
 \[ L(\g) = \g \ot_F F[t_1^{\pm 1}, \ldots, t_n^{\pm 1}]
 \]
Hence by Th.~\ref{n:ucelie} the universal central extension of
$L(\g)$, the toroidal Lie algebras of \S\ref{n:sec:toroidal} are
also Lie tori. Finally, Th.~\ref{n:typede} has the following
corollary. \end{example}

\begin{corollary}[{\cite{bgk}}] \label{n:bgk} Any centreless Lie
torus of type $(S,\ZZ^n)$ with $S=\rmd_l$, $l\ge 4$ or $S=\rme_l$,
$l=6,7,8$, is graded-isomorphic to an untwisted multiloop algebra
$L(\g)$ as in Example~{\rm \ref{n:lieuntwist}}.
\end{corollary}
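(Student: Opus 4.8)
The plan is to obtain this statement as a direct corollary of Theorem~\ref{n:typede}. First I would apply that theorem with $\La=\ZZ^n$: since $S$ is of type $\rmd_l$ ($l\ge 4$) or $\rme_l$ ($l=6,7,8$), it tells us that the given centreless Lie torus $L$ of type $(S,\ZZ^n)$ is graded-isomorphic to $\g\ot A$, where $\g$ is the split simple Lie algebra of the corresponding type and $A=\bigoplus_{\la\in\ZZ^n}A^\la$ is a commutative associative torus of type $\ZZ^n$, the homogeneous subspaces being $(\g\ot A)_\xi^\la=\g_\xi\ot A^\la$ (Example~\ref{n:lieuntwistgen}). So the whole proof reduces to identifying the possible coordinate algebras $A$.

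Next I would show that any commutative associative torus $A$ of type $\ZZ^n$ is graded-isomorphic, as an associative $\ZZ^n$-graded algebra, to the Laurent polynomial ring $F[t_1^{\pm1},\dots,t_n^{\pm1}]$. By Exercise~\ref{n:ex:quntor}(d) there is a graded algebra isomorphism from $A$ onto some quantum torus $\FF_\bq$ with $\bq=(q_{ij})$. Commutativity of $A$ forces $\FF_\bq$ to be commutative, so in particular $t_it_j=t_jt_i$ for its generators; comparing with the defining relation $t_it_j=q_{ij}\,t_jt_i$ and using that $t_jt_i$ is invertible (hence nonzero) yields $q_{ij}=1$ for all $i,j$. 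By Definition~\ref{n:quantordef} this means $\FF_\bq=F[t_1^{\pm1},\dots,t_n^{\pm1}]$ with its standard grading, which gives the claimed graded isomorphism $A\cong F[t_1^{\pm1},\dots,t_n^{\pm1}]$.

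Finally I would transport this back. If $\phi\colon A\to F[t_1^{\pm1},\dots,t_n^{\pm1}]$ is the graded algebra isomorphism just produced, then $\Id_\g\ot\phi$ is an isomorphism of Lie algebras $\g\ot A\to\g\ot F[t_1^{\pm1},\dots,t_n^{\pm1}]=L(\g)$, the untwisted multiloop algebra of~(\ref{n:untloo}) and Example~\ref{n:lieuntwist} --- it is a Lie homomorphism because the bracket on both sides is $[u\ot a,v\ot b]=[u,v]\ot ab$ and $\phi$ is an algebra homomorphism --- and it carries $\g_\xi\ot A^\la$ onto $\g_\xi\ot Ft^\la=L(\g)_\xi^\la$, hence is a graded isomorphism. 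Composing it with the graded isomorphism $L\to\g\ot A$ coming from Theorem~\ref{n:typede} finishes the proof. I do not expect a genuine obstacle here: all the substantive work (through the Coordinatization Theorem of $\scQ(S)$-graded Lie algebras) is already packaged in Theorem~\ref{n:typede}; the only points requiring a little care are the elementary fact that a commutative quantum torus is a Laurent polynomial ring and the bookkeeping that all the isomorphisms involved respect the $\scQ(S)\oplus\ZZ^n$-grading.
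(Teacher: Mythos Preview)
Your proposal is correct and follows essentially the same approach as the paper: the corollary is derived directly from Theorem~\ref{n:typede} together with the observation (made in Example~\ref{n:lieuntwist} and justified via Exercise~\ref{n:ex:quntor}(d)) that a commutative associative torus of type $\ZZ^n$ is a Laurent polynomial ring. You have simply spelled out in more detail the step that a commutative quantum torus must have all $q_{ij}=1$, and checked that the resulting isomorphism respects the grading.
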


Perhaps the reader now expects that the next example will be the
general multiloop algebras $L(\g, \boldsi )$ defined in
(\ref{n:multdeff}). However, an arbitrary multiloop algebra is in
general not a Lie torus, see \cite[Th.~3.3.1]{abfp2} and
\cite[Th.~5.1.4]{naoi} for a characterization of centreless Lie tori
which are multiloop algebras. But this phenomenon does not occur in
nullity $1$.

\begin{exercise}\label{n:ex:twiaff} Verify that the loop algebra $L(\g,\si)$
of (\ref{n:oneloop}) is an invariant Lie torus of type $(S,\ZZ)$
where $S$ is the root system of Table~\ref{n:table1}.
\end{exercise}

%

\section{The construction of all EALAs}\label{n:sec:const}

Recall Th.~\ref{n:ealcor}: If $(E,H)$ is an EALA, its core $E_c$ and
its centreless core $E_{cc}$ are Lie tori, the latter being an
invariant Lie torus. Moreover, if $(S,\La)$ is the type of $E_c$ and
$E_{cc}$ then $\La$ is a free abelian group of finite rank. Thus:
\[ \xymatrix{
    \hbox{core $E_c$ (Lie torus)} \ar@{~>}[d]
    & \hbox{EALA $(E,H)$} \ar@{~>}[l] \\
     \hbox{centreless core $E_{cc}$ (invariant Lie torus)}
}\] In this chapter we will reverse the process, starting from an
invariant Lie torus we will construct an EALA. \sm

To motivate the construction it is useful to look again at the
construction in Example~\ref{n:sec:aff}~and~\ref{n:sec:ealaone} of
an affine Kac-Moody Lie algebra. It can be summarized as follows: We
start with a twisted loop algebra $\scL = L(\g,\si)$, which as we
now know is an invariant Lie torus (Ex.~\ref{n:ex:twiaff}). We then
take a central extension $\tilde \scL$, which in this example is the
universal central extension and hence by Th.~\ref{n:ucelie} again a
Lie torus (of course, one can also verify this directly in this
example). Finally, we add some (not all) derivations to $\tilde L$
to get an affine Kac-Moody Lie algebra, and all affine Kac-Moody Lie
algebras are obtained in this way (Kac's Realization
Theorem~\ref{n:th-kac}). To summarize, using the EALA terminology:
\[ \xymatrix@C=80pt{
   { \begin{matrix}\hbox{central extension of $L$}\\
              \hbox{(another Lie torus)} \end{matrix} }
    \ar@{~>}[r]^{\txt{add \\ derivations}}
          & \hbox{EALA $(E,H)$}  \\
     \hbox{invariant Lie torus $L$} \ar@{~>}[u]
}\] To do something like this in general, one faces the following
two problems.

(A) An invariant Lie torus has in general many central extension.
For example, the untwisted multiloop algebra $L=\g\ot F[t_1^{\pm 1},
\ldots, t_n^{\pm 1}]$ is an invariant Lie torus by
Ex.~\ref{n:lieuntwist}. If $n\ge 2$, its universal central extension
has an infinite-dimensional centre, a result we already mentioned in
\S\ref{n:sec:toroidal}, see in particular Th.~\ref{n:ucemm}. Hence,
there are many possible central extensions. Should we only consider
the universal central extension?

(B) Which derivations should we add? Already in the affine case did
we not add all derivations, as follows for example from
Ex.~\ref{n:ex:derloop}! \sm

It turns out that the two problems are closely related, and we will
solve both at the same time. Rather than taking a $2$-step approach,
we will take one big step, by taking what one may call an
\textit{affine extension} (after all, the result will be an extended
affine Lie algebra). In fact, affine extensions are a special case
of so-called \emph{double extension}, see for example \cite{borde}.
\begin{equation}   \label{n:outline} \vcenter{
  \xymatrix@C=80pt{
    \hbox{central extension of $L$} \ar@{.>}[r]
    & \hbox{EALA $(E,H)$}  \\
     \hbox{invariant Lie torus $L$} \ar@{.>}[u]
       \ar@{~>}[ur]_>>>>>>>>>{\txt{affine \\ extension}}
 } }  \end{equation}

The key idea is based on the construction of a $2$-cocycle in
Ex.~\ref{n:generic}: Any subspace $D$ of skew-symmetric derivations
will give rise to a $2$-cocycle and hence to a central extension.
But not only do we get examples of central extensions. By
Th.~\ref{n:thgencen} and Ex.~\ref{n:uebcent4}, up to isomorphism all
central coverings of $L$ are of the form $\rmE(L,D,\psi_D)$ for some
graded subspace $D$ of $\SDer(L)$ with $D\cap \IDer(L) = \{0\}$.
Observe that we can indeed apply this theorem: The invariant Lie
torus $L$
\begin{itemize}

\item[\rm (i)] is perfect by Ex.~\ref{n:ex:lietor} and is finitely
generated as Lie algebra by Th.~\ref{n:torfg} (recall that $\La$ is
free of finite rank, where $(S,\La)$ is the type of $L$),

\item[\rm (ii)] has finite homogeneous dimension, even bounded homogeneous
dimension also by Th.~\ref{n:torfg}, and

\item[\rm (iii)] has an invariant nondegenerate $\La$-graded
symmetric bilinear form, by definition of an invariant Lie torus.
\end{itemize}
But we need more than just a central covering. For example, the
axiom (EA2) requires that we construct an $\ad$-diagonalizable
subalgebra $H$ for which the subspaces $L_\xi^\la$, $\xi \ne 0$, are
root spaces, as can be seen from Th.~\ref{n:ealcor}. By
Lemma~\ref{n:weisp} we can realize the subspaces $L_\xi$ as root
spaces of some natural subalgebra $\frh \subset L$. But we do not
have a result, which describes the subspaces $L^\la$ in a similar
fashion, i.e., as root spaces of some toral subalgebra. There is in
fact no natural choice of a subalgebra to do so. Rather, we will
distinguish these subspaces ``externally'', i.e., via an action of
some non-inner derivation algebra. The required formalism to do
this, is described in the next section. This has nothing to do with
Lie algebras. Rather, it is a topic in the theory of graded vector
spaces, and we will therefore describe it in this setting.

\subsection{Degree maps} \label{n:sec:degree}

In this section, $V$ is vector space over a field $F$, which could
be of arbitrary characteristic until Prop.~\ref{n:degprop}(b). Also,
$\La$ denotes an arbitrary abelian group. We recall that a
$\La$-grading of $V$ is simply a direct vector space decomposition
of $V$ by a family $(V^\la: \la \in \La)$ of subspaces $V^\la
\subset V$. Our goal in this section is to present a method
describing the homogeneous subspaces $V^\la$ of a given
$\La$-grading of $V$ as the joint eigenspaces of a subspace of
diagonalizable endomorphisms. \sm

To motivate the construction, let us first look at the converse,
namely inducing a grading of $V$ via the action of endomorphisms. We
will say that a subspace $T\subset \End_F(V)$ is a \emph{subspace of
simultaneously diagonalizable endomorphisms\/}, if
\begin{align} \label{n:degree1}
  V &= \textstyle \bigoplus_{\la \in T^*} V^\la \quad \hbox{for} \\
  V^\la &= \{ v\in V: t(v) = \la(t) v \hbox{ for all } t \in T\}.
        \nonumber
\end{align}
In this case, $T$ obviously  consists of pairwise commuting
diagonalizable endomorphisms. Conversely, it is well-known that a
finite-dimensional subspace of pairwise commuting diagonalizable
endomorphisms is a subspace of simultaneously diagonalizable
endomorphisms (this is no longer true if $T$ is
infinite-dimensional). Observe that the decomposition
(\ref{n:degree1}) is a grading of $V$ by the group
$\Span_\ZZ(\supp_{T^*} V)$ where $\supp_{T^*} V =\{ \la \in T^* :
V^\la \ne 0 \}$. Our goal is to realize a given grading of $V$ in
this way. \sm

To do so, we will use the $F$-vector space
\[ \rmD(\La) = \Hom_\ZZ(\La, F) \]
consisting of all maps $\theta : \La \to F$ which are $\ZZ$-linear:
$\thet(\la_1 + \la_2) = \thet(\la_1) + \thet(\la_2)$ for all $\la_i
\in \La$. This is an $F$-vector space by defining for $\thet,
\thet_i \in \rmD(\La)$ and $s\in F$ the sum $\thet_1 + \thet_2$ and
the scalar multiplication $s\thet$ by $(\thet_1 + \thet_2)(\la) =
\thet_1(\la) + \thet_2(\la)$ and $(s\thet)(\la) = s (\thet(\la))$.

\begin{exercise} \label{n:uebdegree} (a) Show $\rmD(\La) \cong \Hom_F( \La \ot_\ZZ F, F) =
(\La \ot_F F)^*$. Thus $\rmD(\La)$ is naturally a dual vector space.

(b) If $\La$ is free of rank $n$, say with $\ZZ$-basis $\veps_1 ,
\ldots, \veps_n$, then $\rmD(\La) = F \pa_1 \oplus \cdots \oplus F
\pa_n$ where $\pa_i\in \rmD(\La)$ is defined by $\pa_i(\sum_j m_j
\veps_j ) = m_i$. In particular, $\dim_F \rmD(\La) = n$.
\end{exercise}

We now suppose that $V= \bigoplus_{\la \in \La} V^\la$ is a
$\La$-grading of the vector space $V$. Any $\thet \in \rmD(\La)$
defines an endomorphism $\pa_\thet \in \End_F(V)$ by
\[ \pa_\thet(v^\la) = \thet(\la) \, v^\la \quad\hbox{for }
v^\la \in V^\la. \] We put
\[ \euD(V) = \{ \pa_\thet : \thet \in \rmD(\La)\} \]
and call the elements of $\euD(V)$ \emph{degree maps\/}. If
$A=\bigoplus_{\la \in \La} A^\la$ is a $\La$-grading of an algebra
$A$, the maps $\pa_\thet$ are derivations and $\euD(A)$ is called
the space of \emph{degree derivations\/}.

The map $\pa : \rmD(\La) \to \euD(V)$ is clearly $F$-linear and
surjective by definition. Its kernel is $\{\thet \in \rmD(\La) :
\thet(\supp_\La V) = 0\}$. To make $\pa$ an isomorphism we will
\begin{equation} \label{n:degree2}
 \hbox{\it from now on assume $\Span_\ZZ (\supp_\La V) =
\La$.} \end{equation}  As we have pointed out at previous occasions,
this is not a serious assumptions since one can always replace $\La$
by $\Span_\ZZ (\supp_\La V)$ without changing the given grading.
Since now $\pa$ is an isomorphism, we can define a linear form
$\ev_\la\in \euD(V)^*$ for every $\la \in \La$:
\[  \ev_\la(\pa_\thet) = \thet(\la) \]
The $F$-linear map
\[ \ev : \La \to \euD(V)^*, \quad \la \mapsto \ev_\la \]
is called the \emph{evaluation map\/}. By construction,
\begin{equation} \label{n:degree3}
  V^\la \subset \{ v\in V : d(v) = \ev_\la(d)v \hbox{ for all }
      d\in \euD(V)\}  \end{equation}
since for $d=\pa_\thet$ and $v\in V^\la$ we have $\pa_\thet(v^\la) =
\thet(\la) v^\la = \ev_\la(\pa_\thet) v^\la$.

\begin{definition} In the setting of above, i.e., $V=\bigoplus_{\la \in
\La} V^\la$ is $\La$-graded and (\ref{n:degree2}) holds, we will say
that a subspace $T \subset \euD(V)$ \emph{induces the $\La$-grading
of $V$\/} if
\[
   V^\la = \{ v\in V : t(v) = \ev_\la(t)v \hbox{ for all }
      t\in T\}
\] holds for all $\la \in \La$.
\end{definition}

\begin{proposition}\label{n:degprop} Let $V= \bigoplus_{\la \in \La} V^\la$
be a $\La$-grading of the vector space $V$ such that {\rm
(\ref{n:degree2})} holds.  \sm

{\rm (a)} A subspace $T\subset \euD(V)$ induces the $\La$-grading
of\/ $V$ if the restricted evaluation map
\[ \ev_T : \La \to T^*, \quad \ev_T(\la ) = \ev_\la \mid_T
\] is injective. \sm

{\rm (b)} Suppose $F$ has characteristic $0$ and $\La$ is
torsion-free, i.e., $n\la = 0$ for some $n\in \ZZ$ implies $\la =
0$. Then $\La$ embeds into the $F$-vector space $U=\La \ot_\ZZ F$
and for every subspace $S\subset \rmD(\La)$ separating the points of
$\La$ in $U$ the corresponding subspace $T=\pa(S) \subset \euD(V)$
induces the $\La$-grading of $V$. In particular, this holds for
$\euD(V)$ itself. \end{proposition}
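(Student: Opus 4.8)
The plan is to prove part (a) first by a direct comparison of subspaces, and then to deduce part (b) by checking that the separation hypothesis there implies the injectivity hypothesis of (a). For part (a), fix $T \subset \euD(V)$ and assume $\ev_T : \La \to T^*$ is injective. By the inclusion \eqref{n:degree3}, which holds for all of $\euD(V)$ and hence a fortiori for the subspace $T$, we already have
\[
   V^\la \subset W^\la := \{ v \in V : t(v) = \ev_\la(t)\, v \hbox{ for all } t \in T\}
\]
for every $\la \in \La$. It remains to show $W^\la \subset V^\la$. Given $v \in W^\la$, write $v = \sum_{\mu} v^\mu$ with $v^\mu \in V^\mu$, the sum over a finite subset of $\La$. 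For any $t = \pa_\thet \in T$ we have $t(v) = \sum_\mu \thet(\mu) v^\mu$, and the condition $v \in W^\la$ says this equals $\ev_\la(t) v = \sum_\mu \thet(\la) v^\mu$. Comparing $\mu$-components (the sum $V = \bigoplus_\mu V^\mu$ is direct) gives $\thet(\mu) v^\mu = \thet(\la) v^\mu$, i.e. $\ev_\mu(t) = \ev_\la(t)$ whenever $v^\mu \ne 0$. Since this holds for all $t \in T$, we get $\ev_T(\mu) = \ev_T(\la)$, and injectivity of $\ev_T$ forces $\mu = \la$. Hence $v = v^\la \in V^\la$, proving $W^\la = V^\la$ and thus that $T$ induces the grading.

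For part (b), assume $F$ has characteristic $0$ and $\La$ is torsion-free. First I would record that the natural map $\La \to U := \La \ot_\ZZ F$ is injective: torsion-freeness of $\La$ together with flatness considerations (or simply: a torsion-free finitely-or-infinitely-generated abelian group embeds in a $\QQ$-vector space, which embeds in $U$) gives this. Next, by Exercise~\ref{n:uebdegree}(a) we may identify $\rmD(\La) = \Hom_\ZZ(\La,F)$ with $U^* = \Hom_F(U,F)$; under this identification a subspace $S \subset \rmD(\La)$ "separates the points of $\La$ in $U$" precisely means: for $\la \ne \la'$ in $\La$ there is $\thet \in S$ with $\thet(\la) \ne \thet(\la')$, equivalently $\thet(\la - \la') \ne 0$ for some $\thet \in S$ whenever $\la - \la' \ne 0$. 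Now set $T = \pa(S) \subset \euD(V)$. I claim $\ev_T : \La \to T^*$ is injective, so that part (a) applies. Indeed, if $\ev_T(\la) = \ev_T(\la')$ then $\ev_\la(\pa_\thet) = \ev_{\la'}(\pa_\thet)$ for all $\thet \in S$, i.e. $\thet(\la) = \thet(\la')$ for all $\thet \in S$; since $S$ separates points, $\la = \la'$. Part (a) then yields that $T$ induces the $\La$-grading of $V$.

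Finally, for the last assertion ("in particular, this holds for $\euD(V)$ itself"), I would take $S = \rmD(\La)$, so $T = \pa(S) = \euD(V)$. It suffices to check that $\rmD(\La) = \Hom_\ZZ(\La, F)$ separates the points of $\La$ inside $U = \La \ot_\ZZ F$; equivalently, that for any $0 \ne \la \in \La$ there is a $\ZZ$-linear map $\thet : \La \to F$ with $\thet(\la) \ne 0$. Since $\La$ is torsion-free it embeds in the $\QQ$-vector space $\La \ot_\ZZ \QQ$, on which one can build (via a basis) a $\QQ$-linear functional nonvanishing on the image of $\la$; composing with $\QQ \hookrightarrow F$ gives the desired $\thet$. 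Applying part (b) to this $S$ completes the proof. I do not expect a genuine obstacle here: the only mild subtlety is the separation claim for $\rmD(\La)$ when $\La$ is not finitely generated, which is handled by the embedding into $\La\ot_\ZZ\QQ$ and choice of a dual basis vector (no finiteness of rank is needed, as the functional only has to be nonzero on a single element). The bulk of the argument is the component-comparison in part (a), which is routine.
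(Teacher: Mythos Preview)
Your argument is correct. Note that the paper does not actually supply a proof of this proposition; it states the result and moves on. Your component-comparison in part~(a) and the reduction of part~(b) to~(a) via the identification $\rmD(\La) \cong (\La\ot_\ZZ F)^*$ of Exercise~\ref{n:uebdegree}(a) are exactly the intended routine verifications, so there is nothing further to compare.
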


\subsection{The centroid of Lie algebras, in particular of Lie tori} \label{n:sec:centr}

After the intermezzo on how to induce gradings of vector spaces in
the previous section \ref{n:sec:degree} we now come back to Lie
algebras, but not immediately to Lie tori and EALAs. Of course, the
topic of this section is motivated by the over-all goal of this
chapter: The construction of EALAs from  Lie tori using certain
subspaces of derivations. The derivations, which in
\ref{n:sec:genconstr} will be used in the general construction, are
products of degree maps, studied in \ref{n:sec:degree}, and
so-called centroidal transformations, to which this section is
devoted. \sm

The basic idea of the centroid of a Lie algebra (or of any algebra
for that matter) is that it identifies the largest ring over which
the given algebra can be considered as an algebra. For example, if
one studies the real Lie algebra $L$ which is $\lsl_n(\CC)$
considered as a real Lie algebra by restricting the scalars to
$\RR$, the centroid will be $\cong \CC$ and will thus indicate that
$L$ can also be considered as a complex Lie algebra.

In general, the centroid will not be a field but only a
(commutative) ring. Hence, considering a Lie algebra as algebra over
its centroid, necessitates that in the following definition and the
Lemma~\ref{n:centlem} after it we will deviate from our standard
assumption and consider Lie algebras over rings. The definition of a
Lie algebra $L$ defined over a ring, say $k$, is not surprising: $L$
is a $k$-module with a $k$-bilinear map $[.,.]: L \times L \to L$
which is alternating, i.e., $[l,l]=0$ for all $l\in L$, and
satisfies the Jacobi identity.

\begin{definition}[{\cite[Ch.~X]{jake}}] \label{n:centdef} The
 \emph{centroid $\Cent_k(L)$\/}
of a Lie algebra $L$ defined over a ring $k$ is defined as
\[ \Cent_k(L) = \{ \chi \in \End_k(L) : \chi([l_1, l_2]) =
    [l_1, \chi(l_2)] \hbox{ for all } l_1, l_2\in L\}. \]
Of course, $\chi \in \Cent_k(L)\Iff \chi([l_1, l_2]) = [\chi(l_1),
l_2]$ for all $l_1, l_2\in L$. It is important to indicate $k$ in
the notation $\cent_k(L)$ since the centroid depends on the base
ring $k$.

We have $k\Id_L \subset \Cent_k(L)$ for every $L$. One calls $L$
\emph{central\/} if the map $k \to \Cent_F(L)$, $s \mapsto s\Id_L$,
is an isomorphism, and one says that $L$ is \emph{central-simple} if
$L$ is just that: central and simple. \sm

Let $L=\bigoplus_{\la \in \La} L^\la$ be a Lie algebra graded by an
abelian group $\La$. We can then also define the \emph{$\La$-graded
centroid} as \[
 \grCent_k(L) =    \grEnd_k(L) \cap  \Cent_k(L) =
\ts \bigoplus_{\la \in \La} \Cent_k(L)^\la
\]
where $\Cent_k(L)^\la$ consists of the centroidal transformations
which have degree $\la$: $\chi(L^\mu) \subset L^{\la + \mu}$ for all
$\mu \in \La$.\end{definition}

\begin{example} As an immediate example we calculate the centroid
of the Lie algebra $L=\lsl_2(\CC)$, considered as real Lie algebra
by restricting the scalars to $\RR$. We let $(e,h,f)$ be an
$\lsl_2$-triple in $\lsl_2(\CC)$. Then the relations $[h,ce]=2ce$
and $[h,cf]=-2cf$ for $c\in \CC$ show that $\chi(ce)$ and $\chi(cf)$
are uniquely determined by $\chi(h)$. For example, $2 \chi(ce) =
\chi([h,ce]) = [\chi(h), ce]$. Moreover, $\chi(h) \in \CC h$ because
$[\chi(h), ch]=\chi([h,ch])=0$. Hence $\dim_\RR\cent_\RR(L) \le 2$.
On the other side, $\CC \Id_L \subset \cent_\RR(L)$ is clear, whence
$\CC \Id_L = \cent_\RR(L)$.

We leave it to the reader to show $\cent_\RR(L) = \CC \Id_L$ for
$L=\lsl_n(\CC)$ considered as real Lie algebra, without using any of
the results mentioned below! \sm

The following lemma gives a mathematical meaning to the claims made
before the Def.~\ref{n:centdef}, and lists the most important
properties of the centroid of Lie algebras which are not necessarily
Lie tori.
\end{example}

\begin{lemma}[Folklore]\label{n:centlem} Let $L$ be a Lie algebra defined
over a ring $k$. \sm

{\rm  (a)} The centroid of $L$ is always a unital associative
subalgebra of the endomorphism algebra $\End_k(L)$ of $L$. Hence
$\cent_k(L)$ is a $k$-algebra and $L$ becomes a $\Cent_k(L)$-module
by defining the action of $\Cent_k(L)$ on $L$ by $\chi \cdot l =
\chi(l)$ for $\chi \in \cent_k(L)$ and $l\in L$. \sm

{\rm (b)} If the centroid of $L$ is commutative, then with respect
to the action of $\cent_k(L)$ on $L$ defined in {\rm (a)}, $L$ is a
Lie algebra over the ring $\cent_k(L)$. Moreover, $L$ is central as
a Lie algebra over its centroid. \sm

{\rm (c)} If $L$ is perfect, its centroid is commutative and does
not depend on the base ring $k$: $\cent_k(L) = \cent_\ZZ ({_\ZZ L})$
where $_\ZZ L$ is the Lie algebra $L$ with scalars restricted to
$\ZZ$. \sm

{\rm (d)} If $L$ is simple, its centroid is a field and $L$ as a Lie
algebra over the field $\cent_F(L)$ is central-simple. In
particular: \begin{itemize} \item[(i)] a finite-dimensional simple
Lie algebra over an algebraically closed field $F$ is
central-simple, and

\item[(ii)]  the centroid of a simple real Lie algebra $L$ is either $\cong \RR \Id$,
in which case $L$ is central-simple, or is $\cong \CC\Id$, in which
case $L$ is a simple complex Lie algebra, considered as a real Lie
algebra. \end{itemize} \sm

{\rm (e)} Suppose $L$ is $\La$-graded. Then $\grCent_k(L)$ is a
$\La$-graded subalgebra of the full centroid $\cent_k(L)$. Moreover,
$\grCent_k(L) = \cent_k(L)$ if $L$ is finitely generated as an
ideal, i.e., there exist $l_1, \ldots , l_n \in L$ such that the
ideal generated by $l_1, \ldots, l_n$ is all of $L$. \sm

{\rm (f)} If $\chi \in \cent_k(L)$ and $d\in \Der_k(L)$, then $\chi
\circ d \in \Der_k(L)$. With respect to this operation, $\Der_k(L)$
is a $\cent_k(L)$-module and $\IDer(L)$ is a submodule of the
$\cent_k(L)$-module $\Der_k(L)$.\end{lemma}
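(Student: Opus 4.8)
The plan is to prove Lemma~\ref{n:centlem} part by part, treating each of (a)--(f) as a short, mostly formal verification. Most of these are standard computations with the defining identity $\chi([l_1,l_2])=[l_1,\chi(l_2)]$, so the work is in organizing the bookkeeping rather than in any deep idea.

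For (a), I would first note that $\Cent_k(L)$ is a $k$-submodule of $\End_k(L)$ (linear combinations of centroidal maps are centroidal, since $[\,\cdot\,,\,\cdot\,]$ is $k$-bilinear), that $\Id_L$ is centroidal, and that if $\chi,\chi'\in\Cent_k(L)$ then $(\chi\circ\chi')([l_1,l_2]) = \chi([l_1,\chi'(l_2)]) = [l_1,\chi(\chi'(l_2))]$, so $\Cent_k(L)$ is closed under composition; hence it is a unital associative $k$-subalgebra of $\End_k(L)$, and $L$ is a left module over it by evaluation. For (b), assuming commutativity, I would check that $[\,\cdot\,,\,\cdot\,]$ is $\Cent_k(L)$-bilinear: $k$-bilinearity is given, and $[\chi\cdot l_1,l_2] = [\chi(l_1),l_2] = \chi([l_1,l_2]) = \chi\cdot[l_1,l_2]$, and similarly in the second slot; alternating and Jacobi are inherited since they only involve the bracket. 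Centrality over the centroid is then a tautology: the centroid of $L$ viewed over $\Cent_k(L)$ is a subring of $\End_{\Cent_k(L)}(L)\subset\End_k(L)$, which by definition of the centroid equals the image of $\Cent_k(L)\to\End_k(L)$. For (c), the key point is the classical fact that for a perfect algebra the centroid is commutative: given $\chi,\chi'\in\Cent_k(L)$ and $l=\sum[a_i,b_i]$, compute $\chi\chi'(l) = \sum[\chi(a_i),\chi'(b_i)] = \chi'\chi(l)$; and the centroid doesn't depend on $k$ because a $\ZZ$-linear endomorphism commuting with all left multiplications $\ad l$ is automatically $k$-linear when $L$ is perfect — write $l = \sum[a_i,b_i]$ and use $\chi(s\,l) = \sum\chi([s a_i,b_i]) = \sum[s a_i,\chi(b_i)] = s\sum[a_i,\chi(b_i)] = s\,\chi(l)$ for $s\in k$. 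This gives a reference-free proof of (c), and (b) then applies.

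For (d), if $L$ is simple then any nonzero $\chi\in\Cent_k(L)$ has $\Ker\chi$ and $\Image\chi$ ideals (kernel because $\chi([l_1,l_2])=[l_1,\chi(l_2)]$ shows $\Ker\chi$ is an ideal; image similarly), so $\chi$ is bijective, and $\chi^{-1}$ is again centroidal; combined with (c) this makes $\Cent_k(L)$ a commutative division $k$-algebra, i.e.\ a field extension of (the image of) $k$. Central-simplicity over that field is then immediate from (b). The two special cases (i), (ii) follow: over an algebraically closed $F$ a finite-dimensional division $F$-algebra is $F$ itself, and a finite-dimensional field extension of $\RR$ is $\RR$ or $\CC$ — in the latter case $L$ is a $\CC$-vector space on which the bracket is $\CC$-bilinear, hence a complex Lie algebra. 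For (e), $\grCent_k(L) = \grEnd_k(L)\cap\Cent_k(L)$ is a graded subalgebra because the homogeneous components of a centroidal map are again centroidal (the identity $\chi([L^\mu,L^\nu])\subset[L^\mu,\chi(L^\nu)]$ respects the grading degree by degree), and the equality $\grCent_k(L)=\Cent_k(L)$ when $L$ is finitely generated as an ideal follows from the standard argument that a centroidal endomorphism of such an algebra is determined by finitely many values and hence, when decomposed into homogeneous pieces, has only finitely many nonzero components — equivalently, any $\chi\in\Cent_k(L)$ applied to a generating set of the ideal, and then extended by the centroidal property through brackets, stays in the finite span of the homogeneous parts that meet that generating set. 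Finally (f): for $\chi\in\Cent_k(L)$ and $d\in\Der_k(L)$, check $(\chi\circ d)([l_1,l_2]) = \chi([d l_1,l_2]+[l_1,d l_2]) = [\chi(d l_1),l_2] + [l_1,\chi(d l_2)] = [(\chi d)l_1,l_2]+[l_1,(\chi d)l_2]$, so $\chi\circ d$ is a derivation; $k$-module structure and the submodule assertion $\IDer(L)\subset\Der_k(L)$ are then formal, using $\chi\circ\ad l = \ad(\chi(l))$, which follows directly from the defining identity.

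The main obstacle, such as it is, is (c) and (e): getting the commutativity of the centroid of a perfect algebra and the base-ring independence cleanly, and in (e) articulating precisely why ``finitely generated as an ideal'' forces centroidal maps to be graded. Neither is hard, but both require care to state without hand-waving; the rest is routine diagram-chasing with the centroid identity. I would present (a)--(c) first since (d) uses (c) and (b), then (e) and (f) independently.
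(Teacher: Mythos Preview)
Your proposal is correct and follows the standard route the paper has in mind: the paper does not actually prove this lemma but leaves it as Exercise~\ref{n:ex:centroid}, with the single hint that part~(c) should use the fact that $\Ker\chi$ and $\Image\chi$ are ideals with $[\Ker\chi,\Image\chi]=0$. Your direct computation for~(c) --- showing $\chi\chi'$ and $\chi'\chi$ agree on commutators, and that a $\ZZ$-linear centroidal map is automatically $k$-linear on $[L,L]=L$ --- is an equally valid (and arguably cleaner) alternative to whatever the hint intends; the remaining parts~(a),~(b),~(d)--(f) match the expected verifications, and your sketch of~(e) correctly identifies the key point that a centroidal $\chi$ vanishing on a finite ideal-generating set must vanish, so only finitely many homogeneous components $\chi^\lambda$ survive.
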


The proof of this lemma is a straightforward exercise, which the
reader will be asked to do now. The exercise also lists some
interesting additional facts on the centroid.

\begin{exercise} \label{n:ex:centroid} (a) For any $\chi \in \cent_k(L)$ the kernel $\Ker  \chi$ and the
image ${\rm Im}\, \chi$ are ideals of $L$ satisfying $[\Ker \chi,
{\rm Im}\, \chi] = 0$. \sm

(b) Prove Lemma~\ref{n:centlem}. For part (c) of the Lemma use (a)
above. \sm

(c) If $L$ is perfect, any $\chi \in \cent_k(L)$ is symmetric with
respect to any invariant bilinear form on $L$. \sm
\end{exercise}

Here is the result, which describes the centroid of the Lie algebras
of interest in this chapter. We will use the notion of an
associative torus, introduced in \ref{n:asstor} and further
discussed in  \ref{n:twigrrev}--\ref{n:ex:quntor}.

\begin{proposition}[{\cite[Prop.~3.13]{BN}}] \label{n:propcenli} Let $L= \bigoplus_{\xi \in S,
\, \la \in \La} L_\xi^\la$ be a centreless Lie torus of type
$(S,\La)$. \sm

{\rm (a)} With respect to the $(\scQ(S) \oplus \La)$-grading of $L$
we have \begin{equation} \label{n:cent1}  \cent_F(L) = \ts
\bigoplus_{\la \in \La} \cent_F(L)_0^\la
     = \grCent_F(L).\end{equation}
In particular, $\chi(L_\xi) \subset L_\xi$ for any $\chi \in
\cent_k(L)$ and $\xi \in S$. \sm

{\rm (b)} Moreover, with respect to the decomposition {\rm
(\ref{n:cent1})} the centroid $\cent_F(L)$ is an associative
commutative torus of type $\Ga$, where $\Ga = \supp_\La \cent_F(L)$
is a subgroup of $\La$, hence a twisted group algebra over $\Ga$.
\sm

{\rm (c)} In particular, if $\La$ is free abelian of finite rank
$n$, the centroid $\cent_F(L)$ is graded-isomorphic to $F[\Ga]$, the
group algebra of\/ $\Ga$ as defined in {\rm \ref{n:groudef}}, and is
thus isomorphic to a Laurent polynomial ring in $\nu$ variables,
$0\le \nu \le n$. Moreover, $L$ is a free module over its centroid.
\end{proposition}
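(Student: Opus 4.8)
The strategy is to bootstrap from the $\scQ(S)$-grading to control $\cent_F(L)$, then exploit invertibility of homogeneous elements of a Lie torus. First I would establish part (a). By Ex.~\ref{n:ex:lietor}(b) a Lie torus $L$ is perfect, and by Th.~\ref{n:torfg}(a) (applicable since $L$ is of type $(S,\La)$ and, in the relevant case, $\La$ is finitely generated; in general one reduces to this or argues directly that $L$ is finitely generated as an ideal) $L$ is finitely generated as a Lie algebra. Hence Lemma~\ref{n:centlem}(c),(e) apply: $\cent_F(L)$ is commutative and equals $\grCent_F(L)$ with respect to the $\scQ(S)\oplus\La$-grading. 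It remains to see that every homogeneous centroidal transformation has $\scQ(S)$-degree $0$. For this I would use Lemma~\ref{n:weisp}: the $\scQ(S)$-homogeneous spaces $L_\xi$ are the weight spaces for $\ad\frh$, $\frh=\Span_F\{h^0_\eta:\eta\in S\ind^\times\}$; since $\frh\subset[L,L]$ (each $h^0_\eta=[e^0_\eta,f^0_\eta]$) and centroidal maps commute with $\ad$ of bracket elements, $\chi$ preserves each joint eigenspace $L_\xi$, i.e. $\chi$ has $\scQ(S)$-degree $0$. Combined with $\cent_F(L)=\grCent_F(L)$ this gives \eqref{n:cent1}, and in particular $\chi(L_\xi)\subset L_\xi$ for all $\xi$.

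For part (b) I would show that every nonzero $\la$-homogeneous $\chi\in\cent_F(L)_0^\la$ is ``invertible'' in the graded sense and that $\dim\cent_F(L)_0^\la\le 1$, which together with commutativity (already known) and $\Gamma:=\supp_\La\cent_F(L)=\Span_\ZZ(\supp_\La\cent_F(L))$ being a subgroup identifies $\cent_F(L)$ as an associative torus of type $\Gamma$ in the sense of Def.~\ref{n:asstor}; the twisted-group-algebra description then follows from Rev.~\ref{n:twigrrev}. To see invertibility: pick $0\ne\chi\in\cent_F(L)_0^\la$. By Ex.~\ref{n:ex:centroid}(a), $\Ker\chi$ and $\operatorname{Im}\chi$ are ideals; they are $\La$-graded since $\chi$ is homogeneous, hence by Ex.~\ref{n:ex:lietor}(g) each is either all of $L$ or central, and $L$ is centreless, so $\Ker\chi=0$ and $\operatorname{Im}\chi=L$, i.e. $\chi$ is bijective. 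Then $\chi^{-1}\in\cent_F(L)_0^{-\la}$, so $\supp_\La\cent_F(L)$ is closed under negation; closure under addition is clear since $\cent_F(L)$ is a subalgebra of $\End$; hence $\Gamma$ is a subgroup. For $\dim\cent_F(L)_0^\la\le 1$: fix a pair $(\xi,\mu)$ with $0\ne\xi$ and $L_\xi^\mu\ne 0$, so $L_\xi^\mu=Fe^\mu_\xi$ is one-dimensional (by \eqref{n:lietordef3}); since $\chi$ preserves $L_\xi$ and has $\La$-degree $\la$, $\chi$ maps $L_\xi^\mu$ into $L_\xi^{\mu+\la}$, which is at most one-dimensional, so $\chi\mapsto\chi|_{L_\xi^\mu}$ is a linear map $\cent_F(L)_0^\la\to L_\xi^{\mu+\la}$; if it is nonzero we are done, and it cannot be zero on a nonzero $\chi$ because a bijective $\chi$ cannot kill $e^\mu_\xi\ne 0$. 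Thus $\dim\cent_F(L)_0^\la\le 1$. Condition (AT3) (spanning) is then automatic by definition of $\Gamma$, giving (b).

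For part (c): if $\La$ is free abelian of finite rank $n$, then so is the subgroup $\Gamma\cong\ZZ^\nu$ with $0\le\nu\le n$. By Ex.~\ref{n:ex:quntor}(d) every associative torus of type $\ZZ^\nu$ is graded-isomorphic to a quantum torus $\FF_\bq$; since $\cent_F(L)$ is commutative, the commutator relations force all $q_{ij}=1$, so $\cent_F(L)\cong F[\Gamma]=F[s_1^{\pm1},\dots,s_\nu^{\pm1}]$, a Laurent polynomial ring in $\nu$ variables (Example~\ref{n:groudef}). Finally, for freeness of $L$ over $\cent_F(L)$: using part (a), for each $\xi$ the component $L_\xi=\bigoplus_\mu L_\xi^\mu$ is a $\cent_F(L)$-submodule, and for $\xi\ne 0$ one checks that $L_\xi$ is free of rank $1$ — pick any $\mu_0$ with $L_\xi^{\mu_0}\ne 0$; each invertible homogeneous element of $\cent_F(L)$ of degree $\gamma$ maps $e^{\mu_0}_\xi$ to a nonzero element of $L_\xi^{\mu_0+\gamma}$, and by Prop.~\ref{n:divsupp} the shifts $\mu_0+\Gamma$ exhaust $\La_\xi$ (this is where I expect the main obstacle: verifying that the $\Gamma$-orbit of a single degree $\mu_0$ covers all of $\La_\xi$, which requires knowing that the ``invertible'' centroidal elements act transitively on the one-dimensional pieces $L_\xi^\mu$, $\mu\in\La_\xi$ — this is essentially the content of \cite[Prop.~3.13]{BN} and can be reduced to the fact, provable from (LT2) and Prop.~\ref{n:weylref}, that any two nonzero $L_\xi^\mu$, $L_\xi^{\mu'}$ are related by the element $[e^\mu_\xi,f^{\mu'}_\xi]$-type constructions that yield a centroidal map). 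Granting this, $L_\xi$ is free of rank $1$ over $\cent_F(L)$ for every $\xi\ne 0$, and $L_0=\bigoplus_{\xi\ne 0}[L_\xi,L_{-\xi}]$ by (LT3.b)/\eqref{n:lietordef4} shows $L_0$ is a sum of images of free modules; a dimension-count over $\cent_F(L)$ (matching the rank with $|S^\times|$ and using centrelessness) shows $L_0$ is also free, so $L=\bigoplus_{\xi}L_\xi$ is free of finite rank $|S|$ over its centroid. This completes the proof.
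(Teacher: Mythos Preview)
Your arguments for (a) and (b) are sound and follow the expected line; the paper itself simply cites \cite{BN} for these parts. The hand-wave in (a) about ``finitely generated as an ideal'' is easy to fill without invoking Th.~\ref{n:torfg}: since $[h_\xi^0, L_\eta^\la] = \langle\eta,\xi\ch\rangle\, L_\eta^\la$ by (LT2), the ideal generated by the finite-dimensional subspace $\frh$ already contains every $L_\eta^\la$ with $\eta\ne 0$, hence all of $L$ by \eqref{n:lietordef4}; so Lemma~\ref{n:centlem}(e) applies for arbitrary $\La$. Your route to the first claim of (c) via Ex.~\ref{n:ex:quntor}(d) and ``commutativity forces all $q_{ij}=1$'' is correct and equivalent to the paper's one-line remark that a \emph{commutative} twisted group algebra over a free abelian group is the ordinary group algebra (symmetric $2$-cocycles on $\ZZ^\nu$ with values in $F^\times$ are coboundaries, since $\ZZ^\nu$ is free).

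The genuine gap is in your freeness argument. The claim that $L_\xi$ is free of rank~$1$ over $\cent_F(L)$ is false in general: for $L=\lsl_N(\FF_\bq)$ with $n=2$ and $q_{12}$ a primitive $m$th root of unity, one has $\Ga=m\ZZ\oplus m\ZZ$ while $\La_\xi=\ZZ^2$ for every $\xi\ne 0$, so $L_\xi\cong\FF_\bq$ has rank $m^2$ over $\cent_F(L)\cong Z(\FF_\bq)$. Consequently your final rank count ``$|S|$'' is also wrong, and the proposed reduction to ``$\Ga$ acts transitively on $\La_\xi$'' cannot succeed. The fix is not to repair that transitivity claim but to invoke the general fact the paper uses: \emph{any graded module over an associative torus is free}. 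The proof is short and independent of Lie-torus structure: decompose the graded module $M$ into its $\Ga$-coset components $M_C=\bigoplus_{\la\in C}M^\la$; since every nonzero homogeneous element $u_\ga$ of the torus is invertible, multiplication by $u_\ga$ gives an $F$-linear isomorphism $M^\la\to M^{\la+\ga}$ with inverse $u_\ga^{-1}\cdot$, so $M_C$ is free on any $F$-basis of a single nonzero $M^{\la_0}$, $\la_0\in C$. Applying this to $L$ with its $\La$-grading gives freeness immediately, with no separate analysis of $L_0$ required.
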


\begin{proof} Parts (a) and (b) of this proposition are proven in
\cite[Prop.~3.13]{BN}. Part (c) is (\cite[Th.~7]{n:tori}). The first
part of (c) follows from (b): A twisted group algebra over a free
group is a group algebra. The second part is a special case of a
general fact: Any graded module over an associative torus is free.
\end{proof}

\begin{example} Let $L=\lsl_N(A)$ for $A$ an associative
$F$-algebra, see \ref{n:sec:lietypeA}, and let $Z(A)= \{ z\in A :
[z,A]=0\}$ be the centre of the associative algebra $A$. Any $z\in
Z(A)$ induces a centroidal transformation $\chi_z$ defined by
mapping $x=(x_{ij}) \in \lsl_N(A)$ to $\chi_z(x) = (zx_{ij})$. It is
easily seen (\cite[7.9]{n:persp}) that
\[ Z(A) \to \cent_F(\lsl_N(A)), \quad z \mapsto \chi_z \]
is an isomorphism of $F$-algebras (the only non-obvious part is
surjectivity).

Let us now specialize to the case of a Lie torus $\lsl_N(A)$ of type
$(\rma_{N-1}, \ZZ^n)$. Thus, by Cor.~\ref{n:bgkcor}, $A=\FF_\bq$ is
a quantum torus. A description of the centre $Z(\FF_\bq)$ is given
in Ex.~\ref{n:ex:quntor}(e) (see \cite[Prop.~2.44]{bgk} for a
proof): $Z(\FF_\bq) = \bigoplus_{\ga \in \Ga} F t^\ga$ where $\Ga$
is the subgroup
\[ \Ga = \{ \ga \in \ZZ^n : \ts \prod_{j=1}^n \, q_{ij}^{\ga_j}=1
   \hbox{ for } 1\le i \le n\}. \]
of $\ZZ^n$.  The centre of $\FF_\bq$ is therefore isomorphic to a
Laurent polynomial ring in, say, $\nu$ variables, as claimed in
Prop.~\ref{n:propcenli}(c). To see that the inequalities $0 \le \nu
\le n$ stated there are sharp, we consider the quantum torus
associated to the matrix
\[ \bq=\begin{bmatrix} 1 &q \\ q^{-1} & 1 \end{bmatrix}. \]
Specializing the description of $\Ga$ above we get
\[
  \Ga = \begin{cases} \{0\}, & \hbox{$q$ not a root of unity}, \\
     m\ZZ\oplus m \ZZ, &\hbox{$q$ an $m$th root of unity}.
   \end{cases}
\]
Hence $\nu = 0$ in the first case and $\nu=2=n$ in the second case.

However, the following result says that this is the only case in
which the centroidal grading group $\Ga$ has smaller rank than
$\La$.
\end{example}

\begin{theorem}[{\cite[Th.~7]{n:tori}}] \label{n:fgcto} Let $L$ be a centreless Lie
torus of type $(S,\ZZ^n)$ with $S$ not of type $\rma$. Then $[\ZZ^n
: \Ga]< \infty$ and $L$ is a free $\cent_F(L)$-module of finite
rank. \end{theorem}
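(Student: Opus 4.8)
The plan is to reduce the statement to facts already developed about extension data and centreless Lie tori. By Proposition~\ref{n:propcenli}, the centroid $\cent_F(L)$ is an associative commutative torus of type $\Ga = \supp_\La \cent_F(L)$, where $\Ga$ is a subgroup of $\La = \ZZ^n$; since $\La$ is free, $\cent_F(L)$ is in fact the group algebra $F[\Ga]$ and $L$ is a free $\cent_F(L)$-module. Thus the only thing left to prove is that $[\ZZ^n : \Ga] < \infty$; the finite-rank freeness of $L$ over $\cent_F(L)$ then follows because a graded module over a finitely generated associative torus is free of finite rank (once $\Ga$ has finite index, $F[\Ga]$ is a Laurent polynomial ring in $n$ variables and $L$, which has bounded homogeneous dimension over $F$ by Th.~\ref{n:torfg}, decomposes into finitely many $\Ga$-cosets of homogeneous pieces).

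So the core of the argument is: for $S$ not of type $\rma$, the centroidal support $\Ga$ has finite index in $\ZZ^n$. First I would pass to the associated extension datum: by Prop.~\ref{n:divsupp}, the support family $(\La_\xi : \xi \in S)$ satisfies the axioms (ED1), (ED2), and (using connectedness of $L$, hence irreducibility of $S$) one has $\La = \Span_\ZZ(\La\sh)$ together with the chain $\La\div \subset \La\lg \subset \La\sh$. The key structural input is the Theorem on the Structure of extension data: because $S$ is irreducible and \emph{not} of type $\rma_l$ with $l\ge 2$ (and the $\rma_1$ case is excluded by hypothesis since $S\ne\{0\}$ must then be one of $\rmb,\rmc,\rmd,\rme,\rmf,\rmg,\rmbc$), at least one of $\La\sh$, $\La\lg$ is forced to be a genuine \emph{subgroup} of $\La$ — indeed for $\rmd_l, \rme_l$ the short roots already force $\La\sh$ to be a subgroup, for $\rmc_l$ and $\rmf_4$ it is $\La\sh$, for $\rmb_l\ (l\ge 3)$ and the various $\rmbc,\rmg_2$ cases it is $\La\lg$, and in every case one of these subgroups spans $\La$ over $\ZZ$ and hence has finite index. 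The point I want to extract is: \emph{there is a subgroup $\La'$ of $\La$ of finite index such that for each $\xi$ of the relevant length class, $\La_\xi$ is a union of finitely many cosets of $\La'$.}

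Next I would relate this to $\Ga$. The centroid acts on $L$ by degree-$\la$ operators for $\la \in \Ga$, and for $\gamma \in \Ga$ the corresponding centroidal transformation $\chi_\gamma$ restricts to an isomorphism $L_\xi^\mu \xrightarrow{\sim} L_\xi^{\mu+\gamma}$ for every $\xi$ (this is the content of part (a) of Prop.~\ref{n:propcenli}: $\chi(L_\xi)\subset L_\xi$, and by invertibility of the homogeneous element $\chi_\gamma$ it is bijective on each one-dimensional $L_\xi^\mu$, $\xi\ne 0$). Conversely, I would argue that $\Ga$ is precisely the ``period group'' of the grading in the sense that $\gamma\in\Ga$ iff translation by $\gamma$ preserves the support in each length class and acts compatibly — more usefully, since $L$ is generated by the $L_\xi$ with $\xi\ne 0$ and these satisfy $\La_\xi=\La_{w\xi}$, the grading group $\La$ and its finite-index subgroups that ``stabilize the module structure'' are exactly the candidates for $\Ga$. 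The cleanest route: use that $L$, being a Lie torus of type $(S,\ZZ^n)$ with $S\ne\rma$, can by Theorems~\ref{n:typede}/\ref{n:typeAcl} and their analogues be coordinatized by a \emph{commutative} associative torus $A$ (for $\rmd,\rme$ this is Th.~\ref{n:typede}; for $\rmb,\rmc,\rmf,\rmg,\rmbc$ the coordinates are still algebras whose centres carry the centroid), and for a commutative associative torus $A = F[\La]$ one has $\cent_F(\g\ot A)\cong A$ itself (Example before Th.~\ref{n:fgcto}), so $\Ga = \La$ has index $1$. For the non-simply-laced and $\rmbc$ types where the coordinate algebra need not be commutative, $\Ga$ is the centre's support, which by the subgroup conclusions above from the Structure of extension data has finite index.

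The main obstacle will be making the last paragraph uniform across all the non-$\rma$ types without simply invoking the full classification type-by-type: ideally one wants a structural argument showing directly that for $S\ne\rma$ the extension-datum axiom (ED1) evaluated at a pair of roots $\eta,\xi$ of the \emph{same} length with $\lan\eta,\xi\ch\ran=1$ (such a pair exists in every irreducible $S$ outside type $\rma_1$, among the short roots for simply-laced of rank $\ge 2$ and among the long roots for $\rmb_l\ (l\ge 3),\rmbc_l\ (l\ge 3),\rmg_2$, etc., with the $\rmc,\rmf$ cases handled by the short roots — here one must check there is genuinely always such a pair, which is the delicate combinatorial point and fails precisely for $\rma_l$) forces the corresponding $\La_\bullet$ to be closed under subtraction, i.e.\ a subgroup, and since that $\La_\bullet$ spans $\La$ it has finite index; then identify $\Ga$ with (a finite-index sub-object of) this subgroup via the centroid action. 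I expect the bookkeeping to confirm $\Ga$ itself is this subgroup, giving $[\ZZ^n:\Ga]<\infty$, and the freeness and finite rank of $L$ over $\cent_F(L)=F[\Ga]$ then follow as above from Th.~\ref{n:torfg}(a) and Prop.~\ref{n:propcenli}(c). $\square$
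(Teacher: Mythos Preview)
The paper does not prove this theorem; it is quoted from \cite{n:tori} without proof, so there is no ``paper's own proof'' to compare against. That said, your proposal has two genuine gaps that would prevent it from going through as written.

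First, the combinatorial claim driving your argument is false in low rank. You assert that for every irreducible $S$ not of type $\rma$ there is a pair of roots $\eta,\xi$ of the \emph{same} length with $\lan \eta,\xi\ch\ran = 1$, forcing the corresponding $\La_\bullet$ to be a subgroup. But in $\rmb_2=\rmc_2$ the short roots $\pm\veps_1,\pm\veps_2$ are pairwise proportional or orthogonal, and likewise the long roots; the Structure Theorem for extension data you invoke explicitly says that for $\rmb_2$ \emph{neither} $\La\sh$ nor $\La\lg$ need be a subgroup. The same failure occurs for $\rmbc_1$ and $\rmbc_2$. So the ``key structural input'' does not exist in these cases, and your extension-datum route stalls precisely there.

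Second, and more fundamentally, you never establish the link between the extension-datum subgroups $\La_\bullet$ and the centroidal grading group $\Ga$. You correctly observe that $\ga\in\Ga$ implies $L_\xi^\mu \cong L_\xi^{\mu+\ga}$, hence $\Ga$ shifts each $\La_\xi$ into itself; but the converse --- that every $\la$ shifting the support comes from a centroidal transformation --- is exactly the nontrivial content, and it is false in general (the support alone does not determine the centroid; one needs the multiplicative structure). Your closing paragraph acknowledges this (``I expect the bookkeeping to confirm\ldots''), but that bookkeeping \emph{is} the theorem. The actual proof in \cite{n:tori} proceeds via the coordinatization theorems for root-graded Lie algebras type by type: for each non-$\rma$ type the coordinate algebra is shown to be either commutative or to have centre of finite index in its grading group, and the centroid is then identified with that centre. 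Your sketch gestures at this for $\rmd,\rme$ (where Th.~\ref{n:typede} gives $L\cong\g\ot A$ with $A$ commutative, hence $\Ga=\La$ outright) but the remaining types genuinely require their own coordinatization results, not an extension-datum argument.
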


This result, together with the Realization Theorem of \cite{abfp}
implies that an invariant Lie torus of type $(S,\ZZ^n)$, $S\ne
\rma_l$, is graded-isomorphic to a multiloop algebra as defined in
(\ref{n:multdeff}). A characterization of which multiloop algebras
are Lie tori is the main result of \cite{abfp2}. A more general
approach to realizing Lie tori as multiloop algebras is developed in
\cite{naoi}.

It is easy to verify Th.~\ref{n:fgcto} in case $L$ is a Lie torus of
type $(S,\ZZ^n)$ and $S$ of type $\rmd$ or $\rme$. As we have seen
in Th.~\ref{n:typede}, in this case $L=\g\otimes F[t_1^{\pm 1},
\ldots, t_n^{\pm 1}]$. The centroids of these types of Lie algebras
are described in the next example.

\begin{example} \label{n:centgtensora} Let $\g$ be a
finite-dimensional central simple Lie algebra. For example, by
\cite[Remark~3.6]{BN} any  finite-dimensional split simple Lie
algebra is central and thus central-simple. (Over algebraically
closed fields, this also follows from Lemma~\ref{n:centlem}(d).)
Also, let $A$ be an associative commutative $F$-algebra.

A straightforward verification shows that for $s\in F$ and $a\in A$
the map $\chi_{s,a}$, defined by $u \ot b \mapsto su \ot ab$, is a
centroidal transformation of the Lie algebra $\g \otimes A$. It
follows from \cite[Lemma~2.3(a)]{abp2.5} or
\cite[Lemma~1.2]{Azam:tensor} or \cite[Cor.~2.23]{BN} that these are
all the maps in the centroid of $\g \ot A$: \[ F \Id_\g \ot A \cong
\cent_F(\g \otimes A) , \quad \hbox{via $s\otimes a \mapsto
\chi_{s,a}$}.
\]\end{example}

Although this will not be needed in the following, we mention that
the centroid of an EALA is known too.

\begin{proposition}\label{n:centEALA} Let $E$ be an EALA, let $K=E_c$ be its core
and put $D=E/K$. Then $K$ is a central Lie algebra, and
\[ \Cent_F(E) = F \Id_E \,\oplus\, \scV(K), \quad \scV(K)=
 \{\chi \in \cent_F(E): \chi(K)=0\}.
\]
As a vector space, the ideal $\scV(K)$ of $\cent_F(E)$ is
canonically isomorphic to the $D$-module homomorphisms $D \to Z(K)$:
\[
 \scV(K) \cong \Hom_D(D, Z(K)).\]
\end{proposition}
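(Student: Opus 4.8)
The plan is to analyze the centroid of $E$ by exploiting the structure of $E$ as an extension of $D=E/K$ by the ideal $K=E_c$, together with the fact that $K$ is perfect and central (the latter following from Cor.~\ref{n:ccore} and Prop.~\ref{n:propcenli}: the core $K$ has $K/Z(K)$ a centreless Lie torus, hence $K$ itself is central since perfect central extensions of central Lie algebras are central, or more directly from the centroid description via Prop.~\ref{n:propcenli} applied to $E_{cc}$ together with a lifting argument). First I would show that any $\chi\in\Cent_F(E)$ preserves $K$: since $K=[E,K]$ (Th.~\ref{n:ealcor}(b), $K$ is a perfect ideal), we have $\chi(K)=\chi([E,K])=[E,\chi(K)]\subset K$, so $\chi|_K\in\Cent_F(K)$. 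Because $K$ is central, $\chi|_K=s\,\Id_K$ for a unique scalar $s=s(\chi)\in F$. The map $\chi\mapsto s(\chi)$ is an $F$-algebra homomorphism $\Cent_F(E)\to F$, and replacing $\chi$ by $\chi-s\Id_E$ we land in $\scV(K)=\{\chi:\chi(K)=0\}$. This already gives the decomposition $\Cent_F(E)=F\Id_E\oplus\scV(K)$ as $F$-modules once we check $F\Id_E\cap\scV(K)=0$, which is clear since $K\ne 0$; that $\scV(K)$ is an ideal of $\Cent_F(E)$ is immediate from $\scV(K)=\ker(s)$.

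Next I would identify $\scV(K)$ with $\Hom_D(D,Z(K))$. Given $\chi\in\scV(K)$, for $e\in E$ and $x\in K$ we have $\chi([e,x])=[\chi(e),x]$, but $[e,x]\in K$ so $\chi([e,x])=0$, whence $[\chi(e),K]=[\chi(e),[E,K]]$; using invariance of the bracket and $\chi(K)=0$ one deduces $[\chi(e),K]=0$, i.e.\ $\chi(e)\in C_E(K)$. By tameness (axiom (EA5)) $C_E(K)\subset K$, and since $[\chi(e),K]=0$ in fact $\chi(e)\in Z(K)$. Moreover $\chi$ vanishes on $K$, so $\chi$ factors through a linear map $\bar\chi:D=E/K\to Z(K)$. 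The centroid condition $\chi([e,e'])=[\chi(e),e']$ for $e'\in E$, $e\in E$, translates — since $Z(K)$ is central in $K$ and $K$ acts trivially on $Z(K)\subset K$ — into $\bar\chi(\bar d\cdot\bar{d'})$-type compatibility, i.e.\ $\bar\chi$ is a homomorphism of $D$-modules, where $D$ acts on $Z(K)$ via the (well-defined, since $[K,Z(K)]=0$) adjoint action of $E$ on $Z(K)$ descended to $E/K$. Conversely, any $D$-module map $\varphi:D\to Z(K)$ lifts to $\tilde\varphi:E\to Z(K)\subset E$ with $\tilde\varphi(K)=0$, and one checks directly that $\tilde\varphi\in\Cent_F(E)$: for $e,e'\in E$, $\tilde\varphi([e,e'])=[e,\tilde\varphi(e')]$ because both sides only depend on the classes in $D$ and on the $D$-equivariance of $\varphi$, using that $[\tilde\varphi(e),e']$ lies in $[Z(K),E]$ which — again since $Z(K)$ is an ideal contained in the centre of $K$ and $E$ acts on it — equals $[\tilde\varphi(e),e']$ computed in $D$. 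These two constructions are mutually inverse, giving the asserted $F$-linear isomorphism $\scV(K)\cong\Hom_D(D,Z(K))$.

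The step I expect to be the main obstacle is the careful bookkeeping of \emph{which} action of $D$ on $Z(K)$ makes the correspondence work, and verifying that the centroid identity on $E$ really is equivalent to $D$-equivariance — in particular that $[\chi(e),e']$ for $\chi(e)\in Z(K)$ can be computed purely in terms of the $E/K$-action. This requires knowing that the $E$-action on $Z(K)$ (by restriction of $\ad$) kills $K$, which holds because $Z(K)$ is central in $K$ and $K$ is an ideal, so $[K,Z(K)]=0$; hence the action descends to $D$. A secondary subtlety is that one must invoke tameness (EA5) at exactly the right place to guarantee $\chi(E)\subset K$ (indeed $\subset Z(K)$), since without it $\scV(K)$ could be larger — this is precisely where the EALA axioms, rather than just formal properties of ideals, enter. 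I would also need to confirm centrality of $K$ as a standalone fact: this follows from Prop.~\ref{n:propcenli} and the observation that a perfect central extension of a central Lie algebra is central (Exercise~\ref{n:ex:centroid}(a) on kernels and images of centroidal transformations being orthogonal ideals makes this routine), so that $\Cent_F(K)=F\Id_K$ and the scalar $s(\chi)$ is well-defined.
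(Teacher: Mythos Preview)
Your decomposition $\Cent_F(E)=F\Id_E\oplus\scV(K)$ and the identification $\scV(K)\cong\Hom_D(D,Z(K))$ are argued correctly, and the use of tameness to force $\chi(E)\subset Z(K)$ for $\chi\in\scV(K)$ is exactly right. The genuine gap is in the very first step: your argument that $K$ is central.

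You write that $K/Z(K)=E_{cc}$ is a centreless Lie torus and then invoke the (true) fact that a perfect central extension of a \emph{central} Lie algebra is central. But ``centreless'' means $Z(E_{cc})=0$, whereas ``central'' means $\Cent_F(E_{cc})=F\Id$; these are different properties. In fact Prop.~\ref{n:propcenli} says precisely the opposite of what you need: $\Cent_F(E_{cc})$ is a Laurent polynomial ring $F[\Gamma]$, hence typically much larger than $F\Id$. So your inference breaks down, and the vague ``lifting argument'' you gesture at cannot be completed without knowing the specific $2$-cocycle defining $K$ as a central extension of $E_{cc}$.

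This is where the paper's route diverges from yours. The paper's proof (via \cite[Cor.~4.13]{BN}) invokes the structure theorem for EALAs, so that $K\cong L\oplus D^{\gr*}$ with the explicit cocycle $\psi_D$ of (\ref{n:examcoc1}), where $D^0=\euD$ consists of degree derivations. One then checks directly that a homogeneous centroidal transformation $\chi^\gamma\in\Cent_F(L)^\gamma$ with $\gamma\ne 0$ cannot lift to $K$: the obstruction is that $\psi_D(\chi^\gamma l_1,l_2)$ and $\psi_D(l_1,l_2)$ live in different $\Lambda$-degrees and no choice of the $C$-components of a putative lift can reconcile them (already visible in the affine case $K=\tilde\scL$, where ``multiplication by $t^n$'' on $\g\otimes F[t^{\pm1}]$ fails to lift for $n\ne 0$). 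Thus centrality of $K$ really uses that $K$ is not an arbitrary Lie-torus central extension of $E_{cc}$, but one whose cocycle sees all of $\euD$ --- a consequence of (\ref{n:gencons0}). Once you have $\Cent_F(K)=F\Id_K$, the remainder of your argument goes through unchanged and matches the paper's.
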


This is proven in \cite[Cor.~4.13]{BN}. Observe that the reference
to \cite[Th.6]{n:eala} in the proof of \cite{BN} can now be replaced
by the combination of Th.~\ref{n:torfg}(c) and Ex.~\ref{n:uebcent4}.

\subsection{Centroidal derivations of Lie tori} \label{n:sec:centder}

In this section $L$ is a centreless Lie torus of type $(S,\La)$.
Regarding $L$ as a $\La$-graded Lie algebra, the results of section
\ref{n:sec:degree} apply and provide us with the subspace
\[ \euD = \euD(L)= \{ \pa_\thet : \thet \in \rmD(\La)\}
\]
of degree derivations of $L$. Moreover, we can apply
Lemma~\ref{n:centlem}(f) and get that $\chi \circ \pa_\thet \equiv
\chi \pa_\thet$ is a derivation for any $\chi \in \Cent_F(L)$. We
call the elements of
\[ \CDer_F(L) = \Cent_F (L) \, \euD \]
\emph{centroidal derivations}. (A notion of centroidal derivations
for arbitrary $\La$-graded Lie algebra is developed in
\cite[4.9]{n:persp}.) Recall from Prop.~\ref{n:propcenli} that
$\Cent_F(L) = \bigoplus_{\ga\in \Ga} \Cent_F (L)^\ga$ is a
commutative associative torus of type $\Ga$, where $\Ga$ is a
subgroup of $\La$. Since $\euD$ consist of degree $0$ endomorphisms,
$\CDer(L)$ is $\Ga$-graded,
\begin{align} \label{n:centder1} \CDer_F(L) &= \textstyle
\bigoplus_{\ga\in \Ga} \CDer_F(L)^\ga \quad
       \hbox{for} \\
   \CDer_F(L)^\ga &= \Cent_F(L)^\ga \, \euD = \Cent_F(L) \cap
             \End_F(L)^\ga. \nonumber
\end{align}
It is then easily seen that $\CDer_F(L)$ is a $\Ga$-graded
subalgebra of $\Der_F(L)$. For $\chi^\ga \in \Cent_F(L)^\ga$,
$\chi^\de \in \Cent_F(L)^\de$ and $\thet, \psi \in \rmD(\La)$ the
Lie algebra product of $\CDer_F(L)$ is given by the formula
\begin{equation} \label{n:centder2}
  [\chi^\ga \pa_\thet,\, \chi^\de \pa_\psi]
   = \chi^\ga \chi^\de\, \big( \thet(\de)\,\pa_\psi - \psi(\ga)\,
\pa_\thet \big).
\end{equation}
Thus, $\CDer_F(L)$ is a generalized Witt algebra, see for example
\cite[1.9]{NY}. \sm

Suppose now that $L$ is an invariant Lie torus, say with respect to
the invariant bilinear from $\inpr$. We can then consider the
\emph{skew centroidal derivations}
\[ \SCDer_F(L)= \SDer_F(L) \cap \CDer_F(L), \]
defined as the centroidal derivations which are skew-symmetric with
respect to $\inpr$. This is a $\Ga$-graded subalgebra of
$\CDer_F(L)$ whose homogenous components are given by
\begin{equation}\label{n:centder4}
   \SCDer_F(L)^\ga = \{ \chi^\ga
\pa_\thet :
  \chi^\ga \in \Cent_F(L)^\ga, \thet(\ga) = 0 \}.  \end{equation}
In particular,
\[ \SCDer_F(L)^0 = \euD \]
is a toral subalgebra of $\SCDer_F(L)$ since $[\pa_\thet, \,
\chi^\de \pa_\psi] = \thet(\de) \pa_\psi$ by (\ref{n:centder2}). It
is also of interest to point out that  $[\SCDer_F(L)^\ga, \,
\SCDer_F(L)^{-\ga}] = 0$, which implies that $\SCDer_F(L)$ is a
semidirect product,
\[  \SCDer_F(L) = \euD \ltimes \big(\textstyle \bigoplus_{\ga \ne 0}
      \SCDer_F(L)^\ga\big)
\]
of the toral subalgebra $\euD$ and the ideal spanned by the
homogeneous subspaces of non-zero degree. For the construction of
EALAs, the following theorem is fundamental. \sm

\begin{theorem}[{\cite[Th.~9]{n:tori}}] Let $L$ be an invariant Lie torus of type $(S,\La)$
with $\La$ free of finite rank. Then $\Der_F(L)$ is a semi-direct
product, \begin{align} \label{n:centder3}
   \Der_F(L) &= \IDer_F(L) \rtimes \CDer_F(L), \quad \hbox{hence}\\
   \SDer_F(L) &= \IDer_F(L) \rtimes \SCDer_F(L),   \nonumber
   \end{align}
where $\IDer_F(L)$ denotes the ideal of all inner derivations.
\end{theorem}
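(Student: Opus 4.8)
The plan is to combine three ingredients already available in the text: the structure theorem for the full derivation algebra, Theorem~\ref{n:thgencen}(a), which gives that $\Der_F(L) = \grDer_F(L)$ is $\La$-graded of finite (in fact bounded, by Th.~\ref{n:torfg}) homogeneous dimension; the computation of the centroid in Prop.~\ref{n:propcenli}, which identifies $\Cent_F(L)$ with a commutative associative torus of type $\Ga \subset \La$; and the semidirect-product decomposition $\SCDer_F(L) = \euD \ltimes (\bigoplus_{\ga \ne 0} \SCDer_F(L)^\ga)$ together with the bracket formulas (\ref{n:centder1})--(\ref{n:centder4}) established just above. The key structural claim is that $\IDer_F(L)$ is an ideal (automatic, since $[\IDer_F(L), \Der_F(L)] \subset \IDer_F(L)$ for any Lie algebra) and that $\CDer_F(L)$ is a subalgebra with $\IDer_F(L) \cap \CDer_F(L) = 0$ and $\IDer_F(L) + \CDer_F(L) = \Der_F(L)$.

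First I would argue the intersection is trivial: since $L$ is centreless, $\IDer_F(L) \cong L$, so a nonzero inner derivation $\ad x$ is never centroidal unless $\ad x$ commutes with all multiplications in the right way, and one checks directly that $\ad x \in \Cent_F(L)\,\euD$ forces $x$ to lie in the centre of $L$ (using that a centroidal derivation preserves each $L_\xi$ by Prop.~\ref{n:propcenli}(a), while $\ad x$ for $x \in L_\xi^\la$ with $\xi \ne 0$ shifts $\scQ(S)$-degree by $\xi \ne 0$; and for $x \in L_0^\la$ one uses that such $\ad x$ acts on $L_\xi^\mu$ by a scalar depending on $\xi$ via the bracket with $h$-elements, which is incompatible with being a product of a centroidal map and a degree derivation unless it is zero). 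Hence $\IDer_F(L) \cap \CDer_F(L) = 0$.

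Next, the sum: here I would use that by Theorem~\ref{n:thgencen}(a) every derivation is a sum of homogeneous ones, so it suffices to treat $d \in (\Der_F L)^\la$ for a fixed $\la$. Restricting $d$ to $L_0$ and using the $\lsl_2$-triples of (LT2) together with Prop.~\ref{n:liesplit} (so that $\g \subset L^0$ is split simple with Cartan $\frh$, and $\frh$-weight considerations pin down $d$ on the weight spaces), one shows $d$ agrees with an inner derivation modulo one that multiplies each homogeneous component $L_\xi^\mu$ by a scalar depending only on $(\xi,\mu)$ — and $\Cent_F(L)$-linearity of the scalar in $\xi$ (again Prop.~\ref{n:propcenli}(a): centroidal maps preserve $L_\xi$) plus additivity in $\mu$ forces that residual derivation to be centroidal, i.e. of the form $\chi \pa_\thet$ with $\chi \in \Cent_F(L)^\la$ and $\thet \in \rmD(\La)$. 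This is essentially the content of \cite[Th.~9]{n:tori}, which I would cite for the detailed bookkeeping, but the skeleton above is the honest reason. The second line, $\SDer_F(L) = \IDer_F(L) \rtimes \SCDer_F(L)$, then follows by intersecting the first decomposition with $\SDer_F(L)$: since $\SDer_F(L) \supset \IDer_F(L)$ (every inner derivation of an invariant Lie algebra is skew-symmetric by invariance of $\inpr$) and $\SDer_F(L) = \IDer_F(L) \oplus (\SDer_F(L) \cap \CDer_F(L)) = \IDer_F(L) \oplus \SCDer_F(L)$.

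The main obstacle I expect is the surjectivity step — showing that the "residual" derivation, after subtracting an inner one, is genuinely centroidal rather than merely a degree-$\la$ endomorphism acting diagonally on the $\scQ(S) \oplus \La$-grading. The subtle point is that a priori such a derivation could act on $L_\xi^\mu$ by a scalar $c(\xi,\mu)$ that is additive in $\mu$ but not of the product form $\chi(\xi)\thet(\mu)$; one needs the compatibility forced by the bracket $[L_\xi^\mu, L_{-\xi}^{-\mu}] \ni h_\xi^\mu \ne 0$ and the derivation property applied to these brackets to collapse $c(\xi,\mu)$ into the required shape. This is exactly where finiteness of rank of $\La$ (hence $\Cent_F(L) \cong F[\Ga]$ a genuine Laurent polynomial ring, Prop.~\ref{n:propcenli}(c)) and $L$ being a free module over its centroid get used; for that reason I would present the argument as a reduction and defer the remaining verification to \cite[Th.~9]{n:tori}, noting that all hypotheses of that theorem are met by the list (i)--(iii) assembled in the outline of \S\ref{n:sec:const}.
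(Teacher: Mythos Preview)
Your approach diverges substantially from the paper's, and the gap you yourself flag is real and not repaired by your proposed deferral.

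The paper does not attempt a direct graded/weight-space argument. Instead it proceeds by a dichotomy on whether $L$ is finitely generated over its centroid. When it is, the key idea is to pass to the \emph{central closure}: since $\Cent_F(L)$ is a Laurent polynomial ring (Prop.~\ref{n:propcenli}(c)) with fraction field $K$, and $L$ is a torsion-free $\Cent_F(L)$-module, one embeds $L$ into $\tilde L = L \ot_{\Cent_F(L)} K$, which is then a finite-dimensional central-simple Lie algebra over $K$. For such an algebra $\Der_K(\tilde L) = \IDer(\tilde L)$, and the decomposition $\Der_F(L) = \IDer_F(L) \rtimes \CDer_F(L)$ is pulled back from there. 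When $L$ is not finitely generated over its centroid, Th.~\ref{n:fgcto} forces $S$ to be of type $\rma$, and the classification (Cor.~\ref{n:bgkcor} and its analogues for $l=1,2$) identifies $L$ with $\lsl_N(\FF_\bq)$; the decomposition is then read off from the explicit derivation computations in \cite{bgk}, \cite{bgkn}, \cite{NY}.

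Your surjectivity step --- showing that after subtracting an inner derivation the residual is centroidal --- is precisely the content of the theorem, and you cannot resolve it by citing \cite[Th.~9]{n:tori}, since that \emph{is} the theorem you are proving. Your sketch does not explain why a homogeneous derivation of nonzero $\scQ(S)$-degree must be inner, nor why a $\scQ(S)$-degree-$0$ derivation acts on the (possibly higher-dimensional) spaces $L_0^\mu$ through a centroidal map; the ``scalar $c(\xi,\mu)$'' heuristic only applies to $L_\xi^\mu$ with $\xi \ne 0$. The central-closure trick sidesteps exactly this difficulty by reducing to the classical fact that finite-dimensional central-simple Lie algebras have only inner derivations, and the type-$\rma$ exception is handled by hand. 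That dichotomy is the missing idea in your outline.
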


Some remarks on the proof of this theorem follow. By
Prop.~\ref{n:propcenli} the centroid of $L$ is a Laurent polynomial
ring. Let $K$ be its field of fractions, a field of rational
functions. As a $\Cent_F(L)$-module, $L$ is torsion-free and hence
$L$ embeds into the Lie $K$-algebra \[ \tilde L = L \ot_{\Cent_F(L)}
K, \] the so-called \emph{central closure of $L$}. If the
$\Cent_F(L)$-module $L$ is finitely generated, its central closure
is a finite-dimensional central-simple Lie algebra. Hence, in this
case $\Der_K(\tilde L) = \IDer(\tilde L)$, from which the theorem
easily follows. If however $L$ is not finitely generated as a
$\Cent_F(L)$-module, then we know from Th.~\ref{n:fgcto} that $L$ is
a Lie torus of type $\rma$. More precisely, as a consequence of the
results in \cite{y1} and \cite{NY} for type $\rma_1$, \cite{bgkn}
for  type $\rma_2$ and Cor.~\ref{n:bgkcor} for type $\rma_l$, $l\ge
3$, such a Lie torus is graded-isomorphic to $\lsl_n(\FF_\bq)$. But
in this case the result follows from \cite[2.17, 2.53]{bgk},
\cite[Th.~1.40]{bgkn} and \cite[Th.~4.11]{NY}. We will discuss the
special case $L=\g\ot F[t_1^{\pm 1}, \ldots, t_n^{\pm n}]$ in
Ex.~\ref{n:cderloo}. To avoid any confusion, we note that the
splitting (\ref{n:centder3}) is not the one proven in
\cite[Th.~3.12]{be:derinv} for arbitrary root-graded Lie algebras.
\sm

The importance of the theorem stems from Th.~\ref{n:thgencen}: It
identifies a natural complement of $\IDer(L)$ in $\SDer_F(L)$.
Hence, up to graded-isomorphism, any graded covering of $L$ has the
form $\rmE(L,D^{\gr *}, \psi_D)$ for a graded subspace $D \subset
\SCDer_F(L)$.  Moreover, since $\euD \subset \SCDer_F(L)$, we can
require that $D^0\subset \euD$ be not too small and use it to
distinguish the homogeneous spaces $L^\la$ by applying
Prop.~\ref{n:degprop}. This will be our approach in section
\ref{n:sec:genconstr}. But first some examples.

\begin{example}\label{n:cderloo}  Let $L = \g \ot A$ where $\g$ is a split simple
finite-dimensional Lie algebra with root system $S$ and where
$A=F[t_1^{\pm 1}, \ldots, t_n^{\pm 1}]$ is a Laurent polynomial ring
in $n$ variables. This is an invariant Lie torus of type
$(S,\ZZ^n)$, see the Examples~\ref{n:lieuntwistgen} and
\ref{n:lieuntwist}. We have seen in (\ref{n:eq:aff11}) that
\[\Der_F(\g \ot A) = \IDer(\g \ot A) \oplus (\Id_\g \ot \Der_F(A)).\] The reader
has (or should have) determined $\Der_F(A)$ in
Ex.~\ref{n:ex:derloop}: $\Der_F(A) = A\, \euD$ where $\euD =
\Span_\ZZ(\{ \pa_i : 1\le i \le n\})$ in the notation of the quoted
exercise. But by Ex.~\ref{n:uebdegree}, $\euD=\euD(A)$ is also the
space of degree derivations of $A$. Since the $\La$-grading of $L=\g
\ot A$ is concentrated in the factor $A$, it follows that $\Id \ot
\euD$ is the space of degree derivations of $L$, whence, by
Example~\ref{n:centgtensora}, we have
\[ \CDer_F(\g \ot A ) = \Id_\g \ot A \, \euD =
       \Id_\g \ot \Der_F(A).   \] Thus, for the invariant Lie torus $\g \ot
A$ the decomposition (\ref{n:eq:aff11}) is the same as the
decomposition (\ref{n:centder3})! \sm

We have seen in Ex.~\ref{n:lieuntwistgen} that  $L$ is an invariant
Lie torus with respect to the tensor product form $\inpr = \ka \ot
\be$ where $\ka$ is the Killing form of $\g$ and where $\be$ is the
bilinear form on $A$ defined by $\be(t^\la, t^\mu)= \de_{\la, -
\mu}$. It is then easy to identify $\SCDer_F(L)$ using
(\ref{n:centder4}). In particular, for $n=1$ we see that $\SCDer(\g
\ot F[t^{\pm 1}]) = F d$, where $d$ is the degree derivation of
(\ref{n:eq:aff3.5}). In particular, this together with
Th.~\ref{n:thgencen} gives a new proof of the theorem, mentioned in
\ref{n:sec:aff}, that the Lie algebra $\tilde \scL(\g,\si)$ of
(\ref{n:aff3.2}) is the universal central extension of the twisted
loop algebra $\scL(\g,\si)$.
\end{example}

\subsection{The general construction} \label{n:sec:genconstr}

Finally, we can describe the ingredients $(L,D,\ta)$ of the general
construction:
\begin{itemize}
 \item $L=\bigoplus_{\xi \in S, \la \in \La} L_\xi^\la$ is an
 invariant Lie torus of type $(S,\La)$ with $\La$ a free abelian
 group of finite rank; we put $\Ga = \supp_\La \cent(L)$, see Prop.~\ref{n:propcenli}.
\sm

 \item $D=\bigoplus_{\ga \in \Ga} D^\ga \subset \SCDer_F(L)$ is a
 graded subalgebra such that the evaluation map
  \begin{equation}\label{n:gencons0}
      \ev_{D^0} : \La \to D^{0\, *}, \quad \la \to \ev_\la \mid_{D^0}
     \hbox{ is injective.} \end{equation}

 \item $\ta : D\times D \to D^{\gr *}$ is an \textit{affine cocycle\/}, i.e.,
 $\ta$ is a bilinear map satisfying for all $d,d_i \in D$
    \begin{align} \label{n:gencons1}
     \ta(d,d) &= 0 \quad \hbox{and} \quad
          \textstyle \sum_\circlearrowleft d_1 \cdot \ta(d_2, d_3)
       =  \sum_\circlearrowleft\ta([d_1, d_2], d_3),\\
      \ta(D^0, D) &= 0, \quad \hbox{and} \quad
        \ta(d_1, d_2)(d_3) = \ta(d_2, d_3)(d_1)
        \label{n:gencons2}
       \end{align}
 \end{itemize}

Recall from Prop.~\ref{n:degprop} that the condition
(\ref{n:gencons0}) implies that $D^0$ induces the $\La$-grading of
$L$, i.e.,
\begin{equation} \label{n:gencons00}  L^\la = \{ l \in L : d^0 (l) =
\ev_\la(d^0) l , \hbox{ for all
  $d^0 \in D^0$}\}. \end{equation}
For example, (\ref{n:gencons0}) holds for $D = \euD = \SCDer_F(L)^0$
or $D$ any graded subalgebra with $D^0 = \euD$. In
(\ref{n:gencons1}), the symbol $\sum_\circlearrowleft$ denotes the
cyclic sum: $\sum_\circlearrowleft d_1 \cdot \ta(d_2, d_3) = d_1
\cdot \ta(d_2, d_3) + d_2 \cdot \ta(d_3, d_1) + d_3 \cdot \ta(d_1,
d_2)$ and analogously for $\sum_\circlearrowleft\ta([d_1, d_2],
d_3)$. Moreover, $d \cdot c$ for $c\in D^{\gr *}$ is the
contragradient action of $D$ on the graded dual space $D^{\gr *}$.
The condition (\ref{n:gencons1}) says that $\ta$ is an \emph{abelian
$2$-cocycle\/}, meaning that $D^{\gr *} \oplus D$ is a Lie algebra
with respect to the product formula \begin{equation}
\label{n:gencons4} [c_1 \oplus d_1, \, c_2 \oplus d_2] = \big( d_1
\cdot c_2 - d_2 \cdot c_1 +\ta(d_1, d_2) \big) \oplus [d_1, d_2]
\end{equation}  for $c_i \in D^{\gr *}$ and $d_i \in D$. Thus,
$$
  \xymatrix{
   0 \ar[r] & D^{\gr *} \ar[r]^(.4)\inc & D^{\gr *} \oplus D
          \ar[r]^(.65){\pr_D} & D \ar[r] & 0 }
$$
is an abelian extension: $D^{\gr *}$ is an abelian ideal, not
necessarily contained in the centre. The conditions in
(\ref{n:gencons2}) will allow us to define a toral subalgebra $H$
and an invariant bilinear form $\inpr$ below. We note that an affine
cocycle is necessarily graded of degree $0$:
  \[  \ta(D^\ga, D^\de) \subset (D^{\gr *})^{\ga + \de} \] for
  $\ga,\de \in \Ga$.
There do exist non-trivial affine cocycles, see
\cite[Rem.~3.71]{bgk} and  \cite{Rao-Moody}. \ms

To data  $(L,D,\ta)$ as above we associate a Lie algebra \[E= L
\oplus D^{\gr *} \oplus D\] with product
 ($l_i \in L$, $c_i \in D^{\gr *}$ and $d_i \in D$)
   \begin{equation} \label{n:gencons3} \begin{split}
   & [l_1 \oplus c_1 \oplus d_1, \, l_2 \oplus c_2 \oplus d_2 ]
     = \big( [l_1, l_2]_L + d_1(l_2) - d_2(l_1) \big) \\
   &\quad  \big(\psi_D(l_1, l_2) +d_1 \cdot c_2 - d_2 \cdot c_1
      + \ta(d_1, d_2) \big)  \oplus  [d_1, d_2].
 \end{split} \end{equation}
Here $[.,.]_L$ is the Lie algebra product of $L$, $d_i(l_j)$ is the
natural action of $D$ on $L$, and $\psi_D$ is the central
$2$-cocycle of (\ref{n:examcoc1}). It is immediate from the product
formula that
\begin{itemize}

\item[(i)] $L \oplus D^{\gr *}$ is an ideal of $E$, and the canonical projection
$L \oplus D^{\gr *} \to L$ is a central extension.

\item[(ii)] The Lie algebra $D^{\gr *} \oplus D$ of (\ref{n:gencons4})
is a subalgebra of $E$.
\end{itemize}

The Lie algebra $E$ has a a subalgebra \[H=\frh \oplus D^{0 \,
*}\oplus D^0\] where $\frh =
 \Span_F \{ h_\xi^\la : \xi \in S^\times, \la \in \La\}
    = \Span_F \{ h_\xi^0 : 0\ne \xi \in S\ind \}$.
We embed $S$ into the dual space $\frh^*$, using the evaluation map
of (\ref{n:gencons0}), and extend $\xi \in S\subset \frh^*$ to a
linear form of $H$ by $\xi (D^{0\, *} \oplus D^0) = 0$. We embed
$\La \subset D^{0\, *}$, using the evaluation map of
Prop.~\ref{n:degprop}, and then extend $\la\in \La \subset D^{0\,
*}$ to a linear form of $H$ by putting $\la (\frh \oplus D^{0\, *})
= 0$. Then $H$ is a toral subalgebra of $E$ with root spaces
\begin{equation*}
   E_{\xi \oplus \la} = \begin{cases}
                 L_\xi^\la,  & \xi \ne 0, \\
                L_0^\la \oplus (D^{-\la})^* \oplus D^\la,
                           & \xi=0. \end{cases}
\end{equation*}
Observe $H=E_0$ since $\frh=L_0^0$ by Ex.~\ref{n:ex:lietor}. The
symmetric bilinear form $\inpr$ on $E$, defined by
\[ \big( l_1 \oplus c_1 \oplus d_1 \mid l_2 \oplus c_2 \oplus d_2\big)
  = (l_1 \mid l_2)_L + c_1(d_2) + c_2(d_1) \]
where $\inpr_L$ is the given bilinear form of the invariant Lie
torus $L$, is nondegenerate and invariant. With respect to this
bilinear form the set of roots of $(E,H)$ is $R=R^0 \cup R\an$,
where
\begin{align*}
   R^0 &= \{ \la \in \La \subset H^* : L_0^\la \ne 0\} \quad\hbox{and}\\
   R\an &= \{ \xi \oplus \la : \xi \ne 0 \hbox{ and }
                    L_\xi^\la \ne 0\}.
\end{align*}
We have now indicated that the axioms (EA1) and (EA2) of an extended
affine Lie algebra holds for the pair $(E,H)$. The verification of
the remaining axioms can be easily be done by the reader, or can be
looked up in \cite[Prop.~5.2.4]{naoi}. This then shows part (a) of
the following theorem.

\begin{theorem}[{\cite[Th.~6]{n:eala}}] {\rm (a)} The pair
$(E,H)$ constructed above is an extended affine Lie algebra, denoted
$\rmE=\rmE(L,D,\ta)$. Its core is $L \oplus D^{\gr\, *}$ and its
centreless core is $L$. \sm

{\rm (b)} Conversely, let $(E,H)$ be an extended affine Lie algebra,
and let $L=E_c/Z(E_c)$ be its centreless core, which by {\rm
Cor.~\ref{n:ccore}} is an invariant Lie torus, say of type
$(S,\La)$, with $\La$ free of finite rank.

Then there exists a subalgebra $D\subset \SCDer_F(L)$ and an abelian
$2$-cocycle $\ta$ satisfying the conditions {\rm
(\ref{n:gencons0})--(\ref{n:gencons2})} on $(D,\ta)$ such that
$E\cong \rmE(L,D,\ta)$.
\end{theorem}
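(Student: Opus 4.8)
<br>

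The plan is to establish the two directions separately, since part (a) has already been essentially verified in the running text preceding the theorem. For part (a), I would organize the argument as a sequence of verifications of the axioms (EA1)--(EA6), following exactly the outline already sketched: the bilinear form $\inpr$ on $E = L \oplus D^{\gr *} \oplus D$ is nondegenerate because $\inpr_L$ is nondegenerate on $L$ and the pairing between $D^{\gr *}$ and $D$ is the canonical perfect pairing of a graded space with its graded dual; invariance is a direct computation using the cocycle conditions \eqref{n:gencons1} and \eqref{n:gencons2} (the symmetry condition $\ta(d_1,d_2)(d_3)=\ta(d_2,d_3)(d_1)$ is precisely what makes the $\ta$-terms associate correctly). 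Axiom (EA2) follows from the root space description, with $H=E_0$ because $\frh = L_0^0$ by Exercise~\ref{n:ex:lietor} and $\ta(D^0,D)=0$ forces $D^0$ to act diagonally via $\ev$; the self-centralizing property uses injectivity \eqref{n:gencons0}. For (EA3) one notes that for $\al = \xi\oplus\la$ with $\xi\neq 0$, $E_\al = L_\xi^\la$ is spanned by an $\lsl_2$-triple element $e_\xi^\la$, and local nilpotency of $\ad e_\xi^\la$ on $E$ follows from local nilpotency on $L$ together with the fact that $D$ and $D^{\gr *}$ are acted on in a locally nilpotent way (this is where one uses that $\ad e_\xi^\la$ raises $\La$-degree by $\la$ and the relevant supports are bounded). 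Axiom (EA4) is inherited from connectedness of the root system of the Lie torus $L$ (which is connected since $S$ is irreducible), (EA5) follows by identifying the core as $L\oplus D^{\gr *}$ and computing its centralizer to be $Z(L)^{0}\oplus(\text{stuff in }D^{\gr*})\subseteq$ core, and (EA6) holds because $\La$ is free of finite rank by hypothesis. One then reads off $E_c = L\oplus D^{\gr *}$ and $E_{cc}=E_c/Z(E_c)\cong L$, using that $\psi_D$ is a covering cocycle and $Z(E_c)$ is contained in $D^{\gr*}$.

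For part (b), the converse, I would start from an EALA $(E,H)$, apply Theorem~\ref{n:ealcor} to get that the core $E_c$ is a Lie torus and Corollary~\ref{n:ccore} to get that $L=E_{cc}$ is an invariant centreless Lie torus of type $(S,\La)$ with $\La$ free of finite rank. By Theorem~\ref{n:ucelie}, $\uce(L)$ is again a Lie torus, and by Theorem~\ref{n:thgencen} (applicable because $L$ satisfies (i)--(iii) there, as noted in the text using Th.~\ref{n:torfg}), we have $\uce(L)\cong\rmE(L,D_0^{\gr*},\psi_{D_0})$ where $D_0$ is any graded complement of $\IDer(L)$ in $\SDer_F(L)$; by Theorem~\ref{n:centder3} we may take $D_0=\SCDer_F(L)$. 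Now $E_c$ is some graded central quotient of $\uce(L)$, hence (by Exercise~\ref{n:uebcent4}) of the form $\rmE(L,D^{\gr*},\psi_D)$ for a graded subspace $D\subseteq\SCDer_F(L)$. The first key point is to arrange that $D^0$ is large enough: since $H$ acts diagonally on $E$ and its action on $L_0^\la$-components distinguishes the $\La$-grading, the image of $H$ in $\CDer_F(L)$ (via the adjoint action on the core, modulo inner derivations) must already induce the $\La$-grading of $L$, which forces the injectivity condition \eqref{n:gencons0} to hold for the $D$ we extract. The second key point is to reconstruct the cocycle $\ta$: choose a graded vector-space section of $E\to E/E_c$ landing in $D^{\gr*}\oplus D\subseteq E$; the failure of this section to be a homomorphism, suitably projected, gives a bilinear map $\ta:D\times D\to D^{\gr*}$, and the conditions \eqref{n:gencons1}--\eqref{n:gencons2} follow respectively from the Jacobi identity in $E$, the requirement that $H$ be abelian ($\ta(D^0,D)=0$), and invariance of the form on $E$ (the symmetry $\ta(d_1,d_2)(d_3)=\ta(d_2,d_3)(d_1)$). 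One then checks the resulting isomorphism $E\cong\rmE(L,D,\ta)$ preserves the distinguished Cartan subalgebras.

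The main obstacle I anticipate is the converse direction, specifically the step of extracting the correct $D\subseteq\SCDer_F(L)$ and showing it satisfies \eqref{n:gencons0}. The subtlety is that $E/E_c$ need not act on $L$ purely through $\SCDer_F(L)$ on the nose --- one has $E/E_c\to\Der_F(L)$, and by Theorem~\ref{n:centder3} this splits as $\IDer\rtimes\CDer$, but the derivations coming from $E$ could have an $\IDer$-component that must be absorbed. The trick, used in \cite[Th.~6]{n:eala}, is that because the form on $E$ is invariant and $E_c$ is a perfect ideal, the centralizer condition (EA5) together with Exercise~\ref{n:ex:centex} lets one modify the section so that the image of $E/E_c$ lies in the skew centroidal part; then the $\La$-grading structure of the root space decomposition of $(E,H)$ forces $D^0$ to surject onto a subspace of $\euD=\SCDer_F(L)^0$ inducing the grading, which is exactly \eqref{n:gencons0}. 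The remaining verifications that $\ta$ is affine are then bookkeeping. I would cite \cite[Th.~6]{n:eala} and, for the axiom-checking in part (a), \cite[Prop.~5.2.4]{naoi}, both of which the text already points to, and present the proof above as the structural outline rather than reproducing every identity.
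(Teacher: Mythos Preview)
Your outline for (a) aligns with the paper: the text preceding the theorem establishes (EA1) and (EA2) and then defers (EA3)--(EA6) to the reader or to \cite[Prop.~5.2.4]{naoi}. One small correction on (EA3): the mechanism is not that $\ad e_\xi^\la$ ``raises $\La$-degree and supports are bounded'', but rather that from the product formula~(\ref{n:gencons3}) one reads off that $\ad e_\xi^\la$ annihilates $D^{\gr*}$, sends $D$ into $L$, and sends $L$ into $L\oplus D^{\gr*}$, so local nilpotency on $E$ reduces at once to local nilpotency of $\ad_L e_\xi^\la$ on $L$.

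For (b) the paper gives no argument beyond the citation to \cite[Th.~6]{n:eala}, so your outline already goes further than the text. The structure you describe is correct, but one link deserves to be made explicit. You obtain $D$ in two ways: first as the graded subspace of $\SCDer_F(L)$ for which $E_c\cong L\oplus D^{\gr*}$ (via Exercise~\ref{n:uebcent4}), and then implicitly as the image of a complement of $E_c$ in $E$ acting on $L$ by derivations. That these two $D$'s coincide is forced by the nondegenerate invariant form on $E$: since $\inpr|_{E_c\times E_c}$ has radical $Z(E_c)$ (Theorem~\ref{n:ealcor}(c)) and $\inpr$ is nondegenerate on $E$, the form induces a graded pairing $Z(E_c)\times (E/E_c)\to F$ that identifies $E/E_c$ with $Z(E_c)^{\gr*}\cong D$. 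Without this observation the $D$ parametrizing the central extension and the $D$ of added derivations are a priori unrelated, and your section ``landing in $D^{\gr*}\oplus D\subseteq E$'' has no meaning yet, since at that stage $D$ is only an abstract subspace of $\SCDer_F(L)$, not a subspace of $E$. This is the one conceptual step your outline leaves implicit; the rest is, as you say, bookkeeping covered by the cited references.
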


We have defined discrete EALAs in \ref{n:sec:eala-def} as a special
class of EALAs over the base field $F=\CC$. They can now be
characterized as follows.

\begin{corollary}[{\cite[Th.~8]{n:eala}}] Let $F=\CC$. {\rm (a)} Let
$L$ be an invariant Lie torus of type $(S,\La)$ with $\La$ free of
finite rank and let $D\subset \SCDer_\CC(L)$ be a graded subalgebra
such that the evaluation map $\ev : \La \to D^{0\, *}$ is injective
with discrete image. Then, for any affine $2$-cocycle $\ta$ the
extended affine Lie algebra $\rmE(L,D,\ta)$ is a discrete EALA.
Conversely, any discrete EALA arises in this way.
\end{corollary}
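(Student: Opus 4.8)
The plan is to obtain this corollary as a formal consequence of the preceding theorem — which both constructs $\rmE(L,D,\ta)$ and shows every EALA arises so — together with the characterization of discrete extended affine root systems in Proposition~\ref{n:earsstrut}(c) and the dictionary between EALAs and EARS in Theorem~\ref{n:ears&eala}. I would keep the notation of \S\ref{n:sec:genconstr} throughout; in particular $H = \frh \oplus D^{0\,*} \oplus D^0$, the finite root system $S$ sits in $\frh^* \subset H^*$, and $\La$ sits in the finite-dimensional space $D^{0\,*} \subset H^*$ via the evaluation map of (\ref{n:gencons0}) (here $D^0 \subseteq \SCDer_\CC(L)^0 = \euD$ is finite-dimensional because $\La$ is free of finite rank). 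The first preparatory step is to record that, with these identifications, the roots of $\rmE(L,D,\ta)$ are $R = R^0 \cup R\an$ with $R^0 = \{\la \in \La : L_0^\la \ne 0\}$ and $R\an = \{\xi \oplus \la : 0 \ne \xi \in S,\ L_\xi^\la \ne 0\}$, and that $\Span_\ZZ(R^0) = \La$ inside $D^{0\,*}$ by (LT3.c) and Proposition~\ref{n:divsupp}.

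For the forward implication I would argue as follows. By the preceding theorem $\rmE(L,D,\ta)$ is already an EALA, so (EA1)--(EA5) hold and only the discreteness axiom (DE) needs checking. Since $R$ is an EARS (Theorem~\ref{n:ears&eala}), Proposition~\ref{n:earsstrut}(c) reduces (DE) to discreteness of $R^0$. But $R^0 \subseteq \La$ and the image of $\La$ under $\ev$ in $D^{0\,*} \subset H^*$ is discrete by hypothesis, so $R^0$ is discrete; hence $R$ is a discrete EARS and, by the last sentence of Theorem~\ref{n:ears&eala}, $(\rmE(L,D,\ta),H)$ is a discrete EALA.

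For the converse I would start from a discrete EALA $(E,H)$ over $\CC$, note (as recalled in \S\ref{n:sec:eala-def}) that it is in particular an EALA, and invoke part (b) of the preceding theorem to write $E \cong \rmE(L,D,\ta)$ for $L = E_{cc}$ an invariant Lie torus of type $(S,\La)$ with $\La$ free of finite rank, $D \subset \SCDer_\CC(L)$ a graded subalgebra, and $\ta$ an abelian $2$-cocycle satisfying (\ref{n:gencons0})--(\ref{n:gencons2}); injectivity of $\ev : \La \to D^{0\,*}$ is then automatic. It remains to show $\ev(\La)$ is discrete. Again by Theorem~\ref{n:ears&eala} the root system $R$ of $(E,H)$ is a discrete EARS, so Proposition~\ref{n:earsstrut}(c) gives that $\Span_\ZZ(R^0)$ is a discrete subgroup of $H^*$; and under the identifications above $\Span_\ZZ(R^0) = \ev(\La)$, which is thus discrete. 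This puts $(L,D,\ta)$ in the desired form.

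I do not expect a genuine obstacle here: all the real content has been packaged into the construction theorem and the EARS results. The one step to carry out with care is the bookkeeping of the identifications $R^0 \subseteq \La \hookrightarrow D^{0\,*} \hookrightarrow H^*$ together with the equality $\Span_\ZZ(R^0) = \La$, since this is exactly what makes ``$\ev(\La)$ discrete in $D^{0\,*}$'' and ``$R^0$ discrete in $H^*$'' the same condition and thereby lets both implications go through.
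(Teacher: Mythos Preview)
The paper does not include a proof of this corollary; it is stated with a citation to \cite[Th.~8]{n:eala} and no argument is given. Your approach is correct and is the natural one given the ingredients available in the paper: reduce discreteness of the EALA to discreteness of its EARS via Theorem~\ref{n:ears&eala}, then to discreteness of $\Span_\ZZ(R^0)$ via Proposition~\ref{n:earsstrut}(c), and finally identify $\Span_\ZZ(R^0)$ with $\ev(\La)$ using the description of $R^0$ in the construction together with (LT3.c) and Proposition~\ref{n:divsupp}. The bookkeeping you flag as the delicate point is indeed the only thing to watch; in particular, you should note (as you implicitly do) that any $\la$ supporting a nonzero $D^\la$ lies in $\Ga \subset \La_0$, so the stated formula $R^0 = \{\la : L_0^\la \ne 0\}$ really does capture all null roots and the identification $\Span_\ZZ(R^0) = \La$ goes through.
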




%
%


\begin{thebibliography}{AABGP}


\bibitem[AABGP]{aabgp}
Allison, B., Azam, S., Berman, S., Gao, Y. and Pianzola, A.
\textit{Extended affine {L}ie algebras and their root systems}, Mem.
Amer. Math. Soc. \textbf{126} (1997), no.~603, x+122.

\bibitem[ABG1]{abg}
Allison, B., Benkart, G., and Gao, Y., \textit{Central extensions of
{L}ie algebras graded by finite root systems}, Math. Ann.
\textbf{316} (2000), 499--527.

\bibitem[ABG2]{abg2}
\bysame, \textit{Lie algebras graded by the root systems {${\rm
BC}_r,\ r\ge2$}}, Mem. Amer. Math. Soc. \textbf{158} (2002),
no.~751, x+158.

\bibitem[ABFP1]{abfp}
Allison, B., Berman, S., Faulkner, J., and Pianzola, A.,
\textit{Realization of graded-simple algebras as loop algebras},
Forum Math. \textbf{20} (2008), 395--432.

\bibitem[ABFP2]{abfp2}
\bysame,  \textit{Muliloop realization of extended affine {L}ie
algebras and {L}ie tori}, Trans. Amer. Math. Soc. \textbf{361}
(2009), 4807--4842.

\bibitem[ABGP]{abgp}
Allison, B., Berman, S., Gao, Y. and Pianzola, A., \textit{A
characterization of affine {K}ac-{M}oody {L}ie algebras}, Comm.
Math. Phys. \textbf{185} (1997), 671--688.

\bibitem[ABP1]{abp1}
Allison, B., Berman, S., and Pianzola, A., {\it Covering algebras.
{I}. {E}xtended affine {L}ie algebras}, J. Algebra {\bf 250} (2002),
no.~2, 485--516.

\bibitem[ABP2]{abp2}
\bysame, \textit{Covering algebras. {II}. {I}somorphism of loop
algebras}, J. Reine Angew. Math. \textbf{571} (2004), 39--71.

\bibitem[ABP3]{abp2.5}
\bysame, \textit{Iterated loop algebras}, Pacific J. Math.
\textbf{227} (2006), 1--41.

\bibitem[AF]{AF:isotopy}
Allison, B. and Faulkner, J., \textit{Isotopy for extended affine
{L}ie algebras and {L}ie tori}, arXiv:0709.1181 [math.RA].

\bibitem[AG]{AG2}
B.~N. Allison and Y.~Gao, \textit{The root system and the core of an
extended affine {L}ie algebra}, Selecta Math. (N.S.) \textbf{7}
(2001), 149--212.

\bibitem[Az1]{az:grla}
Azam, S., \textit{Generalized reductive {L}ie algebras: connections
with extended  affine {L}ie algebras and {L}ie tori}, Canad. J.
Math. \textbf{58} (2006), 225--248.

\bibitem[Az2]{Azam:tensor}
Azam, S., \textit{Derivations of tensor product algebras}, Comm.
Algebra \textbf{36} (2008), 905--927.


\bibitem[Be]{be:derinv}
Benkart, G., \textit{Derivations and invariant forms of {L}ie
algebras graded by finite root systems}, Canad. J. Math. \textbf{50}
(1998), 225--241.

\bibitem[BM]{bemo}
Benkart, G. and Moody, R., \textit{Derivations, central extensions,
and affine {L}ie algebras}, Algebras Groups Geom. \textbf{3} (1986),
456--492.

\bibitem[BN]{BN}
Benkart, G. and Neher, E., \textit{The centroid of extended affine
and root graded {L}ie algebras}, J. Pure Appl. Algebra \textbf{205}
(2006), 117--145.

\bibitem[BS]{BeSm}
Benkart, G. and Smirnov, O., \textit{Lie algebras graded by the root
system {${\rm BC}_1$}}, J. Lie Theory \textbf{13} (2003), 91--132.


\bibitem[BGK]{bgk}
Berman, S., Gao, Y. Krylyuk, Y., \textit{Quantum tori and the
structure of elliptic quasi-simple {L}ie algebras}, J. Funct. Anal.
\textbf{135} (1996), 339--389.

\bibitem[BGKN]{bgkn}
Berman, S., Gao, Y., Krylyuk, Y., and Neher, E., \textit{The
alternative torus and the structure of elliptic quasi-simple {L}ie
algebras of type $\rma_2$}, Trans. Amer. Math. Soc. \textbf{347}
(1995), 4315--4363.

\bibitem[BerM]{bm}
Berman, S. and Moody, R.,  \textit{Lie algebras graded by finite
root systems and the intersection matrix algebras of {S}lodowy},
Invent. Math. \textbf{108} (1992), 323--347.


\bibitem[Bor]{borde} Bordemann, M., \emph{Nondegenerate invariant bilinear forms on nonassociative
algebras}, Acta Math. Univ. Comenian. (N.S.), \textbf{66} (1997),
151--201.

\bibitem[Bou1]{bou:lie1} Bourbaki, N., \textit{Groupes et Alg{\`e}bres
de {L}ie}, Ch.~1, Hermann Paris 1971.

\bibitem[Bou2]{brac} \bysame, \bysame, Ch.~4--6, Masson 1981.

\bibitem[Bou3]{bou:lie78} \bysame, \bysame, Ch. 7--8, Hermann Paris 1975.

\bibitem[DFP]{DiFuPe} Dimitrov, I., Futorny, V., and Penkov, I.,
 \textit{A reduction theorem for highest weight modules over toroidal
              {L}ie algebras}, Comm. Math. Phys. \textbf{250} (2004),
47--63.

\bibitem[D]{Dix} Dixmier, J., \textit{Enveloping algebras},
Graduate Studies in Mathematics \textbf{11}, revised reprint of the
1977 translation, American Mathematical Society, Providence, RI,
1996.

\bibitem[ERM]{Rao-Moody}
Eswara~Rao, S., and Moody, R., \textit{Vertex representations for
{$n$}-toroidal {L}ie algebras and a generalization of the {V}irasoro
algebra}, Comm. Math. Phys. \textbf{159} (1994), 239--264.

\bibitem[GN]{gn2}
Garc{\'{\i}}a, E., and Neher, E., \textit{Tits-{K}antor-{K}oecher
superalgebras of {J}ordan superpairs covered by grids}, Comm.
Algebra \textbf{31} (2003), 3335--3375.

\bibitem[G]{gar}
Garland, H., \textit{The arithmetic theory of loop groups}, Inst.
Hautes \'Etudes Sci. Publ. Math. \textbf{52} (1980), 5--136.


\bibitem[He]{helgason} Helgason, S., \textit{Differential Geometry, Lie Groups,
and Symmetric Spaces}, Pure and Applied Mathematics \textbf{80},
Academic Press New York 1978.

\bibitem[HT]{HKT}
H{\o}egh-Krohn, R. and Torr{\'e}sani, B., \textit{Classification and
construction of quasisimple {L}ie algebras}, J. Funct. Anal.
\textbf{89} (1990), 106--136.

\bibitem[Hu]{hum} Humphreys, J. E., \textit{Introduction to Lie
algebras and Representation Theory}, Graduate Texts in Mathematics
\textbf{9}, Springer-Verlag New York, 1972.

\bibitem[J]{jake} Jacobson, N.,  \textit{Lie algebras},
Interscience Tracts in Pure and Applied Mathematics \textbf{10},
Interscience Publishers (a division of John Wiley \& Sons), New
York-London 1962. 



\bibitem[Kac]{kac} \bysame, \textit{Infinite dimensional {L}ie algebras},
third edition, Cambridge University Press, Cambridge 1990.

\bibitem[Kas]{Kas}
Kassel, C., \textit{K\"ahler differentials and coverings of complex
simple {L}ie algebras extended over a commutative algebra},
Proceedings of the Luminy conference on algebraic $K$-theory
(Luminy, 1983),  J. Pure Appl. Algebra \textbf{34} (1984), 265--275.

\bibitem[L]{l:sp}
Loos, O., \textit{{S}piegelungsr\"aume und homogene symmetrische
{R}\"aume}, Math. Z. \textbf{99} (1967), 141--170.


\bibitem[LN]{prs}
Loos, O. and Neher, E., \textit{Reflection systems and partial root
systems}, Jordan Theory Preprint Archives, paper \#182, to appear in
Forum Math.

\bibitem[MP]{mp}
Moody, R. and Pianzola, A., \textit{Lie algebras with triangular
decompositions}, Can. Math. Soc. series of monographs and advanced
texts, John Wiley, 1995.

\bibitem[MRK]{Moo-Rao-Yok} Moody, R.~V., Rao, S.~E. and Yokonuma,
T., \textit{Toroidal {L}ie algebras and vertex representations},
Geom. Dedicata \textbf{35} (1990), 283--307.


\bibitem[MY]{morita-yoshii}
Morita, J. and Yoshii, Y., \textit{Locally extended affine {L}ie
algebras}, J. Algebra \textbf{301} (2006), 59--81.

\bibitem[Na]{naoi} Naoi, K., \textit{Multiloop Lie algebras and the
construction of extended affine Lie algebras}, arXiv:0807.2019.

\bibitem[Ne1]{n:3g}
Neher, E.,{\it Lie algebras graded by 3-graded root systems and
{J}ordan pairs
  covered by a grid}, Amer. J. Math {\bf 118} (1996), 439--491.

\bibitem[Ne2]{n:superuce}
\bysame, \textit{An introduction to universal central extensions of
{L}ie superalgebras}, Groups, rings, Lie and Hopf algebras (St.
John's, NF, 2001), Math. Appl., vol. \textbf{555}, Kluwer Acad.
Publ., Dordrecht, 2003, pp.~141--166.

\bibitem[Ne3]{n:tori}
\bysame,  \textit{Lie tori}, C. R. Math. Acad. Sci. Soc. R. Can.
\textbf{26} (2004), 84--89.

\bibitem[Ne4]{n:eala} \bysame, \textit{Extended affine {L}ie algebras}, C. R. Math. Acad.
Sci. Soc. R. Can. \textbf{26} (2004), 90--96.

\bibitem[Ne5]{n:persp} \bysame, \textit{Extended affine Lie algebras and other
generalizations -- a survey}, in ``Trends and developments in
infinite dimensional Lie theory", Progress in Mathematics,
Birkh\"auser, to appear in 2009.

\bibitem[Ne6]{n:uce} \bysame, \textit{Universal central extensions of graded Lie
algebras}, preprint 2010.

\bibitem[NT]{NT} Neher, E. and Toc\'on, M., \textit{Lie tori of type $\rmb_2$ and
graded-simple Jordan structures covered by a triangle}, preprint
2009.

\bibitem[NY]{NY}
Neher, E. and Yoshii, Y., \textit{Derivations and invariant forms of
{J}ordan and alternative tori}, Trans. Amer. Math. Soc. \textbf{355}
(2003), 1079--1108.


\bibitem[RSS]{RSS} Ramos, E., Sah, C.-H. and Shrock, R. E.,
   \textit{Algebras of diffeomorphisms of the {$N$}-torus},
 J. Math. Phys. \textbf{31} (1990), 1805--1816.

\bibitem[vdK]{vdK}
van~der~Kallen, W., \emph{Infinitesimally central extensions of
{C}hevalley groups}, Springer-Verlag, Berlin, 1973, Lecture Notes in
Mathematics, Vol. \textbf{356}.

\bibitem[We]{wei}
C.~Weibel, C., \emph{An introduction to homological algebra},
Cambridge studies in advanced mathematics, vol.~\textbf{38},
Cambridge University Press, 1994.

\bibitem[Wi]{wilson}
Wilson, R., \emph{Euclidean {L}ie algebras are universal central
extensions}, Lie algebras and related topics (New Brunswick, N.J.,
1981), Lecture Notes in Math., vol. \textbf{933}, Springer, Berlin,
1982, pp.~210--213.

\bibitem[Y1]{y1} Yoshii, Y.,
    \emph{Coordinate Algebras of Extended Affine {L}ie Algebras of
        Type\/ $\rma_1$}, J. Algebra \textbf{234} (2000), 128--168.

\bibitem[Y2]{y2}
\bysame, \textit{Root-graded {L}ie algebras with compatible
grading}, Comm. Algebra \textbf{29} (2001), 3365--3391.

\bibitem[Y3]{y:ext}
\bysame, \textit{Root systems extended by an abelian group and their
{L}ie algebras}, J. Lie Theory \textbf{14} (2004), 371--394.

\bibitem[Y4]{y:lie}
\bysame, \textit{Lie tori---a simple characterization of extended
affine {L}ie  algebras}, Publ. Res. Inst. Math. Sci. \textbf{42}
(2006), 739--762.

\end{thebibliography}

\newcommand\cprime{$'$}

\end{document}